\pdfoutput=1

\documentclass[
	bigMargin = false,
	font = palatino,
	final,
	]{MyClass}

\usepackage{filecontents}


\usepackage{myMathHeader}
\RenewDocumentCommand{\setSymbol}{o}{
	\nonscript \, #1 \mid 
	\allowbreak
	\nonscript \,
	\mathopen{ }
}
\renewcommand{\symplMatrix}{\mathbb{J}}

\let\ConnSpace\relax
\let\GauGroup\relax
\let\DiffGroup\relax
\let\AutGroup\relax
\MyNewMathOperator{\ConnSpace}		{command={\mathrm{C}}}
\MyNewMathOperator{\GauGroup}		{command={\mathrm{Gau}}}
\MyNewMathOperator{\DiffGroup}		{command={\mathrm{Diff}}}
\MyNewMathOperator{\AutGroup}		{command={\mathrm{Aut}}}

\crefname{section}{Chapter}{Chapters}
\crefname{subsection}{Section}{Sections}
\crefname{subsubsection}{Subsection}{Subsections}

\addbibresource{database.bib}

\Title{Group-valued momentum maps for actions of automorphism groups}

\Abstract{
The space of smooth sections of a symplectic fiber bundle carries a 
natural symplectic structure. We provide a general framework to determine 
the momentum map for the action of the group of bundle automorphism on 
this space. Since, in general, this action does not admit a classical 
momentum map, we introduce the more general class of group-valued momentum 
maps which is inspired by the Poisson Lie setting. In this approach, the 
group-valued momentum map assigns to every section of the symplectic fiber 
bundle a principal circle-bundle with connection. The power of this 
general framework is illustrated in many examples: we construct generalized 
Clebsch variables for fluids with integral helicity; the anti-canonical 
bundle turns out to be the momentum map for the action of the group of  
symplectomorphisms on the space of compatible complex structures; the 
Teichm\"uller moduli space is realized as a symplectic orbit reduced space 
associated to a coadjoint orbit of $\SLGroup(2,\R)$ and spaces 
related to the other coadjoint orbits are identified and studied.
Moreover, we show that the momentum map for the group of bundle automorphisms on the space of connections over a Riemann  surface encodes, besides the curvature, also topological information of the bundle.
}

\Keywords{symplectic structure on moduli spaces of geometric structures, infinite-dimensional symplectic geometry, momentum maps, symplectic fiber bundles, differential characters, K\"ahler geometry, Teichm\"uller space}

\MSC{
	53D20, 
	(%
	58D27, 
	53C08, 
	53C10, 
	58B99, 
	32G15
	)}

\Institution{mpi}{
	Max Planck Institute for Mathematics in the Sciences,
	04103 Leipzig, Germany
	and
	Institut für Theoretische Physik, 
	Universität Leipzig, 
	04009 Leipzig, Germany 
	and Institute of Applied Mathematics, Delft University of Technology, 2628 XE Delft, Netherlands}
\Institution{sjtu}{
	School of Mathematical Sciences and Ministry of Education 
	Laboratory of Scientific Computing (MOE-LSC), 
	Shanghai Jiao Tong University, Minhang District,
	800 Dongchuan Road, 200240 China,  
	Section de Math\'ematiques, Universit\'e de Gen\`eve,
	2--4 rue du Li\`evre, case postale 64,
	1211 Gen\`eve 4, and Ecole Polytechnique F\'ed\'erale
	de Lausanne, 1015 Lausanne,  
	Switzerland	}

\Author[
	affiliation = mpi,
	email = tobias.diez@mis.mpg.de,
	funding = {Partially supported by the Max Planck Institute for Mathematics in the Sciences (Leipzig), by the German National Academic Foundation (DAAD) and by the above NWO grant 639.032.734.}
	]{diez}{Tobias Diez}
\Author[
	affiliation = sjtu,
	email = {ratiu@sjtu.edu.cn, tudor.ratiu@epfl.ch},
	funding = {Partially supported by by the National Natural Science Foundation of China (No. 11871334) and by NCCR SwissMAP grant of the Swiss National Science Foundation.}
	]{ratiu}{Tudor S. Ratiu}

\begin{document}

\date{\today}
\MakeTitle
\vskip 1.8em
\tableofcontents


\section{Introduction}

Noether's theorem states that every symmetry of a given system has a 
corresponding conservation law.
In terms of symplectic geometry, these conserved quantities are encoded 
in the momentum map.
It was, however, quickly realized that the momentum map geometry not only 
plays an important role in dynamical systems but is also a valuable tool 
in the study of differential geometric questions.
In their seminal work, \textcite{AtiyahBott1983} showed that the curvature 
of a connection on a principal bundle over a Riemannian surface is the 
momentum map for the action of the group of gauge transformations.
They applied Morse theory to the norm-squared of the momentum map (the 
Yang--Mills functional) in order to obtain the cohomology of the moduli 
space of Yang--Mills solutions which, by the Narasimhan--Seshadri theorem, 
can be identified with the moduli space of stable holomorphic structures.
Within the same circle of ideas, \textcite{Fujiki1992} and 
\citeauthor{Donaldson1997} 
\parencite{Donaldson1997,Donaldson1999,Donaldson2003} provided a momentum 
map picture for the relationship between the existence of constant scalar 
curvature K\"ahler metrics and stability in the sense of geometric 
invariant theory.
\medskip

\emph{The first main aim of this paper is to provide a framework which 
encompasses the gauge theory setting of \citeauthor{AtiyahBott1983} and, 
at the same time, the action of diffeomorphism groups of 
\citeauthor{Fujiki1992} and \citeauthor{Donaldson1997}.}
Our starting point is a symplectic fiber bundle of the form 
\( F = P \times_G \FibreBundleModel{F} \) for a principal \( G \)-bundle 
\( P \to M \), where the typical fiber \( \FibreBundleModel{F} \) is 
endowed with a \( G \)-invariant symplectic form. 
The fiberwise symplectic structure, combined with a volume form on the 
base \( M \), induces a symplectic form \( \Omega \) on the space 
\( \SectionSpaceAbb{F} \) of sections of \( F \to  M \).
The gauge group of \(P\) acts in a natural way on \( \SectionSpaceAbb{F} \), 
leaving the induced symplectic form \( \Omega \) invariant.
As we will see, this action possesses a momentum map which is completely 
determined by the momentum map of the \( G \)-action on the fiber 
\( \FibreBundleModel{F} \). Suppose  the bundle \( P \) is natural, \ie, 
it comes with a lift of \( \DiffGroup(M) \) to bundle automorphisms  (for 
example, this is the case when \( P \) is the frame bundle of \(M\)).
In this case, every subgroup of the  group of volume-preserving 
diffeomorphisms acts naturally on the space \( \SectionSpaceAbb{F} \) 
of sections and leaves \( \Omega \) invariant. Our first result is 
summarized in \Cref{thm::symplecticFibreBundle:momentumMapVolume,thm::symplecticFibreBundle:momentumMapSymplectic} which determine the momentum map for the group of 
volume-preserving diffeomorphisms and symplectomorphisms, respectively.
There are, essentially, two contributions to the momentum map.
The first term is the pull-back of the fiberwise symplectic structure. 
The second term involves the fiber momentum map and, morally speaking, 
captures how much the lift of diffeomorphisms to bundle automorphisms 
shifts in the vertical direction. The interesting point is that the 
momentum map for the automorphism group on the \emph{infinite-dimensional} 
space of sections is canonically constructed from the 
\emph{finite-dimensional} symplectic \( G \)-manifold 
\( \FibreBundleModel{F} \).

In contrast to the case of the action of the gauge group, the momentum map 
for the symplectic action of the diffeomorphism group on the space of 
sections does not exist in full generality. 
This was already pointed out in \parencite{Donaldson1999,Donaldson2003}.
The obstruction has a topological character, \ie, certain cohomology groups 
have to vanish. To remedy this situation, one restricts attention  to a 
certain \enquote{exact} subgroup, \eg, the subgroup of Hamiltonian 
diffeomorphisms in the group of all symplectomorphisms. The action of 
this subgroup then usually admits  a classical momentum map.
Working from a completely different point of view, similar observations 
were made by \textcite{Gay-BalmazVizman2012} in their study of the 
classical dual pair in hydrodynamics.
In this case, the symplectic action of volume-preserving diffeomorphisms 
on a symplectic manifold of mappings only has a momentum map under certain 
topological conditions and one is forced to work with suitable central 
extensions of the group of exact volume-preserving diffeomorphisms.

In this paper, we take the viewpoint that the above mentioned topological 
obstructions are not a bug but a feature of the theory.
The action of the diffeomorphism group interacts with, and is largely 
determined by, the topological structure of the bundle.
Thus, one would expect to capture certain topological data (like 
characteristic classes) that are \enquote{conserved} by the action.
Such \enquote{conservation laws} should be encoded in the momentum map. 
Since the classical momentum map takes values in a continuous vector space, 
there is no space to \enquote{store} discrete topological information.
Hence, whenever those classes do not vanish, a classical  momentum map 
does not exists. Nonetheless, one could hope that a generalized momentum map exists and captures the conserved topological data. 
\medskip

\emph{The second main aim of the paper is to translate these philosophical 
remarks into explicit mathematical statements. In order to do this, 
we generalize the notion of momentum maps allowing them to take values 
in groups.} Our concept of a group-valued momentum map is inspired by the 
Lu momentum map \parencite{Lu1990,LuWeinstein1990} in Poisson geometry.
We emphasize right away that the group-valued momentum map we introduce 
in this paper is a vast generalization of many notions of momentum maps 
appearing in the literature including circle-valued, cylinder-valued, and 
Lie algebra-valued momentum maps. We show that our generalized group-valued 
momentum map always exists for the action of the diffeomorphism group, 
without any topological assumptions on the base but some integrability 
conditions on the fiber model. The resulting momentum map captures 
topological invariants of the geometry, exactly in (the dual of) those 
cohomology classes which prevented the existence of a classical momentum 
map. This approach of extending the definition of the momentum map, besides 
the situation described above in Poisson geometry, in order to capture 
conservation laws not available using the classical definition, has been 
used successfully before in the theory of the cylinder-valued and optimal 
momentum maps; see \parencite{OrtegaRatiu2003} for a detailed presentation.

To see how our approach works, consider the following setting.
Let \( (M, \mu) \) be a closed (\ie, compact and boundaryless) 
\(n\)-manifold with volume form \( \mu \) and \( (F, \omega) \) a 
symplectic manifold. The space \( \sFunctionSpace(M, F) \) of smooth 
maps from \( M \) to \( F \) carries the weak symplectic form
\begin{equation}
\Omega_\phi(X,Y) = \int_M \omega_{\phi(m)} \bigl(X(m), Y(m)\bigr)\, \mu(m),
\end{equation}
where \( \phi \in \sFunctionSpace(M, F) \) and \( X, Y \in \TBundle_\phi 
\sFunctionSpace(M, F) \), \ie, the maps \( X,Y: M \to \TBundle F \) satisfy 
\( X(m), Y(m) \in \TBundle_{\phi(m)}F \) for all \( m \in M \).
The natural action by precomposition of \( \DiffGroup_\mu(M) \), the 
group of diffeomorphisms of \( M \) preserving the volume form \( \mu \), 
leaves \( \Omega \) invariant. If \( \omega \) is exact, say with 
primitive \( \vartheta \), then the momentum map is given by
\begin{equation}
{}\mkern1mu\overline{\mkern-1mu \SectionMapAbb{J}}: \sFunctionSpace(M, F) 
\to \DiffFormSpace^1(M) \slash \dif \DiffFormSpace^0(M), \qquad \phi 
\mapsto \equivClass{\phi^* \vartheta},
\end{equation}
where the space of volume-preserving vector fields 
\(\VectorFieldSpace_\mu(M)\) (the vector fields whose \( \mu \)-divergence 
vanishes) is identified with the space of  closed \( (n-1) \)-forms, so 
that \( \VectorFieldSpace_\mu(M)^* \isomorph 
\DiffFormSpace^1(M) \slash \dif \DiffFormSpace^0(M) \) with respect to 
the natural integration pairing. More generally, 
\textcite{Gay-BalmazVizman2012} showed that a (non-equivariant) momentum 
map also exists when the pull-back of 
\( \omega \) by all maps \( \phi \in \sFunctionSpace(M, F) \) is exact; 
for example, this happens when \( \sCohomology^2(M, \R) \) is trivial.
Our generalized group-valued momentum map takes no longer values in 
\( \VectorFieldSpace_\mu(M)^* \), but instead in the Abelian group 
\( \csCohomology^2(M, \UGroup(1)) \) that parametrizes principal circle 
bundles with connections modulo gauge equivalence. If \( (F, \omega) \) 
has a prequantum bundle \( (L, \vartheta) \), then the map
\begin{equation}
\SectionMapAbb{J}: \sFunctionSpace(M, F) \to \csCohomology^2(M, \UGroup(1)), 
\qquad \phi \mapsto \equivClass{\phi^* (L, \vartheta)},
\end{equation}
which sends \( \phi \) to the pull-back bundle with connection 
\( \phi^* (L, \vartheta) \), is a group-valued momentum map. 
We see that no (topological) restrictions have to be imposed on \( M \) 
and only the integrability condition of the symplectic form \( \omega \) 
on \( F \) is needed for the existence of a group-valued momentum map.
In contrast to the classical momentum map, a 
\( \csCohomology^2(M, \UGroup(1)) \)-valued momentum map contains 
topological information. First, the Chern class of the bundle, as a 
class in \( \sCohomology^2(M, \Z) \), is available from the generalized 
momentum map. In our simple example, this is the pull-back of the Chern 
class of \( L \). A second class in \( \sCohomology^1(M, \UGroup(1)) \) 
is related to the equivariance of the momentum map; see 
\cref{rem:momentumMapDiffVol:topologicalData} for details. 
\medskip 

\begin{table}[tbp]
   	\makebox[\textwidth]{%
	\centering
	\small
	\renewcommand{\arraystretch}{2}
	\begin{tabular}{l l l l l}
		\toprule
			&
			Space &
			Action of &
			Chern class &
			\makecell[c]{Secondary \\ topological class}
			\\
		\midrule
			Hydrodynamics &
			\( \sFunctionSpace(M, F) \) & 
			\( \DiffGroup_\mu(M) \) &
			\makecell[l]{\( 0 \) (total vorticity) \\ 
			in \( \sCohomology^2(M, \Z) \)} &
			\makecell[l]{Circulations \\ in \( \sCohomology^1(M, \R) \)}
			\\
			\makecell[l]{Lagrangian \\ embeddings} &
			\( \sFunctionSpace(L, M) \) &
			\( \DiffGroup_\mu(L) \) &
			\makecell[l]{Torsion class \\ in \( \sCohomology^2(M, \Z) \)} &
			\makecell[l]{Liouville class \\ in \( \sCohomology^1(M, \R) \)}
			\\
			Kähler geometry &
			\( \sSectionSpace(\FrameBundle M \times_{\SpGroup} \SpGroup 
			\slash \UGroup) \) &
			\( \DiffGroup_\omega(M) \) &
			\makecell[l]{\( c_1(M) \cup \equivClass{\omega}^{n-1} \) \\ 
			in \( \sCohomology^{2n}(M, \Z) \)}
			\\
			Quantomorphism &
			\( \sSectionSpace(P \times_{\UGroup(1)} \C) \) &
			\( \AutGroup_\Gamma(P) \) &
			trivial &
			\\
			Gauge theory &
			\( \ConnSpace(P) \) &
			\( \AutGroup(P) \) &
			\makecell[l]{Torsion class \\ in \( \sCohomology^{2n}(M, \Z) \)}
			\\
		\bottomrule
	\end{tabular}
	}
\caption{Overview of the examples discussed in \cref{sec::applications}.
Here, \( \mu \) is a volume form and \( \omega \) a symplectic form.
Moreover, \( Q \to M \) denotes a prequantum circle bundle with connection 
\( \Gamma \) and \( P \to M \) is an arbitrary principal \( G \)-bundle.
The frame bundle is denoted by \( \FrameBundle M \).
We also abbreviated the homogeneous space \( \SpGroup(2n, \R) \slash 
\UGroup(n) \) by \( \SpGroup \slash \UGroup \).
	}
\label{table:introduction:overviewExamples}
\end{table}

In \cref{table:introduction:overviewExamples}, we illustrate the conclusions 
implied by our general framework in several important examples.
Next, we comment separately on each one of them.

\Textcite{MarsdenWeinstein1983} construct Clebsch variables for ideal 
fluids starting from a similar infinite-dimensional symplectic system as 
discussed above. It turns out, that every vector field represented in those 
Clebsch variables has vanishing helicity, \ie, such a fluid configuration 
has trivial topology and no links or knots.
Thus, topological interesting configurations such as the ones constructed in \parencite{EncisoPeraltaSalas2012} cannot be written in terms of classical Clebsch variables. 
Our more general framework allows to construct generalized Clebsch variables for vector fields with integral helicity; see \cref{sec:applications:hydrodynamics}.

When applied to the space of Lagrangian immersions, the group-valued 
momentum map recovers the Liouville class as the conserved topological 
data. Moreover, we realize moduli spaces of Lagrangian immersions (and modifications thereof) as symplectic quotients (reduced spaces).

A wide range of interesting examples with geometric significance are 
obtained when the typical fiber \( \FibreBundleModel{F} \) is a 
symplectic homogeneous space \( G \slash H \).
In this case, sections of \( \FrameBundle M \times_G \FibreBundleModel{F} \) 
correspond to reductions of the \( G \)-frame bundle \( \FrameBundle M \) to 
\( H \). Special focus is put on the space of almost complex structures 
compatible with a given symplectic structure, \ie, 
\( \FibreBundleModel{F} = \SpGroup(2n, \R) \slash \UGroup(n) \).
In this case, the group-valued momentum map for the group of 
symplectomorphisms assigns to an almost complex structure the 
anti-canonical bundle. It was already observed by 
\textcite{Fujiki1992,Donaldson1997} that the Hermitian scalar curvature 
furnishes a classical momentum map for the action of the group of 
Hamiltonian symplectomorphisms. Of course, the Hermitian scalar curvature 
is the curvature of the anti-canonical bundle.
Thus, the group-valued momentum map combines the geometric curvature 
structure with the topological data of the anti-canonical bundle.
For the case of a \( 2 \)-dimensional base manifold, we realize the 
Teichm\"uller moduli space with the symplectic Weil--Petersson form as a 
symplectic orbit reduced space.

Finally, we revisit the classical gauge theoretic setting of \textcite{AtiyahBott1983} and 
extend their work in two ways.
First, we generalize from \( 2 \)-dimensional surfaces to arbitrary symplectic manifolds \( M \) as the base (a similar extension was already discussed by \textcite{Donaldson1987a}).
Secondly, we investigate the group-valued momentum map for the action of the \emph{full automorphism group} on the space of connections.
The existence of the momentum map for this action is tightly connected to the triviality of a certain class in \( \sCohomology^3(M, \Z) \) that is canonically constructed from the symplectic structure and the principal \( G \)-bundle.
\medskip

We emphasize that, in contrast to most papers discussing 
infinite-dimensional symplectic geometry, we do not work formally, but 
really address the functional analytical problems arising from the 
transition to the infinite-dimensional setting. In particular, smoothness of 
maps between infinite dimensional manifolds is understood in the sense of 
locally convex spaces as, for example, presented in \parencite{Neeb2006}.

Throughout the paper, integrality of certain symplectic forms plays a 
central role. We remark that, to a large extent, this assumption was made 
for convenience. Most results carry over directly to symplectic forms 
with discrete period groups \( \per \omega \subseteq \R \), without much
technical effort. In spirit, our results also hold in the general setting 
without any assumptions on the period group; however, then one is forced 
to work in the diffeological category because the quotient 
\( \R \slash \per \omega \) may no longer be a Lie group.

Finally, we note that most of our symplectic reduced spaces are obtained 
as (sometimes singular) \emph{orbit} reduced spaces, a theory that is not 
yet present in the literature for infinite dimensional systems, even though 
we state theorems using it. However, the techniques in \parencite{DiezThesis} 
which completely treats infinite dimensional singular symplectic 
\emph{point} reduction, combined with the strategy in 
\parencite{OrtegaRatiu2003} for finite dimensional singular symplectic 
orbit reduction, yields a general theory of infinite dimensional singular 
symplectic orbit reduction, which is precisely what is needed here.
In the interest of the focus of this paper, as well as its length, this 
theory will be presented in a future article.

\paragraph*{Structure of the paper}
Starting from the Poisson setting, \cref{sec::groupMomentumMap} introduces 
the notion of a group-valued momentum map. It is shown that this 
generalized momentum map still satisfies Noether's theorem. 
Moreover, existence and uniqueness questions are addressed.
\Cref{sec::groupMomentumMap:forExtension} prepares the calculation of the 
momentum map for the automorphism group by giving a construction of the 
momentum map for a group extension (to the best of our knowledge, this 
construction is original, even for classical momentum maps).
In \cref{sec::globalAnalysisSymplecticFibreBundles}, the symplectic 
geometry of the space of sections of a symplectic fiber bundle is 
discussed. First, the momentum map for the gauge group is determined in 
\cref{sec:momentum_map_for_the_gauge_group} and then for the group of 
volume-preserving diffeomorphisms, culminating in 
\cref{thm::symplecticFibreBundle:momentumMapVolume}.
The momentum map for the group of symplectomorphisms, given in 
\cref{thm::symplecticFibreBundle:momentumMapSymplectic}, is obtained by 
viewing it as a subgroup of the volume-preserving diffeomorphisms.
Finally, \cref{sec::applications} discusses the applications outlined above.

For the convenience of the reader, in \cref{sec::conventions} we summarize 
the notations and conventions from tensor calculus, symplectic and Poisson 
geometry, \etc, in force throughout this article.
In \cref{sec::fibreIntegration} and \cref{sec::differentialCharacter}, 
we recall the main properties of fiber integration and differential 
characters, respectively. The only original work in the appendices is 
\cref{prop::differentialCharacter:pullback}, 
\cref{sec::hatProductFibreBundles}, and 
\cref{sec::hatProduct:differentialCharacters}, where we calculate the 
derivative of the pull-back map and generalize the hat product of 
\parencite{Vizman2011} to fiber bundles and differential characters, 
respectively.

\paragraph*{Acknowledgments}
We are indebted to F.~Gay-Balmaz, J.~Huebschmann, B.~Janssens, J.-H.~Lu, K.-H.~Neeb, 
G.~Rudolph, and C.~Vizman for helpful discussions.
We gratefully acknowledge support of the German National Academic 
Foundation (DAAD), the National Natural Science Foundation of China (NNSFC),
the Swiss National Science Foundation (SNF), the Max Planck Institute for 
Mathematics in the Sciences in Leipzig, the Swiss Federal Institute of 
Technology in Lausanne, and the Shanghai Jiao Tong University which
made our collaboration possible.

\section{Group-Valued Momentum Maps}
\label{sec::groupMomentumMap}
In order to handle the full groups of volume-preserving diffeomorphisms or 
symplectomorphisms, we need a more general concept of a momentum map. 
Our starting point is the notion of a momentum map in Poisson geometry 
as introduced by \citeauthor{LuWeinstein1990} 
\parencite{LuWeinstein1990,Lu1990}. We hasten to add that this 
group-valued momentum map is not built on the pattern from the theory of 
quasi-Hamiltonian actions and shall comment on the relation between the 
two later on. Standard definitions and terminology, with the specification 
of sign and coefficient conventions, are recalled in 
\cref{sec::conventions}.

\subsection[Poisson momentum maps]{Poisson Lie group momentum maps}
 
All manifolds and Lie groups occurring in this subsection are assumed 
to be finite-dimensional. The proofs of the statements below can be found 
in \parencite{LuWeinstein1990, Lu1990}.

A Lie group \( G \) is a \emphDef{Poisson Lie group} if it is simultaneously 
a Poisson manifold (relative to the underlying manifold structure) such that 
group multiplication and inversion are Poisson maps.
Let \( \varpi_G \in \VectorFieldSpace^2(G) \) denote the Poisson tensor 
of \( G \). Let \( (M, \varpi_M) \) be a Poisson manifold.
A \emphDef{Poisson action} of the Poisson Lie group \( G \) on 
\( (M, \varpi_M) \) is a smooth (left) action \( G \times M \to M \) 
which is, in addition, a Poisson map (with \( G \times M \) endowed with 
the product Poisson structure \( \varpi_G \times \varpi_M \), \ie, the 
Poisson bracket of two functions depending only on \( G \) is given by 
the Poisson bracket on \( G \), the Poisson bracket of two functions 
depending only on \( M \) is given by the Poisson bracket on \( M \), 
and functions on \( G \) Poisson commute with functions on \( M \)).

The Poisson tensor \( \varpi_G \) of a Poisson Lie group \( G \), with Lie 
algebra \( \LieA{g} \), necessarily vanishes at the identity element 
\(e \in G\), which then allows for the definition of the \emphDef{intrinsic 
derivative} \( \epsilon: \LieA{g} \to \LieA{g}\wedge \LieA{g} \) by 
\( \epsilon(A)\defeq (\difLie_X\varpi_G)_e \), where \( X \in 
\VectorFieldSpace(G) \) is an arbitrary vector field satisfying 
\( X_e = A \) and \( \difLie_X \) denotes the Lie derivative in the 
direction \( X \). The dual map \( \epsilon^\ast: \LieA{g}^\ast \wedge 
\LieA{g}^\ast \to \LieA{g}^\ast \) satisfies the Jacobi identity and thus 
endows \( \LieA{g}^\ast \) with a Lie algebra structure.
The unique connected and simply connected Lie group \( G^* \) whose Lie 
algebra is \( \LieA{g}^\ast \) is called the \emphDef{dual group} of 
\( G \). The Lie group \( G^* \) has a unique Poisson structure 
\( \varpi_{G^*} \) relative to which \( G^* \) is a Poisson Lie group 
such that the intrinsic derivative of \( \varpi_{G^*} \) coincides with
the Lie bracket on \( \LieA{g} \). If \( G \) is connected and simply 
connected, the intrinsic derivative \( \epsilon \) is a cocycle which 
uniquely determines both Poisson Lie tensors \( \varpi_G \) 
and \( \varpi_{G^*} \).

Let \( G \times M \to M \) be a left Poisson action of the Poisson Lie 
group \( (G, \varpi_G) \) on the Poisson manifold \( (M, \varpi_M) \), 
\ie, the left action map \( G \times M \to M \) is 
Poisson. A smooth map \( J: M \to G^* \), if it exists, is called a 
\emphDef{momentum map} of this action if
\begin{equation}
\label{eq::luMomentumMap:defining}
	A^* + \varpi_M\left(\cdot, J^* A^l\right) = 0, \quad 
	\text{for all} \quad A \in \LieA{g}.
\end{equation}
Here, \( A^* \) denotes the fundamental (or infinitesimal generator) 
vector field on \( M \) induced by the infinitesimal action of 
\(  A \in \LieA{g} \), \ie,
\begin{equation}
	A^*(m) \defeq \left.\frac{d}{dt}\right|_{t=0}\exp(tA) \cdot m, \quad 
	\text{for all}\quad  m \in M,
\end{equation}
where \(g \cdot  m\) denotes the action of \(g \in  G\) on \(m \in  M\).
The second term in~\eqref{eq::luMomentumMap:defining} is interpreted in 
the following way. Since \( \LieA{g} \) is the dual of \( \LieA{g}^\ast \) 
(which is the Lie algebra of \( G^* \)), we may think of \( A \) as a 
linear functional on \( \LieA{g}^\ast \) and extend it to a left invariant 
one-form \( A^l\in \DiffFormSpace^1(G^*) \), \ie, \( (A^l)_a (v) = 
\scalarProd{A}{\LeftTrans_{a^{-1}} v} \) for every \( a \in G^* \) and 
\( v \in \TBundle_a G^* \), where \(\LeftTrans_{a^{-1}}\) denotes both 
the left translation by \(a^{-1} \in  G^*\) in \(G^*\) and its tangent 
map (derivative) on \(\TBundle G^*\).

Assume now that the Poisson manifold \( (M, \varpi_M) \) is symplectic with 
symplectic form \( \omega \) and let us unwind the definition in this case.
For any \( X_m \in \TBundle_m M \) we have 
\begin{equation}
\label{eq::luMomentumMap:calculationCaseSymplectic}
\begin{split}
	\omega_m(A^*_m, X_m) 
		&= (\varpi_M)_m \left(\bigl(\varpi_M^\sharp \bigr)^{-1}A^*_m, 
		\bigl(\varpi_M^\sharp \bigr)^{-1} X_m \right)
		\stackrel{\eqref{eq::luMomentumMap:defining}}
		= - \bigl(J^* A^l\bigr)_m (X_m) \\ 
		& = - A^l_{J(m)} \bigl(\tangent_m J (X_m)\bigr)
		= - \scalarProd{A}{\LeftTrans_{J(m)^{-1}} \tangent_m J (X_m)} \\
		&= - \scalarProd{A}{(\difLog J)_m (X_m)},
\end{split}
\end{equation}
where \( \tangent_m J: \TBundle_m M \rightarrow  \TBundle_{J(m)} G^* \) 
is the derivative (tangent map) of \( J: M \to G^* \) and 
\( \difLog J \in \DiffFormSpace^1(M, \LieA{g}^*) \), defined by the last 
equality, is its \emph{left logarithmic derivative}. 

Note that the identity \( \omega_m(A^*_m, X_m) + 
\scalarProd{A}{(\difLog J)_m (X_m) } = 0 \) proved 
in~\eqref{eq::luMomentumMap:calculationCaseSymplectic} does not use 
the fact that \( G \) is a Poisson Lie group.
Indeed, this identity still  makes sense if the momentum map is 
replaced by a smooth map \( J: M \to H \) with values in an arbitrary 
Lie group \( H \), as long as there is a duality between the Lie algebras 
of \( G \) and \( H \). This observation leads to our generalization of 
Lu's momentum map. In order to define this generalization, we need a 
few preliminary concepts that are inspired by their counterparts 
in the theory of Poisson Lie groups.

\subsection{Dual pairs of Lie algebras}
A \emphDef{dual pair of Lie algebras} (not necessarily finite-dimensional) 
consists of two Lie algebras \( \LieA{g} \) and \( \LieA{h} \), which are 
in duality through a given (weakly) non-degenerate bilinear map 
\( \kappa: \LieA{g} \times \LieA{h} \to \R \). 
Using notation stemming from functional analysis, we often write the 
dual pair as \( \kappa(\LieA{g}, \LieA{h}) \). Intuitively, we think 
of \( \LieA{h} \) as the dual vector space of \( \LieA{g} \), endowed 
with its own Lie bracket operation. For this reason, we often denote 
\( \LieA{g}^\ast \defeq \LieA{h} \), even though \( \LieA{g}^\ast \) 
is not necessarily the functional analytic dual of \( \LieA{g} \).
Two Lie groups \( G \) and \( H \) are said to be \emphDef{dual} to each 
other if there exists a (weakly) non-degenerate  bilinear form 
\( \kappa: \LieA{g} \times \LieA{h} \to \R \) relative to which the 
associated Lie algebras are in duality. We use the notation 
\( \kappa(G, H) \) in this case. As for Lie algebras, we often write 
\( G^* \defeq H \), intuitively thinking of \( G^* \) as the dual Lie group, 
as in the theory of Poisson Lie groups (\parencite{LuWeinstein1990}).

Note that the notion of a dual pair of Lie algebras involves only the 
underlying vector spaces, while the Lie brackets play no role.
We introduce a more rigid concept of duality, which takes all structures 
into account. For a given dual pair \( \kappa(\LieA{g}, \LieA{h}) \) of 
Lie algebras, define a bilinear skew-symmetric bracket on the \emph{double} 
\( \LieA{d} \defeq \LieA{g} \times \LieA{h} \) by
\begin{equation}
	\label{eq::lieAlgebraDualPair:bracketDouble}
	\LieBracket{(A, \mu)}{(B, \nu)} = (\LieBracket{A}{B}_\LieA{g} - 
	\CoadAction_\mu B + \CoadAction_\nu A, \LieBracket{\mu}{\nu}_\LieA{h} - 
	\CoadAction_A \nu + \CoadAction_B \mu),
\end{equation}
for \( A,B \in \LieA{g} \), \( \mu, \nu\in \LieA{h} \), where the 
infinitesimal coadjoint actions are defined with respect to \( \kappa \) 
by\footnote{In infinite dimensions, \cref{eq::definitonCoadjontAction} 
ensures only uniqueness of the adjoints but not their existence. In the 
sequel, we always assume that the coadjoint actions exist.}
\begin{equation}
\label{eq::definitonCoadjontAction}
\begin{split}
\kappa(B, \CoadAction_A \mu) &= \kappa(\LieBracket{A}{B}_\LieA{g}, \mu), \\
\kappa(\CoadAction_\mu A, \nu) &= \kappa(A,\LieBracket{\mu}{\nu}_\LieA{h}).
\end{split} 
\end{equation}
However, the bracket operation defined 
in~\eqref{eq::lieAlgebraDualPair:bracketDouble} does not satisfy the 
Jacobi identity, in general. A dual pair \( \kappa(\LieA{g}, \LieA{h}) \) 
of Lie algebras is called a \emphDef{Lie bialgebra}, if the 
bracket~\eqref{eq::lieAlgebraDualPair:bracketDouble} on 
\( \LieA{d} = \LieA{g} \times \LieA{h} \) is a Lie bracket.
In this case, we denote the double by \( \LieA{g} \bowtie \LieA{h} \).
The notation displays the similarity to the semidirect product but also 
emphasizes that the bracket on \( \LieA{g} \times \LieA{h} \) uses both the 
adjoint actions of \( \LieA{g} \) and \( \LieA{h} \). 

\begin{example}
\label{ex::dualPairLieAlgebra:AbelianDualIsLieBialgebra}
Let \( \kappa(\LieA{g}, \LieA{h}) \) be a dual pair.
Assume that \( \LieA{h} \) is an Abelian Lie algebra.
Then~\eqref{eq::definitonCoadjontAction} implies that the coadjoint 
action \( \CoadAction_\mu: \LieA{g} \to \LieA{g} \) is trivial for 
every \( \mu \in \LieA{h} \).
Hence the bracket on the double \( \LieA{d} \) simplifies to 
\begin{equation}
	\LieBracket{(A, \mu)}{(B, \nu)} 
	= (\LieBracket{A}{B}_\LieA{g}, - \CoadAction_A \nu + \CoadAction_B \mu).
\end{equation}
Thus, \( \LieA{d} \) is the semidirect product 
\( \LieA{g} \rSemiProduct_{\CoadAction} \LieA{h} \) of Lie algebras, 
where \( \LieA{g} \) acts on \( \LieA{h} \) by the \( \kappa \)-coadjoint 
action~\eqref{eq::definitonCoadjontAction}.
Furthermore, the Jacobi identity always holds and thus 
\( \kappa(\LieA{g}, \LieA{h}) \) is a Lie bialgebra.
\end{example}

\begin{example}[Group of volume-preserving diffeomorphisms]
\label{ex::dualPairLieGroups:volumePreservingDiffeos}
Let \( M \) be a compact manifold endowed with a volume form \( \mu \).
Then the group \( G = \DiffGroup_\mu(M) \) of volume-preserving 
diffeomorphisms is a Fr\'echet Lie group, see 
\parencite[Theorem~2.5.3]{Hamilton1982}.
Its Lie algebra consists of \( \mu \)-divergence-free vector fields 
\( X \) on \( M \). Equivalently,
\begin{equation}
	\LieA{g} = \VectorFieldSpace_\mu(M) = 
	\set{X \in \VectorFieldSpace(M) \given \dif (X \contr \mu) = 0},
\end{equation}
where \( \dif \) denotes the exterior derivative on forms and 
\(X \contr \mu \defeq {\rm i}_X \mu \) is the interior product 
(contraction on the first index) of \(X\) with \(\mu \).
Hence, we also identify \( \VectorFieldSpace_\mu(M) \) with 
\( \clDiffFormSpace^{\dim M-1}(M) \) via \( X \mapsto X \contr \mu \), 
where \( \clDiffFormSpace^{k}(M) \) denotes the space of closed 
\( k \)-forms on \( M \).
Thus \( \DiffFormSpace^1(M) \slash \dif \DiffFormSpace^0(M) \) is the 
regular dual with respect to the weakly non-degenerate integration paring
\begin{equation}
\label{eq::diffAction:dualPairOfVolPresVectorFields}
\kappa(X,\alpha) \defeq (-1)^{\dim M - 1}\int_M (X \contr \mu) \wedge \alpha 
= \int_M (X \contr \alpha) \, \mu.
\end{equation}
We now note that a \( 1 \)-form \( \alpha \) can be interpreted as a 
trivial principal circle bundle with curvature \( \dif \alpha \). 
From this point of view, \(\DiffFormSpace^1(M) \slash \dif 
\DiffFormSpace^0(M) \) parametrizes gauge equivalence classes of 
connections on a trivial principal circle bundle.
Thus, it is natural to think of it as the Lie algebra of the Abelian 
group \( H \defeq \csCohomology^2(M, \UGroup(1)) \) of all principal 
circle bundles with connections, modulo gauge equivalence.
This heuristic argument can be made rigorous using the theory of 
Cheeger--Simons differential characters; see 
\cref{sec::differentialCharacter} for a quick review.
In summary, we get a dual pair \( \kappa(\DiffGroup_\mu(M), 
\csCohomology^2(M, \UGroup(1))) \) of Lie groups. For later use, it is 
convenient to introduce the notation \( \csAlgebra^2(M, \UGroup(1)) \) for 
the Lie algebra \( \DiffFormSpace^1(M) \slash \dif \DiffFormSpace^0(M) \) 
of \( \csCohomology^2(M, \UGroup(1)) \).
\end{example}

\begin{example}[Group of symplectomorphisms]
\label{ex::dualPairLieGroups:symplectomorphisms}
Consider a compact symplectic manifold \( (M, \omega) \).
The group \( G = \DiffGroup_\omega(M) \) of symplectomorphisms is a 
Fr\'echet Lie group \parencite[Theorem~43.12.]{KrieglMichor1997} with 
Lie algebra
\begin{equation}
\LieA{g} = \VectorFieldSpace_\omega(M)
 = \set{X \in \VectorFieldSpace(M) \given \dif (X \contr \omega) = 0}.
\end{equation}
The map \( X \mapsto X \contr \omega \) identifies \( \LieA{g} \) with the 
space \( \clDiffFormSpace^1(M) \) of closed \( 1 \)-forms on \( M \). 
Thus the regular dual of \( \LieA{g} \) with respect to the natural 
integration paring 
\begin{equation}
	\label{eq::diffAction:dualPairOSymplPresVectorFields}
	\kappa(X, \alpha) \defeq \frac{(-1)^{\dim M - 1}}{\left(\frac{1}{2} 
	\dim M -1\right)!} \int_M (X \contr \omega) \wedge \alpha
\end{equation}
is \( \csAlgebra^{\dim M}(M, \UGroup(1)) \defeq \DiffFormSpace^{\dim M-1}(M) 
\slash \dif\DiffFormSpace^{\dim M-2}(M)\). The prefactor in front of the 
integral turns out to be a convenient choice in 
\cref{sec:globalAnalysisSymplecticFibreBundles:momentumMapForSymplectomorphisms}.
Similarly to the \cref{ex::dualPairLieGroups:volumePreservingDiffeos} of 
volume-preserving diffeomorphisms, the Abelian Lie algebra 
\( \csAlgebra^{\dim M}(M, \UGroup(1)) \) is integrated by the group 
\(\csCohomology^{\dim M}(M, \UGroup(1)) \) of Cheeger--Simons differential 
characters with degree \( \dim M \).
These can be thought of as equivalence classes of circle \( n \)-bundles 
with connections in the sense of higher differential geometry.
\end{example}

\begin{remark}
The group of volume-preserving diffeomorphism and the group of 
symplectomorphisms both have an Abelian dual group.
Thus they fall in the realm of 
\cref{ex::dualPairLieAlgebra:AbelianDualIsLieBialgebra}.
In particular, both dual pairs \( (\VectorFieldSpace_\mu(M), 
\csAlgebra^2(M, \UGroup(1))) \) and \( (\VectorFieldSpace_\omega(M), 
\csAlgebra^{\dim M}(M, \UGroup(1))) \) are actually Lie bialgebras.
Ignoring the particularities of the infinite-dimensional setting for a 
moment, Drinfeld's theorem \parencite[Theorem~2.2.2]{EtingofSchiffmann2002} 
states that there are essentially unique connected and simply connected 
Poisson Lie groups, whose Lie algebras are \( \VectorFieldSpace_\mu(M) \) 
and \( \VectorFieldSpace_\omega(M) \), respectively.
We do not know if the groups \(\DiffGroup_\mu(M)\) or \(\DiffGroup_\omega(M)\) 
carry a non-trivial Poisson Lie structure integrating the above Lie bialgebras 
(this would require to find a non-trivial integration of the adjoint action, 
see \parencite{Drinfeld1993} and \parencite[Theorem~2.31]{Lu1990}).  Moreover, 
we are not aware of any Poisson Lie structure on these groups, such that 
the actions discussed in \cref{sec::globalAnalysisSymplecticFibreBundles} 
are Poisson maps.
\end{remark}

\begin{example}[Gauge group]
\label{ex::dualPairLieGroups:gaugeGroup}
Let \( P \to M \) be a right principal \( G \)-bundle over the compact 
manifold \( M \). The group \( \GauGroup(P) \) of gauge transformations is 
identified with the space of sections of \( P \times_G G \defeq 
(P \times G)/G \) and thus is a Fr\'echet Lie group with Lie algebra 
\( \GauAlgebra(P) = \sSectionSpace( \AdBundle P ) \), the space of 
sections of the adjoint bundle \( \AdBundle P \defeq (P\times \LieA{g})/G \), 
see \parencite{CirelliMania1985}. Denote the dual of the adjoint bundle 
by \( \CoAdBundle P \defeq (P \times \LieA{g}^\ast)/G\), the action of 
\( G \) on \( \LieA{g}^\ast \) being the left coadjoint action.
The natural pairing
\begin{equation}
	\label{eq::diffAction:dualPairOfGaugeGroup}
	\kappa(\phi, \alpha) = \int_M \dualPair{\phi}{\alpha}, 
	\qquad \phi \in \GauAlgebra(P),
	\alpha \in \DiffFormSpace^{\dim M}(M, \CoAdBundle P),
\end{equation}
identifies \( \DiffFormSpace^{\dim M}(M, \CoAdBundle P) \) as the regular 
dual to \( \GauAlgebra(P) \). In particular, if \( M \) is endowed with 
a volume form \( \mu \), then  \( \CoGauAlgebra(P) = 
\sSectionSpace(\CoAdBundle P) \) is the dual by integration against 
\( \mu \):
\begin{equation}
	\scalarProd{\cdot}{\cdot}_{\AdAction}: \sSectionSpace(\AdBundle P) \times 
	\sSectionSpace(\CoAdBundle P) \to \R, \qquad (\phi, \varrho) \mapsto 
	\int_M \dualPair{\phi}{\varrho} \, \mu.
\end{equation}
Moreover, an \( \AdAction_G \)-invariant non-degenerate bilinear form 
on \( \LieA{g} \) identifies \( \CoGauAlgebra(P) \) with \( \GauAlgebra(P) \) 
so that \( \GauAlgebra(P) \) is self-dual in this case. 
\end{example}

\subsection{Group-valued momentum maps}

With these preliminaries out of the way, we can introduce our 
generalization of a group-valued momentum map.
\begin{defn}
Let \( M \) be \(  G \)-manifold endowed with a symplectic form\footnote{We 
do not require the symplectic form \( \omega \) to be invariant under 
the group action. In fact, we will see below in 
\cref{prop::groupMomentumMap:existenceMaurerCartan} that this will not be the 
case if the dual group \( H \) is non-Abelian.} \( \omega \).
A\emphDef{group-valued momentum map} is a pair \( (J, \kappa) \), where 
\( \kappa(G,G^*) \) is a dual pair of Lie groups and \( J: M \to G^*\) 
is a smooth map satisfying
	\begin{equation}
		\label{eq::momentumMap:DefEq}
		A^* \contr \omega + \kappa(A, \difLog J) = 0, \quad A \in \LieA{g}.
	\end{equation}
In this formula, \( A^* \) denotes the fundamental vector field on \( M \) 
induced by \( A \in \LieA{g} \), \( \difLog J \in 
\DiffFormSpace^1(M, \LieA{\LieA{g}^*}) \) is the left logarithmic 
derivative of \( J \), \( \LieA{g} \) is the Lie algebra of \( G \), 
and \( \LieA{g}^\ast \) is the Lie algebra of \( G^* \).
\end{defn}

\begin{example}[As a generalization of the usual momentum map]
	\label{ex_standard_momentum_map}
Let \( (M, \omega) \) be a finite-dimensional, symplectic \( G \)-manifold.
We view \(G^*= \LieA{g}^* \) as an Abelian group and consider the natural 
pairing \( \kappa: \LieA{g} \times \LieA{g}^* \to \R \). Thus a 
\( \LieA{g}^* \)-valued momentum map is a smooth map \( J: M \to \LieA{g}^* \) 
satisfying the usual relation
	\begin{equation}
		A^* \contr \omega + \dif J_A = 0, \quad A \in \LieA{g},
	\end{equation}
where \( J_A: M \to \R \) is defined by \( J_A (m) = \kappa\bigl(A, J(m)\bigr) \) 
for \(m \in  M\), and \( \dif J_A = \kappa(A, \difLog J) \) holds as a 
consequence of the Abelian character of \( \LieA{g}^* \).
\end{example}

\begin{example}[As a generalization of the Lie algebra-valued momentum map]
If \( \kappa: \LieA{g} \times \LieA{g} \to \R \) is a continuous, weakly 
non-degenerate, \( \AdAction_G \)-invariant symmetric bilinear form, 
one identifies the dual \( \LieA{g}^* \) with \( \LieA{g} \) and the 
coadjoint orbits with adjoint orbits. In view of \cref{ex_standard_momentum_map}, 
we are led to the concept of a Lie algebra-valued momentum map. Although 
this notion has been in the literature since the mid 70s, it was recently 
formalized by \textcite[Definition~4.3]{NeebSahlmannEtAl2014}: a Lie 
algebra-valued momentum map is a smooth map \( J: M \to \LieA{g} \) such 
that, for all \( A \in \LieA{g} \), the component functions \( J_A = 
\kappa(A, J): M \to \R \) satisfy
	\begin{equation}
		A^* \contr \omega + \dif J_A = 0.
	\end{equation}
It is immediate from the definition, that such a Lie algebra-valued 
momentum map can be regarded as a group-valued momentum map with 
respect to the dual pair \( \kappa(G, \LieA{g}) \), where \( \LieA{g} \) 
is viewed as an Abelian Lie group.
\end{example}

\begin{example}[As a generalization of the Poisson momentum map]
\label{ex::momentumMap:generalizationPoisson}
We now describe how our motivational example fits in the new framework. 
Let \( (M, \omega) \) be a finite-dimensional symplectic manifold. 
Suppose a finite-dimensional Poisson Lie group \( G \) acts on \( M \) 
such that the action is Poisson. The Poisson structure of \( G \) 
induces a Lie bracket on the dual \( \LieA{g}^* \) of the Lie algebra 
\parencite[Theorem~2.18]{Lu1990}. Let \(G^* \) denote the unique connected 
and simply connected Lie group integrating the Lie algebra \( \LieA{g}^* \).
As above, let \( \kappa \) be the natural pairing 
\( \LieA{g} \times \LieA{g}^* \to \R \). Hence a \( G^* \)-valued 
momentum map is a smooth map \( J: M \to G^* \) satisfying 
	\begin{equation}
		A^* \contr \omega + \kappa(A, \difLog J) = 0.
	\end{equation}
As we have explained above, this equation is a reformulation of the usual Lu 
momentum map relation in the context of symplectic geometry, 
see~\eqref{eq::luMomentumMap:calculationCaseSymplectic}.
In other words, our group-valued momentum map is the natural generalization 
of the Poisson momentum map if the Lie group \( G \) is not necessarily 
a Poisson Lie group.
\end{example}

\begin{example}[As a generalization of the circle-valued momentum map]
Consider a symplectic manifold \( (M, \omega) \) with a symplectic 
action of \( G = \UGroup(1) \). We let \(G^*= \UGroup(1) \) and take 
(minus) the Killing form \( \kappa: \UAlgebra(1) \times \UAlgebra(1) 
\to \R \) as the pairing between the Lie algebras of \( G \) and \(G^*\),
\ie, \( \kappa(x, y) = xy \) is the usual multiplication of real 
numbers under the identification \( \UAlgebra(1) \isomorph \R \). 
Thus, a map \( J: M \to \UGroup(1) \) is a group-valued momentum map 
if and only if
	\begin{equation}
		1^* \contr \omega + \difLog J = 0.
	\end{equation}
In this way, we recover the usual definition of a circle-valued momentum 
map (see, for example, \parencite[Definition~1]{PelayoRatiu2012}).
\end{example}

\begin{example}[Symplectic torus] 
\label{ex:symplecticTorus}
Consider a symplectic vector space \( (V, \omega) \). Let \( \Lambda \) 
be a lattice in \( V \), \ie, a discrete subgroup of the additive group 
\(V\). The symplectic structure is invariant under the natural action of 
\( \Lambda \) on \( V \) by translations and hence it descends to a 
symplectic form \(\omega_T\) on the torus \( T = V \slash \Lambda \). 
Moreover, the translation action of \( V \) on itself commutes with the 
lattice action and thus induces a symplectic \( V \)-action on \( T \). 
The action of \( V \) on itself has the momentum map
	\begin{equation}
	J: V \to V^*, \qquad v \mapsto \omega(v, \cdot).
	\end{equation}
However, \( J \) is not invariant under the lattice action and so does 
\emph{not} descend to a momentum map for the induced action of \( V \) 
on the torus \( T \). Indeed, it is well-known that, for cohomological 
reasons, the symplectic action on the torus does not admit a standard 
momentum map. Rather, \( J \) transforms as
	\begin{equation}
	J(v + \lambda) = J(v) + \omega(\lambda, \cdot), \quad \lambda \in \Lambda.
	\end{equation}
Thus, if \( \omega(\lambda_1, \lambda_2) \in \Z \) holds for all 
\( \lambda_1, \lambda_2 \in \Lambda \), then \( J \) is equivariant 
with respect to the dual lattice action
	\begin{equation}
	\Lambda^* = \set{\alpha \in V^* \given \alpha(\lambda) \in \Z\;\; 
	\text{for all}\;\; \lambda \in \Lambda}.
	\end{equation}
In this case, \( J \) induces a \( V^* \slash \Lambda^* \)-valued momentum 
map \( J_T \) on the torus \(T\). It is interesting to note that the 
integrality condition \( \omega(\lambda_1, \lambda_2) \in \Z \) is 
equivalent to \( \omega_T \) being prequantizable. 
\end{example}

\begin{example}[As a generalization of the cylinder-valued momentum map]
In a finite-dimensional context, \textcite{CondevauxDazordMolino1988} 
introduced a momentum map with values in the cylinder \( C \defeq 
\LieA{g}^* \slash H \), where \( H \) is the holonomy group of a flat 
connection on some bundle constructed in terms of the symplectic form 
and the action (in our language, \( \alpha \) defined 
in~\eqref{eq::groupMomentumMap:defPrimitive} below plays the role of the 
connection form). If the holonomy group \( H \) is discrete, then \( C \) 
is a Lie group with Lie algebra \( \LieA{g}^* \).  Thus \( C \) is a dual group.
Under the identification of the Lie algebra \( \LieA{c} = \LieA{g}^* \), 
the cylinder-valued momentum map also satisfies~\eqref{eq::momentumMap:DefEq}, 
see \parencite[Theorem~5.2.8]{OrtegaRatiu2003}, and hence is a group-valued 
momentum map. The group-valued momentum map for the symplectic torus 
discussed in \cref{ex:symplecticTorus} is also the cylinder-valued 
momentum map, see \parencite[Example~5.2.5]{OrtegaRatiu2003}.

The case when the holonomy group \( H \subseteq \LieA{g}^* \) has 
accumulation points is pathological both in the framework of cylinder- 
as well as group-valued momentum maps, see 
\cref{ex:groupValuedMomentumMap:exampleNonExistence} and 
\cf \parencite[Example~5.2.6]{OrtegaRatiu2003}. 
\end{example}

As we have seen, the notion of a group-valued momentum map unifies 
many different concepts of momentum maps. Despite its general nature, 
a group-valued momentum map still captures conserved quantities of 
the dynamical system, \ie, it has the \emph{Noether property} (see 
\parencite[Definition~4.3.1]{OrtegaRatiu2003}).
\begin{prop}[Noether's theorem]
Let \( (M, \omega) \) be symplectic \( G \)-manifold.
Assume that the action has a \(G^*\)-valued momentum map \( J: M \to G^* \).
Let \( h \in \sFunctionSpace(M) \) be a smooth function for which the 
Hamiltonian vector field \( X^h \) exists and has a unique local 
flow\footnote{Recall that vector fields on Fr\'echet manifolds do not 
need to have flows. Having a flow is more or less equivalent to local 
in time solutions of the corresponding partial differential equation.}.
If \( h \) is \( G \)-invariant, then \( J \) is constant along the 
integral curves of \( X^h \). 
\end{prop}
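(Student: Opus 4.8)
The plan is to show directly that the tangent map $\tangent_m J$ annihilates the Hamiltonian vector field $X^h_m$ at every point $m \in M$. Granting this, if $\gamma$ is an integral curve of $X^h$ (such curves exist precisely because $X^h$ admits a unique local flow by hypothesis), then the chain rule gives $\frac{\dif}{\dif t}(J\circ\gamma)(t) = \tangent_{\gamma(t)}J\bigl(\dot\gamma(t)\bigr) = \tangent_{\gamma(t)}J\bigl(X^h_{\gamma(t)}\bigr) = 0$. Working in a chart of $G^*$ around $J(\gamma(t_0))$, the composite becomes a smooth curve in a locally convex space with vanishing derivative on an interval, hence constant; so $J\circ\gamma$ is locally constant and therefore constant on its connected domain.

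It remains to prove $\tangent_m J(X^h_m) = 0$. First I would rewrite this condition in terms of the left logarithmic derivative. Since, by definition, $(\difLog J)_m(X^h_m) = \LeftTrans_{J(m)^{-1}}\bigl(\tangent_m J(X^h_m)\bigr) \in \LieA{g}^\ast$ and $\LeftTrans_{J(m)}$ is a diffeomorphism of $G^*$, its tangent map $\tangent_e \LeftTrans_{J(m)}$ is a linear isomorphism, so $\tangent_m J(X^h_m) = 0$ if and only if $(\difLog J)_m(X^h_m) = 0$ in $\LieA{g}^\ast$. Because the pairing $\kappa$ in the dual pair $\kappa(G, G^*)$ is weakly non-degenerate, the latter is in turn equivalent to $\kappa\bigl(A, (\difLog J)_m(X^h_m)\bigr) = 0$ for every $A \in \LieA{g}$.

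Next I would evaluate the defining equation \eqref{eq::momentumMap:DefEq}, an identity of one-forms on $M$, on the tangent vector $X^h_m$. This yields
\begin{equation}
\kappa\bigl(A, (\difLog J)_m(X^h_m)\bigr) = -\,\omega_m\bigl(A^*_m, X^h_m\bigr).
\end{equation}
By the defining property of the Hamiltonian vector field $X^h$, namely $X^h \contr \omega = \pm\,\dif h$ with the sign fixed by the conventions of \cref{sec::conventions}, the right-hand side equals, up to that sign, the directional derivative $A^*_m[h] = \frac{\dif}{\dif t}\big|_{t=0} h\bigl(\exp(tA)\cdot m\bigr)$. Since $h$ is $G$-invariant, $h(\exp(tA)\cdot m) = h(m)$ for all $t$, so this derivative vanishes. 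Hence $\kappa\bigl(A, (\difLog J)_m(X^h_m)\bigr) = 0$ for all $A \in \LieA{g}$ and all $m \in M$, which is exactly what was needed, and the proof is complete.

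The argument is, once the defining equation is in hand, a one-line computation, so there is no genuine conceptual obstacle; the only points demanding care are the infinite-dimensional ones. The passage from ``$\kappa(A,\xi) = 0$ for all $A \in \LieA{g}$'' to ``$\xi = 0$'' is precisely the weak non-degeneracy built into the notion of a dual pair of Lie groups, and the concluding step ``vanishing derivative implies constant'' must be localized to a chart of $G^*$, where it reduces to the corresponding standard fact for smooth curves in locally convex spaces. The assumption that $X^h$ possesses a unique local flow is what gives the statement content by ensuring the integral curves along which constancy is asserted actually exist.
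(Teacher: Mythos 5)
Your proof is correct and follows essentially the same route as the paper: evaluate the defining relation \eqref{eq::momentumMap:DefEq} on \(X^h\), identify \(-\omega_m(A^*_m, X^h_m)\) with the derivative of \(h\) along the \(G\)-orbit, invoke \(G\)-invariance, and conclude \((\difLog J)(X^h)=0\) by weak non-degeneracy of \(\kappa\). The only difference is that you spell out the final step (passing from a vanishing logarithmic derivative to constancy along integral curves via a chart of \(G^*\)), which the paper leaves implicit.
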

\begin{proof}
	Let \( A \in \LieA{g} \) and \( m \in M \).
	Using the defining equation for the momentum map, we have
	\begin{equation}\begin{split}
	\kappa(A, (\difLog J)_m X^h_m)
		&= - \omega_m (A^*_m, X^h_m) 
		= - (\dif h)_m A^*_m \\
		&= - \difFracAt{}{\varepsilon}{0} h(\exp(\varepsilon A) \cdot m)
		= 0
	\end{split}
	\end{equation}
	by \(G\)-invariance of \(h\).
	Since \( A \in \LieA{g} \) is arbitrary and the pairing \( \kappa \) is weakly non-degenerate, we conclude 
	\( (\difLog J) X^h = 0 \).
	Hence, \( J \) is constant along integral curves of \( X^h \).
\end{proof}

\subsection{Existence and uniqueness}
Let \( (M, \omega) \) be a symplectic manifold.
For a given \( G \)-action on \( M \) and a fixed dual pair 
\( \kappa(G, G^*) \) of Lie groups, we will now search for conditions 
which ensure the existence of a \( G^* \)-valued momentum map.

Define a \( \LieA{g}^\ast \)-valued \( 1 \)-form \( \alpha \in 
\DiffFormSpace^1(M, \LieA{g}^\ast) \) by 
\( A^* \contr \omega + \kappa(A, \alpha)=0 \), that is,
\begin{equation}
\label{eq::groupMomentumMap:defPrimitive}
\kappa(A, \alpha_m (X_m)) = \omega_m (X_m, A_m^*) \quad \text{for all } X_m 
\in \TBundle_m M, A \in \LieA{g}.
\end{equation}
In infinite dimensions, the dual pairing \( \kappa \) is often not 
strongly non-degenerate. In such cases, there might not exists 
\( \alpha \in \DiffFormSpace^1(M, \LieA{g}^\ast) \) 
satisfying~\eqref{eq::groupMomentumMap:defPrimitive}, although 
\( \alpha \) is unique, if it exists.
We will assume for the rest of the subsection, that we have 
such an \( \alpha \).

By~\eqref{eq::momentumMap:DefEq}, the \( G \)-action on \( M \) 
admits a \(G^* \)-valued momentum map if and only if 
\( \alpha \in \DiffFormSpace^1(M,\LieA{g}^\ast) \) defined 
by~\eqref{eq::groupMomentumMap:defPrimitive} is log-exact, \ie, 
if \( \alpha = \delta J \) for some smooth function \(J: M \to G^*\).
A necessary condition is that \( \alpha \) satisfies the Maurer--Cartan 
equation, as the next lemma shows. Below, \( \wedgeLie{\alpha}{\beta} \) 
means the wedge product of the \( \LieA{g}^\ast \)-valued forms 
\( \alpha \) and \( \beta \) on \( M \) associated to the bracket 
operation on \( \LieA{g}^\ast \) (as the Lie algebra of \( G^* \)).

\begin{lemma}[{Corresponds to \parencite[Theorem~3.7]{Lu1990}}]
\label{prop::groupMomentumMap:existenceMaurerCartan}
\begin{thmenumerate}*
\item The \( \LieA{g}^\ast \)-valued one-form \( \alpha \) on \( M \) 
defined by~\eqref{eq::groupMomentumMap:defPrimitive} satisfies 
the Maurer-Cartan equation
\begin{equation}
\label{eq::momentumMap:existence:maurerCartan}
\dif \alpha + \frac{1}{2} \wedgeLie{\alpha}{\alpha} = 0
\end{equation}
if and only if \( \difLie_{A^*} \omega = 
\frac{1}{2} \kappa(A, \wedgeLie{\alpha}{\alpha}) \) holds for 
all \( A \in \LieA{g} \). 
\item If the \(G\)-action on \(M\) admits a \(G^*\)-valued momentum 
map \(J: M \to G^*\), then \(\delta J \in \DiffFormSpace^1(M,\LieA{g}^\ast)\) 
satisfies the Maurer-Cartan 
equation~\eqref{eq::momentumMap:existence:maurerCartan}. \qedhere
\end{thmenumerate}
\end{lemma}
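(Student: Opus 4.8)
The plan is to prove both parts by a direct computation using the defining equation \eqref{eq::groupMomentumMap:defPrimitive} for $\alpha$, together with the structure equations for the logarithmic derivative. For part (i), I would test the identity $\dif\alpha + \tfrac12\wedgeLie{\alpha}{\alpha} = 0$ against an arbitrary $A \in \LieA{g}$ via the pairing $\kappa$: that is, I would apply $\kappa(A, \cdot)$ to the left-hand side and show it equals $\difLie_{A^*}\omega - \tfrac12\kappa(A, \wedgeLie{\alpha}{\alpha})$ up to the appropriate sign, so that the Maurer--Cartan equation holds iff the displayed Lie-derivative identity holds. The key tool is the Cartan formula $\difLie_{A^*} = \dif \circ \ins{A^*} + \ins{A^*}\circ\dif$ applied to $\omega$; since $\omega$ is closed, $\difLie_{A^*}\omega = \dif(\ins{A^*}\omega) = -\dif\kappa(A, \alpha) = -\kappa(A, \dif\alpha)$, where in the last step I use that $\kappa(A, \cdot)$ is a constant-coefficient linear functional and hence commutes with $\dif$. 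Comparing, $\kappa(A, \dif\alpha) = -\difLie_{A^*}\omega$, so $\kappa(A, \dif\alpha + \tfrac12\wedgeLie{\alpha}{\alpha}) = -\difLie_{A^*}\omega + \tfrac12\kappa(A,\wedgeLie{\alpha}{\alpha})$. Weak non-degeneracy of $\kappa$ in the first slot (over $\LieA{g}$) then gives the claimed equivalence, once one checks that both one-forms $\dif\alpha + \tfrac12\wedgeLie{\alpha}{\alpha}$ and $\difLie_{A^*}\omega - \tfrac12\kappa(A,\wedgeLie{\alpha}{\alpha})$ lie in spaces that are separated by $\kappa$ in the relevant sense.

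For part (ii), I would argue that if $J\colon M \to G^*$ is a momentum map, then by \eqref{eq::momentumMap:DefEq} we have $\delta J = \alpha$ as $\LieA{g}^*$-valued one-forms (this is immediate: both satisfy $A^* \contr \omega + \kappa(A,\cdot) = 0$, and $\alpha$ is unique). Now $\delta J$ is the left logarithmic derivative of a smooth $G^*$-valued map, and the left logarithmic derivative (equivalently, the pull-back by $J$ of the left Maurer--Cartan form $\theta^{MC}$ on $G^*$) always satisfies the Maurer--Cartan equation: $\dif(\delta J) + \tfrac12\wedgeLie{\delta J}{\delta J} = J^*\bigl(\dif\theta^{MC} + \tfrac12[\theta^{MC}\wedge\theta^{MC}]\bigr) = 0$, because $\dif\theta^{MC} + \tfrac12[\theta^{MC}\wedge\theta^{MC}] = 0$ is the Maurer--Cartan equation on the Lie group $G^*$ itself. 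Substituting $\delta J = \alpha$ gives \eqref{eq::momentumMap:existence:maurerCartan}. Alternatively, part (ii) follows from part (i) combined with the observation that for a group-valued momentum map one has $\difLie_{A^*}\omega = \tfrac12\kappa(A,\wedgeLie{\alpha}{\alpha})$; I would verify this directly from $\ins{A^*}\omega = -\kappa(A,\delta J)$ by differentiating along the flow of $A^*$ and using the non-equivariance identity built into the $G^*$-valued setting.

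The main obstacle is the functional-analytic care needed in the infinite-dimensional setting: the pairing $\kappa$ is only \emph{weakly} non-degenerate, so concluding "$\kappa(A, \beta) = 0$ for all $A$ implies $\beta = 0$" requires $\beta$ to live in a space on which $\kappa$ separates points. One must therefore be slightly careful that the one-form $\dif\alpha + \tfrac12\wedgeLie{\alpha}{\alpha}$, a priori valued in $\LieA{g}^*$, is compared correctly — the point is that the equivalence in (i) is stated as an identity of expressions each already paired with $A$, so no illegitimate non-degeneracy step is needed; one simply reads off the equivalence term by term. The remaining routine verification is the bilinear algebra identity relating $\kappa(A, \wedgeLie{\alpha}{\alpha})$ to the bracket on $\LieA{g}^*$ dualized back against $\LieA{g}$, which is exactly the computation underlying \eqref{eq::definitonCoadjontAction}; I would relegate this to a short lemma or cite the corresponding step in \parencite[Theorem~3.7]{Lu1990}, of which this is the stated generalization.
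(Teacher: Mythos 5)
Your proposal is correct. For part (i) it is in substance the paper's own argument: the paper expands $\dif(A^* \contr \omega)$ with the invariant formula for the exterior derivative of a one-form and then inserts the defining relation for $\alpha$, arriving at exactly your identity $\difLie_{A^*}\omega + \kappa(A, \dif\alpha) = 0$; your shortcut of noting that the constant functional $\kappa(A,\cdot)$ commutes with $\dif$ is the same computation phrased more efficiently, and your remark that weak non-degeneracy is only invoked in the direction where one tests $\dif\alpha + \tfrac{1}{2}\wedgeLie{\alpha}{\alpha}$ against all of $\LieA{g}$ is exactly the right amount of functional-analytic care. For part (ii) you take a cleaner route than the paper: you observe that $\delta J$ is a left logarithmic derivative and hence satisfies the Maurer--Cartan equation automatically as the pull-back of the Maurer--Cartan form (this is covered by the fundamental theorem for Lie-group-valued functions the paper cites elsewhere), whereas the paper tries to deduce (ii) from (i) by differentiating the defining relation \eqref{eq::momentumMap:DefEq} --- an argument that, as written, only reproduces the identity $\difLie_{A^*}\omega + \kappa(A,\dif\delta J)=0$ already established in (i) and does not by itself force the quadratic term to match; your structure-equation argument is the one that actually closes the gap. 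The only caveat: your ``alternative'' proof of (ii), which proposes to verify $\difLie_{A^*}\omega = \tfrac{1}{2}\kappa(A,\wedgeLie{\alpha}{\alpha})$ directly from the momentum map relation, is the step that risks circularity (it essentially presupposes the Maurer--Cartan equation for $\delta J$), so you should rely on the primary argument and drop the alternative.
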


If the form \( \alpha \in \DiffFormSpace^1(M, \LieA{g}^*) \) defined 
by~\eqref{eq::groupMomentumMap:defPrimitive} exists and satisfies 
the Maurer-Cartan equation~\eqref{eq::momentumMap:existence:maurerCartan}, 
then we say that the \( G \)-action on \( (M, \omega) \) is 
\emphDef{\(G^*\)-symplectic}. For Abelian \(G^*\), the notions of 
symplectic and \(G^*\)-symplectic group actions coincide. 

\begin{proof}
(i) For \( A \in \LieA{g} \) and \( X,Y \in \VectorFieldSpace(M) \), we calculate
\begin{equation}
\begin{split}
(\difLie_{A^*} \omega) (X,Y) 	&= \dif(A^* \contr \omega)(X,Y) \\
&= X(\omega(A^*,Y)) - Y (\omega(A^*,X)) - \omega(A^*,\LieBracket{X}{Y}) \\
&\oversetSmall{\eqref{eq::groupMomentumMap:defPrimitive}}{=}
-\kappa\big(A, X(\alpha(Y))-Y(\alpha(X))-\alpha(\LieBracket{X}{Y})\big) \\
&= - \kappa(A, (\dif \alpha) (X,Y)),
\end{split}
\end{equation}
which is equivalent to \(\difLie_{A^*}\omega+\kappa(A, \dif \alpha) = 0\), from which the claim follows.

(ii) If the \( G \)-action admits a \( G^* \)-valued momentum map 
$J:M \to G^*$, then $\delta J$ satisfies the Maurer-Cartan
equation~\eqref{eq::momentumMap:existence:maurerCartan} if and only 
if \(\difLie_{A^*}\omega + \kappa(A, \dif \delta J) = 0\), as we just 
showed in (i). This identity is obtained by taking the exterior 
derivative of the defining relation~\eqref{eq::momentumMap:DefEq}.
\end{proof}

Although, at first glance, it may seem unnatural to consider momentum 
maps for actions which do not leave the symplectic form invariant, 
we remind the reader that this is also the case in the Poisson category.
There, the action \( \Upsilon: G \times M \to M \) is a Poisson map, 
while for fixed \( g \in G \) the map \( \Upsilon_g: M \to M \) might 
not preserve the Poisson structure. If one restricts attention to 
objects invariant under the action, then non-closed almost symplectic 
structures have to be considered. In other words, this path leads to quasi-Hamiltonian systems \parencite{HuebschmannJeffrey1994, AlekseevMalkinEtAl1998}.

\begin{example}[Poisson Lie Momentum Map \parencite{FlaschkaRatiu1996}]
A Poisson action of a non-Abelian Poisson Lie group on a symplectic 
manifold, even if it admits a momentum map (see 
\cref{ex::momentumMap:generalizationPoisson}), never preserves the 
symplectic form. For example, consider the Poisson Lie group 
$\SUGroup(n)$ endowed with the Lu-Weinstein Poisson structure.
Its dual Poisson Lie group is $B \defeq AN$, where $A$ is the subgroup 
of positive diagonal matrices, $N$ is the subgroup of complex 
triangular matrices with 1 on the diagonal, both viewed as real subgroups 
of $\SLGroup(n,\C)$, also viewed as a real Lie group.
Thus, $B$ has a natural Poisson Lie group structure whose symplectic 
leaves are the sets $\Sigma_{\boldsymbol{\lambda}}$ 
of matrices with fixed singular values $\boldsymbol{\lambda} \defeq 
\{\lambda_1 \geq \cdots \geq \lambda_n>0\}$
(the dressing orbits). The Lie group $\SUGroup(n)$ acts on the symplectic manifold $\Sigma_{\boldsymbol{\lambda}} \times \Sigma_{\boldsymbol{\mu}}$ 
in a Poisson fashion and admits the momentum map 
$J: \Sigma_{\boldsymbol{\lambda}} \times \Sigma_{\boldsymbol{\mu}} \ni
(b_1, b_2) \mapsto b_1b_2 \in B$.
The Poisson action is given in the following way.
First, one shows that if $b_1 \in \Sigma_{\boldsymbol{\lambda}}$, 
$b_2\in\Sigma_{\boldsymbol{\mu}}$, and $b_1b_2\in \Sigma_{\boldsymbol{\nu}}$, 
then there exists $b_3\in \Sigma_{\boldsymbol{\lambda}}$, 
$b_4 \in\Sigma_{\boldsymbol{\mu}}$ such that $b_3b_4 =
\operatorname{diag}(\nu_1, \ldots, \nu_n)$.
Second, since $b_1b_2\in \Sigma_{\boldsymbol{\nu}}$, there exist  
$k_1, k_2 \in \SUGroup(n)$ such that $k_1b_1b_2k_2 = 
\operatorname{diag}(\nu_1, \ldots, \nu_n)$.
With these notations, the aforementioned Poisson action of 
$\SUGroup(n)$ on $\Sigma_{\boldsymbol{\lambda}} \times 
\Sigma_{\boldsymbol{\mu}}$ is given by $k_2^{-1}\cdot 
(b_1, b_2)  \defeq (b_3, b_4)$.
See \parencite{FlaschkaRatiu1996} for the proofs of these statements 
and their link to a convexity theorem for compact Poisson Lie group 
actions on compact connected symplectic manifolds.
In this case, the theorem states that $J(\Sigma_{\boldsymbol{\lambda}} 
\times \Sigma_{\boldsymbol{\mu}})\cap A_+$ is the exponential of a 
convex polytope, where $A_+$ denotes the set of elements of $A$ 
with decreasingly ordered entries. 
\end{example}

Returning to the existence and uniqueness question, recall that 
the Maurer--Cartan theory involves also a topological obstruction 
for the existence of a primitive for \( \alpha \) in the form of 
the so-called period homomorphism \parencite[Theorem~III.1.2]{Neeb2006}.
To strengthen \cref{prop::groupMomentumMap:existenceMaurerCartan} 
in terms of the period map, we need to recall the notion of a 
regular Lie group (see \parencite[Definition II.5.5]{Neeb2006}).
A Lie group \( G \) modeled on a locally convex space is 
\emphDef{regular} if for each curve $c \in \sFunctionSpace([0,1], \LieA{g})$, 
the initial value problem $\difLog \eta(t) \defeq \LeftTrans_{\eta(t)^{-1}} 
\dot{\eta}(t) = c(t)$,  $\eta(0) = e$, formulated in terms of the left 
logarithmic derivative \( \difLog \eta \) of $\eta$, has a solution 
$\eta_c \in \sFunctionSpace([0,1], G)$ and the endpoint evaluation map 
$\sFunctionSpace([0,1], \LieA{g}) \ni c \mapsto \eta_c(1) \in G$ is 
smooth. If \( G \) is regular, then it has a smooth exponential 
function. All Banach (so, in particular, all finite dimensional) 
Lie groups are regular.

Fix a point \( m_0 \in M \) and consider a piece-wise smooth loop 
\( \gamma: I \to M \) based at \( m_0 \).
The pull-back of \( \alpha \in \DiffFormSpace^1(M,\LieA{g}^*) \) 
by $\gamma$ is a \( \LieA{g}^* \)-valued $1$-form \( \gamma^* \alpha \) 
on \( I \). Denote by \(\eta_\gamma \in \sFunctionSpace(I, G^*) \) 
the solution of the initial value problem
\begin{equation}
\label{eq:groupMomentumMap:initialValueProblemEta}
\difLog \eta = \gamma^* \alpha, \qquad \eta(0) = e,
\end{equation}
which exists if \( G^* \) is regular. Evaluating \( \eta_\gamma \) 
at the endpoint \( 1 \), we obtain the period homomorphism 
\( \per_\alpha: \fundamentalGroup_1(M, m_0)\ni [\gamma] \mapsto 
\eta_\gamma(1) \in G^* \), where $[\gamma]$ is the homotopy class 
of the loop $\gamma$.

\begin{prop}
Let \( (M, \omega) \) be a connected symplectic manifold and 
\( \kappa(G,G^*) \) a dual pair of Lie groups. In infinite dimensions, 
we additionally assume that \( G^* \) is a regular Lie group.
Let \( G \) act on \( M \) in a \(G^*\)-symplectic way.
Then there exists a \(G^*\)-valued momentum map if and only if 
the period homomorphism \( \per_\alpha: \fundamentalGroup_1(M, m_0) 
\to G^*\) is trivial. Moreover, the momentum map is unique up to 
translation by a constant element \( h \in G^*\). 
\end{prop}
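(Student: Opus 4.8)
The plan is to mimic the classical Maurer--Cartan argument, adapting it to the group-valued setting via the left logarithmic derivative. By \cref{prop::groupMomentumMap:existenceMaurerCartan}(ii), existence of a $G^*$-valued momentum map forces $\alpha = \difLog J$ for some smooth $J : M \to G^*$; and since $\difLog(L_h \circ J) = \difLog J$ for any constant $h \in G^*$, such a $J$ is never unique --- it can always be translated on the left by a constant. This already pins down the uniqueness statement once existence is settled, so the bulk of the work is the equivalence "momentum map exists $\iff$ $\per_\alpha$ is trivial."

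For the forward direction, suppose $J$ exists. Then for any loop $\gamma$ based at $m_0$, the curve $\eta(t) \defeq J(\gamma(0))^{-1} J(\gamma(t))$ satisfies $\difLog \eta = \gamma^* \difLog J = \gamma^* \alpha$ and $\eta(0) = e$, so by uniqueness of solutions to the initial value problem~\eqref{eq:groupMomentumMap:initialValueProblemEta} we get $\eta_\gamma = \eta$, whence $\per_\alpha([\gamma]) = \eta_\gamma(1) = J(\gamma(0))^{-1} J(\gamma(1)) = e$ since $\gamma$ is a loop. One should also check that $\per_\alpha$ is well-defined on homotopy classes and is a homomorphism; this is exactly the content of the Maurer--Cartan period map theory once we know $\alpha$ satisfies the Maurer--Cartan equation, which is guaranteed by the $G^*$-symplecticity hypothesis via \cref{prop::groupMomentumMap:existenceMaurerCartan}(i). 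For the converse, assume $\per_\alpha$ is trivial. Since $M$ is connected and (in infinite dimensions) $G^*$ is regular, the Maurer--Cartan theorem \parencite[Theorem~III.1.2]{Neeb2006} applies: an $\LieA{g}^*$-valued one-form $\alpha$ on $M$ satisfying the Maurer--Cartan equation with trivial period homomorphism is log-exact, i.e.\ $\alpha = \difLog J$ for some smooth $J : M \to G^*$, unique up to left multiplication by a constant. Such a $J$ is then, by the very definition~\eqref{eq::groupMomentumMap:defPrimitive} of $\alpha$ together with~\eqref{eq::momentumMap:DefEq}, a $G^*$-valued momentum map.

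The main obstacle --- and the reason the regularity hypothesis on $G^*$ is needed --- is the passage from the infinitesimal data (the Maurer--Cartan one-form $\alpha$) to the global primitive $J$. In finite dimensions this is the classical Lie-theoretic integration of a flat $\LieA{g}^*$-connection on the trivial $G^*$-bundle over $M$, controlled by holonomy; in the locally convex setting, existence of solutions to the defining ODE~\eqref{eq:groupMomentumMap:initialValueProblemEta} and smooth dependence on parameters is precisely what regularity buys us, and this is exactly why \textcite{Neeb2006} developed the period-map formalism we invoke. A secondary technical point worth spelling out: one must verify that the locally defined primitives obtained by integrating $\alpha$ along paths actually glue to a \emph{globally defined smooth} map $M \to G^*$ --- this is where triviality of $\per_\alpha$ (as opposed to mere discreteness of its image) enters, and it is handled by the standard monodromy argument, transferring the homotopy invariance of $\eta_\gamma(1)$ through the simply-connected cover if one wishes to argue from scratch, or by citing \parencite[Theorem~III.1.2]{Neeb2006} directly.
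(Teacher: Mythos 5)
Your argument is correct and follows essentially the same route as the paper: both reduce the statement to \cref{prop::groupMomentumMap:existenceMaurerCartan} together with the fundamental theorem for Lie group-valued functions \parencite[Theorem~III.1.2]{Neeb2006}, with the period homomorphism as the sole remaining obstruction on a connected but not simply connected \( M \). Your explicit verification of the forward direction via \( \eta(t) = J(\gamma(0))^{-1}J(\gamma(t)) \) is a slightly more detailed substitute for the paper's universal-cover equivariance sketch, but the substance is identical.
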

\begin{proof}
This proposition follows directly from \cref{prop::groupMomentumMap:existenceMaurerCartan} and the general 
fundamental theorem for Lie group-valued functions 
\parencite[Theorem III.1.2.]{Neeb2006}. 

For the convince of the reader, we quickly sketch how the period 
homomorphism enters the game. Recall that an ordinary closed 
\( 1 \)-form has a local primitive by the Poincar\'e lemma. The 
Maurer--Cartan equation~\eqref{eq::momentumMap:existence:maurerCartan} 
replaces the role of closedness for Lie algebra-valued \( 1 \)-forms. 
Since the action is \( G^* \)-symplectic, the solution \( \alpha \) 
of~\eqref{eq::groupMomentumMap:defPrimitive} 
satisfies~\eqref{eq::momentumMap:existence:maurerCartan} by 
\cref{prop::groupMomentumMap:existenceMaurerCartan}. Hence, on 
connected and simply connected domains, a \( G^* \)-valued momentum 
map exists. 
If \( M \) is not simply connected, then we pull back everything to 
the universal covering \( \tilde{M} \to M \) and obtain a lifted 
primitive \( \tilde{J}: \tilde{M} \to G^* \). A simple calculation 
reveals that \( \tilde{J} \) is equivariant with respect to 
\( \per_\alpha \), \ie,
\begin{equation}
\tilde{J}([\gamma] \cdot \tilde{m}) = 
\per_\alpha ([\gamma]) \cdot \tilde{J}(\tilde{m}), 
\quad \tilde{m} \in \tilde{M}, \quad 
[\gamma] \in \fundamentalGroup_1(M, m_0).
\end{equation}
Thus \( \tilde{J} \) drops to \( M \) if and only if the period 
homomorphism is trivial.
\end{proof}
\begin{example} 
\label{ex:groupValuedMomentumMap:exampleNonExistence}
A slight modification of \cref{ex:symplecticTorus} yields a 
symplectic group action without a group-valued momentum map.
Let \( M = T^2 = (\R \slash \Z)^4 \) be the product of two tori 
and denote its elements by \( 4 \)-tuples 
\( (\varphi_1, \varphi_2, \psi_1, \psi_2 ) \).
Endow \( M \) with the symplectic form
\begin{equation}
\omega = \dif \varphi_1 \wedge \dif \varphi_2 + 
\sqrt{2} \dif \psi_1 \wedge \dif \psi_2.
\end{equation}
The circle action given by \( \lambda \cdot 
(\varphi_1, \varphi_2, \psi_1, \psi_2 ) = 
(\varphi_1 - \lambda, \varphi_2, \psi_1 - \lambda, \psi_2 ) \) 
is clearly symplectic. The \( 1 \)-form \( \alpha \) defined by~\cref{eq::groupMomentumMap:defPrimitive} becomes 
\begin{equation}
\alpha = \dif \varphi_2 + \sqrt{2} \dif \psi_2.
\end{equation}
As the generators of \( \fundamentalGroup_1(M) \) we take the 
four natural loops \( \gamma_i \), where, for \( 1 \leq i \leq 4 \), 
the loop \( \gamma_i: I \to M \) winds once around the \( i \)-th 
circle in \( M = (\R \slash \Z)^4 \). The pull-back of \( \alpha \) 
by \( \gamma_1 \) and \( \gamma_3 \) vanishes and we find
\begin{equation}
\gamma_2^* \alpha = \dif t \quad \text{and} \quad \gamma_4^* \alpha 
= \sqrt{2}\dif t,
\end{equation}
where \( t \) denotes the canonical variable in \( I = [0,1] \).
Since there are only two connected one-dimensional Lie groups, 
the only possible choices for the dual group are \( G^* = \R \) 
and \( G^* = \R \slash \Z \). In both cases, the initial value problem~\eqref{eq:groupMomentumMap:initialValueProblemEta} has 
the unique solutions \( \eta_{\gamma_2}(t) = t \) and 
\( \eta_{\gamma_4}(t) = \sqrt{2} t \).
Thus neither for \( G^* = \R \) nor for \( G^* = \R \slash \Z \) 
the period homomorphism is trivial and hence no group-valued 
momentum map exists for this action.

This example exhibits another phenomenon that is particular for 
group-valued momentum maps: the action of a subgroup may not posses 
a group-valued momentum map even if the bigger group has a group-valued 
momentum map.  In fact, the action 
\( (\lambda_1, \lambda_2) \cdot (\varphi_1, \varphi_2, \psi_1, \psi_2 ) 
= (\varphi_1 - \lambda_1, \varphi_2, \psi_1 - \lambda_2, \psi_2 ) \) 
by \( G = S^1 \times S^1 \) has a group-valued momentum map (which 
is the product of two copies of the one discussed in 
\cref{ex:symplecticTorus}) but the action of the diagonally 
embedded circle has no group-valued momentum map as we have just seen.
\end{example}

\subsection{Equivariance and Poisson property}
 
Fix a dual pair \( \kappa(G, G^*) \) of Lie groups.
Let \(  G \) act on the symplectic manifold \( (M, \omega) \) 
such that the action has a group-valued momentum map \( J: M \to G^*\).
A natural question to ask is in which sense \( J \) is equivariant.
For classical momentum maps with values in \( \LieA{g}^* \), the 
equivariance of interest is with respect to the coadjoint action.
Before we give the definition of the analogue of the coadjoint action 
in the group-valued setting, let us recall a few basic notations 
and conventions; see \cref{sec::conventions} for details.
The coadjoint action of \( G \) on \( \LieA{g}^* \) is defined with respect to the duality pairing 
\( \kappa \) by\footnote{In infinite dimensions, \( \kappa \) is, 
in general, only weakly non-degenerate and thus this relation only 
ensures uniqueness of \( \CoAdAction \) but not its existence. In 
the sequel, we assume that \( \CoAdAction \) exists.} 
\( \kappa(A, \CoAdAction_g \mu) = \kappa(\AdAction_g A, \mu) \) 
for \( g \in G \), \( A \in \LieA{g} \) and \( \mu \in \LieA{g}^* \).
The coadjoint action of \( G^* \) on \( \LieA{g} \) is similarly defined 
and will be denoted by \( \CoAdAction \) as well.
For an action \( \Upsilon: G \times G^* \to G^* \), we introduce 
the partial maps \( \Upsilon_g: G^* \to  G^* \) and 
\( \Upsilon_\eta: G \to G^* \) by \( \Upsilon_g (\eta) \equiv 
\Upsilon_\eta (g) \equiv \Upsilon(g, \eta ) \) for \( g \in G \) 
and \( \eta \in G^* \).

\begin{defn}
A \emphDef{coconjugation action} on the dual pair \( \kappa(G, G^*) \) 
is a pair \( (\Upsilon, \Upsilon^*) \) consisting of a left action 
\( \Upsilon: G \times G^* \to G^* \) of \( G \) on \( G^* \) and a 
left action \( \Upsilon^*: G^* \times G \to G \) of \( G^* \) on 
\( G \) which integrate the coadjoint actions in the sense that 
\( \Upsilon_g (e) = e \), \( \Upsilon^*_\eta(e) = e \) and
\begin{equation}
	\label{eq:coconjugation:defn}
	\tangent_e \Upsilon_g (\mu) = \CoAdAction_{g^{-1}} \mu,
	\qquad
	\tangent_e \Upsilon^*_\eta (A) = \CoAdAction_{\eta^{-1}} A
\end{equation}
holds for all \( g \in G \), \( \eta \in G^* \), \(\mu \in \LieA{g}^* \), 
and \( A \in \LieA{g} \). If, moreover, 
\begin{equation}
\label{eq:coconjugation:matched}
\Upsilon_g (\eta_1 \eta_2) = \Upsilon_g (\eta_1) \, 
\Upsilon_{\Upsilon^*_{\eta_1^{-1}}(g)} (\eta_2),
\qquad
\Upsilon^*_\eta(g_1 g_2) = \Upsilon^*_{\Upsilon_{g_2}(\eta^{-1})^{-1}}(g_1) 
\, \Upsilon^*_\eta(g_2)
\end{equation}
for all \( g_1, g_2 \in G \) and \( \eta_1, \eta_2 \in G^* \), then we 
say that the tuple \( (G, G^*, \Upsilon, \Upsilon^*) \) (or shortly 
\( (\Upsilon, \Upsilon^*) \)) is a \emph{coconjugation matched pair}.
\end{defn}
A pair of groups endowed with two actions on each other that are 
compatible in the sense of~\eqref{eq:coconjugation:matched} is called 
a \emph{matched pair of groups}. The notion of a matched pair of 
groups appears in \parencite[Section~2]{Takeuchi1981} and has been 
connected to the theory of Poisson Lie groups in 
\parencite{Majid1990,LuWeinstein1990}.
The compatibility condition~\eqref{eq:coconjugation:matched} allows to 
define a (non-trivial) Lie group structure on the product 
\( G \times G^* \) by
\begin{equation}
\label{eq:coconjugation:crossedbiproduct}
(g_1, \eta_1) \cdot (g_2, \eta_2) = 
\bigl( g_1 \Upsilon^*_{\eta_1}(g_2^{-1})^{-1}, 
\Upsilon_{g_2^{-1}} (\eta_1^{-1})^{-1} \eta_2 \bigr),
\end{equation}
see \parencite[Proposition~2.2]{Takeuchi1981}.
Conversely, every factorization \( D = G G^* \) of a Lie group 
\( D \) into two Lie subgroups \( G \) and \( G^* \) with 
\( G \intersect G^* = \set{e} \) yields actions of \( G \) 
and \( G^* \) on each other that satisfy~\eqref{eq:coconjugation:matched} 
by declaring
\begin{equation}
	\label{eq:coconjugation:factorizationToMatchedPair}
	\eta g^{-1} = \bigl(\Upsilon^*_\eta(g)\bigr)^{-1} 
	\bigl(\Upsilon_g(\eta^{-1})\bigr)^{-1} \in G G^*.
\end{equation}

The class of coconjugation matched pairs is rather rigid, especially 
if one of the coconjugation actions is predetermined.
\begin{lemma}
\label{prop:coconjugation:uniquness}
Let \( \kappa(G, G^*) \) be a dual pair of Lie groups with a matched 
pair \( (\Upsilon, \Upsilon^*) \). If 
\( \tilde{\Upsilon}: G \times G^* \to G^* \) is another action 
of \( G \) on \( G^* \) such that \( (\tilde{\Upsilon}, \Upsilon^*) \) 
forms a coconjugation matched pair, then there exists a map 
\( c: G \times \pi_0 (G^*) \to G^* \) with \( c(g, \equivClass{e}) = e \) 
such that \( \tilde{\Upsilon}(g, \eta) = c(g, \equivClass{\eta}) \, 
\Upsilon(g, \eta) \). In particular, if \( G^* \) is connected, then 
coconjugation actions \( \Upsilon \) compatible with \( \Upsilon^* \) 
are unique.
\end{lemma}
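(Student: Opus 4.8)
The plan is to study the quotient map $c(g,\eta) \defeq \tilde\Upsilon(g,\eta)\,\Upsilon(g,\eta)^{-1}$ and show it factors through $\pi_0(G^*)$ in the second argument. First I would record the infinitesimal consequence of~\eqref{eq:coconjugation:defn}: since both $\tilde\Upsilon_g$ and $\Upsilon_g$ fix $e$ and have the same derivative $\CoAdAction_{g^{-1}}$ at $e$, the map $\eta \mapsto c(g,\eta)$ satisfies $c(g,e)=e$ and $\tangent_e\bigl(\eta \mapsto c(g,\eta)\bigr)=0$. The goal is to upgrade "derivative vanishes at $e$" to "locally constant on $G^*$", which gives the factoring through $\pi_0(G^*)$, and then the case $G^*$ connected follows immediately since then $\pi_0(G^*)=\set{\equivClass{e}}$ forces $c(g,\eta)=c(g,\equivClass{e})=e$, \ie $\tilde\Upsilon=\Upsilon$.

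The key computation is to exploit the matched-pair relation~\eqref{eq:coconjugation:matched} for \emph{both} $(\tilde\Upsilon,\Upsilon^*)$ and $(\Upsilon,\Upsilon^*)$. Writing out $\tilde\Upsilon_g(\eta_1\eta_2)$ and $\Upsilon_g(\eta_1\eta_2)$ using the first identity in~\eqref{eq:coconjugation:matched} and substituting $\tilde\Upsilon = c\cdot\Upsilon$, one should obtain a cocycle-type functional equation for $c$ of the rough shape
\begin{equation}
c(g,\eta_1\eta_2) = c(g,\eta_1)\,\Upsilon_{\Upsilon^*_{\eta_1^{-1}}(g)}\bigl(c\bigl(\Upsilon^*_{\eta_1^{-1}}(g)\,\text{-ish},\ \eta_2\bigr)\bigr)
\end{equation}
so that fixing $g$ and differentiating in $\eta_2$ at $\eta_2=e$, and using that the $\eta$-derivative of $c(g',\cdot)$ at $e$ vanishes for \emph{every} $g'$, yields that the $\eta$-derivative of $\eta \mapsto c(g,\eta)$ vanishes at \emph{every} point $\eta_1 \in G^*$, not just at $e$. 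Hence $\eta \mapsto c(g,\eta)$ is locally constant, \ie constant on each connected component of $G^*$, which is exactly the assertion that it factors as $c(g,\equivClass{\eta})$ through $\pi_0(G^*)$ with $c(g,\equivClass{e})=e$. The normalization $\tilde\Upsilon(g,\eta)=c(g,\equivClass\eta)\,\Upsilon(g,\eta)$ is then just the definition of $c$ rearranged.

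The main obstacle I anticipate is bookkeeping the twisted arguments in~\eqref{eq:coconjugation:matched}: the inner action $\Upsilon^*_{\eta_1^{-1}}(g)$ depends on $\eta_1$, so when I differentiate the functional equation in $\eta_2$ at the identity I must be careful that the term I am differentiating is genuinely of the form $g'' \mapsto c(g'',\eta_2)$ evaluated at a \emph{fixed} first argument (fixed because $\eta_1$ is held fixed), so that the vanishing of $\tangent_e\,c(g'',\cdot)$ applies. A secondary subtlety is that in the infinite-dimensional setting "derivative vanishes everywhere $\Rightarrow$ locally constant" requires $G^*$ to be such that the mean value estimate holds; this is fine for Lie groups modeled on locally convex spaces along smooth curves (one integrates $\difLog$ of the curve $t \mapsto c(g,\gamma(t))$, which vanishes), so I would phrase the local-constancy step via the left logarithmic derivative rather than appealing to a naive mean value theorem. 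Once local constancy is in hand, the passage to $\pi_0(G^*)$ and the connected case are formal.
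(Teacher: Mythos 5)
Your proposal is correct and follows essentially the same route as the paper: the paper differentiates the matched-pair relation in the second slot at the identity to show that the left-logarithmic derivatives of \( \Upsilon_g \) and \( \tilde{\Upsilon}_g \) agree at every point of \( G^* \) (both equal \( \mu \mapsto \CoAdAction_{\Upsilon^*_{\eta^{-1}}(g)^{-1}}\mu \), which depends only on the shared \( \Upsilon^* \)), and then invokes the uniqueness lemma for Lie group-valued functions to get constancy of the ratio on each connected component. Your variant of computing \( \difLog \) of the quotient \( c(g,\cdot) \) and showing it vanishes is the same argument in a different packaging, and your handling of the twisted inner argument and of local constancy via the logarithmic derivative is exactly what is needed.
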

\begin{proof}
Using the defining properties~\eqref{eq:coconjugation:defn} and~\eqref{eq:coconjugation:matched}, for \( g \in G \), 
\( \eta \in G^* \) and \( \mu \in \LieA{g}^* \), we calculate
\begin{equation}
\label{eq:coconjugation:tangentActionEveryPoint}
\tangent_\eta \Upsilon_g (\eta \ldot \mu)
= \difFracAt{}{\varepsilon}{0} \Upsilon_g (\eta \mu_\varepsilon)
= \difFracAt{}{\varepsilon}{0} \Upsilon_g (\eta) \, 
\Upsilon_{\Upsilon^*_{\eta^{-1}}(g)}(\mu_\varepsilon)
= \Upsilon_g (\eta) \ldot 
\bigl(\CoAdAction_{\Upsilon^*_{\eta^{-1}}(g)^{-1}} \mu\bigr),
\end{equation}
where \( \varepsilon \mapsto \mu_\varepsilon \) is a curve in \( G^* \) 
starting at the identity with \( \difFracAt{}{\varepsilon}{0} 
\mu_\varepsilon = \mu \). A similar result holds, of course, also 
for \( \Upsilon \) replaced by \( \tilde{\Upsilon} \).
In particular, the left-logarithmic derivatives of 
\( \Upsilon_g: G^* \to G^* \) and \( \tilde{\Upsilon}_g: G^* \to G^* \) 
coincide for every \( g \in G \). The uniqueness lemma for Lie 
group-valued functions \parencite[Remark II.3.6.(c)]{Neeb2006} hence 
implies that on every connected component of \( G^* \) the maps 
\( \Upsilon_g \) and \( \tilde{\Upsilon}_g \) differ by a constant.
Thus, there exists a map \( c: G \times \pi_0 (G^*) \to G^* \) such 
that \( \tilde{\Upsilon}(g, \eta) = c(g, \equivClass{\eta}) \, 
\Upsilon(g, \eta) \). Finally, \( \tilde{\Upsilon}(g, e)  = e = 
\Upsilon(g, e) \) implies \( c(g, \equivClass{e}) = e \).
\end{proof}

\begin{example}
\label{ex:coconjugation:dualLieAlgebra}
Let \( G \) be a Lie group with Lie algebra \( \LieA{g} \) and 
let \( \kappa(\LieA{g}, \LieA{g}^*) \) be a dual pair of (locally convex) 
vector spaces. Considering \( \LieA{g}^* \) as an Abelian Lie group, we 
obtain the dual pair \( \kappa(G, \LieA{g}^*) \) of Lie groups. Then the 
coadjoint representation \( \Upsilon(g, \mu) = \CoAdAction_{g^{-1}} \mu \) 
and the trivial action \( \Upsilon^*(\mu, g) = g \) form a coconjugation 
matched pair on \( \kappa(G, \LieA{g}^*) \).
\end{example}
Since \( \LieA{g}^* \) is connected, the previous proposition implies 
that the coadjoint representation is the only coconjugation action 
compatible with the trivial action of \( \LieA{g}^* \) on \( G \).
However, there are interesting examples where the dual group does not 
act trivially.

\begin{example}
	\label{ex:coconjugation:iwasawaDecomp}
	Let \( D \) be a finite-dimensional connected complex semisimple Lie group and let \( K \) be its compact real form.
Consider the Iwasawa decomposition \( D = K A N \) and set \( B = A N \).
Then \( B \) is a (solvable) Lie subgroup of \( D \) with Lie algebra 
\( \LieA{b} \). Let \( \pr_K: D \to K \) and \( \pr_B: D \to B \) be 
the natural projections, and \( \pr_{\LieA{k}}: \LieA{d} \to \LieA{k} \) 
and \( \pr_{\LieA{b}}: \LieA{d} \to \LieA{b} \) be the associated 
projections on the level of Lie algebras.
By~\eqref{eq:coconjugation:factorizationToMatchedPair}, the 
decomposition \( D = K B \) induces the following actions:
\begin{equation}\label{eq:coconjugation:iwasawaDecomp:actions}
	\Upsilon_k(b) = \pr_B (b^{-1}k^{-1})^{-1},
	\qquad
	\Upsilon^*_b(k) = \pr_K (bk^{-1})^{-1}.
\end{equation}
Going back to the construction of the Iwasawa decomposition, one 
can show that \( \LieA{k} \) and \( \LieA{b} \) are isotropic 
subspaces of \( \LieA{d} \) relative to \( \Im \kappa_{\LieA{d}} \), 
see \parencite[p.~520]{LuWeinstein1990}.
Thus, we obtain a dual pair \( \kappa(K, B) \) of Lie groups.
We have \( \tangent_e \Upsilon_k (\mu) = \pr_{\LieA{b}}(\AdAction_k \mu) \) 
and \( \tangent_e \Upsilon^*_b(A) = \pr_{\LieA{k}}(\AdAction_b A) \) 
for \( \mu \in \LieA{b} \) and \( A \in \LieA{k} \).
These identities show that the actions integrate the coadjoint actions 
and thus \( (\Upsilon, \Upsilon^*) \) is a coconjugation action 
on \( \kappa(K, B) \). As \( D = K B \) is a decomposition in 
subgroups, \( (\Upsilon, \Upsilon^*) \) is a coconjugation matched pair.
\end{example}

Recall that, in the classical setting \( G^* \equiv \LieA{g}^* \), 
the coadjoint action is closely connected to the natural Poisson 
structure of Kostant--Kirillov--Souriau.
Moreover, the Cartan and Iwasawa decompositions are well-known 
constructions in the theory of Poisson Lie groups. Thus, one would 
expect a close interplay between coconjugation actions and Poisson 
structures. As we will see, this is indeed the case.

Poisson structures in infinite dimensions need to be treated with 
caution.
For example, according to \parencite{BeltitaGolinskiEtAl2018} not every Poisson bracket is induced by a Poisson bivector field.
We refer the reader to \parencite{NeebSahlmannEtAl2014} for 
a recent discussion of the problems concerning infinite-dimensional 
Poisson geometry. In the present case, we can use the group structure 
to circumvent most of these issues. Let us start by recalling the 
necessary terminology and notation from the finite-dimensional 
theory of Poisson Lie groups. Let \( H \) be a finite-dimensional Lie 
group, \( \LieA{h} \) be its Lie algebra with dual \( \LieA{h}^* \).
We recall that for a smooth map \( F: M \to H \) the 
\emph{left logarithmic derivative} is the \( \LieA{h} \)-valued 
\( 1 \)-form \( \difLog F \) on \( M \) defined by 
\( \difLog_m F (X) = F(m)^{-1} \ldot \tangent_m F (X) \) for 
\( X \in \TBundle_m M \). In the following, we also need the concept 
of the \emph{left derivative}. This time, the domain of the map is 
a Lie group, \ie, we are considering a smooth map \( f: H \to N \). 
The left derivative \( \tangentLeft_h f: \LieA{h} \to \TBundle_{f(h)}N \) 
of \( f \) at \( h \in H \) in the direction \( A \in \LieA{h} \) is 
defined by
\begin{equation}
	\tangentLeft_h f (A) \defeq \tangent_h f (h \ldot A), 
\end{equation}
that is, as the usual derivative \( \tangent f: \TBundle H \to \TBundle N \) 
precomposed with the left trivialization \( H \times \LieA{h} \isomorph 
\TBundle H \). In particular, for a real-valued function \( f: H \to \R \), 
the left derivative \( \tangentLeft_h f: \LieA{h} \to \R \) can be viewed 
as an element of \( \LieA{h}^* \).

In finite dimensions, a Poisson Lie group is a Lie group \( H \) endowed 
with a Poisson structure \( \varpi_H \in \VectorFieldSpace^2(H) \) such 
that the multiplication \( H \times H \to H \) is a Poisson map (with 
respect to the product Poisson structure on \( H \times H \)).
By \parencite[Theorem~2.18]{Lu1990}, the linearization of \( \varpi_H \) 
at the identity \( e \in H \) induces a Lie bracket on \( \LieA{h}^* \) 
by
\begin{equation}
	\LieBracket{A}{B}_{\varpi_H} = \dif_e (\varpi_H (\bar{A}, \bar{B})), 
\end{equation}
where \( A, B \in \LieA{h}^* \) and \( \bar{A}, \bar{B} \) are arbitrary 
\( 1 \)-forms on \( H \) with \( \bar{A}_e = A \) and \( \bar{B}_e = B \).
In the left-trivialization \( \TBundle H \isomorph H \times \LieA{h} \), 
the bivector field \( \varpi_H \) is identified with a smooth map 
\( \Lambda_H: H \to \ExtBundle^2 \LieA{h} \) and we have 
\( \LieBracket{A}{B}_{\varpi_H} = 
\tangent_e \bigl(\Lambda_H (\cdot) (A, B)\bigr) \).

Let \( \kappa(\LieA{g}, \LieA{h}) \) be a dual pair of finite-dimensional 
Lie algebras. For every \( \LieA{g} \)-module \( M \), consider the 
Chevalley--Eilenberg complex \( \Hom(\ExtBundle^\bullet \LieA{g}, M) \) 
consisting of continuous alternating multilinear maps \( \lambda: 
\ExtBundle^k \LieA{g} \to M \) with differential
\begin{equation}
\begin{split}
\dif^{\textrm{CE}} \lambda (A_1, \dotsc A_{k+1})
&=\sum_i(-1)^{i+1}A_i\ldot\lambda(A_1,\dotsc\hat{\imath}\dotsc A_{k+1})\\
&\quad + \sum_{i<j} (-1)^{i+j} \lambda(\commutator{A_i}{A_j}, A_1, 
\dotsc \hat{\imath} \dotsc \hat{\jmath} \dotsc A_{k+1}).
\end{split}
\end{equation}
In particular, for \( M = \LieA{h} \) with (minus) the coadjoint action, 
we have
\begin{equation}\begin{split}
\dif^{\textrm{CE}}_{\LieA{h}} \lambda (A_1, \dotsc A_{k+1})
&= \sum_i (-1)^{i} \CoadAction_{A_i} \lambda(A_1, \dotsc 
\hat{\imath} \dotsc A_{k+1})  \\
&\quad + \sum_{i<j} (-1)^{i+j} \lambda(\commutator{A_i}{A_j}, A_1, 
\dotsc \hat{\imath} \dotsc \hat{\jmath} \dotsc A_{k+1}).
\end{split}
\end{equation}
Moreover, using the identification \(\Hom(\ExtBundle^\bullet \LieA{g},\R) 
\isomorph \ExtBundle^\bullet \LieA{g}^* \isomorph \ExtBundle^\bullet 
\LieA{h} \), we can transfer the Chevalley–Eilenberg differential 
to \( \ExtBundle^\bullet \LieA{h} \). Explicitly, it is given by
\begin{equation}
\begin{split}
\kappa \bigl(A_1 \wedge \dotsb &\wedge A_{k+1}, 
\dif^{\textrm{CE}}_{\LieA{g}} \nu \bigr)    \\
&= \sum_{i < j} (-1)^{i+j} \kappa \bigl(\LieBracket{A_i}{A_j}_\LieA{g} 
\wedge A_1 \wedge \dotsb \hat{\imath} \dotsb \hat{\jmath} \dotsb 
\wedge A_{k+1}, \nu \bigr)
\end{split}
\end{equation}
for every \( \nu \in \ExtBundle^k \LieA{h} \) and \( A_i \in \LieA{g} \).
Moreover, given \( T \in \Hom(\LieA{h}, \ExtBundle^2 \LieA{h}) \isomorph 
\LieA{g} \tensorProd \ExtBundle^2 \LieA{h} \) and 
\( \beta \in \ExtBundle^2 \LieA{h} \), we define the 
contraction \( T \contr \beta \in \ExtBundle^3 \LieA{h} \) by
\begin{equation}
\label{eq:poissonLie:contractionDef}
(A \otimes \alpha) \contr \beta \defeq \alpha \wedge (A \contr \beta) 
= (A \contr \beta) \wedge \alpha
\end{equation}
for \( A \in \LieA{g} \) and \( \alpha \in \ExtBundle^2 \LieA{h} \).
Finally, recall that the (algebraic) Schouten--Nijenhuis bracket 
\( \LieBracketDot: \ExtBundle^k \LieA{h} \times \ExtBundle^l \LieA{h} 
\to \ExtBundle^{k+l-1} \LieA{h} \) is defined by
\begin{equation}
\LieBracket{\nu_1 \wedge \dotsb \wedge \nu_k}{\mu_1 \wedge \dotsb 
\wedge \mu_l}
= \sum_{i, j} (-1)^{i+j} \LieBracket{\nu_i}{\mu_j}_{\LieA{h}} 
\wedge \nu_1 \wedge \dotsb \hat{\imath} \dotsb \wedge \nu_k 
\wedge \mu_1 \wedge \dotsb \hat{\jmath} \dotsb \wedge \mu_l \, .
\end{equation}
Let us introduce a related bracket as follows.
For linear maps \( \phi: \ExtBundle^k \LieA{g} \to \LieA{h} \) 
and \( \varphi: \ExtBundle^l \LieA{g} \to \LieA{h} \), define 
\( \LieBracket{\phi}{\varphi}: \ExtBundle^{k+l} \LieA{g} \to 
\LieA{h} \) by
\begin{equation}
\begin{split}
\LieBracket{\phi}{\varphi}(A) 
&= \sum_{\sigma} (-1)^{\sgn \sigma} 
\Bigl( \phi\bigl( \CoadAction_{\varphi(\sigma_l(A))} \sigma_k(A)\bigr) 
- \varphi\bigl( \CoadAction_{\phi(\sigma_k(A))} \sigma_l(A)\bigr) 
		\\
&\qquad\qquad\qquad\qquad+ \LieBracket[\big]{\phi(\sigma_k(A))}
{\varphi(\sigma_l(A))}_{\LieA{h}} \Bigr),
\end{split}\end{equation}
where the sum is over (unordered) partitions \( \sigma \) of 
\( A \in \ExtBundle^{k+l} \LieA{g} \) in pairs \( \sigma_k(A) 
\in \ExtBundle^k \LieA{g} \) and \( \sigma_l(A) \in 
\ExtBundle^{l} \LieA{g} \).
In particular, for a linear map \( \phi: \LieA{g} \to \LieA{h} \), 
we get
\begin{equation}
\label{eq:poissonLie:strangeSchouten}
\frac{1}{2} \LieBracket{\phi}{\phi}(A, B)
= \phi\bigl(\CoadAction_{\phi(A)} B\bigr) - 
\phi\bigl(\CoadAction_{\phi(B)} A\bigr) + 
\LieBracket[\big]{\phi(A)}{\phi(B)}_{\LieA{h}}
\end{equation}
for all \( A, B \in \LieA{g} \).

The following result is the first step towards a reformation of the 
notion of a Poisson Lie structure that is suitable for the 
infinite-dimensional setting. 

\begin{prop}
	\label{prop:poissonLie:inLeftTriv}
	Let \( \kappa(G, H) \) be a dual pair of finite-dimensional Lie groups.
	Let \( \Lambda_H: H \to \ExtBundle^2 \LieA{h} \) be a smooth map and let \( \varpi_H \in \VectorFieldSpace^2(H) \) be the associated bivector field.
	Then \( \varpi_H \) is a Poisson Lie structure on \( H \) whose linearization \( \LieBracketDot_{\varpi_H} \) coincides with the Lie bracket on \( \LieA{g} \isomorph \LieA{h}^* \) if and only if \( \Lambda_H \) possesses the following properties:
	\begin{thmenumerate}
		\item (multiplicativity)
			For all \( \eta, \zeta \in H \),
			\begin{equation}
				\label{eq:poissonLie:multiplicative}
				\Lambda_H(\eta \, \zeta) = \AdAction_{\zeta^{-1}} \Lambda_H(\eta) + \Lambda_H(\zeta),
			\end{equation}
			where \( \AdAction \) denotes the natural extension to \( \ExtBundle^2 \LieA{h} \) of the adjoint action of \( H \) on \( \LieA{h} \).
		\item (Poisson property)
			\begin{equation}
				\label{eq:poissonLie:poisson}
				\dif^{\textrm{CE}}_{\LieA{g}} \, \Lambda_H - \frac{1}{2} \LieBracket{\Lambda_H}{\Lambda_H} = 0.
			\end{equation}
		\item (compatibility)
			For all \( \mu \in \LieA{h} \) and \( A, B \in \LieA{g} \),
			\begin{equation}
				\label{eq:poissonLie:compatible}
				\kappa \bigl( A \wedge B, \tangent_e \Lambda_H (\mu) \bigr) = \kappa \bigl(\LieBracket{A}{B}_{\LieA{g}}, \mu \bigr).
				\qedhere
			\end{equation}
	\end{thmenumerate}
\end{prop}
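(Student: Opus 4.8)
The proposition packages three requirements on \( \varpi_H \) — that it be \emph{multiplicative}, that it be a \emph{Poisson} bivector, and that its \emph{linearization} at \( e \) be the prescribed bracket on \( \LieA{g} \isomorph \LieA{h}^* \) — and re-expresses their conjunction in terms of the left trivialization \( \Lambda_H \colon H \to \ExtBundle^2\LieA{h} \). The plan is to treat the three requirements one at a time and then recombine. For multiplicativity, recall that \( (H, \varpi_H) \) is a Poisson Lie group exactly when \( \varpi_H \) is Poisson and the multiplication is a Poisson map, the latter being equivalent to the identity expressing \( \varpi_H(\eta\zeta) \) as the sum of the push-forward of \( \varpi_H(\zeta) \) under left translation by \( \eta \) and of \( \varpi_H(\eta) \) under right translation by \( \zeta \) (see~\parencite{Lu1990}). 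Reading this identity in the left trivialization at \( \eta\zeta \), where left translation acts as the identity while the derivative of right translation becomes \( \AdAction_{\zeta^{-1}} \), turns it into precisely~\eqref{eq:poissonLie:multiplicative}; putting \( \eta = \zeta = e \) shows en passant that \( \Lambda_H(e) = 0 \), so \( \varpi_H \) vanishes at \( e \) and its linearization is well defined.

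For the linearization, I would unwind the definition \( \LieBracket{A}{B}_{\varpi_H} = \dif_e(\varpi_H(\bar{A}, \bar{B})) \), where \( \bar{A}, \bar{B} \) are one-forms on \( H \) with \( \bar{A}_e = A \), \( \bar{B}_e = B \) in \( \LieA{h}^* \isomorph \LieA{g} \). Writing \( \varpi_H(\bar{A}, \bar{B})(g) = \kappa\bigl(\Lambda_H(g), A_g \wedge B_g\bigr) \) with \( A_g, B_g \in \LieA{h}^* \) the left trivializations of \( \bar{A}_g, \bar{B}_g \), and differentiating at \( e \), the contributions from the variation of \( A_g, B_g \) are annihilated by \( \Lambda_H(e) = 0 \); what is left is \( \kappa\bigl(\LieBracket{A}{B}_{\varpi_H}, C\bigr) = \kappa\bigl(A \wedge B, \tangent_e\Lambda_H(C)\bigr) \) for every \( C \in \LieA{h} \). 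Hence the linearization is the skew bracket on \( \LieA{g} \) that is \( \kappa \)-dual to \( \tangent_e\Lambda_H \), and it coincides with \( \LieBracket{\cdot}{\cdot}_{\LieA{g}} \) exactly when~\eqref{eq:poissonLie:compatible} holds.

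The Jacobi identity is the core of the argument. The idea is to compute the Schouten square of \( \varpi_H \) in the left trivialization: expanding the Jacobiator of the bracket \( \{f, h\} = \kappa\bigl(\Lambda_H, \tangentLeft f \wedge \tangentLeft h\bigr) \) with the Leibniz rule, and using that left-invariant vector fields close under bracket into \( \LieA{h} \), shows that \( \varpi_H \) is Poisson if and only if, pointwise, a ``derivative part'' — built from \( \tangentLeft_g\Lambda_H \) contracted once into \( \Lambda_H(g) \) — cancels an ``algebraic part'' equal to \( \frac{1}{2}\LieBracket{\Lambda_H(g)}{\Lambda_H(g)} \), the algebraic Schouten--Nijenhuis bracket on \( \ExtBundle^\bullet\LieA{h} \) that occurs in~\eqref{eq:poissonLie:poisson}. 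Differentiating~\eqref{eq:poissonLie:multiplicative} in \( \zeta \) at \( \zeta = e \) gives \( \tangentLeft_g\Lambda_H(\mu) = \tangent_e\Lambda_H(\mu) - \mu \ldot \Lambda_H(g) \), where \( \mu\ldot \) denotes the infinitesimal adjoint action on \( \ExtBundle^2\LieA{h} \). Substituting this, the part involving \( \tangent_e\Lambda_H \) reorganizes — through the explicit formula for the transferred Chevalley--Eilenberg differential recorded just before the statement — into \( \dif^{\textrm{CE}}\Lambda_H \), taken with respect to the bracket \( \kappa \)-dual to \( \tangent_e\Lambda_H \); and the part involving \( \mu\ldot\Lambda_H(g) \) merges with the algebraic part to produce exactly \( -\frac{1}{2}\LieBracket{\Lambda_H}{\Lambda_H} \), by the standard identity that rewrites the Schouten square of a bivector through its contractions with the adjoint action (this is~\eqref{eq:poissonLie:strangeSchouten} applied to the map \( \LieA{g}\to\LieA{h} \) induced by \( \Lambda_H \)). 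Conclusion of this step: \emph{granting}~\eqref{eq:poissonLie:multiplicative}, \( \varpi_H \) is Poisson if and only if \( \dif^{\textrm{CE}}\Lambda_H - \frac{1}{2}\LieBracket{\Lambda_H}{\Lambda_H} = 0 \) with \( \dif^{\textrm{CE}} \) formed from the bracket dual to \( \tangent_e\Lambda_H \); if moreover~\eqref{eq:poissonLie:compatible} holds, this bracket equals \( \LieBracket{\cdot}{\cdot}_{\LieA{g}} \) and the condition becomes precisely~\eqref{eq:poissonLie:poisson}.

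Combining the three steps yields both directions: \( \varpi_H \) is a Poisson Lie structure with linearization \( \LieBracket{\cdot}{\cdot}_{\LieA{g}} \) iff it is multiplicative, Poisson, and has that linearization, iff~\eqref{eq:poissonLie:multiplicative} and~\eqref{eq:poissonLie:compatible} hold and, granted these,~\eqref{eq:poissonLie:poisson} holds. I expect the Jacobi step to be the only genuinely laborious part: it requires pinning down the left-trivialized Schouten square and then carefully tracking every sign and combinatorial factor so that, after inserting \( \tangentLeft_g\Lambda_H = \tangent_e\Lambda_H - (\,\cdot\,)\ldot\Lambda_H(g) \), the terms assemble exactly as \( \dif^{\textrm{CE}}\Lambda_H - \frac{1}{2}\LieBracket{\Lambda_H}{\Lambda_H} \); everything else is a direct unwinding of definitions. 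It is worth noting that~\eqref{eq:poissonLie:poisson} is actually redundant once \( H \) is connected — there \( \Lambda_H \) is determined by \( \tangent_e\Lambda_H \) via~\eqref{eq:poissonLie:multiplicative}, and~\eqref{eq:poissonLie:poisson} reduces to the Jacobi identity for \( \LieA{g} \) — but for a possibly disconnected \( H \) the values of \( \Lambda_H \) off the identity component are not constrained by~\eqref{eq:poissonLie:multiplicative} and~\eqref{eq:poissonLie:compatible}, so~\eqref{eq:poissonLie:poisson} is then a genuine requirement.
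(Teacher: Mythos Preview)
Your approach is correct and essentially the same as the paper's: both treat multiplicativity, linearization, and Jacobi separately, differentiate~\eqref{eq:poissonLie:multiplicative} to get \( \tangentLeft_\eta\Lambda_H(\mu) = \tangent_e\Lambda_H(\mu) - \adAction_\mu\Lambda_H(\eta) \), and then rewrite the left-trivialized Schouten square as \( \dif^{\textrm{CE}}_{\LieA{g}}\Lambda_H - \tfrac{1}{2}\LieBracket{\Lambda_H}{\Lambda_H} \). The paper streamlines the Jacobi step by quoting \parencite[Proposition~3.4]{AlekseevskyGrabowskiMarmoEtAl1998} for the characterization \( (\tangentLeft\Lambda_H)\contr\Lambda_H = \tfrac{1}{2}\LieBracket{\Lambda_H}{\Lambda_H} \) and then isolating the two algebraic identities you need as separate lemmas: one showing \( (\adAction_{\,\cdot}\Lambda)\contr\Lambda = -\LieBracket{\Lambda}{\Lambda} \), the other showing that the \( \tangent_e\Lambda_H \)-contraction equals \( -\dif^{\textrm{CE}}_{\LieA{g}}\Lambda \). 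Your pointer to~\eqref{eq:poissonLie:strangeSchouten} for the first of these is a misattribution --- that formula concerns the auxiliary bracket \( \LieBracket{\phi}{\phi} \) on \( \Hom(\LieA{g},\LieA{h}) \), not the Schouten--Nijenhuis square on \( \ExtBundle^2\LieA{h} \) --- so you will need to prove the identity \( (\adAction_{\,\cdot}\Lambda)\contr\Lambda = -\LieBracket{\Lambda}{\Lambda} \) directly (a short basis computation), as the paper does.
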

Note that the Poisson property is reformulated in terms of the purely algebraic condition~\eqref{eq:poissonLie:poisson}.
For the particular case where \( H \) is Abelian,~\eqref{eq:poissonLie:poisson} holds if and only if \( \Lambda_H (\eta) \) is a \( 2 \)-cocycle on \( \LieA{g} \) for all \( \eta \in H \).
\begin{proof}
	By \parencite[Lemma~4.1]{AlekseevskyGrabowskiMarmoEtAl1998}, multiplication \( H \times H \to H \) is a Poisson map if and only if \( \Lambda_H \) is multiplicative in the sense that~\eqref{eq:poissonLie:multiplicative} holds.

	If \( \bar{A}, \bar{B} \in \DiffFormSpace^1(H) \) are left-invariant extensions of \( A, B \in \LieA{g} \isomorph \LieA{h}^* \), then by definition \( (\varpi_H)_\eta (\bar{A}, \bar{B}) = \Lambda_H(\eta)(A, B) \) for every \( \eta \in H \).
	Thus, the Lie bracket on \( \LieA{g} \) induced by \( \varpi_H \) is given by
	\begin{equation}
		\kappa \bigl(\LieBracket{A}{B}_{\varpi_H}, \mu \bigr)
			= \kappa \bigl( \dif_e \bigl(\varpi_H (\bar{A}, \bar{B})\bigr), \mu \bigr)
			= \kappa \bigl( A \wedge B, \tangent_e \Lambda_H(\mu) \bigr). 
	\end{equation}
	Thus, \( \LieBracketDot_{\varpi_H} \) coincides with the original Lie bracket on \( \LieA{g} \) if and only if~\eqref{eq:poissonLie:compatible} holds.

	By \parencite[Proposition~3.4]{AlekseevskyGrabowskiMarmoEtAl1998}, the Schouten--Nijenhuis bracket of \( \varpi_H \) with itself vanishes if and only if 
	\begin{equation}
		\label{eq:poissonLie:poissonLeft}
		\bigl(\tangentLeft \Lambda_H\bigr) \contr \Lambda_H = \frac{1}{2} \LieBracket{\Lambda_H}{\Lambda_H},
	\end{equation}
	where, for every \( \eta \in H \), the left derivative \( \tangentLeft_\eta \Lambda_H \) is an element of \( \Hom(\LieA{h}, \ExtBundle^2 \LieA{h}) \). 
	By differentiating~\eqref{eq:poissonLie:multiplicative}, we obtain
	\begin{equation}
		\tangentLeft_\eta \Lambda_H (\mu)
			= \tangent_\eta \Lambda_H (\eta \ldot \mu)
			= - \adAction_\mu \Lambda_H(\eta) + \tangent_e \Lambda_H (\mu).
	\end{equation}
	Using the compatibility relation~\eqref{eq:poissonLie:compatible}, \cref{prop:poissonLie:Tcontr,prop:poissonLie:omegaContr} below imply that
	\begin{equation}
		\bigl(\tangentLeft \Lambda_H\bigr) \contr \Lambda_H = \LieBracket{\Lambda_H}{\Lambda_H} - \dif^{\textrm{CE}}_{\LieA{g}} \, \Lambda_H.
	\end{equation}
	Thus,~\eqref{eq:poissonLie:poissonLeft} is equivalent to~\eqref{eq:poissonLie:poisson}.
\end{proof}
\begin{lemma}
	\label{prop:poissonLie:Tcontr}
	Let \( \kappa(\LieA{g}, \LieA{h}) \) be a dual pair of finite-dimensional Lie algebras.
	For every \( \Lambda \in \ExtBundle^2 \LieA{h} \), the operator \( T: \LieA{h} \to \ExtBundle^2 \LieA{h} \) defined by \( T(\mu) = \adAction_\mu \Lambda \) satisfies
	\begin{equation}
		T \contr \Lambda = - \LieBracket{\Lambda}{\Lambda}.
		\qedhere	
	\end{equation}
\end{lemma}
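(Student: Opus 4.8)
Here is how I would prove \cref{prop:poissonLie:Tcontr}.

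The plan is to reduce everything, by finite-dimensional linear algebra, to the graded Leibniz rule for the Schouten--Nijenhuis bracket. First I would fix a basis \( \{e_i\} \) of \( \LieA{h} \) and write \( \Lambda = \frac{1}{2} \Lambda^{ij}\, e_i \wedge e_j \) with \( \Lambda^{ij} = -\Lambda^{ji} \); let \( \{e^i\} \subseteq \LieA{g} \) be the basis that is \( \kappa \)-dual to \( \{e_i\} \), so \( \kappa(e^i, e_j) = \delta^i_j \). Under the identification \( \Hom(\LieA{h}, \ExtBundle^2 \LieA{h}) \isomorph \LieA{g} \tensorProd \ExtBundle^2 \LieA{h} \), the operator \( T \) corresponds to \( \sum_r e^r \tensorProd T(e_r) \), so that \( \eqref{eq:poissonLie:contractionDef} \) gives \( T \contr \Lambda = \sum_r T(e_r) \wedge (e^r \contr \Lambda) \). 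A one-line computation with \( \kappa \) shows \( e^r \contr \Lambda = \Lambda^{rl}\, e_l \).

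The crucial observation is the purely algebraic identity \( \adAction_\mu \Lambda = -\LieBracket{\Lambda}{\mu} \), valid for every \( \mu \in \LieA{h} \) and \( \Lambda \in \ExtBundle^2 \LieA{h} \). It is verified at once on a decomposable bivector \( \Lambda = a \wedge b \) by comparing \( \adAction_\mu(a\wedge b) = \LieBracket{\mu}{a}_{\LieA{h}}\wedge b + a \wedge \LieBracket{\mu}{b}_{\LieA{h}} \) with \( \LieBracket{a\wedge b}{\mu} \) computed from the \( (k,l) = (2,1) \) case of the defining formula of the Schouten--Nijenhuis bracket, and then extending by bilinearity. Consequently \( T(e_r) = -\LieBracket{\Lambda}{e_r} \), and plugging this into the formula for \( T \contr \Lambda \) yields \( T \contr \Lambda = -\sum_{r,l} \Lambda^{rl}\, \LieBracket{\Lambda}{e_r}\wedge e_l \).

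Finally I would recognise the right-hand side as \( -\LieBracket{\Lambda}{\Lambda} \): by the graded Leibniz rule, \( \LieBracket{\Lambda}{\Lambda} = \LieBracket{\Lambda}{\frac{1}{2}\Lambda^{rl}e_r\wedge e_l} = \frac{1}{2}\Lambda^{rl}\bigl(\LieBracket{\Lambda}{e_r}\wedge e_l - e_r\wedge\LieBracket{\Lambda}{e_l}\bigr) \), and the two terms in the parentheses coincide after relabelling \( r \leftrightarrow l \), using \( \Lambda^{rl} = -\Lambda^{lr} \) together with the fact that a bivector commutes with a vector in \( \ExtBundle^\bullet \LieA{h} \); hence \( \LieBracket{\Lambda}{\Lambda} = \Lambda^{rl}\, \LieBracket{\Lambda}{e_r}\wedge e_l \). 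Combining the last two identities gives the assertion.

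I expect the only real care needed is sign bookkeeping — getting the sign \( (-1)^{(2-1)\cdot 1} = -1 \) right in the graded Leibniz rule, and making sure that the isomorphism \( \Hom(\LieA{h}, \ExtBundle^2\LieA{h}) \isomorph \LieA{g}\tensorProd\ExtBundle^2\LieA{h} \) implicit in \( \eqref{eq:poissonLie:contractionDef} \) is the one induced by the pairing \( \kappa \), so that \( T \) really corresponds to \( \sum_r e^r\tensorProd T(e_r) \). Since the setting is finite-dimensional and purely algebraic, an alternative is to polarise and verify the corresponding bilinear identity on decomposable bivectors, but the basis computation above is shorter and sidesteps the combinatorics of partitions.
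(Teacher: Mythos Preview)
Your proof is correct. Both your argument and the paper's proceed by fixing a basis and expanding \( T \contr \Lambda \) via the definition~\eqref{eq:poissonLie:contractionDef}, so the overall architecture is the same. The difference lies in how the Schouten--Nijenhuis bracket is handled: the paper expands everything explicitly in structure constants, computing \( \adAction_{\varepsilon^i}\Lambda \) and \( \LieBracket{\Lambda}{\Lambda} \) directly in indices and matching the resulting expressions, whereas you organise the computation around two structural identities, namely \( \adAction_\mu \Lambda = -\LieBracket{\Lambda}{\mu} \) and the graded Leibniz rule. Your route is a bit more conceptual and avoids writing out the four-term expansion of \( \LieBracket{\varepsilon^k\wedge\varepsilon^l}{\varepsilon^i\wedge\varepsilon^j} \); the paper's route is entirely self-contained and does not appeal to any properties of the Schouten bracket beyond its defining formula. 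Either way the bookkeeping is minimal, and your remarks about the signs and about the identification \( \Hom(\LieA{h},\ExtBundle^2\LieA{h}) \isomorph \LieA{g}\tensorProd\ExtBundle^2\LieA{h} \) are exactly the points that need attention.
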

\begin{proof}
	Choose a basis \( \set{e_i} \) in \( \LieA{g} \) and the dual basis \( \set{\varepsilon^i} \) in \( \LieA{h} \).
	Write
	\begin{equation}
		T = e_i \tensorProd \adAction_{\varepsilon^i} \Lambda \qquad \text{and} \qquad \Lambda = \frac{1}{2} \Lambda_{kl} \, \varepsilon^k \wedge \varepsilon^l \, .
	\end{equation}
	Then, we have
	\begin{equation}
		e_i \contr \Lambda = \frac{1}{2} \Lambda_{kl} (\delta_i^k \varepsilon^l - \delta_i^l \varepsilon^k) = \Lambda_{il} \, \varepsilon^l
	\end{equation}
	and
	\begin{equation}
		\adAction_{\varepsilon^i} \Lambda 
			= \frac{1}{2} \Lambda_{kl} \, \bigl(\commutator{\varepsilon^i}{\varepsilon^k}_{\LieA{h}} \wedge \varepsilon^l + \varepsilon^k \wedge \commutator{\varepsilon^i}{\varepsilon^l}_{\LieA{h}}\bigr)
			= \Lambda_{kl} \, \commutator{\varepsilon^i}{\varepsilon^k}_{\LieA{h}} \wedge \varepsilon^l \, .
	\end{equation}
	Thus, using~\eqref{eq:poissonLie:contractionDef}, we obtain
	\begin{equation}\begin{split}
		T \contr \Lambda
			&= \adAction_{\varepsilon^i} \Lambda \wedge (e_i \contr \Lambda)
			= \Lambda_{kl} \Lambda_{ij} \, \commutator{\varepsilon^i}{\varepsilon^k}_{\LieA{h}} \wedge \varepsilon^l \wedge \varepsilon^j \, .
	\end{split}\end{equation}
	On the other hand, we find
	\begin{equation}\label{eq:poissonLie:Tcontr:shoutenLambda}\begin{split}
		\LieBracket{\Lambda}{\Lambda} 
			&= \frac{1}{4} \Lambda_{kl} \Lambda_{ij} \, \LieBracket{\varepsilon^k \wedge \varepsilon^l}{\varepsilon^i \wedge \varepsilon^j}
			\\
			&= \frac{1}{4} \Lambda_{kl} \Lambda_{ij} \, \bigl( 
				\LieBracket{\varepsilon^k}{\varepsilon^i}_\LieA{h} \wedge \varepsilon^l \wedge \varepsilon^j
				- \LieBracket{\varepsilon^k}{\varepsilon^j}_\LieA{h} \wedge \varepsilon^l \wedge \varepsilon^i
			\\
			& \qquad\qquad\quad
				- \LieBracket{\varepsilon^l}{\varepsilon^i}_\LieA{h} \wedge \varepsilon^k \wedge \varepsilon^j
				+ \LieBracket{\varepsilon^l}{\varepsilon^j}_\LieA{h} \wedge \varepsilon^k \wedge \varepsilon^i
				\bigr)
			\\
			&= \Lambda_{kl} \Lambda_{ij} \, \LieBracket{\varepsilon^k}{\varepsilon^i}_\LieA{h} \wedge \varepsilon^l \wedge \varepsilon^j \, .
	\end{split}\end{equation}
	Comparing the last two identities yields the claim.
\end{proof}

\begin{lemma}
	\label{prop:poissonLie:omegaContr}
	Let \( \kappa(\LieA{g}, \LieA{h}) \) be a dual pair of finite-dimensional Lie algebras.
	For every \( \Lambda \in \ExtBundle^2 \LieA{h} \),
	\begin{equation}
		\omega \contr \Lambda = - \dif^{\textrm{CE}}_{\LieA{g}} \, \Lambda,
	\end{equation}
	where the operator \( \omega: \LieA{h} \to \ExtBundle^2 \LieA{h} \) is defined by \( \kappa \bigl( A \wedge B, \omega (\mu) \bigr) = \kappa \bigl(\LieBracket{A}{B}_{\LieA{g}}, \mu \bigr) \) for all \( A, B \in \LieA{g} \).
\end{lemma}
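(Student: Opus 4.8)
The plan is to reduce the identity to its degree-one version, where it is practically the definition of \( \omega \), and then to lift it to \( \ExtBundle^2 \LieA{h} \) via the Leibniz rule for \( \dif^{\textrm{CE}}_{\LieA{g}} \). First I would observe that \( \omega \) is nothing but minus the Chevalley--Eilenberg differential in degree one: evaluating the defining formula of \( \dif^{\textrm{CE}}_{\LieA{g}} \) on an element \( \mu \in \LieA{h} \isomorph \LieA{g}^* \) gives \( \kappa(A \wedge B, \dif^{\textrm{CE}}_{\LieA{g}} \mu) = -\kappa(\LieBracket{A}{B}_{\LieA{g}}, \mu) = -\kappa(A \wedge B, \omega(\mu)) \) for all \( A, B \in \LieA{g} \), and non-degeneracy of the induced pairing on second exterior powers yields \( \omega(\mu) = -\dif^{\textrm{CE}}_{\LieA{g}} \mu \).

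Next, both sides of the asserted identity are linear in \( \Lambda \in \ExtBundle^2 \LieA{h} \), so it suffices to treat decomposable \( \Lambda = \mu \wedge \nu \) with \( \mu, \nu \in \LieA{h} \). To evaluate the left-hand side I would fix a basis \( \set{e_i} \) of \( \LieA{g} \) and the \( \kappa \)-dual basis \( \set{\varepsilon^i} \) of \( \LieA{h} \), so that under \( \Hom(\LieA{h}, \ExtBundle^2 \LieA{h}) \isomorph \LieA{g} \tensorProd \ExtBundle^2 \LieA{h} \) the operator \( \omega \) corresponds to \( \sum_i e_i \tensorProd \omega(\varepsilon^i) \). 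Together with \( e_i \contr (\mu \wedge \nu) = \kappa(e_i, \mu)\, \nu - \kappa(e_i, \nu)\, \mu \), the definition~\eqref{eq:poissonLie:contractionDef} of the contraction, and the linearity of \( \omega \), a short computation gives \( \omega \contr (\mu \wedge \nu) = \omega(\mu) \wedge \nu - \omega(\nu) \wedge \mu \). For the right-hand side I would use that the transferred Chevalley--Eilenberg differential on \( \ExtBundle^\bullet \LieA{h} \isomorph \ExtBundle^\bullet \LieA{g}^* \) is a graded derivation of degree \( +1 \), so that \( \dif^{\textrm{CE}}_{\LieA{g}}(\mu \wedge \nu) = (\dif^{\textrm{CE}}_{\LieA{g}} \mu) \wedge \nu - \mu \wedge (\dif^{\textrm{CE}}_{\LieA{g}} \nu) = -\omega(\mu) \wedge \nu + \mu \wedge \omega(\nu) \) by the first step; since \( \mu \wedge \omega(\nu) = \omega(\nu) \wedge \mu \) for degree reasons, this equals \( -\omega \contr (\mu \wedge \nu) \), and comparing the two expressions proves the claim.

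The computation is routine and I do not expect any genuine obstacle; the only points deserving attention are the sign in identifying \( \omega \) with minus the degree-one Chevalley--Eilenberg differential, and the observation that the contraction \( \contr \) is built from the basis-independent element \( \sum_i e_i \tensorProd \omega(\varepsilon^i) \in \LieA{g} \tensorProd \ExtBundle^2 \LieA{h} \), which legitimizes the computation in dual bases. Alternatively, one can dispense with bases altogether: the formula \( \omega \contr (\mu \wedge \nu) = \omega(\mu) \wedge \nu - \omega(\nu) \wedge \mu \) shows that \( \Lambda \mapsto \omega \contr \Lambda \) agrees on decomposable \( 2 \)-vectors with the graded derivation of \( \ExtBundle^\bullet \LieA{h} \) extending \( \omega \); since \( -\dif^{\textrm{CE}}_{\LieA{g}} \) is likewise a graded derivation extending \( \omega \) (by the first step), the two operators coincide in degree two.
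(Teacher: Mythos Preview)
Your proof is correct and takes a genuinely different route from the paper. The paper argues by direct evaluation: it writes \( \omega = e_k \tensorProd \LieBracketDot^k \) in a basis of \( \LieA{g} \), expands \( (\omega \contr \Lambda)(A,B,C) \) as a sum of three terms \( \Lambda(\LieBracket{B}{C},A) - \Lambda(\LieBracket{A}{C},B) + \Lambda(\LieBracket{A}{B},C) \), and recognizes this as \( -\dif^{\textrm{CE}}_{\LieA{g}} \Lambda(A,B,C) \). Your argument is more structural: you identify \( \omega \) with \( -\dif^{\textrm{CE}}_{\LieA{g}} \) in degree one, reduce by linearity to decomposable \( \Lambda = \mu \wedge \nu \), compute \( \omega \contr (\mu \wedge \nu) = \omega(\mu) \wedge \nu - \omega(\nu) \wedge \mu \), and then invoke the Leibniz rule for \( \dif^{\textrm{CE}}_{\LieA{g}} \). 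The paper's computation is shorter on the page and treats \( \Lambda \) as a black box; your approach explains why the identity holds---both sides are the degree-two restriction of the same graded derivation---and, as you note, can be made entirely basis-free. Either way the signs are the only delicate point, and you have tracked them correctly.
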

\begin{proof}
	Fix a basis \( \set{ e_k } \) of \( \LieA{g} \).
	Denote by \( \commutatorDot^k \in \ExtBundle^2 \LieA{h}\) the \( k \)-th component of the commutator, \ie, \( \commutator{A}{B}^k e_k = \commutator{A}{B}_\LieA{g} \) for all \( A, B \in \LieA{g} \).
	With this notation, we have \( \omega = e_k \otimes \commutatorDot^k \), because
	\begin{equation}
		\kappa \bigl(A \wedge B, \omega(\mu) \bigr)
			= \kappa(\commutator{A}{B}_\LieA{g}, \mu)
			= \kappa(e_k, \mu)\, \commutator{A}{B}^k.
	\end{equation}
	Thus we find
	\begin{equation}\begin{split}
		(\omega \contr \Lambda) (A, B, C)
			&= \bigl((e_k \otimes \commutatorDot^k) \contr \Lambda\bigr) (A, B, C)
			\\
			&= \bigl((e_k \contr \Lambda) \wedge \commutatorDot^k\bigr) (A, B, C)
			\\
			&= \Lambda(e_k, A) \, \commutator{B}{C}^k - \Lambda(e_k, B) \, \commutator{A}{C}^k + \Lambda(e_k, C) \, \commutator{A}{B}^k
			\\
			&= \Lambda(\commutator{B}{C},A) - \Lambda(\commutator{A}{C},B) + \Lambda(\commutator{A}{B}, C)
			\\
			&= - \Lambda(A,\commutator{B}{C}) + \Lambda(B, \commutator{A}{C}) - \Lambda(C, \commutator{A}{B})
			\\
			&= - \dif^{\textrm{CE}}_{\LieA{g}} \, \Lambda (A, B, C).
			\qedhere
	\end{split}\end{equation}
\end{proof}

\Cref{prop:poissonLie:inLeftTriv} yields a convenient reformulation of what it means for a bivector field \( \varpi_H \in \VectorFieldSpace^2(H) \) to be a Poisson Lie structure on \( H \) in terms of the associated map \( \Lambda_H: H \to \ExtBundle^2 \LieA{h} \).
In the sequel, it will be convenient to view the latter as a map
\begin{equation}
	\pi_H: H \times \LieA{g} \to \LieA{h},
\end{equation}
that is, \( \Lambda_H(\eta)(A, B) = \kappa\bigl(B, \pi_H(\eta, A)\bigr) \).
From this viewpoint, we have the following equivalent description of a Poisson Lie structure on \( H \).
\begin{prop}
	\label{prop:poissonLie:inLeftTrivDualized}
	Let \( \kappa(G, H) \) be a dual pair of finite-dimensional Lie groups.
	Let \( \Lambda_H: H \to \ExtBundle^2 \LieA{h} \) be a smooth map satisfying the properties of \cref{prop:poissonLie:inLeftTriv} (so that its extension \( \varpi_H \in \VectorFieldSpace^2(H) \) is a Poisson Lie structure on \( H \)).
	Then the map \( \pi_H: H \times \LieA{g} \to \LieA{h} \) defined by \( \pi_H(\eta, A) = A \contr \Lambda_H \) satisfies the following conditions:
	\begin{thmenumerate}
		\item (linearity and skew-symmetry)
			\( \pi_H \) is linear in the second argument and satisfies
			\begin{equation}
				\label{eq:poissonLie:inLeftTrivDualized:skewsymmetric}
				\kappa\bigl(A, \pi_H(\eta, B)\bigr) = - \kappa\bigl(B, \pi_H(\eta, A)\bigr)
			\end{equation}
			for all \( A, B \in \LieA{g} \) and \( \eta \in H \).
		\item (multiplicativity)
			For all \( \eta, \zeta \in H \) and \( A \in \LieA{g} \),
			\begin{equation}
				\label{eq:poissonLieDef:multiplicative}
				\pi_H(\eta \, \zeta, A) = \AdAction_{\zeta^{-1}} \pi_H(\eta, \CoAdAction_{\zeta^{-1}} A) + \pi_H(\zeta, A).
			\end{equation}
		\item (Poisson property)
			\begin{equation}
				\label{eq:poissonLieDef:poisson}
				\dif^{\textrm{CE}}_{\LieA{h}} \, \pi_H - \frac{1}{2} \LieBracket{\pi_H}{\pi_H} = 0.
			\end{equation}
		\item (compatibility)
			For all \( \mu \in \LieA{h} \) and \( A \in \LieA{g} \),
			\begin{equation}
				\label{eq:poissonLieDef:compatible}
				\tangent_e \bigl(\pi_H (\cdot, A)\bigr) (\mu) = \CoadAction_A \mu,
			\end{equation}
			where \( \pi_H (\cdot, A): H \to \LieA{h} \).
	\end{thmenumerate}
	Conversely, to every map \( \pi_H: H \times \LieA{g} \to \LieA{h} \) satisfying these conditions, the map \( \Lambda_H: H \to \ExtBundle^2 \LieA{h} \) defined by \( \pi_H(\eta, A) = A \contr \Lambda_H \) posses the properties of \cref{prop:poissonLie:inLeftTriv}.
\end{prop}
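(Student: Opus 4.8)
The plan is to build a linear-algebraic dictionary between \( \Lambda_H \) and \( \pi_H \) and then transport each of the three conditions of \cref{prop:poissonLie:inLeftTriv} across it.

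\emph{The dictionary.} For fixed \( \eta \in H \), the assignment \( \Lambda \mapsto \bigl(A \mapsto A \contr \Lambda\bigr) \) is a linear isomorphism from \( \ExtBundle^2 \LieA{h} \) onto the space of linear maps \( \phi: \LieA{g} \to \LieA{h} \) with \( \kappa(A, \phi(B)) = - \kappa(B, \phi(A)) \): composing with \( \kappa \) identifies both sides with the skew bilinear forms on \( \LieA{g} \isomorph \LieA{h}^* \), and injectivity is weak non-degeneracy of \( \kappa \). Applying this pointwise in \( \eta \) yields a bijective correspondence between smooth maps \( \Lambda_H: H \to \ExtBundle^2 \LieA{h} \) and smooth maps \( \pi_H: H \times \LieA{g} \to \LieA{h} \) that are linear in the second slot and satisfy the skew-symmetry~\eqref{eq:poissonLie:inLeftTrivDualized:skewsymmetric}. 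This settles item~(i) and, in the converse direction, shows that \( \Lambda_H \) is well defined by \( \pi_H(\eta, A) = A \contr \Lambda_H \); it remains to match the other conditions.

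\emph{Multiplicativity and compatibility.} Contracting both sides of~\eqref{eq:poissonLie:multiplicative} with \( A \in \LieA{g} \), using the elementary identity \( A \contr (\AdAction_{\zeta^{-1}} \Lambda) = \AdAction_{\zeta^{-1}}\bigl((\CoAdAction_{\zeta^{-1}} A) \contr \Lambda\bigr) \) (which expresses that the coadjoint action is the transpose of the adjoint action) together with \( A \contr \Lambda_H(\zeta) = \pi_H(\zeta, A) \), turns~\eqref{eq:poissonLie:multiplicative} precisely into~\eqref{eq:poissonLieDef:multiplicative}. For compatibility, differentiating \( \pi_H(\cdot, A) = A \contr \Lambda_H(\cdot) \) at \( e \in H \) gives \( \tangent_e\bigl(\pi_H(\cdot, A)\bigr)(\mu) = A \contr \tangent_e \Lambda_H(\mu) \); pairing with an arbitrary \( B \in \LieA{g} \) and invoking \( \kappa(B, \CoadAction_A \mu) = \kappa(\LieBracket{A}{B}_\LieA{g}, \mu) \) from~\eqref{eq::definitonCoadjontAction}, weak non-degeneracy of \( \kappa \) shows that~\eqref{eq:poissonLie:compatible} holds if and only if~\eqref{eq:poissonLieDef:compatible} does. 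Both equivalences are symmetric, which takes care of the converse direction for these two items.

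\emph{The Poisson property.} This is the crux. Equation~\eqref{eq:poissonLie:poisson} is an identity in \( \ExtBundle^3 \LieA{h} \), where \( \LieBracketDot \) is the algebraic Schouten--Nijenhuis bracket of bivectors, whereas~\eqref{eq:poissonLieDef:poisson} lives in \( \Hom(\ExtBundle^2 \LieA{g}, \LieA{h}) \), where \( \LieBracketDot \) is the bracket of~\eqref{eq:poissonLie:strangeSchouten}. Contracting~\eqref{eq:poissonLie:poisson} with \( A \wedge B \in \ExtBundle^2 \LieA{g} = \ExtBundle^2 \LieA{h}^* \) and performing the same kind of basis bookkeeping as in the proofs of \cref{prop:poissonLie:Tcontr,prop:poissonLie:omegaContr}, one checks, up to a common scalar factor, the two identities \( (A \wedge B) \contr \dif^{\textrm{CE}}_{\LieA{g}} \Lambda_H = \dif^{\textrm{CE}}_{\LieA{h}} \pi_H (A, B) \) and \( (A \wedge B) \contr \LieBracket{\Lambda_H}{\Lambda_H} = \LieBracket{\pi_H}{\pi_H}(A, B) \). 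Since \( \kappa(C, (A \wedge B) \contr \nu) = \pm\kappa(A \wedge B \wedge C, \nu) \) and \( \kappa \) is weakly non-degenerate on \( \ExtBundle^3 \LieA{h} \), a \( 3 \)-vector \( \nu \) vanishes if and only if \( (A \wedge B) \contr \nu = 0 \) for all \( A, B \in \LieA{g} \); hence~\eqref{eq:poissonLie:poisson} is equivalent to~\eqref{eq:poissonLieDef:poisson} in either direction. Reading all of the above equivalences backwards, a map \( \pi_H \) satisfying~(i)--(iv) yields a \( \Lambda_H \) with the three properties of \cref{prop:poissonLie:inLeftTriv}, which completes the proof. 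The main obstacle is precisely this last step: verifying the two contraction identities is a careful, if routine, index computation threading through the definitions of the Chevalley--Eilenberg differentials and the two non-standard brackets.
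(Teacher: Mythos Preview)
The proposal is correct and follows essentially the same approach as the paper: items (i), (ii), (iv) are handled identically as direct reformulations under the contraction dictionary, and for the Poisson property both you and the paper reduce to showing, separately, that the Chevalley--Eilenberg terms and the bracket terms on the two sides are related by contraction with \( A \wedge B \) (equivalently, by pairing against \( A \wedge B \wedge C \)), via an explicit basis computation. The paper carries out that computation in full (obtaining \( \kappa\bigl(A \wedge B \wedge C, \dif^{\textrm{CE}}_{\LieA{g}} \Lambda - \tfrac{1}{2}\LieBracket{\Lambda}{\Lambda}\bigr) = \kappa\bigl(C, \dif^{\textrm{CE}}_{\LieA{h}} \pi(A,B) - \tfrac{1}{2}\LieBracket{\pi}{\pi}(A,B)\bigr) \)), so your ``up to a common scalar factor'' can in fact be sharpened to equality with the coefficients as written.
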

\begin{proof}
	Linearity in the second argument and skew-symmetry of \( \pi_H \) is clear as \( \Lambda_H \) takes values in \( \ExtBundle^2 \LieA{h} \).
	Moreover, the multiplicativity and compatibility conditions are simple reformulations of the corresponding properties in \cref{prop:poissonLie:inLeftTriv}.

	Thus, it is left to show that~\eqref{eq:poissonLie:poisson} and~\eqref{eq:poissonLieDef:poisson} are equivalent.
	Since this a completely algebraic question (for fixed \( \eta \in H \)), it suffices to consider a linear map \( \pi: \LieA{g} \to \LieA{h} \) with associated \( \Lambda \in \ExtBundle^2 \LieA{h} \).
	First note that we have
	\begin{equation}\begin{split}
		\kappa\bigl(C, \dif^{\textrm{CE}}_{\LieA{h}} \, \pi (A, B)\bigr)
			&= \kappa\bigl(C, -\CoadAction_A \pi(B) + \CoadAction_B \pi(A) - \pi(\commutator{A}{B}) \bigr)
			\\
			&= - \kappa\bigl(\commutator{A}{C}, \pi(B)\bigr) + \kappa\bigl(\commutator{B}{C}, \pi(A)\bigr) - \kappa\bigl(C, \pi(\commutator{A}{B}) \bigr)
			\\
			&= \kappa\bigl(\commutator{A}{C} \wedge B - \commutator{B}{C} \wedge A - \commutator{A}{B} \wedge C, \Lambda \bigr)
			\\
			&= \kappa\bigl(A \wedge B \wedge C, \dif^{\textrm{CE}}_{\LieA{g}} \Lambda \bigr).
	\end{split}\end{equation}
	Choose a basis \( \set{e_i} \) in \( \LieA{g} \) and the dual basis \( \set{\varepsilon^i} \) in \( \LieA{h} \).
	According to~\eqref{eq:poissonLie:Tcontr:shoutenLambda}, we have \( \LieBracket{\Lambda}{\Lambda} = \Lambda_{kl} \Lambda_{pq} \, \LieBracket{\varepsilon^k}{\varepsilon^p}_r \varepsilon^r \wedge \varepsilon^l \wedge \varepsilon^q \), where \( \Lambda = \frac{1}{2} \Lambda_{kl} \varepsilon^k \wedge \varepsilon^l \) and \( \LieBracket{\varepsilon^k}{\varepsilon^p}_r \varepsilon^r = \LieBracket{\varepsilon^k}{\varepsilon^p}_{\LieA{h}} \).
	Thus, 
	\begin{equation}\begin{split}
		e_j \contr e_i \contr \LieBracket{\Lambda}{\Lambda}
			&= \Lambda_{kl} \Lambda_{pq} \, \LieBracket{\varepsilon^k}{\varepsilon^p}_r 
				\bigl(\delta_i^r \delta_j^l \varepsilon^q + \delta_i^l \delta_j^q \varepsilon^r + \delta_i^q \delta_j^r \varepsilon^l
				\\
				&\qquad\qquad\qquad\qquad\quad- \delta_i^r \delta_j^q \varepsilon^l - \delta_i^q \delta_j^l \varepsilon^r - \delta_i^l \delta_j^r \varepsilon^q \bigr)
			\\
			&= 2 \Lambda_{kl} \Lambda_{pq} \, \LieBracket{\varepsilon^k}{\varepsilon^p}_r \bigl(\delta_i^r \delta_j^l \varepsilon^q + \delta_i^l \delta_j^q \varepsilon^r + \delta_i^q \delta_j^r \varepsilon^l \bigr)
			\\
			&= 2 \bigl( \Lambda_{kj} \Lambda_{lr} \, \LieBracket{\varepsilon^k}{\varepsilon^l}_i + \Lambda_{ki} \Lambda_{lj} \, \LieBracket{\varepsilon^k}{\varepsilon^l}_r + \Lambda_{kr} \Lambda_{li} \, \LieBracket{\varepsilon^k}{\varepsilon^l}_j\bigr) \varepsilon^r ,
	\end{split}\end{equation}
	because \( \Lambda_{kl} \Lambda_{pq} \, \LieBracket{\varepsilon^k}{\varepsilon^p}_r \) is skew-symmetric in the indices \( l \) and \( q \).
	On the other hand, using~\eqref{eq:poissonLie:strangeSchouten}, the identities \( \pi(e_i) = e_i \contr \Lambda = \Lambda_{il} \varepsilon^l \) and \( \CoadAction_{\varepsilon^l} e_i = \LieBracket{\varepsilon^l}{\varepsilon^k}_i e_k \) imply
	\begin{equation}\begin{split}
		\frac{1}{2} \LieBracket{\pi}{\pi} (e_i, e_j)
			&= \pi\bigl(\CoadAction_{\pi(e_i)} e_j\bigr) - \pi\bigl(\CoadAction_{\pi(e_j)} e_i\bigr) + \LieBracket[\big]{\pi(e_i)}{\pi(e_j)}_{\LieA{h}}
			\\
			&= \Lambda_{il} \Lambda_{kr} \LieBracket{\varepsilon^l}{\varepsilon^k}_j \varepsilon^r
				- \Lambda_{jl} \Lambda_{kr} \LieBracket{\varepsilon^l}{\varepsilon^k}_i \varepsilon^r
				+ \Lambda_{il} \Lambda_{jk} \LieBracket{\varepsilon^l}{\varepsilon^k}_r \varepsilon^r .
	\end{split}\end{equation}
	In summary, we obtain
	\begin{equation}
		\kappa\Bigl(A \wedge B \wedge C, \dif^{\textrm{CE}}_{\LieA{g}} \Lambda - \frac{1}{2}\LieBracket{\Lambda}{\Lambda}\Bigr)
			= \kappa\Bigl(C, \dif^{\textrm{CE}}_{\LieA{h}} \, \pi (A, B) - \frac{1}{2} \LieBracket{\pi}{\pi}(A, B)\Bigr),
	\end{equation}
	so that~\eqref{eq:poissonLie:poisson} and~\eqref{eq:poissonLieDef:poisson} are equivalent indeed.
\end{proof}
It is this form that we adopt as the definition of a Poisson Lie structure in the infinite-dimensional setting.
\begin{defn}
	Let \( \kappa(G, H) \) be a dual pair of (possibly infinite-dimensional) Lie groups.
	A smooth map \( \pi_H: H \times \LieA{g} \to \LieA{h} \) is said to be a \emphDef{Poisson Lie structure on \( H \) compatible with the Lie bracket on \( \LieA{g} \)} if it satisfies the properties stated in \cref{prop:poissonLie:inLeftTrivDualized}.
\end{defn}
In \parencite{NeebSahlmannEtAl2014} it has been observed that a Poisson tensor on an infinite-dimensional manifold does \emph{not} induce a Poisson bracket on the space of all smooth functions.
This continues to be true in our particular setting of Poisson Lie structures.
Indeed, a Poisson Lie structure \( \pi_H \) on \( H \) defines a Lie bracket
\begin{equation}
	\label{eq:poissonLie:poissonBracketFromTensor}
	\poisson{f}{g}(\eta) = \kappa\bigl(\tangentLeft_\eta f, \pi_H(\eta, \tangentLeft_\eta g) \bigr)
\end{equation}
only on the subalgebra of \( \sFunctionSpace(H) \) consisting of functions \( f \) for which the functional \( \tangentLeft f: \LieA{h} \to \R \) can be represented by an element of \( \LieA{g} \).
Conversely, we note that, in infinite dimensions, the Poisson bracket on the space of smooth functions may depend on higher-order derivatives of functions and thus may not be induced by a Poisson tensor, see \parencite{BeltitaGolinskiEtAl2018}.  

After this excursion into the theory of infinite-dimensional Poisson Lie groups, let us return to our original quest of establishing a relation between coconjugation actions and induced Poisson Lie structures.
In finite dimensions, every Poisson Lie group with complete dressing vector fields can be integrated to matched pair of Lie groups, see \parencite[Theorem~3.12]{LuWeinstein1990}.
The following result establishes the corresponding inverse \textquote{derivation} operation. 
\begin{thm}
	\label{prop:poissonLie:fromCoconjugationToPoisson}
	Let \( \kappa(G, G^*) \) be a dual pair of Lie groups.
	For every coconjugation matched pair \( (\Upsilon, \Upsilon^*) \) on \( \kappa(G, G^*) \), define \( \pi_{G}: G \times \LieA{g}^* \to \LieA{g} \) and \( \pi_{G^*}: G^* \times \LieA{g} \to \LieA{g}^* \) by 
	\begin{equation}
		\pi_{G}(g, \mu) \defeq \tangent_e \Upsilon^*_{g^{-1}} \bigl(\CoAdAction_{g^{-1}} \mu\bigr) \ldot g,
		\qquad
		\pi_{G^*}(\eta, A) \defeq - \eta^{-1} \ldot \tangent_e \Upsilon_\eta \bigl(\CoAdAction_{\eta^{-1}} A\bigr).
	\end{equation}
	If \( \pi_G \) and \( \pi_{G^*} \) are skew-symmetric in the sense of~\eqref{eq:poissonLie:inLeftTrivDualized:skewsymmetric}, then they define Poisson Lie structures on \( G \) and on \( G^* \), respectively.
\end{thm}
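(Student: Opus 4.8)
The plan is to verify that $\pi_{G^*}$ satisfies the four conditions characterising a Poisson Lie structure in \cref{prop:poissonLie:inLeftTrivDualized}; the statement for $\pi_{G}$ is then obtained by the parallel argument, with the roles of $G$ and $G^*$, and of $\Upsilon$ and $\Upsilon^*$, interchanged and the translations and signs adjusted to match the (asymmetric) shape of its defining formula. Condition~(i) is cheap: linearity in the $\LieA{g}$-slot is clear from the defining formula, which is assembled from the linear maps $\CoAdAction_{\eta^{-1}}$, $\tangent_e \Upsilon_\eta$ and a left translation, while skew-symmetry in the sense of~\eqref{eq:poissonLie:inLeftTrivDualized:skewsymmetric} is precisely the standing hypothesis. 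Skew-symmetry is, moreover, exactly what is needed for the associated map $\Lambda_{G^*}\colon G^* \to \ExtBundle^2 \LieA{g}^*$, defined by $\pi_{G^*}(\eta, A) = A \contr \Lambda_{G^*}(\eta)$, to be well defined, so that \cref{prop:poissonLie:inLeftTrivDualized} applies at all. It remains to check multiplicativity~\eqref{eq:poissonLieDef:multiplicative}, the Poisson property~\eqref{eq:poissonLieDef:poisson}, and compatibility~\eqref{eq:poissonLieDef:compatible}. All differentiations below are understood in the calculus of locally convex spaces, where the chain rule holds and mixed partial derivatives of smooth maps commute; the coadjoint-type operators that occur are assumed to exist, as stipulated earlier.

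For compatibility I would differentiate $\eta \mapsto \pi_{G^*}(\eta, A)$ at $\eta = e$. Since $\Upsilon_g(e) = e$ for all $g$, the partial map $G \ni g \mapsto \Upsilon(g, e) \in G^*$ is constant, so $\tangent_e$ of it vanishes; this shows $\pi_{G^*}(e, A) = 0$ and, when the product $-\eta^{-1} \ldot \tangent_e \Upsilon_\eta(\CoAdAction_{\eta^{-1}} A)$ is differentiated at $e$, every term in which the derivative hits the left translation or the $\CoAdAction_{\eta^{-1}}$-factor vanishes. What survives is $-\difFracAt{}{t}{0} \tangent_e \Upsilon_{\eta_t}(A)$ along a curve $\eta_t$ with $\difFracAt{}{t}{0}\eta_t = \mu$. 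Writing $\tangent_e \Upsilon_\eta(A) = \difFracAt{}{s}{0} \Upsilon(\exp(sA), \eta)$ in a chart around $e \in G^*$ — in which, because $\Upsilon(g, e) = e$ and $\Upsilon(e, \eta) = \eta$, both iterated derivatives reduce to the symmetric second-order term of $\Upsilon$ at the origin — and commuting the two derivatives, this equals $-\difFracAt{}{s}{0} \tangent_e \Upsilon_{\exp(sA)}(\mu)$. By~\eqref{eq:coconjugation:defn} the inner quantity is $\CoAdAction_{\exp(-sA)} \mu$, whose $s$-derivative at $0$ is $-\CoadAction_A \mu$; the two signs cancel and $\tangent_e\bigl(\pi_{G^*}(\cdot, A)\bigr)(\mu) = \CoadAction_A \mu$, which is~\eqref{eq:poissonLieDef:compatible}.

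For multiplicativity I would differentiate the matched-pair identity $\Upsilon_g(\eta \zeta) = \Upsilon_g(\eta)\, \Upsilon_{\Upsilon^*_{\eta^{-1}}(g)}(\zeta)$ from~\eqref{eq:coconjugation:matched} in the variable $g$ at $g = e$. Using $\Upsilon^*_{\eta^{-1}}(e) = e$, the identity $\tangent_e \Upsilon^*_{\eta^{-1}} = \CoAdAction_\eta$ (again~\eqref{eq:coconjugation:defn}), and the Leibniz rule for multiplication in $G^*$, one finds that $\tangent_e \Upsilon_{\eta\zeta}(B)$ is the sum of the right translate by $\zeta$ of $\tangent_e \Upsilon_\eta(B)$ and the left translate by $\eta$ of $\tangent_e \Upsilon_\zeta(\CoAdAction_\eta B)$. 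Substituting $B = \CoAdAction_{(\eta\zeta)^{-1}} A$ and left-translating by $(\eta\zeta)^{-1}$: the second summand collapses to $\pi_{G^*}(\zeta, A)$, while in the first, since left and right translations commute, the combination of one- and two-sided translations is exactly the adjoint action $\AdAction_{\zeta^{-1}}$ of $G^*$, so that summand becomes $\AdAction_{\zeta^{-1}} \pi_{G^*}(\eta, \CoAdAction_{\zeta^{-1}} A)$; this is~\eqref{eq:poissonLieDef:multiplicative}.

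The real work, and the step I expect to be the main obstacle, is the Poisson property~\eqref{eq:poissonLieDef:poisson}. Here I would use \cref{prop:poissonLie:inLeftTrivDualized} once more to trade~\eqref{eq:poissonLieDef:poisson} for the equivalent purely algebraic identity~\eqref{eq:poissonLie:poisson} for $\Lambda_{G^*}$, namely $\dif^{\textrm{CE}}_{\LieA{g}}\, \Lambda_{G^*} - \frac{1}{2} \LieBracket{\Lambda_{G^*}}{\Lambda_{G^*}} = 0$, and prove this by differentiating the matched-pair relations~\eqref{eq:coconjugation:matched} a second time and running the outcome through the Jacobi identity of the double $\LieA{g} \bowtie \LieA{g}^*$. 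That Jacobi identity is available: $\LieA{g} \bowtie \LieA{g}^*$ is the Lie algebra of the crossed-biproduct Lie group~\eqref{eq:coconjugation:crossedbiproduct}, and by~\eqref{eq:coconjugation:defn} its bracket is the one displayed in~\eqref{eq::lieAlgebraDualPair:bracketDouble}. In substance this is the infinite-dimensional counterpart of the converse of the Lu--Weinstein integration theorem \parencite[Theorem~3.12]{LuWeinstein1990} — a matched pair of Lie groups carries dual Poisson Lie structures on its two factors — and, since the verification is entirely algebraic once the first-order derivative identities above are in place, resting only on the Jacobi identity of $\LieA{g} \bowtie \LieA{g}^*$, it transfers verbatim to the present, possibly infinite-dimensional, situation.
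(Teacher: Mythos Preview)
Your treatment of linearity, skew-symmetry, multiplicativity, and compatibility is correct and close to the paper's; for compatibility you argue by commuting mixed partials, whereas the paper packages the same computation as the Maurer--Cartan equation for $\difLog\Upsilon \in \DiffFormSpace^1(G\times G^*,\LieA{g}^*)$ evaluated on $\bigl((0,\mu),(A,0)\bigr)$ at $(e,e)$. Both reach~\eqref{eq:poissonLieDef:compatible}.

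The substantive divergence is in the Poisson property. You propose to extract it from the Jacobi identity of the double $\LieA{g}\bowtie\LieA{g}^*$, citing the analogy with the converse of \parencite[Theorem~3.12]{LuWeinstein1990}. This is plausible in spirit but incomplete as stated: the Jacobi identity of the double is an identity at the Lie-algebra level, whereas~\eqref{eq:poissonLieDef:poisson} must hold at every $\eta\in G^*$, and at $\eta=e$ it is vacuous because $\pi_{G^*}(e,\cdot)=0$. One can bridge the gap --- for a multiplicative bivector the Schouten square is again multiplicative, so it vanishes everywhere once its intrinsic derivative at $e$ does, and that derivative is governed by the bialgebra cocycle condition --- but you have not supplied this argument, and in the infinite-dimensional setting one must also justify that the bivector $\Lambda_{G^*}$ (rather than just $\pi_{G^*}$) exists in order to invoke it.

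The paper takes a different and more direct route that avoids the double entirely. For each fixed $\eta\in G^*$, the maps $\Upsilon_\eta\colon G\to G^*$ and $\Upsilon^*_\eta\colon G\to G$ are smooth, so their left logarithmic derivatives automatically satisfy Maurer--Cartan equations on $G$. The paper first computes $\difLog_g\Upsilon_\eta$ and $\difLog_g\Upsilon^*_\eta$ explicitly in terms of $\pi_{G^*}$ and $\pi_G$ (using both identities in~\eqref{eq:coconjugation:matched}), then evaluates each Maurer--Cartan equation at $e\in G$. The first produces an identity relating $\CoadAction$, $\pi_{G^*}(\eta,\cdot)$, and the $\LieA{g}^*$-bracket; the second yields a companion identity involving $\CoAdAction_\eta$. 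Substituting one into the other gives exactly $\dif^{\textrm{CE}}_{\LieA{h}}\,\pi_{G^*}(\eta,\cdot,\cdot)-\tfrac{1}{2}\LieBracket{\pi_{G^*}}{\pi_{G^*}}(\eta,\cdot,\cdot)=0$ at the arbitrary point $\eta$, without ever invoking the Jacobi identity of the double or the existence of a global bivector. This is what buys the paper a self-contained argument valid in the locally convex setting.
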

\begin{proof}
	We will give the proof only for \( \pi_{G^*} \) as similar arguments show that \( \pi_G \) is a Poisson Lie structure as well.
	We have to verify that \( \pi_{G^*} \) satisfies the conditions of \cref{prop:poissonLie:inLeftTrivDualized}.
	
	First, note that \( \pi_{G^*} \) is clearly linear in the second argument and it is skew-symmetric by assumption.

	The compatibility condition~\eqref{eq:coconjugation:matched} implies
	\begin{equation}\begin{split}
		\pi_{G^*}(\eta \, \zeta, A) 
			&= - (\zeta^{-1} \eta^{-1}) \ldot \tangent_e \Upsilon_{\eta\zeta} (\CoAdAction_{\eta^{-1}}\CoAdAction_{\zeta^{-1}}A) \\
			&= - (\zeta^{-1} \eta^{-1}) \ldot \tangent_e \bigl(\Upsilon_{\eta} \cdot (\Upsilon_{\zeta} \circ \Upsilon^*_{\eta^{-1}}) \bigr) (\CoAdAction_{\eta^{-1}}\CoAdAction_{\zeta^{-1}}A) \\
			&= - \AdAction_{\zeta^{-1}} \bigl(\eta^{-1} \ldot \tangent_e \Upsilon_{\eta} (\CoAdAction_{\eta^{-1}}\CoAdAction_{\zeta^{-1}}A)\bigr) 
			\\&\qquad- \zeta^{-1} \ldot \tangent_e \Upsilon_{\zeta} \bigl(\tangent_e \Upsilon^*_{\eta^{-1}}(\CoAdAction_{\eta^{-1}}\CoAdAction_{\zeta^{-1}}A) \bigr) \\
			&= \AdAction_{\zeta^{-1}} \bigl(\pi_{G^*}(\eta, \CoAdAction_{\zeta^{-1}}A)\bigr) + \pi_{G^*}(\zeta, A).
	\end{split}\end{equation}
	This verifies the multiplicativity of \( \pi_{G^*} \), \cf~\eqref{eq:poissonLieDef:multiplicative}.

	Let us now turn to the compatibility relation~\eqref{eq:poissonLieDef:compatible}.
	Note that we have \( (\difLog \Upsilon)_{(e, \eta)} (A, 0) = -\pi_{G^*}(\eta, \CoAdAction_{\eta} A) \) and \( (\difLog \Upsilon)_{(g, \eta)}(0, \eta \ldot \mu) = \CoAdAction_{\Upsilon^*_{\eta^{-1}}(g)^{-1}} \mu \), see~\eqref{eq:coconjugation:tangentActionEveryPoint}.
	Since the \( 1 \)-form \( \difLog \Upsilon \in \DiffFormSpace^1(G \times G^*, \LieA{g}^*) \) satisfies the Maurer--Cartan equation, we thus obtain
	\begin{equation}\label{eq:poissonLie:fromCoconjugationToPoisson:tangentPoisson}\begin{split}
		0
		&= \Bigl(\dif\difLog \Upsilon + \frac{1}{2} \LieBracket{\difLog \Upsilon}{\difLog \Upsilon}\Bigr)_{(e,e)} \bigl((0,\mu), (A,0)\bigr) \\
		&= (0, \bar{\mu})_{(e,e)} \bigl(\difLog \Upsilon (\bar{A}, 0)\bigr) - (\bar{A}, 0)_{(e,e)} \bigl(\difLog \Upsilon (0, \bar{\mu})\bigr)\\
			&\qquad\qquad\qquad+ \LieBracket[\big]{(\difLog \Upsilon)_{(e,e)} (0,\mu)}{(\difLog \Upsilon)_{(e,e)} (A,0)} \\
		&= - \tangent_e \bigl(\eta \mapsto \pi_{G^*}(\eta, \CoAdAction_{\eta} A)\bigr) (\mu) - \tangent_e \bigl(g \mapsto \CoAdAction_{g^{-1}} \mu\bigr)(A) + 0 \\
		&= - \tangent_e \bigl(\pi_{G^*}(\cdot, A)\bigr) (\mu) + \CoadAction_{A} \mu,
	\end{split}\end{equation}
	where \( \bar{A} \) and \( \bar{\mu} \) denote the left-invariant extensions of \( A \) and \( \mu \), respectively.
	Thus, \( \tangent_e \bigl(\pi_{G^*} (\cdot, A)\bigr) (\mu) = \CoadAction_A \mu \), which is the compatibility relation~\eqref{eq:poissonLieDef:compatible}.

	For the Poisson property~\eqref{eq:poissonLieDef:poisson}, we need a bit of preparation.
	First, the fact that \( \Upsilon \) is a left \( G \)-action implies
	\begin{equation}\begin{split}
		\difLog_g \Upsilon_\eta (g \ldot A)
			&= \Upsilon(g, \eta)^{-1} \ldot \tangent_{\eta} \Upsilon_g \bigl(\tangent_e \Upsilon_\eta (A)\bigr)
			\\
			&= - \Upsilon(g, \eta)^{-1} \ldot \tangent_{\eta} \Upsilon_g \bigl(\eta \ldot \pi_{G^*}(\eta, \CoAdAction_{\eta} A) \bigr)
			\\
			&= - \CoAdAction_{\Upsilon^*_{\eta^{-1}}(g)^{-1}} \pi_{G^*}(\eta, \CoAdAction_{\eta} A),
	\end{split}\end{equation}
	where, for the last equality, we used~\eqref{eq:coconjugation:tangentActionEveryPoint}.
	Since \( \difLog \Upsilon_\eta \in \DiffFormSpace^1(G, \LieA{g}^*) \) satisfies the Maurer--Cartan equation for every \( \eta \in G^* \), we thus find
	\begin{equation}\label{eq:poissonLie:fromCoconjugationToPoisson:tangentAction}\begin{split}
		0
		&= \Bigl(\dif\difLog \Upsilon_\eta + \frac{1}{2} \LieBracket{\difLog \Upsilon_\eta}{\difLog \Upsilon_\eta}\Bigr)_e (A, B) \\
		&= \bar{A}_e \bigl(\difLog \Upsilon_\eta (\bar{B})\bigr) - \bar{B}_e \bigl(\difLog \Upsilon_\eta (\bar{A})\bigr) - \difLog_e \Upsilon_\eta (\LieBracket{\bar{A}}{\bar{B}})\\
			&\qquad+ \LieBracket{\difLog_e \Upsilon_\eta (A)}{\difLog_e \Upsilon_\eta (B)}
			\\
		&= \CoadAction_{\CoAdAction_\eta A} \pi_{G^*}(\eta, \CoAdAction_{\eta} B) - \CoadAction_{\CoAdAction_\eta B} \pi_{G^*}(\eta, \CoAdAction_{\eta} A)\\
			&\qquad + \pi_{G^*}(\eta, \CoAdAction_\eta \LieBracket{A}{B})
			+ \LieBracket{\pi_{G^*}(\eta, \CoAdAction_\eta A)}{\pi_{G^*}(\eta, \CoAdAction_\eta B)}.
	\end{split}\end{equation}
	Similarly, the twisted multiplicativity~\eqref{eq:coconjugation:matched} of \( \Upsilon^*_\eta \) implies
	\begin{equation}\begin{split}
		\difLog_g \Upsilon^*_\eta (g \ldot A)
			&= \Upsilon^*(g, \eta)^{-1} \ldot \tangent_{g} \Upsilon^*_\eta ( g \ldot A)
			\\
			&= \Upsilon^*(g, \eta)^{-1} \ldot \bigl(\tangent_\eta \Upsilon^*_g (\pi_{G^*}(\eta^{-1}, \CoAdAction_{\eta^{-1}} A) \ldot \eta) + \Upsilon^*_\eta(g) \ldot \tangent_e \Upsilon^*_\eta(A)\bigr)
			\\
			&= \AdAction_{\Upsilon^*_\eta(g)^{-1}} \pi_{G}\bigl(\Upsilon^*_\eta(g)^{-1}, \CoAdAction_{\Upsilon^*_\eta(g)^{-1}} \pi_{G^*}(\eta^{-1}, \CoAdAction_{\eta^{-1}} A)\bigr) + \CoAdAction_{\eta^{-1}} A \,,
	\end{split}\end{equation}
	where we used \( \tangent_e \bigl(g \mapsto \Upsilon_g(\eta^{-1})^{-1}\bigr)(A) = \pi_{G^*}(\eta^{-1}, \CoAdAction_{\eta^{-1}} A) \ldot \eta \) and \( \tangent_\eta \Upsilon^*_g(\mu \ldot \eta) = \tangent_e \Upsilon^*_{\Upsilon^*_\eta(g)} (\mu) = \pi_{G}(\Upsilon^*_\eta(g)^{-1}, \CoAdAction_{\Upsilon^*_\eta(g)^{-1}} \mu) \ldot \Upsilon^*_\eta(g) \).
	Taking the derivative with respect to \( g \) at the identity in the direction \( B \in \LieA{g} \) yields
	\begin{equation}\begin{split}
		\bar{B}_e \bigl(\difLog \Upsilon^*_\eta (\bar{A})\bigr)
			&= - \tangent_e \bigl(\pi_{G}(\cdot, \pi_{G^*}(\eta^{-1}, \CoAdAction_{\eta^{-1}} A))\bigr)\bigr(\tangent_e \Upsilon^*_\eta (B)\bigl)
			\\
			&= - \CoadAction_{\pi_{G^*}(\eta^{-1}, \CoAdAction_{\eta^{-1}} A)} (\CoAdAction_{\eta^{-1}} B)
	\end{split}\end{equation}
	Since \( \difLog \Upsilon^*_\eta \in \DiffFormSpace^1(G, \LieA{g}) \) satisfies the Maurer--Cartan equation for every \( \eta \in G^* \), we thus find
	\begin{equation}\label{eq:poissonLie:fromCoconjugationToPoisson:tangentCoAction}\begin{split}
		0
		&= \Bigl(\dif\difLog \Upsilon^*_\eta + \frac{1}{2} \LieBracket{\difLog \Upsilon^*_\eta}{\difLog \Upsilon^*_\eta}\Bigr)_e (A, B) \\
		&= \bar{A}_e \bigl(\difLog \Upsilon^*_\eta (\bar{B})\bigr) - \bar{B}_e \bigl(\difLog \Upsilon^*_\eta (\bar{A})\bigr) - \difLog_e \Upsilon^*_\eta (\LieBracket{\bar{A}}{\bar{B}})\\
			&\qquad+ \LieBracket{\difLog_e \Upsilon^*_\eta (A)}{\difLog_e \Upsilon^*_\eta (B)}
			\\
		&= -\CoadAction_{\pi_{G^*}(\eta^{-1}, \CoAdAction_{\eta^{-1}} B)} (\CoAdAction_{\eta^{-1}} A) + \CoadAction_{\pi_{G^*}(\eta^{-1}, \CoAdAction_{\eta^{-1}} A)} (\CoAdAction_{\eta^{-1}} B)
		    - \CoAdAction_{\eta^{-1}} \LieBracket{A}{B}
			\\
			&\qquad+ \LieBracket{\CoAdAction_{\eta^{-1}} A}{\CoAdAction_{\eta^{-1}} B}
	\end{split}\end{equation}
	Using~\eqref{eq:poissonLie:fromCoconjugationToPoisson:tangentAction} and~\eqref{eq:poissonLie:fromCoconjugationToPoisson:tangentCoAction}, we now obtain
	\begin{equation}\label{eq:poissonLie:fromCoconjugationToPoisson:poisson}\begin{split}
		\dif^{\textrm{CE}}_{\LieA{h}} \, \pi_{G^*} &(\eta, A, B) - \frac{1}{2} \LieBracket{\pi_{G^*}}{\pi_{G^*}}(\eta, A, B)\\
			&= -\CoadAction_A \pi_{G^*}(\eta, B) + \CoadAction_B \pi_{G^*}(\eta, A) - \pi_{G^*}(\eta, \LieBracket{A}{B}) \\
			&\qquad - \pi_{G^*}(\eta, \CoadAction_{\pi_{G^*}(\eta, A)} B) + \pi_{G^*}(\eta, \CoadAction_{\pi_{G^*}(\eta, B)} A)\\
			&\qquad - \LieBracket[\big]{\pi_{G^*}(\eta, A)}{\pi_{G^*}(\eta, B)}
			\\
			&= \pi_{G^*}(\eta, \CoAdAction_{\eta} \LieBracket{\CoAdAction_{\eta^{-1}} A}{\CoAdAction_{\eta^{-1}} B}) 
			 - \pi_{G^*}(\eta, \LieBracket{A}{B}) \\
			&\qquad - \pi_{G^*}(\eta, \CoadAction_{\pi_{G^*}(\eta, A)} B) + \pi_{G^*}(\eta, \CoadAction_{\pi_{G^*}(\eta, B)} A)
			\\
			&= 0.
	\end{split}\end{equation}
	This verifies the Poisson property~\eqref{eq:poissonLieDef:poisson}.
\end{proof}

Note that, by construction, the coconjugation action \( \Upsilon \) and the Poisson bivector field \( \pi_{G^*} \) are connected by
\begin{equation}
- \eta \ldot \pi_{G^*} (\eta, \CoAdAction_{\eta} A) = \tangent_e \Upsilon_\eta (A) \in \TBundle_\eta G^*, \qquad \eta \in G^*, A \in \LieA{g}.
\end{equation}
In Poisson geometry, actions satisfying this relation with respect to a Poisson structure on a Lie group are called \emphDef{dressing transformations} (see, \eg, \parencite[Definition~2.38]{Lu1990} or \parencite{FlaschkaRatiu1996}).
For questions concerning the equivariance of momentum maps, it feels however more natural to put the action first and then derive the Poisson structure from it, than the other way around.
In summary, we obtain the following hierarchy:
\begin{equationcd}[tikz={row sep=large}]
	\text{Coconjugation actions} \; (\Upsilon, \Upsilon^*)
	\arrow[d, bend left=50, "\;\text{\cref{prop:poissonLie:fromCoconjugationToPoisson}}"]
	\\
	\text{Dual pair of Poisson Lie groups} \; \kappa(G, G^*)
	\arrow[d, bend left=50, "\;\text{\parencite[Theorem~1.8]{LuWeinstein1990}}"]
	\arrow[u, bend left=50, dotted, "\text{dressing transformations}\;"]
	\\
	\text{Lie bialgebra} \; \kappa(\LieA{g}, \LieA{g}^*)
	\arrow[u, bend left=50, dotted, "\text{\parencite[Theorem~1.8]{LuWeinstein1990}}\;"]
\end{equationcd}
The dotted arrows indicate that the integration of structure is not always possible.
The integration of a Lie bialgebra to a dual pair of Poisson Lie groups, \parencite[Theorem~1.8.]{LuWeinstein1990}, requires the groups to be connected and simply connected.
Moreover, to obtain dressing transformations one needs that the dressing vector fields are complete, see \parencite[Theorem~3.12]{LuWeinstein1990}.

\begin{example}
	For the coadjoint action dual pair \( \kappa(G, \LieA{g}^*) \) discussed in \cref{ex:coconjugation:dualLieAlgebra}, the induced Poisson Lie structure on \( \LieA{g}^* \) is given by
	\begin{equation}
		\pi_{\LieA{g}^*}(\mu, A) = \CoadAction_A \mu \,.
	\end{equation}
	This is the usual Kostant--Kirillov--Souriau Poisson structure.
\end{example}
\begin{example}
	Continuing with \cref{ex:coconjugation:iwasawaDecomp}, let \( D \) be a finite-dimensional connected complex semisimple Lie group and let \( D = K B \) be its Iwasawa decomposition.
	The action~\eqref{eq:coconjugation:iwasawaDecomp:actions} of \( K \) on \( B \) induces the following Poisson Lie structure on \( B \):
	\begin{equation}
		\pi_B (b, A) = - \pr_{\LieA{b}} (\AdAction_{b^{-1}} \CoAdAction_{b^{-1}} A).
	\end{equation}
	The corresponding bivector field is given by
	\begin{equation}
		\Lambda_b (A, B) = \Im \kappa_{\LieA{d}} \bigl(\pr_{\LieA{b}} (\AdAction_b A), \pr_{\LieA{k}}(\AdAction_{b} B)\bigr),
	\end{equation}
	which coincides with the one given in \parencite[Definition~4.2]{LuRatiu1991}.
	Similarly, the action~\eqref{eq:coconjugation:iwasawaDecomp:actions} of \( B \) on \( K \) induces the Poisson Lie structure on \( K \)
	\begin{equation}
		\pi_K (k, \mu) = - \pr_{\LieA{k}} (\AdAction_{k^{-1}} \CoAdAction_{k^{-1}} \mu)
	\end{equation}
	with bivector field
	\begin{equation}
		\Lambda_k (\mu, \nu) = \Im \kappa_{\LieA{d}} \bigl(\pr_{\LieA{k}} (\AdAction_k \mu), \pr_{\LieA{b}}(\AdAction_{k} \nu)\bigr).
	\end{equation}
	This coincides with the Poisson structure of \parencite[Theorem~4.3]{LuWeinstein1990}.
\end{example}

\begin{example}
\label{ex:dualPairLieGroups:volumePreservingDiffeos:coconjugation}
Consider the dual pair of Lie groups \( \kappa(\DiffGroup_\mu(M), \csCohomology^2(M, \UGroup(1))) \) from \cref{ex::dualPairLieGroups:volumePreservingDiffeos}.
The coadjoint action of a diffeomorphism is given by pull-back.
Thus a natural coconjugation action is given by
\begin{equation}
\DiffGroup_\mu(M) \times \csCohomology^2(M, \UGroup(1)) \to 
\csCohomology^2(M, \UGroup(1)), \quad (\phi, h) \mapsto (\phi^{-1})^* h.
\end{equation}
This indeed endows \( \kappa(\DiffGroup_\mu(M), \csCohomology^2(M, \UGroup(1))) \) with the structure of a coconjugation matched pair (relative to the trivial action of \( \csCohomology^2(M, \UGroup(1)) \) on \( \DiffGroup_\mu(M) \)).
Using \cref{prop::differentialCharacter:pullback}, we see that the induced Poisson Lie structure on \( \csCohomology^2(M, \UGroup(1)) \) is defined by
\begin{equation}
\csCohomology^2(M, \UGroup(1)) \times \VectorFieldSpace_\mu(M) \to 
\DiffFormSpace^1(M) \slash \dif \DiffFormSpace^0(M), \quad 
(h, X) \mapsto \equivClass{X \contr \curv h}. 
\end{equation}
Under the integration pairing, this corresponds to the map
	\begin{equation}
		\csCohomology^2(M, \UGroup(1)) \times \VectorFieldSpace_\mu(M) \times \VectorFieldSpace_\mu(M) \to \R \quad
		(h, X, Y) \mapsto \int_M \curv h (X, Y) \, \mu. 
	\end{equation}
Note that, for fixed \( h \in \csCohomology^2(M, \UGroup(1)) \), this is precisely the Lichnerowicz cocycle on \( \VectorFieldSpace_\mu(M) \) defined by the \( 2 \)-form \( \curv h \).
In other words, the Lichnerowicz cocycle is derived from the pull-back action.
\end{example}

For the rest of the section, we consider the case where \( G^* \) is Abelian in more detail.
To emphasize the Abelian nature we write group operation in \( G^* \) as addition.
In this case, the coadjoint action of \( G^* \) on \( \LieA{g} \) is trivial so that we can choose \( \Upsilon^*: G^* \times G \to G \) to be the trivial action.
Accordingly, we refer to the \( G \)-action \( \Upsilon \) on \( G^* \) as the conjugation action.
Moreover, the matched pair condition~\eqref{eq:coconjugation:matched} simplifies to
\begin{equation}
	\Upsilon_g(\eta_1 + \eta_2) = \Upsilon_g(\eta_1) + \Upsilon_g(\eta_2),
\end{equation}
and we will call \( \Upsilon \) standard if this relation is satisfies. 

The following is a refinement of \cref{prop:coconjugation:uniquness} in the Abelian setting.
\begin{prop}
\label{prop:coconjugationUniqueness}
Let \( \Upsilon: G \times G^* \to G^* \) be a standard 
coconjugation action.
A map \( c: G \to G^* \) defines a coconjugation 
action by \( \tilde{\Upsilon} \defeq \Upsilon + c \) if and only 
if \( c \) satisfies \( c(gh) = c(g) + \Upsilon_g (c(h)) \) for 
all \( g, h \in G \). That is, 
\( \tilde{\Upsilon} \) is a coconjugation action if and only 
if \( c \) is a \( 1 \)-cocycle with respect to \( \Upsilon \).
\end{prop}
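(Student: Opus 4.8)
The plan is to check the defining conditions of a coconjugation action for the affine map \( \tilde{\Upsilon}(g,\eta) \defeq \Upsilon(g,\eta) + c(g) \) one by one, and to observe that only the left-action property carries content — which then turns out to be precisely the cocycle identity. Since \( G^* \) is Abelian we keep \( \Upsilon^* \) trivial, so the \( \Upsilon^* \)-parts of~\eqref{eq:coconjugation:defn} are unchanged and hold. For \( \tilde{\Upsilon} \), the tangent map at any point is unaffected by the constant translation by \( c(g) \), hence \( \tangent \tilde{\Upsilon}_g = \tangent \Upsilon_g = \CoAdAction_{g^{-1}} \) and the linearization condition~\eqref{eq:coconjugation:defn} is inherited from \( \Upsilon \). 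Finally, putting \( g = h = e \) in the cocycle relation gives \( c(e) = c(e) + \Upsilon_e(c(e)) = 2 c(e) \), so \( c(e) = 0 \) and therefore \( \tilde{\Upsilon}_e = \operatorname{id}_{G^*} \). Thus everything reduces to the associativity of \( \tilde{\Upsilon} \). (Conceptually this is the group-valued counterpart of the classical statement that an affine modification of the coadjoint action is a group action exactly for the non-equivariance cocycles of momentum maps.)

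For the direction \enquote{cocycle \( \Rightarrow \) action}, assume \( c(gh) = c(g) + \Upsilon_g(c(h)) \). Using that \( \Upsilon \) is \emph{standard}, i.e.\ each \( \Upsilon_g \colon G^* \to G^* \) is additive, one computes for \( g,h \in G \) and \( \eta \in G^* \)
\begin{equation}
	(\tilde{\Upsilon}_g \circ \tilde{\Upsilon}_h)(\eta)
	= \Upsilon_g\bigl(\Upsilon_h(\eta) + c(h)\bigr) + c(g)
	= \Upsilon_g\bigl(\Upsilon_h(\eta)\bigr) + \Upsilon_g(c(h)) + c(g)
	= \Upsilon_{gh}(\eta) + c(gh)
	= \tilde{\Upsilon}_{gh}(\eta),
\end{equation}
where the third equality uses both the cocycle relation and the fact that \( \Upsilon \) is itself an action. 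Hence \( \tilde{\Upsilon} \) is a left \( G \)-action on \( G^* \), and by the previous paragraph \( (\tilde{\Upsilon}, \Upsilon^*) \) is a coconjugation action.

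Conversely, if \( \tilde{\Upsilon} \) is a coconjugation action, then \( \tilde{\Upsilon}_{gh} = \tilde{\Upsilon}_g \circ \tilde{\Upsilon}_h \); evaluating this identity at \( 0 \in G^* \) and using \( \Upsilon_g(0) = 0 \) (a consequence of standardness) yields \( c(gh) = \tilde{\Upsilon}_{gh}(0) = \tilde{\Upsilon}_g(c(h)) = \Upsilon_g(c(h)) + c(g) \), which is the cocycle condition. The argument is elementary throughout; the only thing to keep track of is \emph{where} standardness of \( \Upsilon \) enters — in the forward direction to pull \( \Upsilon_g \) through the sum \( \Upsilon_h(\eta) + c(h) \), and in the converse direction only through the evaluation at \( 0 \in G^* \) — and no functional-analytic difficulties occur since every map in sight is assembled from \( \Upsilon \), \( c \), and the group operations.
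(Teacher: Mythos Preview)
Your proof is correct and follows essentially the same approach as the paper: the paper's forward direction is exactly your displayed calculation, and for the converse the paper simply says ``reading the argument backwards,'' which amounts to the same comparison you make explicit by evaluating at \(0\). Your version is slightly more thorough in explicitly verifying the side conditions (\(\tilde{\Upsilon}_e = \operatorname{id}\), the linearization, and where standardness is used), but the core argument is identical.
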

\begin{proof}
Suppose that \( c: G \to G^* \) is a map satisfying \( c(gh) = c(g) + \Upsilon_g (c(h)) \).
Define the map \( \tilde{\Upsilon}: G \times G^* \to G^* \) by \( \tilde{\Upsilon}_g (\eta) = \Upsilon_g (\eta) + c(g) \).
It is clear that \( \tilde{\Upsilon} \) still integrates the coadjoint action.
Thus it is left to show that \( \tilde{\Upsilon} \) is a left action indeed.
This follows from the following straightforward calculation:
\begin{equation}\begin{split}
	\left( \tilde{\Upsilon}_g \circ \tilde{\Upsilon}_h\right) (\eta)
		&= \Upsilon_g \bigl(\Upsilon_h (\eta) + c(h)\bigr) + c(g) \\
		&=  \Upsilon_g \bigl(\Upsilon_h (\eta)\bigr) + \Upsilon_g \bigl(c(h)\bigr) + c(g) \\
		&=  \Upsilon_{gh} (\eta) + c(gh) \\
		&= \tilde{\Upsilon}_{gh}(\eta).
\end{split}\end{equation}
Reading the argument backwards establishes the converse claim in the 
\enquote{if and only if} statement.
\end{proof}

\begin{example}
The coadjoint representation is a standard coconjugation action of \( G \) on \( G^* \defeq \LieA{g}^* \). Moreover, since \( \LieA{g}^* \) is connected, the previous proposition in combination with \cref{prop:coconjugation:uniquness} establishes a bijection between coconjugation actions on \( \LieA{g}^* \) and \( 1 \)-cocycles \( c: G \to \LieA{g}^* \). The coconjugation action corresponding to a \(1\)-cocycle \( c \) is the affine action \( (g, \mu) \mapsto \CoAdAction_{g^{-1}} \mu + c(g) \), which plays an important role for classical non-equivariant momentum maps (see \parencite[Definiton~4.5.23]{OrtegaRatiu2003}).
\end{example}

According to \cref{prop:poissonLie:fromCoconjugationToPoisson}, every standard coconjugation action yields an associated Lie Poisson structure \( \pi_{G^*} \) on \( G^* \) (the induced Lie Poisson structure \( \pi_G \) on \( G \) vanishes, because \( \Upsilon^* \) is trivial).
For the Poisson structure induced on \( G^* \) we have the following affine version.
\begin{prop}
	\label{prop:poissonLie:fromCoconjugationToAffinePoisson}
	Let \( \kappa(G, G^*) \) be a dual pair of Lie groups with Abelian \( G^* \).
	For every standard coconjugation action \( \Upsilon \) and for every cocycle \( c: G \to G^* \), let \( \tilde{\Upsilon} = \Upsilon + c \) be the associated coconjugation action and define \( \tilde{\pi}_{G^*}: G^* \times \LieA{g} \to \LieA{g}^* \) by 
	\begin{equation}
		\tilde{\pi}_{G^*}(\eta, A) \defeq - \eta^{-1} \ldot \tangent_e \tilde{\Upsilon}_\eta (A).
	\end{equation}
	If \( \pi_{G^*} \) is skew-symmetric in the sense of~\eqref{eq:poissonLie:inLeftTrivDualized:skewsymmetric}, then it defines an affine Poisson structures on \( G^* \).
\end{prop}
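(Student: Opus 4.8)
The plan is to verify that $\tilde{\pi}_{G^*}$ fulfils the defining properties of an affine Poisson structure on $G^*$, that is, the conditions of \cref{prop:poissonLie:inLeftTrivDualized} with the multiplicativity~\eqref{eq:poissonLieDef:multiplicative} replaced by an affine counterpart. The first step is a reduction that exploits that $G^*$ is Abelian. Since $c$ is a cocycle for $\Upsilon$ (\cref{prop:coconjugationUniqueness}), it maps the identity of $G$ to that of $G^*$, so the map $\tilde{\Upsilon}_\eta \colon G \to G^*$, $g \mapsto \Upsilon_g(\eta) + c(g)$, satisfies $\tilde{\Upsilon}_\eta(e) = \eta$ and, differentiating the product in the Abelian group $G^*$, $\tangent_e \tilde{\Upsilon}_\eta (A) = \tangent_e \Upsilon_\eta (A) + \eta \ldot \sigma(A)$ with $\sigma \defeq \tangent_e c \colon \LieA{g} \to \LieA{g}^*$. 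As $\CoAdAction_{\eta^{-1}}$ acts trivially on $\LieA{g}$ (because $G^*$ is Abelian), this gives
\begin{equation}
	\tilde{\pi}_{G^*}(\eta, A) = \pi_{G^*}(\eta, A) - \sigma(A),
\end{equation}
where $\pi_{G^*}$ is the map associated with the coconjugation matched pair $(\Upsilon, \Upsilon^*)$ as in \cref{prop:poissonLie:fromCoconjugationToPoisson} and the correction $\sigma(A) \in \LieA{g}^*$ does not depend on $\eta$. Linearity of $\tilde{\pi}_{G^*}$ in the second argument is then clear, while skew-symmetry in the sense of~\eqref{eq:poissonLie:inLeftTrivDualized:skewsymmetric} is the standing hypothesis; it is also the only one of the four conditions that is not automatic.

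I would next dispose of the two conditions involving only first-order data. The proof of \cref{prop:poissonLie:fromCoconjugationToPoisson} establishes the multiplicativity and compatibility identities for $\pi_{G^*}$ without ever invoking its skew-symmetry, and for Abelian $G^*$ the multiplicativity~\eqref{eq:poissonLieDef:multiplicative} collapses to $\pi_{G^*}(\eta\,\zeta, A) = \pi_{G^*}(\eta, A) + \pi_{G^*}(\zeta, A)$. Substituting $\pi_{G^*} = \tilde{\pi}_{G^*} + \sigma$ yields the \emph{affine multiplicativity}
\begin{equation}
	\tilde{\pi}_{G^*}(\eta\,\zeta, A) = \tilde{\pi}_{G^*}(\eta, A) + \tilde{\pi}_{G^*}(\zeta, A) + \sigma(A),
\end{equation}
the extra summand $\sigma(A)$ being the affine defect that names the structure. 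The compatibility condition~\eqref{eq:poissonLieDef:compatible} passes over verbatim, since the correction $-\sigma(A)$ is constant in $\eta$: indeed $\tangent_e\bigl(\tilde{\pi}_{G^*}(\cdot, A)\bigr)(\mu) = \tangent_e\bigl(\pi_{G^*}(\cdot, A)\bigr)(\mu) = \CoadAction_A \mu$.

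For the Poisson property~\eqref{eq:poissonLieDef:poisson} I would first note that, as $\LieA{g}^*$ is an Abelian Lie algebra, every operator $\CoadAction_\mu$ with $\mu \in \LieA{g}^*$ vanishes and the bracket~\eqref{eq:poissonLie:strangeSchouten} is identically zero on maps with values in $\LieA{g}^*$; hence~\eqref{eq:poissonLieDef:poisson} reduces to $\dif^{\textrm{CE}}_{\LieA{g}^*} \tilde{\pi}_{G^*} = 0$. Splitting $\tilde{\pi}_{G^*} = \pi_{G^*} - \sigma$, the identity $\dif^{\textrm{CE}}_{\LieA{g}^*} \pi_{G^*} = 0$ is exactly the algebraic condition~\eqref{eq:poissonLieDef:poisson} for $\pi_{G^*}$ established (again without using skew-symmetry) in the proof of \cref{prop:poissonLie:fromCoconjugationToPoisson}, so it remains to show $\dif^{\textrm{CE}}_{\LieA{g}^*} \sigma = 0$, which upon unwinding is the Lie algebra cocycle relation $\sigma(\LieBracket{A}{B}_{\LieA{g}}) = \CoadAction_B \sigma(A) - \CoadAction_A \sigma(B)$. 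To obtain it, observe that the left logarithmic derivative $\difLog c \in \DiffFormSpace^1(G, \LieA{g}^*)$ satisfies the Maurer--Cartan equation, and the Abelian bracket on $\LieA{g}^*$ turns this into simply $\dif \difLog c = 0$; on the other hand, differentiating the cocycle relation $c(gh) = c(g) + \Upsilon_g(c(h))$ in $h$ at $h = e$ and using $\tangent_e \Upsilon_g = \CoAdAction_{g^{-1}}$ from~\eqref{eq:coconjugation:defn} shows $(\difLog c)_g(B) = \CoAdAction_{g^{-1}} \sigma(B)$; evaluating $\dif \difLog c = 0$ on left-invariant vector fields at $e$ then produces the asserted relation. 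This establishes all four conditions, so $\tilde{\pi}_{G^*}$ is an affine Poisson structure on $G^*$.

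The computations are mostly forced by the Abelian hypothesis together with the identities already contained in the proof of \cref{prop:poissonLie:fromCoconjugationToPoisson}; the one place requiring genuine care is the last step, where the differential calculus on the locally convex Lie group $G$ has to be handled rigorously — in particular one must justify the formula $(\difLog c)_g(B) = \CoAdAction_{g^{-1}} \sigma(B)$ and the Maurer--Cartan equation for $\difLog c$. The skew-symmetry assumption enters precisely because, for a general group cocycle $c$, the correction $\sigma$ need not satisfy~\eqref{eq:poissonLie:inLeftTrivDualized:skewsymmetric}, in which case $\tilde{\pi}_{G^*}$ would not define a bivector field on $G^*$ at all, so the hypothesis is exactly what is needed for the statement to make sense.
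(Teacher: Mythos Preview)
Your argument is correct, and the key identities check out (including the affine defect $+\sigma(A)$, which agrees with the paper's $-\tilde{\pi}_{G^*}(0,A)$; the paper's intermediate expression $-\tangent_e c(A)$ appears to carry a sign slip that your computation avoids).

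The route, however, is organised differently from the paper's. The paper does not decompose $\tilde{\pi}_{G^*}=\pi_{G^*}-\sigma$ at all: it first records the identity $\tilde{\Upsilon}_{\zeta+\eta}=\tilde{\Upsilon}_\zeta+\tilde{\Upsilon}_\eta-c$ and differentiates it at $e$ to obtain affine multiplicativity in one stroke, then for the Poisson property it simply reruns the Maurer--Cartan argument of \cref{prop:poissonLie:fromCoconjugationToPoisson} with $\tilde{\Upsilon}$ in place of $\Upsilon$ (this works verbatim because $\tilde{\Upsilon}$ is still a left action and, for Abelian $G^*$, the relation $\difLog_g \tilde{\Upsilon}_\eta(g\ldot A)=-\CoAdAction_{g^{-1}}\tilde{\pi}_{G^*}(\eta,A)$ survives the affine shift by $c(g)$). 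Your approach instead reduces everything to the already-established properties of $\pi_{G^*}$ plus a separate verification that $\sigma=\tangent_e c$ is a Chevalley--Eilenberg $1$-cocycle, which you obtain from the Maurer--Cartan equation for $\difLog c$ and the differentiated group-cocycle identity. What your decomposition buys is a transparent explanation of \emph{why} the Poisson property persists (the correction is a Lie algebra cocycle), at the cost of needing the auxiliary computation $(\difLog c)_g(g\ldot B)=\CoAdAction_{g^{-1}}\sigma(B)$; the paper's route is shorter but leaves this structure implicit. Your observation that the multiplicativity, compatibility and Poisson computations in the proof of \cref{prop:poissonLie:fromCoconjugationToPoisson} never invoke skew-symmetry is correct and is what makes your reduction legitimate.
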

\begin{proof}
	Since we have \( \tilde{\Upsilon}_{\zeta + \eta} = \tilde{\Upsilon}_{\zeta} + \tilde{\Upsilon}_{\eta} - c \), a similar calculation as in the proof of \cref{prop:poissonLie:fromCoconjugationToPoisson} shows that
	\begin{equation}\begin{split}
	\tilde{\pi}_{G^*}(\zeta + \eta, A) 
	&=\tilde{\pi}_{G^*}(\zeta, A) + \tilde{\pi}_{G^*}(\eta, A) 
	- \tangent_e c (A)  \\
	&=\tilde{\pi}_{G^*}(\zeta, A) + \tilde{\pi}_{G^*}(\eta, A) 
	- \tilde{\pi}_{G^*}(0, A).
	\end{split}\end{equation}
	Thus \( \tilde{\pi}_{G^*} \) is an affine bivector field.
	The Poisson property follows as in \cref{prop:poissonLie:fromCoconjugationToPoisson}.
\end{proof}

For a given coconjugation action \( \Upsilon \) of \( G \) on \( G^* \), we say that the group-valued momentum map is \emphDef{equivariant} if it is \( G \)-equivariant as a map \( J: M \to G^* \).
It is well-known that a classical momentum map is equivariant if and only if it is a Poisson map.
This relation carries over to our more general setting.

As above, we first have to reformulate what it means for \( J \) to be a Poisson map in terms of \( \pi_{G^*} \).
For a smooth function \( f:G^* \to \R \), the left derivative \( \tangentLeft_\eta f: \LieA{g}^* \to \R \) at \( \eta \in G^* \) is an element of the double dual \( \LieA{g}^{**} \).
In finite dimensions, we can view \( \tangentLeft_\eta f \) thus as an element of \( \LieA{g} \).
The Poisson bracket of \( f, g \in \sFunctionSpace(G^*) \) is given by \( \poisson{f}{g}_{G^*}(\eta) = \kappa\bigl(\tangentLeft_\eta f, \pi_{G^*}(\eta, \tangentLeft_\eta g)\bigr) \) according to~\eqref{eq:poissonLie:poissonBracketFromTensor}.
Given a group-valued momentum map \( J: M \to G^* \), we calculate for \( X_m \in \TBundle_m M \)
\begin{equation}\begin{split}
 	\tangent_m (f \circ J)(X) 
 		&= \tangent_{J(m)} f \bigl(J(m) \ldot \difLog_m J(X)\bigr)
 		= \kappa\bigl(\tangentLeft_{J(m)} f, \difLog_m J(X)\bigr)
 		\\
 		&= - \omega_m \bigl((\tangentLeft_{J(m)} f) \ldot m, X\bigr)
\end{split}\end{equation} 
by~\eqref{eq::momentumMap:DefEq}.
Hence, we obtain (see~\eqref{eq::luMomentumMap:defining})
\begin{equation}\begin{split}
	\poisson{f \circ J}{g \circ J}_M(m)
		&= \varpi_m \bigl(\tangent_m (f \circ J), \tangent_m (g \circ J)\bigr)
		\\
		&= \omega_m \bigl((\tangentLeft_{J(m)} f) \ldot m, (\tangentLeft_{J(m)}g) \ldot m\bigr).
\end{split}\end{equation}
Thus, in summary, \( J \) is a Poisson map, \ie, \( \poisson{f \circ J}{g \circ J}_M = \poisson{f}{g}_{G^*} \circ J \), if and only if 
\begin{equation}\label{eq:momentumMap:Poisson}
	\omega_m (A \ldot m, B \ldot m) = \kappa\bigl(A, \pi_{G^*}(J(m), B)\bigr)
\end{equation}
holds for all \( A, B \in \LieA{g} \) and \( m \in M \).
This equation no longer relies on reflexivity to make sense and so we adopt it as the definition for \( J \) to be a Poisson map in infinite dimensions.
Recall that the left-hand side of~\eqref{eq:momentumMap:Poisson} defines the so called \emphDef{non-equivariance} cocycle \( \sigma_m (A, B) = \omega_m (A \ldot m, B \ldot m) \). 
Thus, \( J \) is a Poisson map if and only if the Lie algebra cocycles \( \sigma_m \) and \( \kappa\bigl(\cdot, \pi_{G^*} (J(m), \cdot)\bigr) \) coincide for all \( m \in M \).

\begin{prop}
	Let \( \kappa(G, G^*) \) be a dual pair of Lie groups where \( G^* \) is Abelian.
	Assume that \(  G \) acts on the symplectic manifold \( (M, \omega) \) with a group-valued momentum map \( J: M \to G^*\).
	If \( J \) is equivariant with respect to a given coconjugation action \( \tilde{\Upsilon} = \Upsilon + c \), then it is a Poisson map relative to the induced Poisson structure \( \tilde{\pi}_{G^*} \) of \cref{prop:poissonLie:fromCoconjugationToAffinePoisson}.
\end{prop}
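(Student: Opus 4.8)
The plan is to verify, under the equivariance hypothesis, the explicit criterion for being a Poisson map established in the paragraph preceding the statement, now for the affine structure \( \tilde{\pi}_{G^*} \). Recall that a group-valued momentum map \( J \) is a Poisson map with respect to a Poisson Lie structure \( \pi \) on \( G^* \) exactly when \( \omega_m(A \ldot m, B \ldot m) = \kappa\bigl(A, \pi(J(m), B)\bigr) \) for all \( A, B \in \LieA{g} \) and \( m \in M \); the derivation of this equivalence goes through verbatim with \( \pi \) replaced by the affine structure \( \tilde{\pi}_{G^*} \) of \cref{prop:poissonLie:fromCoconjugationToAffinePoisson}. Thus the whole task reduces to establishing this single identity.

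First I would differentiate the equivariance relation \( J(g \cdot m) = \tilde{\Upsilon}_g(J(m)) \) in the group direction: inserting \( g = \exp(tB) \) and differentiating at \( t = 0 \) gives
\[
	\tangent_m J(B \ldot m) = \tangent_e \tilde{\Upsilon}_{J(m)}(B),
\]
where \( \tilde{\Upsilon}_\eta: G \to G^* \), \( g \mapsto \tilde{\Upsilon}(g, \eta) \), is the partial map. Applying the left translation \( J(m)^{-1} \ldot \) and invoking the definition \( \tilde{\pi}_{G^*}(\eta, A) = - \eta^{-1} \ldot \tangent_e \tilde{\Upsilon}_\eta(A) \) from \cref{prop:poissonLie:fromCoconjugationToAffinePoisson}, this becomes
\[
	(\difLog J)_m(B \ldot m) = J(m)^{-1} \ldot \tangent_e \tilde{\Upsilon}_{J(m)}(B) = - \tilde{\pi}_{G^*}(J(m), B).
\]

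Then I would plug the fundamental vector field \( B \ldot m \) into the defining equation~\eqref{eq::momentumMap:DefEq} of the momentum map, \( A^* \contr \omega + \kappa(A, \difLog J) = 0 \), evaluated at \( m \); since \( A^*_m = A \ldot m \), this yields \( \omega_m(A \ldot m, B \ldot m) = - \kappa\bigl(A, (\difLog J)_m(B \ldot m)\bigr) \). Substituting the previous display gives exactly \( \omega_m(A \ldot m, B \ldot m) = \kappa\bigl(A, \tilde{\pi}_{G^*}(J(m), B)\bigr) \), which is the criterion~\eqref{eq:momentumMap:Poisson} for \( \tilde{\pi}_{G^*} \). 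Hence \( J \) is a Poisson map.

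The calculation is short, so the real content is keeping the signs straight and tracking where the Abelian hypothesis enters. It is used twice: it is what makes the standard coconjugation action \( \tilde{\Upsilon} = \Upsilon + c \) meaningful, and, more essentially, it is why the formula for \( \tilde{\pi}_{G^*} \) carries no coadjoint twist — in the non-Abelian case one would instead compare \( \tangent_m J(B \ldot m) = \tangent_e \Upsilon_{J(m)}(B) \) against \( \pi_{G^*}(\eta, A) = - \eta^{-1} \ldot \tangent_e \Upsilon_\eta(\CoAdAction_{\eta^{-1}} A) \) of \cref{prop:poissonLie:fromCoconjugationToPoisson}, which still matches precisely because \( \CoAdAction \) of \( G^* \) on \( \LieA{g} \) is trivial when \( G^* \) is Abelian. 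I would also remark that skew-symmetry of \( \tilde{\pi}_{G^*} \) — the hypothesis of \cref{prop:poissonLie:fromCoconjugationToAffinePoisson} ensuring it is genuinely a Poisson structure — is presupposed in the statement, and is in any event consistent with the identity just proved, whose right-hand side is visibly skew in \( A, B \) along the image of \( J \).
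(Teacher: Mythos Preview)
Your proof is correct and follows exactly the same route as the paper: differentiate the equivariance relation to obtain \( \difLog_m J(B \ldot m) = -\tilde{\pi}_{G^*}(J(m),B) \), then combine this with the momentum map relation \( \kappa(A,\difLog_m J(B\ldot m)) = -\omega_m(A\ldot m, B\ldot m) \) to get the Poisson criterion~\eqref{eq:momentumMap:Poisson}. Your additional remarks on where the Abelian hypothesis enters and on skew-symmetry are sound commentary not present in the paper's terse version.
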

\begin{proof}
	If \( J: M \to G^* \) is equivariant, then \( J(g \cdot m) = \tilde{\Upsilon}_g J(m) \) relative to the coconjugation action \( \tilde{\Upsilon}: G \times G^* \to G^* \).
	Thus,
	\begin{equation}
		\difLog_m J (B \ldot m)
		= J(m)^{-1} \ldot \tangent_e \tilde{\Upsilon}_{J(m)} (B)
		= - \tilde{\pi}_{G^*}\bigl(J(m), B \bigr).
	\end{equation}
	On the other hand, \( \kappa\bigl(A, \difLog_m J (B \ldot m)\bigr) = - \omega_m (A \ldot m, B \ldot m) \) by the definition of the momentum map, and the claim follows.
\end{proof}

\subsection{Momentum maps for group extensions}
\label{sec::groupMomentumMap:forExtension}

Consider the following set-up. Let \( H, K, G \) be Lie groups which fit in the exact sequence
\begin{equationcd}
\label{eq::momentumMap:forExtension:extension}
e \to[r] 	& H \to[r, "\iota"] 	& K \to[r, "\pi"] 	& G \to[r] 	& e,
\end{equationcd}
where \( \iota \) and \( \pi \) are Lie group morphisms. 
Suppose that \( K \) acts on a symplectic manifold \( (M, \omega) \) such that the diffeomorphism defined by each group element is symplectic. 
We are interested in the momentum map for this action.
Starting from the extension sequence~\eqref{eq::momentumMap:forExtension:extension}, it is natural to seek an expression of the momentum map for the \( K \)-action in terms of the momentum maps for the groups \( H \) and \( G \), assuming they exist. 
Similar questions occur in the context of symplectic reduction by stages (see \parencite{MarsdenMisiolekEtAl2007}).

The corresponding sequence on the infinitesimal level
\begin{equationcd}[label=eq::groupMomentumMap:forExtensionInfinitisimal]
0 \to[r] 	& \LieA{h} \to[r, "\iota"] 	& \LieA{k} \to[r, "\pi"] 	
& \LieA{g} \to[r] 	& 0
\end{equationcd}
always splits as vector spaces but not necessarily as Lie algebras. Let us fix a splitting \( \sigma: \LieA{g} \to \LieA{k} \) in the category of locally convex vector spaces and write $\LieA{h}\oplus_\sigma \LieA{g} = \LieA{k}$ for the corresponding direct sum. 
Thus, every \( A \in \LieA{k} \) can be uniquely written as the sum \( A = \iota(A_H) + \sigma(A_G) \) with \( A_H \in \LieA{h} \) and \( A_G = \pi(A) \in \LieA{g} \).

\begin{prop}
\label{prop::groupMomentumMap:forExtension}
Let \( \kappa(G, G^*) \) be a dual pair of Lie groups and let \( \dualPair{\LieA{h}}{\LieA{h}} \) be a pairing of \( \LieA{h} \) with itself.
Assume that the induced action of \( H \) on \( M \) has a standard momentum map \( J_H: M \to \LieA{h} \) with respect to the pairing \(\dualPair{\LieA{h}}{\LieA{h}} \) and that there exists a map \( J_\sigma: M \to G^*\) satisfying
\begin{equation}
\label{eq:groupMomentumMap:forExtension:momentumMapSplitting}
\sigma(A_G)^* \contr \omega + \kappa(A_G, \difLog J_\sigma) = 0, 
\qquad A_G \in \LieA{g}.
\end{equation}		
Then, \( J_K = (J_H, J_\sigma): M \to \LieA{h} \times G^*\) is a group-valued momentum map for the \( K \)-action with respect to the pairing 
\( \scalarProdR{\LieA{h} \oplus_\sigma \LieA{g}}{\LieA{h} \oplus 
\LieA{g}^*} 
= \dualPair{\LieA{h}}{\LieA{h}} + \kappa(\LieA{g}, \LieA{g}^*) \).
\end{prop}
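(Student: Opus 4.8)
The plan is to verify the defining equation~\eqref{eq::momentumMap:DefEq} of a group-valued momentum map for the $K$-action directly, exploiting the product structure of the target $\LieA{h} \times G^*$, the $\R$-linearity of the assignment $A \mapsto A^*$, and the vector-space splitting $\LieA{k} = \LieA{h} \oplus_\sigma \LieA{g}$. As a preliminary, note that each of the two summands of $\dualPair{\cdot}{\cdot} + \kappa$ is weakly non-degenerate, so this combined pairing puts $\LieA{k}$ in weak duality with the Lie algebra $\LieA{h} \oplus \LieA{g}^*$ of the Lie group $\LieA{h} \times G^*$ (with $\LieA{h}$ regarded as an abelian Lie group); hence $\bigl(K, \LieA{h} \times G^*\bigr)$ is a dual pair of Lie groups and a $K$-momentum map with values in it makes sense. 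The Lie bracket on $\LieA{h} \oplus \LieA{g}^*$ plays no role for this.

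Next I would record two elementary structural facts. First, linearity of $A \mapsto A^*$ together with $A = \iota(A_H) + \sigma(A_G)$ gives $A^* = \iota(A_H)^* + \sigma(A_G)^*$; and since the $H$-action on $M$ is the one induced through $\iota$, the fundamental vector field of $A_H \in \LieA{h}$ for the $H$-action equals $\iota(A_H)^*$. Second, because the target is a direct product of Lie groups, $\difLog J_K = (\difLog J_H, \difLog J_\sigma)$, and as $\LieA{h}$ is abelian $\difLog J_H$ is just the ordinary differential $\dif J_H$, so $\dualPair{A_H}{\difLog J_H} = \dif \dualPair{A_H}{J_H}$ recovers the classical momentum-map expression on the $\LieA{h}$-factor.

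The conclusion is then just an addition of the two hypotheses. Since $J_H$ is a standard $\LieA{h}$-valued momentum map for the $H$-action, $\iota(A_H)^* \contr \omega + \dif \dualPair{A_H}{J_H} = 0$; and by~\eqref{eq:groupMomentumMap:forExtension:momentumMapSplitting}, $\sigma(A_G)^* \contr \omega + \kappa(A_G, \difLog J_\sigma) = 0$. Adding these and using the two facts above yields, for every $A = \iota(A_H) + \sigma(A_G) \in \LieA{k}$,
\[
	A^* \contr \omega + \dualPair{A_H}{\difLog J_H} + \kappa(A_G, \difLog J_\sigma) = 0 ,
\]
which is precisely~\eqref{eq::momentumMap:DefEq} for $J_K = (J_H, J_\sigma)$ relative to the pairing $\scalarProdR{\LieA{h}\oplus_\sigma\LieA{g}}{\LieA{h}\oplus\LieA{g}^*} = \dualPair{\cdot}{\cdot} + \kappa(\cdot,\cdot)$. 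As $A$ was arbitrary, $J_K$ is the asserted group-valued momentum map.

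There is no real obstacle: the only points needing (minor) care are the identification of the fundamental vector fields of the $H$-action with the restriction along $\iota$ of those of the $K$-action, and the behavior of $\difLog$ under products of Lie groups together with its reduction to the ordinary differential on the abelian factor $\LieA{h}$ — both standard. The substance of \Cref{prop::groupMomentumMap:forExtension} is rather in the statement itself: from the quotient side one needs only a map $J_\sigma$ obeying the momentum-map relation along the image of the chosen splitting $\sigma$, and not a genuine momentum map for the $G$-action on $M$ (which, as in \Cref{prop::groupMomentumMap:existenceMaurerCartan}, may fail to exist for topological reasons).
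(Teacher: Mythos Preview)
Your proof is correct and follows essentially the same approach as the paper's own proof: decompose $A = \iota(A_H) + \sigma(A_G)$, use linearity of $A \mapsto A^*$ and of the pairing, and observe that the momentum map relation for $J_K$ splits into the two assumed relations for $J_H$ and $J_\sigma$. The paper's version is terser, while you make explicit the auxiliary facts (product structure of $\difLog J_K$, reduction of $\difLog$ to $\dif$ on the abelian factor, and non-degeneracy of the combined pairing) that the paper leaves implicit.
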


\begin{proof}
The momentum map equation
\begin{equation}
A^* \contr \omega + \scalarProdR{A}{\difLog J_K} = 0, \qquad A \in \LieA{k}
\end{equation}	
is linear in \( A = \iota(A_H) + \sigma(A_G) \) and thus decomposes in
	\begin{equation}
		A_H^* \contr \omega + \dualPair{A_H}{\dif J_H} = 0
	\end{equation}
and in 
	\begin{equation}
		\sigma(A_G)^* \contr \omega + \kappa(A_G, \difLog J_\sigma) = 0.
	\end{equation}
Both equations hold by assumption.
\end{proof}

Formally,~\eqref{eq:groupMomentumMap:forExtension:momentumMapSplitting} looks like a momentum map relation.
However, we do not assume that \( \sigma \) is a splitting on the level of Lie algebras and, hence \( G \), or its Lie algebra \( \LieA{g} \), does not act on \( M \) via \( \sigma \). 
However, if \( G \) happens to act on \( M \) through a different splitting \( \chi: G \to K \) which is a group section of $\pi$
(see~\eqref{eq::momentumMap:forExtension:extension}), then \( J_\sigma \) is the momentum map up to some twisting by \( J_H \).

\begin{prop}\label{prop::groupMomentumMap:forLift}
	In the setting above, let \( \chi: \LieA{g} \to \LieA{k} \) be a Lie algebra homomorphism splitting the exact sequence~\eqref{eq::groupMomentumMap:forExtensionInfinitisimal} and hence there is an induced (infinitesimal) \( \LieA{g} \)-action on \( M \).
	Define \( \tau_{\sigma\chi} \defeq \chi - \sigma: \LieA{g} \to \LieA{h} \) and denote  its dual map with respect to the chosen pairings by \( \tau_{\sigma\chi}^*: \LieA{h}^* \to \LieA{g}^\ast \).
	Moreover, assume that the dual group \(G^*\) of \( G \) is Abelian.
	Then, 
	\begin{equation}
		J_\chi: M \to G^*, \qquad m \mapsto J_\sigma(m) \cdot \exp( \tau_{\sigma\chi}^* J_H (m) )
	\end{equation}
	is a group-valued momentum map for the \( G \)-action on \( M \).
	Moreover, \( J_\chi \) does not depend on the splitting \( \sigma \).
\end{prop}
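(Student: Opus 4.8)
\subsection*{Proof proposal for \texorpdfstring{\cref{prop::groupMomentumMap:forLift}}{the lifting proposition}}

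The plan is to check directly that \( J_\chi \) satisfies the defining relation~\eqref{eq::momentumMap:DefEq} for the infinitesimal \( \LieA{g} \)-action \( A_G \mapsto \chi(A_G)^* \), and then to deduce independence of the vector-space splitting \( \sigma \) by comparing left logarithmic derivatives. The starting observation is that, since \( G^* \) is Abelian, the left logarithmic derivative is additive on products, \( \difLog(f \cdot g) = \difLog f + \difLog g \) for smooth maps \( f, g \colon M \to G^* \), and \( \difLog(\exp \circ \beta) = \dif \beta \) for \( \beta \colon M \to \LieA{g}^* \), because \( \exp \colon \LieA{g}^* \to G^* \) is then a Lie group homomorphism whose derivative at the identity is \( \mathrm{id}_{\LieA{g}^*} \). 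Applying both facts to \( J_\chi = J_\sigma \cdot \exp(\tau_{\sigma\chi}^* \circ J_H) \) yields
\begin{equation}
	\difLog J_\chi = \difLog J_\sigma + \dif\bigl(\tau_{\sigma\chi}^* \circ J_H\bigr).
\end{equation}

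Next I would pair this identity with an arbitrary \( A_G \in \LieA{g} \). The first term is handled by hypothesis~\eqref{eq:groupMomentumMap:forExtension:momentumMapSplitting}, which gives \( \kappa(A_G, \difLog J_\sigma) = - \sigma(A_G)^* \contr \omega \). For the second term, the definition of the dual map gives \( \kappa(A_G, \tau_{\sigma\chi}^* \xi) = \dualPair{\tau_{\sigma\chi}(A_G)}{\xi} \) for \( \xi \in \LieA{h} \), hence \( \kappa(A_G, \dif(\tau_{\sigma\chi}^* \circ J_H)) = \dif\dualPair{\tau_{\sigma\chi}(A_G)}{J_H} \). Now \( \tau_{\sigma\chi}(A_G) = \chi(A_G) - \sigma(A_G) \in \LieA{h} \), and since \( J_H \) is the standard momentum map for the \( H \)-action, the momentum map relation \( A_H^* \contr \omega + \dualPair{A_H}{\dif J_H} = 0 \) together with linearity of \( B \mapsto B^* \) on \( \LieA{k} \) gives \( \dif\dualPair{\tau_{\sigma\chi}(A_G)}{J_H} = - \chi(A_G)^* \contr \omega + \sigma(A_G)^* \contr \omega \). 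Adding the two contributions, the \( \sigma(A_G)^* \) terms cancel and one is left with \( \kappa(A_G, \difLog J_\chi) = - \chi(A_G)^* \contr \omega \), that is, \( \chi(A_G)^* \contr \omega + \kappa(A_G, \difLog J_\chi) = 0 \), which is exactly~\eqref{eq::momentumMap:DefEq} for the \( \LieA{g} \)-action through \( \chi \).

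For the independence of the splitting, I would take a second vector-space splitting \( \sigma' \) with an associated \( J_{\sigma'} \) satisfying~\eqref{eq:groupMomentumMap:forExtension:momentumMapSplitting}, and note that \( \sigma - \sigma' \) maps \( \LieA{g} \) into \( \LieA{h} \) and that \( \tau_{\sigma\chi} - \tau_{\sigma'\chi} = \sigma' - \sigma \). Running the same computation as above shows that the two maps \( J_\sigma \cdot J_{\sigma'}^{-1} \) and \( \exp\bigl((\sigma - \sigma')^* \circ J_H\bigr) \) from \( M \) to \( G^* \) have the same left logarithmic derivative: one uses~\eqref{eq:groupMomentumMap:forExtension:momentumMapSplitting} for both \( \sigma \) and \( \sigma' \), the \( H \)-momentum map relation applied to \( (\sigma - \sigma')(A_G) \in \LieA{h} \), and weak non-degeneracy of \( \kappa \). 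Hence the two maps differ by a constant element of \( G^* \); since \( J_\sigma \) and \( J_{\sigma'} \) are themselves only determined up to translation by a constant, this ambiguity can be absorbed, so the resulting \( J_\chi \) does not depend on the splitting.

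I do not expect a genuinely hard step here; the content is bookkeeping. The points that require care are the behaviour of \( \difLog \) under products and under \( \exp \), which really does use that \( G^* \) is Abelian — for non-Abelian \( G^* \) there would be \( \AdAction \)-twists and \( \tau_{\sigma\chi}^* \) would have to be transported along the coadjoint action — and keeping the sign conventions for the dual map \( \tau_{\sigma\chi}^* \) and for the momentum map relation mutually consistent, so that the two \( \sigma(A_G)^* \) contributions indeed cancel rather than add.
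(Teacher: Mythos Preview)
Your proof is correct and follows essentially the same approach as the paper. The verification of the momentum map relation is identical: additivity of \( \difLog \) on the Abelian \( G^* \), the defining relation for \( J_\sigma \), and the \( H \)-momentum map relation combine to cancel the \( \sigma(A_G)^* \) terms. For the independence of the splitting, the paper organizes the argument slightly more constructively --- given \( J_{\sigma_1} \), it \emph{defines} \( J_{\sigma_2} \defeq J_{\sigma_1} \cdot \exp\bigl(-(\sigma_1-\sigma_2)^* J_H\bigr) \), checks that this satisfies~\eqref{eq:groupMomentumMap:forExtension:momentumMapSplitting} for \( \sigma_2 \), and then obtains exact equality of the two \( J_\chi \)'s --- whereas you compare logarithmic derivatives of arbitrary \( J_\sigma, J_{\sigma'} \) and absorb the resulting constant into the inherent ambiguity of \( J_\sigma \); this is the same computation packaged differently, and both are valid.
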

\begin{proof}
For \( B \in \LieA{g} \), the infinitesimal generator vector field is \( B^* = (\chi(B))^* = \sigma(B)^* + \tau_{\sigma\chi}(B)^* \). Since \(G^*\) is by assumption Abelian, we have the ordinary product rule \( \difLog J_\chi = \difLog J_\sigma + \tau_{\sigma\chi}^* \dif J_H \). Calculating, using the defining equations for \( J_H \) and \( J_\sigma \), yields
	\begin{equation}\begin{split}
		B^* \contr \omega &+ \kappa(B, \difLog J_\chi)
			\\
			&= \sigma(B)^* \contr \omega + \kappa(B, \difLog J_\sigma)
			 + \tau_{\sigma\chi}(B)^* \contr \omega  + \kappa(B, \tau_{\sigma\chi}^* \dif J_H) \\
			&= \sigma(B)^* \contr \omega + \kappa(B, \difLog J_\sigma)
			 - \dualPair{\tau_{\sigma\chi}(B)}{\dif J_H} + \kappa(B, \tau_{\sigma\chi}^* \dif J_H) \\
			&= 0,
	\end{split}\end{equation}
	which shows that \( J_\chi \) is a  group-valued momentum map.

It is left  to show that the momentum map \( J_\chi \) does not depend on the splitting that  was used in the construction.
	Let \( \sigma_1 \) and \( \sigma_2 \) be different splittings.
	First, we start with the observation that if \( J_{\sigma_1} \) satisfies~\eqref{eq:groupMomentumMap:forExtension:momentumMapSplitting} for \( \sigma_1 \), then the map defined by \( J_{\sigma_2} \defeq J_{\sigma_1} \cdot \exp ( - (\sigma_1 - \sigma_2)^* J_H) \) satisfies the same relation with \( \sigma \) replaced by \( \sigma_2 \).
	Indeed, since \( G^* \) is assumed to be Abelian, the chain rule for the logarithmic derivative simplifies and is not twisted by the adjoint representation.
	We thus obtain 
	\begin{equation}
	 	\difLog J_{\sigma_2} 
	 		= \difLog J_{\sigma_1} - (\sigma_1 - \sigma_2)^* \dif J_H .
	\end{equation} 
Using~\eqref{eq:groupMomentumMap:forExtension:momentumMapSplitting} and the momentum relation for \( J_H \), we get for all \( A_G \in \LieA{g} \)
\begin{equation}\begin{split}
	\kappa(A_G, \difLog J_{\sigma_2})
		&= \kappa(A_G, \difLog J_{\sigma_1}) - \kappa(A_G, (\sigma_1 - \sigma_2)^* \dif J_H)
		\\
		&= \kappa(A_G, \difLog J_{\sigma_1}) - \dualPair{(\sigma_1 - \sigma_2) A_G}{\dif J_H}
		\\
		&= - (\sigma_1 A_G)^* \contr \omega + ((\sigma_1 - \sigma_2) A_G)^* \contr \omega
		\\
		&= - (\sigma_2 A_G)^* \contr \omega.
\end{split}\end{equation}
On the other hand, we clearly have \( \tau_{\sigma_2 \chi}^* = \tau_{\sigma_1 \chi}^* + (\sigma_1 - \sigma_2)^* \).
Hence, in summary, 
\begin{equation}\begin{split}
	J_{\sigma_2} \cdot \exp(\tau_{\sigma_2 \chi}^* J_H)
	 	&= J_{\sigma_1} \cdot \exp ( - (\sigma_1 - \sigma_2)^* J_H) \exp (\tau_{\sigma_1 \chi}^* J_H) \exp ((\sigma_1 - \sigma_2)^* J_H)
	 	\\
	 	&= J_{\sigma_1} \cdot \exp(\tau_{\sigma_1 \chi}^* J_H)
\end{split}\end{equation}
and thus \( J_\chi \) is independent of the chosen splitting \( \sigma \).
\end{proof}

In a similar spirit, we can determine the momentum map for the action of a subgroup.
\begin{prop}\label{prop::momentumMap:forSubgroups}
Let \( G \) act on the symplectic manifold \( (M, \omega) \) with 
group-valued momentum map \( J: M \to G^* \). Let \( \iota: H \to G \) be 
a Lie group homomorphism, so that \( H \) acts through \( G \) on \( M \). 
Fix a dual group \( H^* \) of \( H \).
Assume\footnotemark{} that there is a Lie group homomorphism \( \rho: G^* \to H^* \) whose associated Lie algebra homomorphism \( \rho: \LieA{g}^* \to \LieA{h}^*\) 
is the dual of \( \iota: \LieA{h} \to \LieA{g} \) with respect to 
\( \kappa(\LieA{g}, \LieA{g}^*) \) and \( \dualPair{\LieA{h}}
{\LieA{h}^*} \). Then \( J^H \defeq \rho \circ J: M \to H^* \) is 
a group-valued momentum map for the induced \( H \)-action.
\footnotetext{Note that these assumptions are automatically satisfied in 
finite dimensions if \( G \) is \( 1 \)-connected, \ie, connected and
$\pi_1(G, e) = 1$. In infinite dimensions, however, the adjoint 
\( \rho: \LieA{g}^* \to \LieA{h}^*\) of the linear map \( \iota \) does 
not need to exist, and even if it exists, it does not necessarily 
integrate to a Lie group homomorphism (for this, we would need some 
regularity assumptions on the dual group \( H^*\), \cf 
\parencite[Theorem~III.1.5]{Neeb2006}).}
\end{prop}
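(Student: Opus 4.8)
The plan is to verify directly the defining equation of a group-valued momentum map for the induced \( H \)-action, namely \( B^* \contr \omega + \dualPair{B}{\difLog J^H} = 0 \) for all \( B \in \LieA{h} \), where \( J^H = \rho \circ J \) and \( B^* \) denotes the fundamental vector field on \( M \) generated by \( B \) via the \( H \)-action. Two elementary observations reduce this to the momentum map equation~\eqref{eq::momentumMap:DefEq} already available for \( J \). First, because \( H \) acts on \( M \) through the homomorphism \( \iota \colon H \to G \), the \( H \)-fundamental vector field of \( B \in \LieA{h} \) coincides with the \( G \)-fundamental vector field of \( \iota(B) \in \LieA{g} \), so \( B^* = (\iota B)^* \). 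Second, for any Lie group homomorphism \( \rho \colon G^* \to H^* \) with derivative \( \rho_* \defeq \tangent_e \rho \) and any smooth map \( J \colon M \to G^* \), the left logarithmic derivative transforms naturally, \( \difLog(\rho \circ J) = \rho_* \circ \difLog J \), the right-hand side being \( \rho_* \) applied pointwise on \( M \) to the \( \LieA{g}^* \)-valued \( 1 \)-form \( \difLog J \); this is the one-line computation
\begin{equation*}
	\difLog_m(\rho \circ J)(X) = \rho(J(m))^{-1} \ldot \tangent_{J(m)}\rho\bigl(J(m) \ldot \difLog_m J(X)\bigr) = \rho_*\bigl(\difLog_m J(X)\bigr),
\end{equation*}
where the last step uses that \( \rho \), being a group homomorphism, intertwines left translations.

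With these in hand I would invoke the duality hypothesis: \( \rho_* \colon \LieA{g}^* \to \LieA{h}^* \) is the transpose of \( \iota \colon \LieA{h} \to \LieA{g} \), that is, \( \dualPair{B}{\rho_* \xi} = \kappa(\iota B, \xi) \) for all \( B \in \LieA{h} \) and \( \xi \in \LieA{g}^* \). Then, for every \( B \in \LieA{h} \),
\begin{equation*}
	B^* \contr \omega + \dualPair{B}{\difLog J^H}
	= (\iota B)^* \contr \omega + \dualPair{B}{\rho_* \circ \difLog J}
	= (\iota B)^* \contr \omega + \kappa(\iota B, \difLog J) = 0,
\end{equation*}
the last equality being precisely~\eqref{eq::momentumMap:DefEq} for \( J \) evaluated at \( A = \iota(B) \in \LieA{g} \). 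This shows that \( J^H = \rho \circ J \) is a group-valued momentum map for the \( H \)-action, as claimed.

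The computation itself is routine; the substance of the statement lies in its hypotheses, and this is where I expect the only real difficulty to be. In infinite dimensions the transpose \( \rho_* \) of \( \iota \) with respect to the given weak pairings need not exist, and even when it does it need not integrate to a Lie group homomorphism \( \rho \colon G^* \to H^* \) without a regularity assumption on \( H^* \) (\cf \parencite[Theorem~III.1.5]{Neeb2006}); this is why the existence of \( \rho \) is assumed rather than derived, and, as recorded in the footnote, it is automatic in finite dimensions when \( G \) is \( 1 \)-connected. Once \( \rho \) is granted, the argument above closes without further obstacle.
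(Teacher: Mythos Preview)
Your proof is correct and follows essentially the same approach as the paper's: both use that \( B^* = (\iota B)^* \), that the left logarithmic derivative commutes with Lie group homomorphisms, and the duality hypothesis to reduce the momentum map relation for \( J^H \) to the one already known for \( J \). The only difference is that the paper compresses this into a single displayed chain of equalities and cites \parencite[Proposition~II.4.1.1]{Neeb2006} for the compatibility of \( \difLog \) with homomorphisms, whereas you spell out the one-line verification of this fact.
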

\begin{proof}
	Let \( B \in \LieA{h} \). Then using the fact that taking the 
logarithmic derivative commutes with Lie group homomorphisms 
\parencite[Proposition~II.4.1.1]{Neeb2006}, we have
\begin{equation}
\iota(B)^* \contr \omega = - \kappa(\iota(B), \difLog J) = 
- \dualPair{B}{\rho \circ \difLog J} = 
- \dualPair{B}{\difLog (\rho \circ J)},
\end{equation}
which proves the proposition.
\end{proof}

\section{Global Analysis of Symplectic Fiber Bundles}
\label{sec::globalAnalysisSymplecticFibreBundles}

\subsection[Symplectic form]{Symplectic form on the space of sections}
\label{sec::symplecticFormSectionSpace}

The prime example of an infinite-dimensional manifold is the space of 
sections of a fiber bundle. So, for the study of infinite-dimensional 
symplectic geometry, it is suggestive to search for a way to construct a 
symplectic form on the space of sections using some geometric structure on 
the fiber bundle. As we will see, a symplectic form along the fibers 
is what we are looking for.

Let \( \pi: P \to M \) be a finite-dimensional right principal \( G \)-bundle.
Given a left \( G \)-manifold \(\FibreBundleModel{F}\), where the action is denoted by \( \FibreBundleModel{l}: G \times
\FibreBundleModel{F} \to \FibreBundleModel{F} \), form the associated fiber bundle \( F = P \times_G \FibreBundleModel{F} \to M \) (see \cref{sec::conventions} for definitions and conventions).
If, in addition, \( \FibreBundleModel{F} \) carries a \( G \)-invariant symplectic form \( \FibreBundleModel{\omega} \), then there exists a unique symplectic structure \( \omega_m \) on each fiber \( F_m \) such that the map 
\begin{equation}
	\label{eq::associatedFibreBundle:fibreInjection}
	\iota_p: \FibreBundleModel{F} \to F_m, \qquad \FibreBundleModel{f} 
	\mapsto \equivClass{p, \FibreBundleModel{f}}
\end{equation}
is a symplectomorphism for all \( p \in P_m \).
The fiber bundle \( F \) with the induced fiberwise symplectic structure \( \omega \) is called a \emphDef{symplectic fiber bundle} and we denote it by \( (F, \omega) \). 
\textit{We emphasize that \( \omega \) is not a \( 2 \)-form on \( F \) but is merely a notation for the fiberwise symplectic structure.}

A \( 2 \)-form \( \hat{\omega} \) on \( F \) is called an \emphDef{extension} of the fiberwise symplectic structure if its pull-back to each fiber \( F_m \) coincides with \( \omega_m \). 
Such extensions always exist (for example, choose a connection in \( F \) and construct the extension \( \hat{\omega} \) by letting it vanish on horizontal vectors).
In the theory of symplectic fiber bundles one usually tries to construct extensions that are again symplectic forms \parencite{Weinstein1980}.
We emphasize that we do not require the extension \( \hat{\omega} \) to be non-degenerate. 
An extension \( \hat{\omega} \) is called \emphDef{compatible with a given connection} on \( P \) if the vertical and horizontal subbundles of the associated connection on \( F \) are orthogonal with respect to \( \hat{\omega} \).

In what follows, we assume that the base manifold \( M \) is closed 
(\ie, compact without boundary) so that the space of sections 
\( \SectionSpaceAbb{F} = \sSectionSpace(F) \) is a Fr\'echet manifold.
The tangent space to \( \SectionSpaceAbb{F} \) at a section \( \phi \) 
consists of vertical vector fields along \( \phi \), \ie, sections of 
\( \phi^* \VBundle F \), where \( \VBundle F \subset \TBundle F \) is 
the vector subbundle of vertical vectors. Fix a volume form \( \mu \) 
on \( M \) and choose an extension \( \hat{\omega} \) of the fiberwise 
symplectic structure \( \omega \) on \( F \). Using the evaluation map 
\( \ev: M \times \SectionSpaceAbb{F} \to F \) and the projection 
\( \pr_M: M \times \SectionSpaceAbb{F} \to M \), we define a 
\( 2 \)-form \( \Omega \) on \( \SectionSpaceAbb{F} \) by
\begin{equation}
\Omega = \mu \hatProduct \omega \equiv \intFibre_M \pr_M^* \mu \wedge \ev^* 
\hat{\omega},
\end{equation}
which uses the hat product introduced by \textcite{Vizman2011}, here 
generalized to sections of fiber bundles. Details about the resulting hat 
calculus are collected in \cref{sec::hatProductFibreBundles}.
The form \( \Omega \) depends only on the fiber symplectic form 
\( \omega \) and not on the extension \( \hat{\omega} \), since only 
the vertical part of the tangent vector plays a role; see 
\cref{rem::hatProduct:WithVolumeForm} (which also justifies the notation 
\( \mu \hatProduct \omega \) instead of \( \mu\hatProduct\hat{\omega} \)).
In fact, by~\eqref{eq::hatProduct:VolumeForm}, we have the explicit description
\begin{equation}
	\label{eq:symplecticFormSectionSpace:symplecticForm:explicit}
\Omega_\phi(Y_1, Y_2) = \int_M \omega_{\phi}(Y_1, Y_2) \, \mu, 
\qquad Y_1, Y_2 \in \sSectionSpace(\phi^* \VBundle F).
\end{equation}
This identity also shows that \( \Omega \) is non-degenerate, because 
\( \omega \) is fiberwise non-degenerate, by assumption. 
The exterior differential of \( \Omega \) is given by 
\( \dif \Omega = (-1)^{\dim M} \mu\hatProduct \dif_\pi \omega  \), 
where \( \dif_\pi \) denotes the 
vertical differential along the fibers; 
see~\eqref{eq::hatProduct:DifferentialWithVolumeForm}.
Thus \( \Omega \) is closed.
Let us record this observation.
\begin{prop}
\label{prop::symplecticStructureOnSectionsOfSymplecticFibreBundle}
Let \( M \) be a closed manifold endowed with volume form \( \mu \) and 
let \( (F, \omega) \) be a symplectic fiber bundle over \( M \).
Then \( \Omega = \mu \hatProduct \omega \) is a symplectic structure on 
the space of smooth sections \( \SectionSpaceAbb{F} \) of \( F \).
\end{prop}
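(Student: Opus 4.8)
The plan is to check the three defining properties of a (weak) symplectic form on the Fr\'echet manifold \( \SectionSpaceAbb{F} \) — smoothness and skew-symmetry, closedness, and weak non-degeneracy — one at a time, relying throughout on the hat-product calculus developed in \cref{sec::hatProductFibreBundles}. The point of organizing the argument this way is that the symplectic-geometric content is essentially trivial once the functional-analytic properties of the hat product are in place, so the proof reduces to invoking the identities recorded there.

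For smoothness, I would observe that \( \Omega = \mu \hatProduct \omega = \intFibre_M \pr_M^* \mu \wedge \ev^* \hat{\omega} \) is assembled from the smooth evaluation map \( \ev \colon M \times \SectionSpaceAbb{F} \to F \), the smooth projection \( \pr_M \), a wedge product, and fiber integration over the compact base \( M \); since each of these operations is smooth in the sense of locally convex calculus — precisely the content of \cref{sec::hatProductFibreBundles} — the resulting \( 2 \)-form \( \Omega \) is smooth. Skew-symmetry is inherited from the fiberwise skew form \( \omega \). I would then record that \( \Omega \) is independent of the chosen extension \( \hat{\omega} \): tangent vectors at \( \phi \) are sections of \( \phi^* \VBundle F \), so only the vertical components enter, and \( \hat{\omega} \) pairs them through its restriction to the fibers, which is \( \omega \) by definition of an extension; this is \cref{rem::hatProduct:WithVolumeForm}, and it yields the explicit formula~\eqref{eq:symplecticFormSectionSpace:symplecticForm:explicit}. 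Weak non-degeneracy is then immediate from that formula: if \( \Omega_\phi(Y_1, \cdot) = 0 \) for some \( Y_1 \in \sSectionSpace(\phi^* \VBundle F) \), then \( \int_M \omega_\phi(Y_1, Y_2)\, \mu = 0 \) for all \( Y_2 \); localizing \( Y_2 \) near an arbitrary point \( m \) and using fiberwise non-degeneracy of \( \omega_m \) together with the fact that \( \mu \) is a volume form forces \( Y_1(m) = 0 \), hence \( Y_1 = 0 \). (As usual for section spaces, one does not expect strong non-degeneracy.)

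For closedness, I would invoke the Leibniz-type formula for the exterior differential of a hat product with a volume form, \eqref{eq::hatProduct:DifferentialWithVolumeForm}, which gives \( \dif \Omega = (-1)^{\dim M}\, \mu \hatProduct \dif_\pi \omega \), where \( \dif_\pi \) is the vertical (fiberwise) exterior differential. Since \( \omega \) restricts to a genuine symplectic, in particular closed, form on each fiber \( F_m \), we have \( \dif_\pi \omega = 0 \), and therefore \( \dif \Omega = 0 \).

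The hard part is not the symplectic bookkeeping above but the functional-analytic groundwork it rests on: establishing that \( \ev \), the pull-back map, and fiber integration are smooth between the relevant locally convex manifolds, and that the hat product satisfies the stated differentiation rule with the correct sign. This is exactly why those facts are isolated in \cref{sec::hatProductFibreBundles} (and the pull-back derivative in \cref{prop::differentialCharacter:pullback}); granting them, the proposition follows essentially by quoting~\eqref{eq::hatProduct:VolumeForm} and~\eqref{eq::hatProduct:DifferentialWithVolumeForm}.
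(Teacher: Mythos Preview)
Your proposal is correct and follows essentially the same approach as the paper: the paper establishes non-degeneracy directly from the explicit formula~\eqref{eq:symplecticFormSectionSpace:symplecticForm:explicit} and fiberwise non-degeneracy of \( \omega \), and closedness from the identity \( \dif \Omega = (-1)^{\dim M} \mu \hatProduct \dif_\pi \omega \) together with fiberwise closedness of \( \omega \). Your version is simply more expansive, spelling out the localization argument for non-degeneracy and the smoothness considerations that the paper leaves implicit in the hat-product calculus of \cref{sec::hatProductFibreBundles}.
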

Note that~\eqref{eq:symplecticFormSectionSpace:symplecticForm:explicit} 
shows that \( \Omega \) coincides with the symplectic form considered in a 
slightly more restricted setting by \textcite[Equation~7]{Donaldson2003}.

\subsection[Action of the automorphism group]{Action of $ \AutGroup(P) $}
\label{sec:momentum_map_for_automorphism_group}

Every bundle automorphism \( \psi \) of \( F \) induces a diffeomorphism 
\( \check{\psi} \) on the base \( M \).
Consider the natural smooth left action of \( \AutGroup(F) \) on the section space \( \SectionSpaceAbb{F} \) defined by
\begin{equation}
\Upsilon: \AutGroup(F) \times \SectionSpaceAbb{F} \to \SectionSpaceAbb{F}, 
\qquad \psi \cdot \phi = \psi \circ \phi \circ \check{\psi}^{-1}.
\end{equation}
Let 
\begin{equation}
	\AutGroup_{\mu, \omega}(F) = \set{\psi \in \AutGroup(F) \given \check{\psi} \in \DiffGroup_\mu(M), \; \psi^* \omega = \omega}
\end{equation}
be the subgroup of automorphisms of \( F \) preserving the fiber 
symplectic structure and inducing volume-preserving diffeomorphisms 
on the base.
\Cref{eq::hatProduct:TransfUnderAutomorphisms} implies that the induced 
action of \( \AutGroup_{\mu, \omega}(F) \) on \( \SectionSpaceAbb{F} \)  leaves the symplectic form \( \Omega = \mu \hatProduct \omega \) invariant. 

Although it is thus perfectly natural to seek a momentum map for the \( \AutGroup_{\mu, \omega}(F) \) action on \( \SectionSpaceAbb{F} \), one is confronted with serious difficulties in this quest.
We will further comment on these matters below in \cref{rem::extendMomentumMapOfGaugeGroupToSymplectomorphisms} and for now concentrate on bundle automorphisms of \( F = P \times_G \FibreBundleModel{F} \) coming from the principal bundle.
Every principal bundle automorphism \( \varphi \) of \( P \) defines a bundle automorphism \( \varphi_F \) of \( F \) by \( \varphi_F = \varphi \times_G \id_{\FibreBundleModel{F}} \). 
Hence, by definition, \( \varphi_F \circ \iota_p = \iota_{\varphi(p)} \) for every \( p \in P \), where \( \iota_p: \FibreBundleModel{F} \to F \) was introduced in~\eqref{eq::associatedFibreBundle:fibreInjection}.
This relation implies \( \iota^*_p \varphi_F^* \omega_{\check{\varphi}(m)} = \iota^*_{\varphi(p)} \omega_{\check{\varphi}(m)} = \FibreBundleModel{\omega} \), or equivalently \( \varphi_F^* \omega_{\check{\varphi}(m)} = \omega_m \) for \( m = \pi(p) \).
Thus, \( \varphi_F \) leaves the fiber symplectic structure invariant and we get a group homomorphism \( \AutGroup_\mu(P) \to \AutGroup_{\mu, \omega}(F) \), where
\begin{equation}
	\AutGroup_{\mu}(P) = \set{\varphi \in \AutGroup(P) \given \check{\varphi} \in \DiffGroup_\mu(M)}
\end{equation}
is the subgroup of automorphisms of \( P \) that induce volume-preserving diffeomorphisms on the base.
Therefore, we seek a momentum map for the \( \AutGroup_\mu(P) \) action on \( \SectionSpaceAbb{F} \).
We will divide this problem into its vertical and horizontal parts using \cref{prop::groupMomentumMap:forExtension}.

Assigning to a principal bundle automorphism \( \psi \) of \( P \) the 
induced diffeomorphism \( \check{\psi} \) on the base \( M \), yields a 
group homomorphism \( \AutGroup_\mu(P) \to \DiffGroup_\mu(M) \), whose 
kernel is the gauge group \( \GauGroup(P) \). Thus we obtain a short exact 
sequence of Lie groups 
\begin{equation}
\label{eq::hatProduct:AutGroupExactSequence}
\id\to\GauGroup(P) \to \AutGroup_\mu(P) \to \DiffGroup_{\mu, P}(M) \to \id,
\end{equation}
where \( \DiffGroup_{\mu, P}(M) \) denotes the subgroup of 
volume-preserving diffeomorphisms which is the range of the projection 
given by the third arrow in~\eqref{eq::hatProduct:AutGroupExactSequence}.
Note that \( \DiffGroup_{\mu, P}(M) \) consists of those diffeomorphisms 
that preserve the equivalence class of \( P \) under pull-back, and thus 
it is an open subgroup of \( \DiffGroup_{\mu}(M) \).

So we find ourselves in the setting discussed in 
\cref{sec::groupMomentumMap:forExtension}. 
\Cref{prop::groupMomentumMap:forExtension} shows that the momentum map for 
\( \AutGroup_\mu(P) \) is obtained from the momentum map for the gauge 
group as well as the `partial' momentum map for \(\DiffGroup_{\mu,P}(M)\). 
Before we determine these two ingredients, let us record the common 
starting point. By~\eqref{eq::hatProduct:InfActionSectionSpace}, the infinitesimal generator vector field on 
\( \SectionSpaceAbb{F} \) induced by \( Y \in \AutAlgebra_\mu(P) \) is given by
\begin{equation}
	Y^*_\phi = Y_F \circ \phi - \tangent \phi (Y_M) \in \sSectionSpace(\phi^* \VBundle F),
\end{equation}
where \( Y_M = \tangent \pi (Y) \) denotes the induced vector field on 
\( M \) and where \( Y_F \) is the vector field on \( F \) induced by the natural map \( \AutAlgebra_\mu(P) \to \AutAlgebra_{\mu, \omega}(F) \).
Using~\eqref{eq::hatProduct:ContractionWithKillingVectorField}, we obtain
\begin{equation}
\label{eq::autAction:contractionWithKillingVectorField}
Y^* \contr \Omega = (Y_M \contr \mu) \hatProduct \hat{\omega} 
+ (-1)^{\dim M} \mu \hatProduct (Y_F \contr \hat{\omega}), 
\qquad Y \in \AutAlgebra_\mu(P).
\end{equation}
Hence, \( Y^* \contr \Omega \) decomposes into  vertical and horizontal terms, so that we can determine the momentum map for the gauge group separately from the momentum map for the diffeomorphism group.

\subsubsection{Momentum map for the gauge group}
\label{sec:momentum_map_for_the_gauge_group}
Since for every \( Y \in \GauAlgebra(P) \) the induced vector field 
\( Y_M \) on the base vanishes,
\eqref{eq::autAction:contractionWithKillingVectorField} simplifies to
\begin{equation}
Y^*\contr\Omega + (-1)^{\dim M+1}\mu \hatProduct (Y_F \contr \hat{\omega}) 
= 0.
\end{equation}
To study the second term \( (-1)^{\dim M} \mu \hatProduct (Y_F \contr \hat{\omega}) \in \DiffFormSpace^1(\SectionSpaceAbb{F}) \), we evaluate it on \( Z \in \TBundle_\phi \SectionSpaceAbb{F} \) and, using~\eqref{eq::fibreIntegration:defWithPartialPullback}, obtain
\begin{equation}
\label{eq::diffAction:momentumMapCalculationSecondTerm}
(-1)^{\dim M} \bigl(\mu \hatProduct (Y_F \contr \hat{\omega})\bigr)_\phi (Z) = \int_M \hat{\omega}_{\phi(m)}\bigl(Y_F \circ \phi(m), Z(m)\bigr) \, \mu.
\end{equation}
In order to shift the calculation from section spaces to the fiber 
model \( \FibreBundleModel{F} \), we introduce the smooth bundle map\footnote{Morally, 
\( l = \id_P \times_G \FibreBundleModel{l} \).} 
over \( M \)
\begin{equation}
	l: \ConjBundle P \times_M F \to F, \qquad (\equivClass{p, g}, \equivClass{p, \FibreBundleModel{f}})
		\mapsto \equivClass{p, g \cdot \FibreBundleModel{f}},
\end{equation} 
where $\ConjBundle P = P \times_G G \to M$ is the conjugation bundle associated to the principal \( G \)-bundle $P \to M$ (see \cref{sec::conventions}).
The fiber derivative of \( l \) is given by 
\begin{equation}
	\label{eq:symplecticFibreBundle:fibreDerivativeAction}
	\difFibre_1 l: \AdBundle P \times_M F \to \VBundle F = P \times_G \TBundle \FibreBundleModel{F},
	\qquad 
	\left(\equivClass{p, A}, \equivClass{p, \FibreBundleModel{f}}\right) \mapsto \equivClass{p, A^*_{\FibreBundleModel{f}}},
\end{equation}
where $\AdBundle P \to M$ is the adjoint bundle of the principal \( G \)-bundle $P \to M$.
Let \( \VectorFieldSpace^v(F) \) denote the set of vertical vector fields on the associated fiber bundle $F$.
Note that \( l \) induces a natural map \( l_*: \GauGroup(P) \to \GauGroup(F) \) on sections and that the map \( \GauAlgebra(P) \to \VectorFieldSpace^v(F) \) sending a vertical vector field \( Y \) on \( P \) to its partner \( Y_F \) on \( F \) is identified with \( (\difFibre_1 l)_*: \sSectionSpace(\AdBundle P) \to \VectorFieldSpace^v(F) \). 
In other words, \( Y_F = (\difFibre_1 l)_* Y \).

To calculate the contraction \( Y_F \contr \omega = (\difFibre_1 l)_* Y \contr \omega \) of the 
vertical vector field \( Y_F \) with the fiber symplectic form \( \omega \) 
we need a \( G \)-equivariant momentum map \( \FibreBundleModel{J}: 
\FibreBundleModel{F} \to \LieA{g}^* \) for 
the symplectic \( G \)-action on \( (\FibreBundleModel{F}, 
\FibreBundleModel{\omega}) \).
By \( G \)-equivariance, \( \FibreBundleModel{J} \) yields a bundle map \( J: F \to \CoAdBundle P \), where \( \CoAdBundle P = P \times_G \LieA{g}^* \) is the coadjoint bundle. 
Taking the derivative in the fiber direction yields a bundle map \( \difFibre J: \VBundle F \to \CoAdBundle P \).
Note that the derivative of \( J_*: \SectionSpaceAbb{F} \to \sSectionSpace(\CoAdBundle P) \) at \( \phi \in \SectionSpaceAbb{F} \) equals \( (\difFibre J)_* \circ \phi \).
Let \( l'A \) be the infinitesimal generator vector field for the \( G \)-action \( l:G \times \FibreBundleModel{F} \to \FibreBundleModel{F} \) associated to \( A \in \LieA{g} \).
Unravelling the definitions shows that the momentum map relation \( (\FibreBundleModel{l}' A) \contr \FibreBundleModel{\omega} + \dualPair{A}{\dif \FibreBundleModel{J}} = 0 \) on \( \FibreBundleModel{F} \) translates in the bundle picture to 
\begin{equation}\label{eq:symplecticFibreBundle:momentumMapBundle}
 	\bigl((\difFibre_1 l)_* Y\bigr) \contr \omega + \dualPair{Y}{\difFibre J} = 0
\end{equation} 
on vertical vectors.
Indeed, let \( \equivClass{p, \FibreBundleModel{u}} \in P \times_G 
\TBundle \FibreBundleModel{F} \) be a vertical vector 
at the point \( \equivClass{p, \FibreBundleModel{f}} \in F \) over 
\( m \in M \) and represent \( Y(m) \in \AdBundle P \) by the 
pair \( \equivClass{p, A} \) with \( A \in \LieA{g} \).
Then, using~\eqref{eq:symplecticFibreBundle:fibreDerivativeAction}, we obtain
\begin{equation}\begin{split}
	\omega_{m} \bigl(\difFibre_1 l \,(Y (m)), \equivClass{p, \FibreBundleModel{u}}\bigr)
		&= \omega_{m} \bigl(\equivClass{p, A^*_{\FibreBundleModel{f}}}, \equivClass{p, \FibreBundleModel{u}}\bigr)
		\\
		&= \FibreBundleModel{\omega}_{\FibreBundleModel{f}} 
		(A^*_{\FibreBundleModel{f}}, \FibreBundleModel{u})
		\\
		&= -\dualPair[\big]{A}{\tangent_{\FibreBundleModel{f}} \FibreBundleModel{J} (\FibreBundleModel{u})}.
\end{split}\end{equation}
On the other hand, we have
\begin{equation}\begin{split}
	\dualPair[\big]{Y(m)}{\difFibre J (\equivClass{p, \FibreBundleModel{u}})}
		= \dualPair[\big]{\equivClass{p, A}}{\equivClass{p, \tangent_{\FibreBundleModel{f}} \FibreBundleModel{J} (\FibreBundleModel{u})}}
		= \dualPair[\big]{A}{\tangent_{\FibreBundleModel{f}} \FibreBundleModel{J} (\FibreBundleModel{u})},
\end{split}\end{equation}
which verifies~\eqref{eq:symplecticFibreBundle:momentumMapBundle}.
The additional structure on the bundle \( F \) derived from the fiberwise Hamiltonian system on \( \FibreBundleModel{F} \) is summarized in \cref{table_2}.
\begin{table}[tbp]
	\centering
	\begin{tabular}{l l l}
		\toprule
			&
			Fiber model \( \FibreBundleModel{F} \) &
			Bundle \( F \) \\
		\midrule
			Symplectic form &
			\( \FibreBundleModel{\omega} \) &
			\( \omega \) \\

			Action &
			\( \FibreBundleModel{l}: G \times \FibreBundleModel{F} \to \FibreBundleModel{F} \) &
			\( l: \ConjBundle P \times_M F \to F \) \\

			Infinitesimal action &
			\( \FibreBundleModel{l}': \LieA{g} \times \FibreBundleModel{F} \to \TBundle \FibreBundleModel{F} \) &
			\( \difFibre_1 l: \AdBundle P \times_M F \to \VBundle F \) \\

			Momentum map &
		\( \FibreBundleModel{J}: \FibreBundleModel{F} \to \LieA{g}^* \) &
		\( J: F \to \CoAdBundle P \) \\
		\bottomrule
	\end{tabular}
\caption{Overview of the objects of the fiberwise Hamiltonian system and their companions in the bundle picture.}
\label{table_2}
\end{table}

Returning to the original problem of computing the momentum map of the 
gauge group, we can now 
rewrite~\eqref{eq::diffAction:momentumMapCalculationSecondTerm} as
\begin{equation}\begin{split}
	(-1)^{\dim M} \bigl(\mu \hatProduct (Y_F \contr \hat{\omega})\bigr)_\phi (Z) 
		&= \int_M \hat{\omega}_{\phi(m)}\bigl(Y_F \circ \phi(m), Z(m)\bigr) \, \mu \\
		&= \int_M \hat{\omega}_{\phi(m)}\bigl((\difFibre_1 l)_* Y \circ \phi(m), Z(m)\bigr) \, \mu \\
		&= - \int_M \dualPair[\big]{Y(m)}{(\difFibre J)_{\phi(m)} \bigl(Z(m)\bigr)} \, \mu \\
		&= - \scalarProd[\big]{Y}{(\difFibre J)_\phi (Z)}_{\AdAction},
\end{split}\end{equation}
where in the last step we used the natural pairing \( \scalarProd{\cdot}
{\cdot}_{\AdAction}: \sSectionSpace(\AdBundle P) \times 
\sSectionSpace(\CoAdBundle P) \to \R \) discussed in 
\cref{ex::dualPairLieGroups:gaugeGroup}. Summarizing, we get
\begin{equation}
Y^* \contr \Omega + \scalarProd{Y}{(\difFibre J)_*}_{\AdAction} = 0.
\end{equation}
Keeping in mind that \( (\difFibre J)_* \circ \phi \) is the derivative 
of \( J_*: \SectionSpaceAbb{F} \to \CoGauAlgebra(P) \) at 
\( \phi \in \SectionSpaceAbb{F} \), we read off the momentum map for 
the group of gauge transformation.

\begin{prop}\label{prop::momentumMap:gaugeGroup}
Let \( P \to M \) be a principal \( G \)-bundle over the closed manifold 
\( M \) endowed with a volume form \( \mu \).
Let \( F = P \times_G \FibreBundleModel{F} \) be a symplectic fiber bundle 
modeled on the Hamiltonian \( G \)-manifold \( (\FibreBundleModel{F}, 
\FibreBundleModel{\omega},\FibreBundleModel{J}) \).  
The momentum map for the action of the group \( \GauGroup(P) \) of gauge 
transformations on the space \( \SectionSpaceAbb{F} \) of sections 
of \( F \) is given by
	\begin{equation}
		\SectionSpaceAbb{J}_{\GauGroup}: \SectionSpaceAbb{F} \to \CoGauAlgebra(P), 
		\qquad \phi \mapsto J_* \phi.
	\end{equation}	
Here, \( \CoGauAlgebra(P) = \sSectionSpace(\CoAdBundle P) \) is dual 
to \( \GauAlgebra(P) \) by the integration pairing 
\( \scalarProdDot_{\AdAction} \) introduced in \cref{ex::dualPairLieGroups:gaugeGroup} and, moreover, 
\( J = \id_P \times_G \FibreBundleModel{J}: F \to \CoAdBundle P \) 
is the bundle map induced by \( \FibreBundleModel{J} \). That is,
	\begin{equation}
\scalarProd[\big]{Y}{\SectionSpaceAbb{J}_{\GauGroup}(\phi)}_{\AdAction}
= \int_M \dualPair[\big]{Y(m)}{(\id_P \times_G \FibreBundleModel{J}) 
\circ \phi(m)} \, \mu(m)
\end{equation}
for all \( Y \in \GauAlgebra(P) \).
\end{prop}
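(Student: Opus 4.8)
The proof amounts to assembling the computations made in the paragraphs immediately preceding the statement and reading them in the right way; no genuinely new idea is required. The plan is twofold: first, I would verify that $\SectionSpaceAbb{J}_\GauGroup = J_* \colon \SectionSpaceAbb{F} \to \CoGauAlgebra(P)$ satisfies the defining equation of a (classical, \ie vector-space valued) momentum map for the $\GauGroup(P)$-action with respect to the dual pair $\scalarProd{\cdot}{\cdot}_{\AdAction}$ of \cref{ex::dualPairLieGroups:gaugeGroup}; second, I would check that $J_*$ is genuinely smooth as a map of Fréchet manifolds and that its derivative is the one used in the first part.

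For the momentum map equation, fix $Y \in \GauAlgebra(P)$. Since $\CoGauAlgebra(P)$ is abelian (indeed linear), it suffices to show $Y^* \contr \Omega + \dif (\SectionSpaceAbb{J}_\GauGroup)_Y = 0$, where $(\SectionSpaceAbb{J}_\GauGroup)_Y(\phi) \defeq \scalarProd{Y}{\SectionSpaceAbb{J}_\GauGroup(\phi)}_{\AdAction} = \int_M \dualPair{Y(m)}{J \circ \phi(m)}\, \mu(m)$ and $J = \id_P \times_G \FibreBundleModel{J}$. Differentiating this function at $\phi$ in the direction $Z \in \sSectionSpace(\phi^* \VBundle F)$ under the integral sign — using that the derivative of $J_*$ at $\phi$ is postcomposition with the fiber derivative, $\tangent_\phi J_*(Z) = (\difFibre J)_{\phi(\cdot)}(Z(\cdot))$ — yields $\dif(\SectionSpaceAbb{J}_\GauGroup)_Y|_\phi(Z) = \scalarProd{Y}{(\difFibre J)_\phi(Z)}_{\AdAction}$, that is, $\dif(\SectionSpaceAbb{J}_\GauGroup)_Y = \scalarProd{Y}{(\difFibre J)_*}_{\AdAction}$ as a real $1$-form on $\SectionSpaceAbb{F}$. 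On the other hand, $Y$ induces the trivial vector field $Y_M = 0$ on the base, so formula~\eqref{eq::autAction:contractionWithKillingVectorField} collapses to $Y^* \contr \Omega = (-1)^{\dim M} \mu \hatProduct (Y_F \contr \hat{\omega})$; evaluating this $1$-form on $Z$, writing $Y_F = (\difFibre_1 l)_* Y$, invoking the bundle form~\eqref{eq:symplecticFibreBundle:momentumMapBundle} of the fiberwise momentum map relation, and recognizing the pairing $\scalarProd{\cdot}{\cdot}_{\AdAction}$ gives, exactly as in the computation displayed above, $Y^* \contr \Omega = - \scalarProd{Y}{(\difFibre J)_*}_{\AdAction}$. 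Adding the two identities yields the momentum map equation for $\SectionSpaceAbb{J}_\GauGroup$.

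The only point requiring a (routine) functional-analytic argument is smoothness. The fiberwise momentum map $\FibreBundleModel{J}$ is smooth and $G$-equivariant, hence it induces a smooth fiber-bundle morphism $J = \id_P \times_G \FibreBundleModel{J} \colon F \to \CoAdBundle P$ covering $\id_M$; by the standard smoothness of pushforward (postcomposition) maps between manifolds of sections in the locally convex category (\cf \parencite{Neeb2006}), the induced map $J_* \colon \SectionSpaceAbb{F} \to \sSectionSpace(\CoAdBundle P) = \CoGauAlgebra(P)$ is smooth, and its tangent map at $\phi$ is postcomposition with the restriction of $\difFibre J$ along $\phi$. This both justifies the derivative formula used above and shows that $\SectionSpaceAbb{J}_\GauGroup$ is a bona fide smooth momentum map. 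I expect no real obstacle here: the whole of the difficulty, such as it is, is bookkeeping — matching the various pairings and the sign and orientation conventions, and quoting the smoothness of mapping-space operations in the precise form recalled in the appendix.
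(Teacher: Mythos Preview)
Your proposal is correct and follows essentially the same route as the paper: the paper's argument is precisely the chain of computations in the paragraphs preceding the proposition—use $Y_M=0$ to collapse~\eqref{eq::autAction:contractionWithKillingVectorField}, rewrite $Y_F \contr \hat{\omega}$ via the bundle momentum relation~\eqref{eq:symplecticFibreBundle:momentumMapBundle}, and identify the result with $\scalarProd{Y}{(\difFibre J)_*}_{\AdAction}$, noting that $(\difFibre J)_*\circ\phi$ is the derivative of $J_*$. Your explicit smoothness discussion is a welcome addition that the paper leaves implicit.
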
 
In special cases, similar expressions for the momentum map of the gauge 
group were found in \parencite[Remark~3.1]{CieliebakGaioEtAl1999} (on 
Riemann surfaces) and in \parencite[4.2.2]{MundetiRiera2000} (on K\"ahler 
manifolds), both in the context of Yang--Mills equations coupled to 
matter fields.  

\subsubsection{Momentum map for the diffeomorphism group}
Let us now determine the momentum map for the action of the 
diffeomorphism group of the base. 
In the computations below we use a dual pair 
\( \kappa(\DiffGroup_{\mu, P}(M), \SectionSpaceAbb{H}) \) but 
we do not specify \(\SectionSpaceAbb{H}\) for the general case. 
The situations interesting to us are treated explicitly later.
In light of 
\cref{prop::groupMomentumMap:forExtension}, it is enough to split 
the infinitesimal sequence corresponding to~\eqref{eq::hatProduct:AutGroupExactSequence} 
and subsequently determine the `partial' momentum map. In the 
present setting, the associated sequence is 
\begin{equation}
\label{sequence_momentum_diffeos}
0 \to \GauAlgebra(P) \to \AutAlgebra_\mu(P) \to \VectorFieldSpace_{\mu}(M) 
\to 0,
\end{equation}
which naturally splits (in the category of locally convex vector spaces, 
not as Lie algebras) using a principal connection in \( P \).
Given a principal connection \( \Gamma \) in \( P \), let \( \Gamma: 
\VectorFieldSpace_{\mu}(M) \to \AutAlgebra_\mu(P) \) be the 
vector space homomorphism that assigns to a vector field 
\( X \in \VectorFieldSpace_{\mu}(M) \) its horizontal lift 
\( X^h \in \AutAlgebra_\mu(P) \).
Recall that the curvature of \( \Gamma \) is the obstruction for the 
lift \( \Gamma: \VectorFieldSpace_{\mu}(M) \to \AutAlgebra_\mu(P) \) 
to be a Lie algebra homomorphism.
By \cref{prop::groupMomentumMap:forExtension} applied 
to~\eqref{sequence_momentum_diffeos} with \( \Gamma: 
\VectorFieldSpace_{\mu}(M) \to \AutAlgebra_\mu(P) \) as the splitting, 
the `partial' momentum map \( \SectionMapAbb{J}_\Gamma: 
\SectionSpaceAbb{F} \to \SectionSpaceAbb{H} \) has to 
satisfy
\begin{equation}
\label{eq::diffGroup:partialMomentumMap}
(X^h)^* \contr \Omega + \kappa(X, \difLog \SectionMapAbb{J}_\Gamma) = 0, 
\qquad X \in \VectorFieldSpace_\mu(M)
\end{equation}
for a suitable dual pair \(\kappa(\DiffGroup_{\mu,P}(M),
\SectionSpaceAbb{H})\).
If we choose an extension \( \hat{\omega} \) compatible with the 
connection \( \Gamma \), then the second term 
in~\eqref{eq::autAction:contractionWithKillingVectorField} vanishes and 
thus~\eqref{eq::diffGroup:partialMomentumMap} simplifies to
\begin{equation}
\label{eq::diffGroup:partialMomentumMapCompatibleConnection}
(X \contr \mu) \hatProduct \hat{\omega} + \kappa(X, \difLog \SectionMapAbb{J}_\Gamma) = 0, 
\qquad X \in \VectorFieldSpace_\mu(M).
\end{equation} 
In general, it is hard to determine the `partial' momentum map for the 
diffeomorphism group, mainly because one has to describe the dual group 
of \( \DiffGroup_{\mu, P}(M) \) rather concretely. For this reason, we 
will only study the momentum map for the most important cases of 
volume-preserving and symplectic diffeomorphisms.

\subsubsection{Momentum map for volume-preserving diffeomorphisms}
\label{sec:momentum_map_for_volume_preserving_diffeomorphisms}

Recall the exact sequence~\eqref{eq::hatProduct:AutGroupExactSequence} 
and assume, for simplicity, that \( \DiffGroup_{\mu, P}(M) = \DiffGroup_{\mu}(M) \). In 
\cref{ex::dualPairLieGroups:volumePreservingDiffeos} we showed that the 
dual group to \( \DiffGroup_{\mu}(M) \) is the group of Cheeger--Simons 
differential characters \( \csCohomology^2(M, \UGroup(1)) \), which 
parametrizes principal \( \UGroup(1) \)-bundles over \( M \). 
Let \( \kappa(\DiffGroup_{\mu}(M), \csCohomology^2(M, \UGroup(1))) \) be 
the resulting dual pair of Lie groups.

We continue to work in the general setting where \( F = P \times_G \FibreBundleModel{F} \) is an associated fiber bundle endowed with a 
fiber symplectic structure \( \omega \) induced by a symplectic form 
\( \FibreBundleModel{\omega} \) on \( \FibreBundleModel{F} \).
Assume that \( (\FibreBundleModel{F}, \FibreBundleModel{\omega}) \) 
has a \( G \)-equivariant prequantization $\UGroup(1)$-bundle 
\( \FibreBundleModel{L} \to  \FibreBundleModel{F} \) with connection 
\( \FibreBundleModel{\vartheta} \).
That is, \( G \) acts on \( \FibreBundleModel{L} \) by principal bundle 
automorphisms covering the action on \( \FibreBundleModel{F} \), and the 
connection \( \FibreBundleModel{\vartheta} \) is \( G \)-invariant. 
Since \( G \) acts by bundle automorphisms, the actions of \( \UGroup(1) \) 
and \( G \) commute.
Consider the associated $\UGroup(1)$-bundle \( L \defeq P \times_G  \FibreBundleModel{L} \to F= P \times _G \FibreBundleModel{F} \).
Since $L \to M$ is a fiber bundle with typical fiber $\FibreBundleModel{L}$, the vertical subbundle $\VBundle L = P \times_G \TBundle \FibreBundleModel{L}$ naturally projects onto $\VBundle F = P \times_G \TBundle \FibreBundleModel{F}$.
The \( G \)-equivariant \( \UGroup(1) \)-connection \( \FibreBundleModel{\vartheta} \) in \( \FibreBundleModel{L} \to \FibreBundleModel{F} \) yields an \( G \)-equivariant lift $\TBundle \FibreBundleModel{F} \to \TBundle \FibreBundleModel{L}$ and, thereby, induces a lift \( \vartheta: \VBundle F \to \VBundle L \).
The principal \( G \)-connection \( \Gamma \) on \( P \rightarrow  M \) completes \( \vartheta \) to a \( \UGroup(1) \)-connection \( \vartheta^\Gamma \) in \( L \to F \).
Indeed, the associated connection \( 1 \)-form on \( L \) is, under the identification \( \TBundle L = 
\TBundle P \times_{\TBundle G} \TBundle \FibreBundleModel{L} \), given by
\begin{equation}
\label{eq:connectionOnL}
\vartheta^\Gamma_{\equivClass{p, \FibreBundleModel{l}}}\bigl( \equivClass[\big]{Z^\Gamma_p + 
B^*_p, u_{\FibreBundleModel{l}}} \bigr) \defeq 
\FibreBundleModel{\vartheta}_{\FibreBundleModel{l}}
( B^*_{\FibreBundleModel{l}} + u_{\FibreBundleModel{l}} ),
\end{equation}
where \( Z^\Gamma_p \) is a \(\Gamma \)-horizontal vector at the 
point \( p \in  P \), \( B \in \LieA{g} \), and 
\( u_{\FibreBundleModel{l}} \in \TBundle_{\FibreBundleModel{l}} 
\FibreBundleModel{L} \).
By construction, \( \vartheta^\Gamma \) coincides with \( \vartheta \) on the vertical part and hence the curvature \( F_{\vartheta^\Gamma} \) of \( \vartheta^\Gamma \), seen as a \( 2 \)-form on \( F \), is an extension of the fiber symplectic structure \( \omega \).
In other words, \( (L, \vartheta^\Gamma) \) yields a prequantization of the fiber symplectic bundle \( (F, \omega) \).
We will set \( \hat{\omega} = F_{\vartheta^\Gamma} \) in the general theory discussed above.
Moreover, as discussed in \cref{sec::differentialCharacters:motivationCircleBundles}, the holonomy of \( (L, \vartheta^\Gamma) \) yields a differential character \( h_\Gamma \in \csCohomology^2(F, \UGroup(1)) \).

The pull-back map
\begin{equation}
	\pb_h: \SectionSpaceAbb{F} \to \csCohomology^2(M, \UGroup(1)), 
	\qquad \phi \mapsto \phi^* h_\Gamma
\end{equation}
has logarithmic derivative \( (\difLog\, \pb_h)_\phi Y = \phi^* \bigl((Y \contr 
F_{\vartheta^\Gamma}) \circ \phi\bigr) = \phi^* \bigl((Y \contr \hat{\omega}) \circ 
\phi\bigr) \) according to  \cref{prop::differentialCharacter:pullback}.
On the other hand, 
using~\eqref{eq::fibreIntegration:defWithPartialPullback}, we evaluate 
\( (X \contr \mu) \hatProduct \hat{\omega} \) on \( Y \in \TBundle_\phi 
\SectionSpaceAbb{F} \) to obtain
\begin{equation}
\bigl((X \contr \mu) \hatProduct \hat{\omega}\bigr)_\phi(Y) = 
(-1)^{\dim M -1} \int_M (X \contr \mu) \wedge \phi^* \bigl((Y \contr \hat{\omega}) 
\circ \phi\bigr).
\end{equation}
Thus we have
\begin{equation}
(X \contr \mu) \hatProduct \hat{\omega} = \kappa(X, \difLog \, \pb_h),
\end{equation}
so that \( \SectionMapAbb{J}_\Gamma \defeq - \pb_h: \SectionSpaceAbb{F} \to 
\csCohomology^2(M, \UGroup(1)) \) satisfies the `partial' momentum map relation~\eqref{eq::diffGroup:partialMomentumMapCompatibleConnection}. 

We finally gathered all the necessary ingredients to apply 
\cref{prop::groupMomentumMap:forExtension,prop::groupMomentumMap:forLift}.

\begin{thm}[Momentum map in the volume-preserving case]
\label{thm::symplecticFibreBundle:momentumMapVolume}
Let \( \pi: P \to M \) be a finite-dimensional principal \( G \)-bundle 
over the closed volume manifold \( (M, \mu) \) and assume that 
\( \DiffGroup_{\mu, P}(M) = \DiffGroup_{\mu}(M) \).
Then the following holds:
\begin{enumerate}
\item For every Hamiltonian \( G \)-manifold \( (\FibreBundleModel{F}, 
\FibreBundleModel{\omega}, \FibreBundleModel{J}) \), the space 
\( \SectionSpaceAbb{F} \) of sections of the associated symplectic fiber bundle 
\( (F = P \times_G \FibreBundleModel{F}, \omega) \) is a symplectic 
Fr\'echet manifold with weak symplectic form \( \Omega = \mu \hatProduct 
\omega \).

\item 	Choose a principal connection \( \Gamma \) in \( P \). Assume that \( (\FibreBundleModel{F}, \FibreBundleModel{\omega}) \) 
has a \( G \)-equivariant prequantization $\UGroup(1)$-bundle 
\( \FibreBundleModel{L} \) and denote the resulting prequantization differential character
of \( (F, \omega) \) by \( h_\Gamma \in 
\csCohomology^2(F, \UGroup(1))\). 
The group \( \AutGroup_\mu(P) \) of  
bundle automorphisms, whose induced base diffeomorphisms
are volume-preserving, acts symplectically on \( (\SectionSpaceAbb{F}, 
\Omega) \) and has a group-valued  momentum map
\begin{equation}
\SectionSpaceAbb{J}_{\AutGroup}: \SectionSpaceAbb{F} \to \CoGauAlgebra(P) \times 
\csCohomology^2(M, \UGroup(1)), \qquad \phi \mapsto 
(J_* \phi, - \phi^* h_\Gamma).
\end{equation}
This momentum map is equivariant relative to the natural actions of \( \AutGroup_\mu(P) \) (see \cref{ex:dualPairLieGroups:volumePreservingDiffeos:coconjugation}).

\item 	A lift \( \chi: \DiffGroup_\mu(M) \to \AutGroup_\mu(P) \) of the group of volume-preserving diffeomorphisms to principal bundle automorphisms (this is a section of the projection in~\eqref{eq::hatProduct:AutGroupExactSequence}) yields a symplectic group action of \( \DiffGroup_\mu(M) \) on \( \SectionSpaceAbb{F} \) with \( \csCohomology^2(M, \UGroup(1)) \)-valued momentum map
\begin{equation}
\SectionSpaceAbb{J}_{\DiffGroup}: \SectionSpaceAbb{F} \to 
\csCohomology^2(M, \UGroup(1)), \qquad \phi \mapsto - \phi^* h_\Gamma + 
\iota \circ \tau_{\Gamma, \chi}^* \circ J_* (\phi).
	 \end{equation}
Here, \( \iota: \csAlgebra^2(M, \UGroup(1)) \to \csCohomology^2(M, \UGroup(1)) \) denotes the natural inclusion of topological trivial bundles (see \cref{sec::differentialCharacters:definition}). Moreover, \( \tau_{\Gamma, \chi}^*: \CoGauAlgebra(P)\to \csAlgebra^2(M, \UGroup(1))\) denotes the adjoint of \( \tau_{\Gamma, \chi} \defeq \chi' - \Gamma: \VectorFieldSpace_\mu(M) \to \GauAlgebra(P) \) with respect to the dual pairs \( \kappa(\VectorFieldSpace_\mu(M), \csAlgebra^2(M, \UGroup(1))) \) and \( \scalarProd{\GauAlgebra(P)}{\CoGauAlgebra(P)}_{\AdAction} \).
Furthermore, \( \SectionSpaceAbb{J}_{\DiffGroup} \) does not depend on the connection \( \Gamma \) that was used in the construction.
\qedhere
\end{enumerate}
\end{thm}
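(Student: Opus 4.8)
The plan is to obtain the three assertions by specializing the extension machinery of \cref{sec::groupMomentumMap:forExtension} to the short exact sequence~\eqref{eq::hatProduct:AutGroupExactSequence} of Lie groups. Part~(i) requires nothing new: it is \cref{prop::symplecticStructureOnSectionsOfSymplecticFibreBundle} together with the fact that \( \SectionSpaceAbb{F} \) is a Fr\'echet manifold. For part~(ii) I would first note that the homomorphism \( \AutGroup_\mu(P) \to \AutGroup_{\mu, \omega}(F) \) constructed before the theorem, combined with the invariance of \( \Omega = \mu \hatProduct \omega \) under \( \AutGroup_{\mu, \omega}(F) \), shows that \( \AutGroup_\mu(P) \) acts symplectically on \( (\SectionSpaceAbb{F}, \Omega) \); since \( \csCohomology^2(M, \UGroup(1)) \) is Abelian, this is the same as acting in a \( \csCohomology^2(M, \UGroup(1)) \)-symplectic way. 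Under the running hypothesis \( \DiffGroup_{\mu, P}(M) = \DiffGroup_\mu(M) \), I would then take the vector-space splitting \( \sigma \defeq \Gamma \colon \VectorFieldSpace_\mu(M) \to \AutAlgebra_\mu(P) \) furnished by the horizontal lift and apply \cref{prop::groupMomentumMap:forExtension} to~\eqref{eq::hatProduct:AutGroupExactSequence} with \( H = \GauGroup(P) \), \( K = \AutGroup_\mu(P) \), \( G = \DiffGroup_\mu(M) \), \( G^* = \csCohomology^2(M, \UGroup(1)) \), and the pairing \( \scalarProdDot_{\AdAction} \) between \( \GauAlgebra(P) \) and \( \CoGauAlgebra(P) \). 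The two inputs that proposition demands are already available: the gauge group carries the standard momentum map \( \SectionSpaceAbb{J}_{\GauGroup} \colon \phi \mapsto J_* \phi \) by \cref{prop::momentumMap:gaugeGroup}, and, choosing the connection-compatible extension \( \hat{\omega} = F_{\vartheta^\Gamma} \), the computation carried out just before the theorem shows that \( \SectionMapAbb{J}_\Gamma \defeq -\pb_h \colon \phi \mapsto -\phi^* h_\Gamma \) solves the `partial' momentum map relation~\eqref{eq::diffGroup:partialMomentumMapCompatibleConnection}. \cref{prop::groupMomentumMap:forExtension} then glues these together into \( \SectionSpaceAbb{J}_{\AutGroup} = (J_* \phi,\, -\phi^* h_\Gamma) \).

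Equivariance I would check componentwise. The gauge component behaves correctly because \( \FibreBundleModel{J} \) is \( G \)-equivariant, so the associated bundle map \( J = \id_P \times_G \FibreBundleModel{J} \colon F \to \CoAdBundle P \) intertwines the \( \AutGroup_\mu(P) \)-action on \( F \) with the coadjoint (pull-back) action on \( \CoAdBundle P \), whence \( J_*(\psi \cdot \phi) \) is the coadjoint transform of \( J_* \phi \). For the topological component, the bundle \( L = P \times_G \FibreBundleModel{L} \) is canonically \( \AutGroup_\mu(P) \)-equivariant, so \( \psi_F^* h_\Gamma = h_{\varphi^* \Gamma} \), where \( \varphi \) is the lift of \( \psi \) to \( P \) and \( \varphi^* \Gamma \) the pulled-back connection; since \( h_{\varphi^*\Gamma} \) and \( h_\Gamma \) have the same underlying bundle and differ only by a topologically trivial character built from \( \varphi^*\Gamma - \Gamma \), one finds that \( -(\psi \cdot \phi)^* h_\Gamma \) equals the \( \check{\psi}^{-1} \)-pull-back of \( -\phi^* h_\Gamma \) up to a correction term of the schematic form \( \iota\bigl(\dualPair{\varphi^*\Gamma - \Gamma}{J_* \phi}\bigr) \), which depends only on the gauge component \( J_* \phi \). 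This is exactly the twisting prescribed by the coconjugation matched-pair action on \( \CoGauAlgebra(P) \times \csCohomology^2(M, \UGroup(1)) \) (\cf \cref{ex:dualPairLieGroups:volumePreservingDiffeos:coconjugation} together with the matched-pair formalism). I expect pinning down this coconjugation action on the product precisely and verifying the cocycle identity for the correction term to be the main obstacle; the rest is assembly of results proved earlier.

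Finally, for part~(iii): the \( \DiffGroup_\mu(M) \)-action through the group section \( \chi \) factors through \( \AutGroup_\mu(P) \to \AutGroup_{\mu, \omega}(F) \) and is therefore symplectic. As \( \csCohomology^2(M, \UGroup(1)) \) is Abelian, \cref{prop::groupMomentumMap:forLift} applies directly, with \( \sigma = \Gamma \) the vector-space splitting and \( \chi' \colon \VectorFieldSpace_\mu(M) \to \AutAlgebra_\mu(P) \) the Lie-algebra splitting induced by \( \chi \); then \( \tau_{\Gamma, \chi} = \chi' - \Gamma \) lands in \( \GauAlgebra(P) = \ker \), as the proposition requires. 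It produces the momentum map \( \phi \mapsto \SectionMapAbb{J}_\Gamma(\phi) \cdot \exp\bigl(\tau_{\Gamma, \chi}^*(J_* \phi)\bigr) \); substituting \( \SectionMapAbb{J}_\Gamma = -\pb_h \) and recalling that the exponential \( \csAlgebra^2(M, \UGroup(1)) \to \csCohomology^2(M, \UGroup(1)) \) is the inclusion \( \iota \) of topologically trivial characters yields exactly \( \SectionSpaceAbb{J}_{\DiffGroup}(\phi) = -\phi^* h_\Gamma + \iota \circ \tau_{\Gamma, \chi}^* \circ J_*(\phi) \). Independence of the auxiliary connection \( \Gamma \) is the closing statement of \cref{prop::groupMomentumMap:forLift}: changing the connection shifts \( -\phi^* h_\Gamma \) and \( \tau_{\Gamma, \chi}^* \) by exactly cancelling amounts, both governed by the difference of the two horizontal lifts.
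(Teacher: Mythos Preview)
Your proposal is correct and matches the paper's approach exactly: the theorem is presented as the direct outcome of applying \cref{prop::groupMomentumMap:forExtension,prop::groupMomentumMap:forLift} to the sequence~\eqref{eq::hatProduct:AutGroupExactSequence}, using the gauge momentum map from \cref{prop::momentumMap:gaugeGroup} and the partial momentum map \( \SectionMapAbb{J}_\Gamma = -\pb_h \) computed immediately before the theorem. Your componentwise equivariance discussion in fact goes beyond what the paper writes out (equivariance is simply asserted there), and your heuristic for the correction term is on the right track but, as you suspect, would need to be made precise to count as a full proof of that clause.
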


\begin{remark}\label{rem:symplecticFibreBundle:tauMapVolume}
In order to illustrate the meaning of the map \( \tau_{\Gamma, \chi} \) and its adjoint, we compute it for the $\SLGroup(n, \mathbb{R})$-frame bundle \( \pi: \FrameBundle M \to M \). Every volume-preserving diffeomorphism \( \phi \) lifts in a natural way to a diffeomorphism
\begin{equation}
	\chi(\phi) = \hat{\phi}: \FrameBundle M \to \FrameBundle M, 
	\quad
	u_m \mapsto \tangent_m \phi \circ u_m,
\end{equation}
where we view a frame \( u_m \in \FrameBundle_m M \) as a linear isomorphism \( u_m: \R^n \to \TBundle_m M \).
Thus the frame bundle comes equipped with a natural lift \( \chi: \DiffGroup_\mu(M) \to \AutGroup_\mu(\FrameBundle M) \).
Of course, we also get an induced lifting map for vector fields, which we write as \( \chi': \VectorFieldSpace_\mu(M) \ni X \mapsto \hat{X} \in \AutAlgebra_\mu(\FrameBundle M) \).

Recall that there exists a natural \( 1 \)-form \( \theta \in \DiffFormSpace^1(\FrameBundle M, \R^n) \) defined by (see \parencite[Section~III.2]{KobayashiNomizu1963})
\begin{equation}
\theta (Z_u) = u^{-1} \circ \tangent_u \pi (Z_u), \quad 
Z_u \in \TBundle_u (\FrameBundle M). 
\end{equation}
The form \( \theta \) is invariant under naturally lifted vector fields, \ie, \( \difLie_{\hat{X}} \theta = 0 \).

Let $\alpha \in \DiffFormSpace^1(\FrameBundle M, \SLAlgebra(n, \R))$ be a principal connection \( 1 \)-form on the bundle $\FrameBundle M \to M$ and denote by $\Gamma$ the associated horizontal lift.
The associated covariant derivative on the tangent bundle $\TBundle M \to M$ is defined by (see \parencite[Lemma on p.~133]{KobayashiNomizu1963})
\begin{equation}
\nabla_Y X \defeq u \circ \hat{Y}^\Gamma_u (\theta(\hat{X})), 
\quad u \in \FrameBundle M,
\end{equation}
where \( \hat{Y}^\Gamma \) denotes the \( \Gamma \)-horizontal lift of \( Y \in \VectorFieldSpace(M) \). The torsion \( t_\Gamma \in 
\DiffFormSpace^2(\FrameBundle M, \R^n) \) of \( \Gamma \) is given as the covariant derivative of \( \theta \).
The structure equation (see \parencite[Theorem~III.2.4]{KobayashiNomizu1963}) allows us to write
\begin{equation}
t_\Gamma (Z_1, Z_2) = \dif \theta (Z_1, Z_2) + 
\alpha(Z_1) \cdot \theta (Z_2) - \alpha(Z_2) \cdot \theta (Z_1),
\end{equation}
where the $ \cdot $ in the last two terms denotes the natural action of \( \SLAlgebra(n, \R) \) on \(\R^n \).
We evaluate this expression for the case where \( Z_1 = \hat{X} \in \VectorFieldSpace(\FrameBundle M)\) is the natural lift of \( X \in \VectorFieldSpace(M) \) and \( Z_2 = \hat{Y}^\Gamma \in \VectorFieldSpace(\FrameBundle M)\) is the horizontal lift of \( Y \in \VectorFieldSpace(M) \):
\begin{equation}\begin{split}
t_\Gamma \bigl(\hat{X}, \hat{Y}^\Gamma\bigr) 
&= \dif \theta \bigl(\hat{X}, \hat{Y}^\Gamma\bigr) + \alpha (\hat{X}) \cdot \theta \bigl(\hat{Y}^\Gamma\bigr) \\
&= - \hat{Y}^\Gamma \contr \dif (\hat{X} \contr \theta) + 
\alpha (\hat{X}) \cdot u^{-1}(Y) \\
&= - u^{-1} (\nabla_Y X) + \alpha (\hat{X}) \cdot u^{-1}(Y),
\end{split}\end{equation}
where we used \( \difLie_{\hat{X}} \theta = 0 \). 
On the other hand, for \( \tau_{\Gamma, \chi} = \chi' - \Gamma \), 
we have
\begin{equation}
\tau_{\Gamma, \chi} (X)
			= \alpha \bigl(\tau_{\Gamma, \chi} (X)\bigr)
			= \alpha \bigl(\hat{X} - \hat{X}^\Gamma\bigr) 
			= \alpha (\hat{X}).
\end{equation}
Combining both equations, we get
\begin{equation}\begin{split}
	u \bigl(\tau_{\Gamma, \chi} (X) \cdot u^{-1}(Y)\bigr) 
		&= u \circ t_\Gamma \bigl(\hat{X}, \hat{Y}^\Gamma\bigr) + \nabla_Y X \\
		&= \nabla_X Y - \commutator{X}{Y}.
\end{split}\end{equation}
Thus, for torsion-free connections, we identify \( \tau_{\Gamma, \chi} \) with the covariant derivative
\begin{equation}
\tau_{\Gamma, \chi}: \VectorFieldSpace_\mu (M) \to 
\sSectionSpace(\CotBundle M \otimes \TBundle M), 
\quad X \mapsto \nabla X.
\end{equation}
For a section \(s\) of \(\TBundle M \otimes \CotBundle M\rightarrow M\), the covariant derivative \( \nabla s \) is a section of \((\TBundle M \otimes \CotBundle M) \otimes \CotBundle M\rightarrow M\).
We obtain a \( 1 \)-form \( c (\nabla s) \) by contracting the first with the last component (\ie, the vector component with the \enquote{derivative} component; in Penrose's abstract index notation \( c(\nabla s)_i = \nabla_j s^j{}_i \)). The identity
\begin{equation}
\divergence (X \contr s) = \dualPair{\nabla X}{s} + X \contr c (\nabla s)
\end{equation}
yields the following expression for the adjoint operator of 
\( \tau_{\Gamma, \chi} \):
\begin{equation}
\tau_{\Gamma, \chi}^*: \sSectionSpace(\TBundle M \otimes \CotBundle M) 
\to \DiffFormSpace^1(M) \slash \dif \DiffFormSpace^0(M), \quad
s \mapsto - \equivClass{c (\nabla s)}.
		\qedhere
\end{equation}
\end{remark}

\begin{remark}[Momentum map in the classical sense]
\label{rem::momentumMap:obstructionForClassicalExistence}
If we compose the momentum map \( \SectionSpaceAbb{J}_{\DiffGroup}: 
\SectionSpaceAbb{F} \to \csCohomology^2(M, \UGroup(1)) \) with the 
curvature map \(\csCohomology^2(M,\UGroup(1)) \to \DiffFormSpace^2(M,\R)\), 
then  we recover Donaldson's expression for the "momentum map" 
\parencite[Equation~8]{Donaldson2003}\footnote{Actually, Donaldson's 
expression for the momentum map differs by a sign, which comes from the 
fact that in \parencite{Donaldson2003} the diffeomorphism group acts 
via a right action on the section space.}
\begin{equation}
	\label{eq:momentumMap:classical}
\SectionSpaceAbb{F} \to \DiffFormSpace^2(M, \R), \qquad \phi \mapsto - \phi^* \hat{\omega} + \dif \bigl(\tau_{\Gamma, \chi}^* \circ J_* (\phi)\bigr).
\end{equation}
Note, however, that \( \DiffFormSpace^2(M, \R) \) is not the dual of the 
Lie algebra \( \VectorFieldSpace_\mu(M) \). In fact, it is only the 
dual of the Lie algebra of \emph{exact} volume-preserving vector fields.
Thus,~\eqref{eq:momentumMap:classical} does not yield a bona-fide 
momentum map for the action of \( \DiffGroup_\mu(M) \).

Although we were able to determine a group-valued momentum map for the diffeomorphism action under rather weak assumptions on the fiber model \( (\FibreBundleModel{F}, \FibreBundleModel{\omega}) \), \emph{a momentum map in the classical sense exists only under certain topological conditions on the fiber bundle \( F \)}. Indeed, a necessary and sufficient condition for the existence of a momentum map \( \SectionMapAbb{J}: \SectionSpaceAbb{F} \to (\VectorFieldSpace_\mu(M))^* \) is that the obstruction map \( \VectorFieldSpace_\mu(M) \slash \commutator{\VectorFieldSpace_\mu(M)}{\VectorFieldSpace_\mu(M)} \to \sCohomology^1(\SectionSpaceAbb{F}, \R) \) sending the class of  
\( X \in \VectorFieldSpace_\mu(M) \) to the cohomology class of 
\( X^* \contr \Omega \) is identically zero (see, for example, \parencite[Proposition~4.5.17]{OrtegaRatiu2003}). 

Suppose we have a lift \( \chi: \DiffGroup_\mu(M) \to \AutGroup_\mu(P) \).
Then there is an induced lifting map for vector fields, which we write as 
\( \VectorFieldSpace_\mu(M) \ni X \mapsto \hat{X}\in\AutAlgebra_\mu(P) \).
Let us assume\sideNote{These assumptions are satisfied, for example, for 
the trivial bundle \( F = M \times \FibreBundleModel{F} \to M \).} that 
\(\hat{X}\) is horizontal with respect to a principal connection on 
$P \to M$ compatible with the extension $\hat{\omega}$ and that 
\( \DiffGroup_{\mu, P}(M) = \DiffGroup_{\mu}(M) \). In this setting, 
the obstruction to the existence of a classical momentum map takes the 
form
\begin{equation}
\label{eq::obstructionUsualMomentumMap}
\sCohomology^{\dim M-1}(M, \R) \to 
\sCohomology^1(\SectionSpaceAbb{F}, \R), \qquad 
\equivClass{\alpha} \mapsto \equivClass{\alpha} \hatProduct 
\equivClass{\hat{\omega}},
\end{equation}
where we used the isomorphism
\( \VectorFieldSpace_\mu(M) \slash [\VectorFieldSpace_\mu(M), 
\VectorFieldSpace_\mu(M)] \ni \equivClass{X} \mapsto 
\equivClass{X \contr \mu} \in \sCohomology^{\dim M-1} (M, \R)\), 
see \parencite[Theorem~1.4.3]{Banyaga1997}.
We do not know what are the minimal conditions guaranteeing that the 
above obstruction~\eqref{eq::obstructionUsualMomentumMap} vanishes.
Clearly, this is the case if the corresponding cohomology class of 
\( M \) vanishes. However, this is a rather strong topological assumption. 
\end{remark}

\begin{remark}[Topological data contained in the group-valued momentum map]
	\label{rem:momentumMapDiffVol:topologicalData}
	The \( \csCohomology^2(M, \UGroup(1)) \)-valued momentum map contains two types of information.
	First, the curvature as a geometric datum essentially corresponds to the classical momentum map as discussed in \cref{rem::momentumMap:obstructionForClassicalExistence} above.

	Second, the group-valued momentum map also yields topological classes that are not present in the classical momentum map.
	The Chern class 
	\begin{equation}
		c(\SectionSpaceAbb{J}_{\DiffGroup}(\phi)) = - \phi^* c(h_\Gamma) 
	\end{equation}
	can be viewed as an obstruction to the existence of a classical momentum map. Indeed, if a classical momentum map with values in \( \DiffFormSpace^1(M) \slash \dif \DiffFormSpace^0(M) \) exists, then its composition with the inclusion \( \iota: \DiffFormSpace^1(M) \slash \dif \DiffFormSpace^0(M) \to  \csCohomology^2(M, \UGroup(1)) \) (see~\eqref{eq::differentialCharacter:definingDiagram}) is a group-valued momentum map whose Chern class vanishes, because the image of \( \iota \) lies in the kernel of the Chern class map \( c \). 

If the Chern class is trivial, then a secondary topological class related to the equivariance of the momentum map appears.
This secondary class has its origin in the short exact sequence from~\eqref{eq::differentialCharacter:definingDiagram}
\begin{equationcd}
\!\!\!\!
0 \to[r]
	& \DiffFormSpace^1(M) \slash \clZDiffFormSpace^1(M) \to[r, "\iota"]
	& \csCohomology^2(M, \UGroup(1)) \to[r, "c"] 
	& \sCohomology^2(M, \Z) \to[r]
	& 0.
\end{equationcd}
Exactness implies that the kernel of the characteristic class map \( c \) is isomorphic to \( \DiffFormSpace^1(M) \slash \clZDiffFormSpace^1(M) \).
However, this isomorphism is not canonical and involves a choice.
For example, we may choose a basis of the singular homology 
\( \sHomology_1(M, \Z) \) given by piece-wise smooth loops \( \gamma_i \). 
The curvature of a differential character \( h \in \csCohomology^2(M, 
\UGroup(1)) \) with trivial Chern class is an exact \( 2 \)-form, 
say \( \curv h = \dif \alpha_h \).
The primitive \( \alpha_h \) is uniquely determined, modulo 
\( \clZDiffFormSpace^1(M) \), by the requirement that \( \exp\left(2\pi 
\I \int_{\gamma_i} \alpha_h\right) = h(\gamma_i) \) for all \( i \). By 
construction, the map \( \iota_{\gamma}^{-1}: h \mapsto 
\equivClass{\alpha_h} \) is inverse to \( \iota \) and thus yields the 
claimed isomorphism \( \ker c \to \DiffFormSpace^1(M) \slash 
\clZDiffFormSpace^1(M) \) of Abelian Lie groups.
	
Given an \( \csCohomology^2(M, \UGroup(1)) \)-valued momentum map 
\( \SectionMapAbb{J} \), whose image is contained in \( \ker c \), the 
composition \( \SectionMapAbb{J}_{\gamma} = \iota_{\gamma}^{-1} \circ 
\SectionMapAbb{J} \) is a \( \DiffFormSpace^1(M) \slash 
\clZDiffFormSpace^1(M) \)-valued momentum map, \ie, almost a classical 
momentum map.
Note that, even though \( \SectionMapAbb{J} \) is equivariant with 
respect to pull-backs by diffeomorphisms, the reduced momentum map 
\( \SectionMapAbb{J}_\gamma \) is no longer equivariant, because the 
loops \( \gamma_i \) are also affected by the diffeomorphism.
The obstruction for the equivariance is the class in \( \sCohomology^1(M, 
\UGroup(1)) \) given by evaluating \(h\) on the generators \( \gamma_i \).
We note that the same construction goes through in the slightly more 
general case, where the Chern class does not vanish but where it is a 
torsion class\footnote{An element \( c \in \sCohomology^k(M, \Z) \) is 
called a torsion class if it belongs to the subgroup 
\( \ExtAb(\sHomology_{k-1}(M, \Z), \Z) \subseteq \sCohomology^k(M, \Z) \).} 
(because then the curvature is an exact form as well).
	
We return to this secondary topological class in \cref{sec::applications}, 
where we recover, as an example, the Liouville class of a Lagrangian
embedding. 

In summary, if an equivariant \( \csCohomology^2(M, \UGroup(1)) \)-valued momentum map exists, then the existence of a classical equivariant momentum map is obstructed by two topological classes: the Chern class in \( \sCohomology^2(M, \Z) \) and a class in \( \sCohomology^1(M, \UGroup(1)) \). 
\end{remark}

\subsubsection{Momentum map for symplectomorphisms}%
\label{sec:globalAnalysisSymplecticFibreBundles:momentumMapForSymplectomorphisms}
Let \( M \) be a closed \( 2n \)-dimensional manifold endowed with a symplectic form \( \sigma \). The Liouville volume form is denoted by \( \mu_\sigma = \sigma^n \slash n! \). Instead of the exact sequence~\eqref{eq::hatProduct:AutGroupExactSequence}, we consider the exact sequence
\begin{equation}
\label{eq::hatProduct:AutGroupExactSequenceSymplectic}
\id \to \GauGroup(P) \to \AutGroup_\sigma(P) \to 
\DiffGroup_{\sigma, P}(M) \to \id,
\end{equation}
where \( \AutGroup_\sigma(P) \) is the group of automorphisms of the 
bundle \( P \to M \) whose induced diffeomorphisms on the base p
reserve \( \sigma \). 
We assume that \( \DiffGroup_{\sigma, P}(M) = \DiffGroup_{\sigma}(M) \). 
The momentum map is determined similarly to the volume-preserving case. Indeed, we only need to replace the dual pair \( \kappa(\DiffGroup_{\mu}(M), \csCohomology^2(M, \UGroup(1))) \) by the dual pair \( \kappa(\DiffGroup_{\sigma}(M), \csCohomology^{2n}(M, \UGroup(1))) \) appearing in \cref{ex::dualPairLieGroups:symplectomorphisms}.

Assume that \( \sigma \) is prequantizable and choose a prequantization circle bundle with connection \( h_\sigma \in \csCohomology^2(M, \UGroup(1)) \). 
As above, we assume that \( (\FibreBundleModel{F}, \FibreBundleModel{\omega}) \) has a \( G \)-equivariant prequantization bundle \( \FibreBundleModel{L} \).
A principal connection \( \Gamma \) in \( P \to M \) yields a prequantization \( h_\Gamma \in \csCohomology^2(F, \UGroup(1)) \) of the symplectic fiber bundle \( (F, \omega) \).
We claim that the `partial' momentum map \( \SectionMapAbb{J}_\Gamma: \SectionSpaceAbb{F} \to \csCohomology^{2n}(M, \UGroup(1)) \) assigns to a section \( \phi \) the degree \( 2n \)-bundle \( - \phi^* h_\Gamma \star h_\sigma^{n-1} \), where the star product extends the wedge product to differential characters and the power \( n-1 \) should also be understood in this sense, see~\eqref{Appendix_D_def_fat_star}.
In order to verify that the `partial' momentum map relation, \cf~\eqref{eq::diffGroup:partialMomentumMapCompatibleConnection},
\begin{equation}
	\label{eq:momentumMapForSymplectomorphisms:partialMomentumMap}
(X \contr \mu_\sigma) \hatProduct \hat{\omega} + \kappa(X, \difLog 
\SectionMapAbb{J}_\Gamma) = 0, \qquad X \in \VectorFieldSpace_\sigma(M)
\end{equation}
holds, we first observe that \( \SectionMapAbb{J}_\Gamma \) is the composition of the pull-back map \( \pb_{h_\Gamma} \) with the Lie group homomorphism
\begin{equation}
\csCohomology^{2}(M, \UGroup(1)) \to \csCohomology^{2n}(M, \UGroup(1)), 
\quad h \mapsto h \star h_\sigma^{n-1}.
\end{equation}
The corresponding Lie algebra homomorphism is given by \( \alpha \mapsto \alpha \wedge \sigma^{n-1} \), because, using the properties listed in \cref{sec::differentialCharacter:starProduct}, we have 
\begin{equation}
\iota(\alpha) \star h_\sigma^{n-1} = \iota (\alpha \wedge \curv 
h_\sigma^{n-1}).
\end{equation}
Using the distribution law of the logarithmic derivative over Lie group homomorphisms \parencite[Proposition~II.4.1.1]{Neeb2006}, we thus get
\begin{equation}\begin{split}
	\kappa\bigl(X, \difLog_\phi \SectionMapAbb{J}_\Gamma (Y)\bigr)
		&= - \, \frac{(-1)^{\dim M - 1}}{(n-1)!} \int_M (X \contr \sigma) \wedge \phi^* \bigl((Y \contr \hat{\omega}) \circ \phi\bigr) \wedge \sigma^{n-1}  
		\\
		&= - \, (-1)^{\dim M - 1} \int_M (X \contr \sigma) \wedge \frac{\sigma^{n-1}}{(n-1)!} \wedge \phi^* \bigl((Y \contr \hat{\omega}) \circ \phi\bigr)
		\\
		&= - \, (-1)^{\dim M - 1} \int_M (X \contr \mu_\sigma) \wedge \phi^* \bigl((Y \contr \hat{\omega}) \circ \phi\bigr),
		\\
		&= - \, \bigl((X \contr \mu_\sigma) \hatProduct \hat{\omega}\bigr)_\phi (Y),
\end{split}\end{equation}
where the last equality follows from~\eqref{eq::fibreIntegration:defWithPartialPullback}.
This completes the proof that \( \SectionMapAbb{J}_\Gamma \) satisfies 
the `partial' momentum map relation~\eqref{eq:momentumMapForSymplectomorphisms:partialMomentumMap}.
Thus, \cref{prop::groupMomentumMap:forExtension,prop::groupMomentumMap:forLift} yield the following. 

\begin{thm}[Momentum map in the symplectic case]
\label{thm::symplecticFibreBundle:momentumMapSymplectic}
Let \( \pi: P \to M \) be a finite-dimensional principal \( G \)-bundle over the closed $2n$-dimensional symplectic manifold \( (M, \sigma) \) and assume that \( \DiffGroup_{\sigma, P}(M) = \DiffGroup_{\sigma}(M)\) in the exact sequence~\eqref{eq::hatProduct:AutGroupExactSequenceSymplectic}.
Let $\mu_\sigma \defeq \sigma^n/n!$ be the Liouville volume form on \(M\).
Then the following holds:
\begin{enumerate}
\item For every Hamiltonian \( G \)-manifold 
\((\FibreBundleModel{F},\FibreBundleModel{\omega},\FibreBundleModel{J})\), 
the space of sections \( \SectionSpaceAbb{F} \) of the associated symplectic 
fiber bundle \( (F = P \times_G \FibreBundleModel{F},\omega)\) is a 
symplectic Fr\'echet manifold with symplectic form \( \Omega = \mu_\sigma 
\hatProduct \omega \).

\item 	
Choose a principal connection \( \Gamma \) in \(P\rightarrow  M\). 
Assume that \( (\FibreBundleModel{F}, \FibreBundleModel{\omega}) \) has a \( G \)-equivariant prequantization $\UGroup(1)$-bundle \( \FibreBundleModel{L} \) and denote the resulting prequantization differential character of \( (F, \omega) \) by \( h_\Gamma \in \csCohomology^2(F, \UGroup(1)) \).
Assume that \( \sigma \) has a prequantization \( h_\sigma \in \csCohomology^2(M, \UGroup(1)) \).
The group \( \AutGroup_\sigma(P) \) of bundle automorphisms whose induced 
base diffeomorphisms preserve the symplectic form \( \sigma \) acts 
symplectically on \( (\SectionSpaceAbb{F}, \Omega) \) and has a group-valued momentum map
\begin{equation}
\SectionSpaceAbb{J}_{\AutGroup}: \SectionSpaceAbb{F} \to \CoGauAlgebra(P) \times 
\csCohomology^{2n}(M, \UGroup(1)), \qquad \phi \mapsto 
(J_* \phi, - \phi^* h_\Gamma \star h_\sigma^{n-1}).
\end{equation}
This momentum map is equivariant relative to the natural actions of \( \AutGroup_\sigma(P) \) (compare \cref{ex:dualPairLieGroups:volumePreservingDiffeos:coconjugation}).

\item 	A lift \( \chi: \DiffGroup_\sigma(M) \to \AutGroup_\sigma(P) \) 
of the group of symplectomorphisms to principal bundle automorphisms (this is a section of the projection 
in~\eqref{eq::hatProduct:AutGroupExactSequenceSymplectic}) yields a symplectic group action of \( \DiffGroup_\sigma(M) \) on 
\(\SectionSpaceAbb{F}\) with \(\csCohomology^{2n}(M, \UGroup(1))\)-valued momentum map
\begin{equation}
\SectionSpaceAbb{J}_{\DiffGroup}: \SectionSpaceAbb{F} \to 
\csCohomology^{2n}(M, \UGroup(1)), \qquad \phi \mapsto - 
\phi^* h_\Gamma \star h_\sigma^{n-1} + 
\iota \circ \tau_{\Gamma, \chi}^* \circ J_* (\phi).
\end{equation}

Here, \( \iota: \csAlgebra^{2n}(M, \UGroup(1)) \to \csCohomology^{2n}(M, \UGroup(1)) \) denotes the natural inclusion of topological trivial bundles (see \cref{sec::differentialCharacters:definition}).
Moreover, \(\tau_{\Gamma,\chi}^*: \CoGauAlgebra(P)\to \csAlgebra^{2n}(M,\UGroup(1))\) denotes the adjoint of \( \tau_{\Gamma, \chi} = \chi' - \Gamma: \VectorFieldSpace_{\sigma}(M) \to \GauAlgebra(P) \) with respect to the dual pairs \( \kappa(\VectorFieldSpace_{\sigma}(M), 
\csAlgebra^{2n}(M, \UGroup(1))) \) and \( \scalarProd{\GauAlgebra(P)}{\CoGauAlgebra(P)}_{\AdAction} \).
Furthermore, \( \SectionSpaceAbb{J}_{\DiffGroup} \) does not depend on the connection \( \Gamma \) that was used in the construction.
\qedhere
\end{enumerate}
\end{thm}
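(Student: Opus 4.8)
The plan is to run the same argument as for \cref{thm::symplecticFibreBundle:momentumMapVolume}, systematically replacing the dual pair \( \kappa(\DiffGroup_\mu(M), \csCohomology^2(M, \UGroup(1))) \) by the dual pair \( \kappa(\DiffGroup_\sigma(M), \csCohomology^{2n}(M, \UGroup(1))) \) of \cref{ex::dualPairLieGroups:symplectomorphisms} and the volume form \( \mu \) by the Liouville form \( \mu_\sigma = \sigma^n/n! \). With this substitution, part~(1) is just \cref{prop::symplecticStructureOnSectionsOfSymplecticFibreBundle} applied to the volume manifold \( (M, \mu_\sigma) \).

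For part~(2), I would first fix the connection-compatible extension \( \hat{\omega} = F_{\vartheta^\Gamma} \) of the fiber symplectic structure \( \omega \), built from the \( G \)-equivariant prequantum data \( (\FibreBundleModel{L}, \FibreBundleModel{\vartheta}) \) and the principal connection \( \Gamma \) exactly as in \cref{sec:momentum_map_for_volume_preserving_diffeomorphisms}, together with the associated character \( h_\Gamma \in \csCohomology^2(F, \UGroup(1)) \). The group \( \AutGroup_\sigma(P) \) then sits in the exact sequence~\eqref{eq::hatProduct:AutGroupExactSequenceSymplectic}, which places us in the setting of \cref{sec::groupMomentumMap:forExtension}: the kernel \( \GauGroup(P) \) has momentum map \( \SectionSpaceAbb{J}_{\GauGroup}: \phi \mapsto J_*\phi \) by \cref{prop::momentumMap:gaugeGroup}, and the `partial' momentum map for \( \DiffGroup_\sigma(M) \) relative to the splitting \( \Gamma: \VectorFieldSpace_\sigma(M) \to \AutAlgebra_\sigma(P) \) is \( \SectionMapAbb{J}_\Gamma \colon \phi \mapsto -\,\phi^* h_\Gamma \star h_\sigma^{n-1} \). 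Indeed, since \( \hat{\omega} \) is \( \Gamma \)-compatible the horizontal term in~\eqref{eq::autAction:contractionWithKillingVectorField} drops out, so the relation~\eqref{eq::diffGroup:partialMomentumMapCompatibleConnection} reduces to~\eqref{eq:momentumMapForSymplectomorphisms:partialMomentumMap}, which is precisely the computation carried out just before the theorem (the prefactor \( 1/(n-1)! \) from~\eqref{eq::diffAction:dualPairOSymplPresVectorFields} being absorbed into the Lie group homomorphism \( h \mapsto h \star h_\sigma^{n-1} \)). Applying \cref{prop::groupMomentumMap:forExtension} to the splitting \( \GauAlgebra(P) \oplus_\Gamma \VectorFieldSpace_\sigma(M) = \AutAlgebra_\sigma(P) \) then yields \( \SectionSpaceAbb{J}_{\AutGroup}(\phi) = (J_*\phi,\ -\phi^* h_\Gamma \star h_\sigma^{n-1}) \) as a group-valued momentum map with respect to \( \scalarProd{\GauAlgebra(P)}{\CoGauAlgebra(P)}_{\AdAction} + \kappa(\VectorFieldSpace_\sigma(M), \csAlgebra^{2n}(M, \UGroup(1))) \); symplecticity of the action follows from~\eqref{eq::hatProduct:TransfUnderAutomorphisms}.

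Equivariance is the one genuinely new ingredient, and I expect it to be the main point to get right. I would describe the coconjugation matched pair on \( \kappa(\AutGroup_\sigma(P), \CoGauAlgebra(P) \times \csCohomology^{2n}(M, \UGroup(1))) \) explicitly — a bundle automorphism \( \psi \) acting by the natural gauge-covariant action on \( \CoGauAlgebra(P) \) and by \( h \mapsto (\check{\psi}^{-1})^* h \) on \( \csCohomology^{2n}(M, \UGroup(1)) \), in the spirit of \cref{ex:dualPairLieGroups:volumePreservingDiffeos:coconjugation} — and then verify \( \SectionSpaceAbb{J}_{\AutGroup}(\psi\cdot\phi) = \Upsilon_\psi(\SectionSpaceAbb{J}_{\AutGroup}(\phi)) \). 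This reduces to the naturality of \( J_*\phi = J \circ \phi \circ \check{\psi}^{-1} \) (since \( J = \id_P \times_G \FibreBundleModel{J} \) is constructed \( G \)-equivariantly from \( \FibreBundleModel{J} \)) together with \( (\psi\cdot\phi)^* h_\Gamma = (\check{\psi}^{-1})^*(\phi^*(\psi^* h_\Gamma)) \) and the observation that \( \psi^* h_\Gamma \) differs from \( h_\Gamma \) only through the change of connection \( \Gamma \mapsto \psi^*\Gamma \), so that the connection-independence proved in part~(3) forces these characters to have equal pull-backs along sections. The functional-analytic prerequisites — smoothness of the pull-back map (\cref{prop::differentialCharacter:pullback}), regularity of the Abelian dual group, and existence of the adjoint \( \tau_{\Gamma,\chi}^* \) — are inherited verbatim from the volume-preserving case.

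For part~(3), a lift \( \chi: \DiffGroup_\sigma(M) \to \AutGroup_\sigma(P) \) differentiates to a Lie algebra homomorphism \( \chi': \VectorFieldSpace_\sigma(M) \to \AutAlgebra_\sigma(P) \) splitting~\eqref{sequence_momentum_diffeos} in its symplectic form, so we are in the hypotheses of \cref{prop::groupMomentumMap:forLift} with \( \tau_{\sigma\chi} = \tau_{\Gamma,\chi} = \chi' - \Gamma: \VectorFieldSpace_\sigma(M) \to \GauAlgebra(P) \). Since \( \csCohomology^{2n}(M, \UGroup(1)) \) is Abelian — written additively — the twisted momentum map \( \SectionMapAbb{J}_\Gamma \cdot \exp(\tau_{\Gamma,\chi}^* \SectionSpaceAbb{J}_{\GauGroup}) \) of \cref{prop::groupMomentumMap:forLift} becomes \( -\phi^* h_\Gamma \star h_\sigma^{n-1} + \iota(\tau_{\Gamma,\chi}^* J_*\phi) \), where the exponential map of \( \csCohomology^{2n}(M, \UGroup(1)) \) is identified with the inclusion \( \iota: \csAlgebra^{2n}(M, \UGroup(1)) \to \csCohomology^{2n}(M, \UGroup(1)) \) of topologically trivial characters (see \cref{sec::differentialCharacters:definition}); this is \( \SectionSpaceAbb{J}_{\DiffGroup} \). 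Its independence of the connection \( \Gamma \) is exactly the last assertion of \cref{prop::groupMomentumMap:forLift}. Apart from the equivariance check, the proof is a line-by-line transcription of the volume-preserving argument.
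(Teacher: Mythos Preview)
Your proposal is correct and matches the paper's approach essentially line by line: the paper explicitly says the momentum map is ``determined similarly to the volume-preserving case'' by replacing the dual pair, verifies the partial momentum map relation~\eqref{eq:momentumMapForSymplectomorphisms:partialMomentumMap} via the Lie group homomorphism \( h \mapsto h \star h_\sigma^{n-1} \), and then concludes by invoking \cref{prop::groupMomentumMap:forExtension,prop::groupMomentumMap:forLift} --- exactly your plan.

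One remark on equivariance: you flag it as ``the one genuinely new ingredient'' and sketch an argument via connection-independence, but the paper does not actually prove equivariance in detail (neither here nor in \cref{thm::symplecticFibreBundle:momentumMapVolume}); it is simply asserted as following from the naturality of pull-back and of \( J_* \). Your sketch is more ambitious than what the paper supplies, but be careful: the connection-independence statement in part~(3) concerns \( \SectionSpaceAbb{J}_{\DiffGroup} \), which already includes the correction term \( \iota \circ \tau_{\Gamma,\chi}^* \circ J_* \), whereas equivariance in part~(2) is about the bare second component \( -\phi^* h_\Gamma \star h_\sigma^{n-1} \). A cleaner route is to argue directly that \( \psi_F^* h_\Gamma = h_{\psi^*\Gamma} \) and then absorb the connection change into the \( \CoGauAlgebra(P) \)-component, but at the level of detail the paper offers, simply asserting naturality is what is expected.
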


\begin{remark}[Momentum map in the classical sense]
Consider the case of a trivial bundle 
\( F = M \times \FibreBundleModel{F} \to M \). 
In this setting the term involving \( \tau_{\Gamma, \chi} \) vanishes.
After composing the momentum map \( \SectionSpaceAbb{J}_{\DiffGroup}: \SectionSpaceAbb{F} \to \csCohomology^{2n}(M, \UGroup(1)) \) with the curvature \( \csCohomology^{2n}(M, \UGroup(1)) \to \DiffFormSpace^{2n}(M, \R) \) we get the map
\begin{equation}
\SectionSpaceAbb{F} \to \DiffFormSpace^{2n}(M, \R), \quad \phi \mapsto - \phi^* \hat{\omega} \wedge \sigma^{n-1}.
\end{equation}
\Textcite[Proposition~3]{Donaldson1999} found that this map is the 
momentum map for the action of the symplectomorphism group 
\emph{under certain topological conditions on \( M \)}.
However, we remark again that \( \DiffFormSpace^{2n}(M, \R) \) is not the 
dual of the Lie algebra of symplectic vector fields 
\( \VectorFieldSpace_\sigma(M) \) but that it is only the dual of 
the space of \emph{Hamiltonian} vector fields.

The existence of a momentum map in the classical sense is again 
obstructed by a map on the Abelianization of 
\(\VectorFieldSpace_\sigma(M) \).
The map  \( \VectorFieldSpace_\sigma(M) \slash [\VectorFieldSpace_\sigma(M), \VectorFieldSpace_\sigma(M)] \ni \equivClass{X} \mapsto \equivClass{X \contr \sigma} \in \sCohomology^1(M, \R) \) is an isomorphism; see, \parencite[Theorem~1.4.3]{Banyaga1997}.
In particular, if \( \sCohomology^1(M, \R) \) is trivial, then a classical momentum map exists.
\end{remark}

\section{Applications}%
\label{sec::applications}

\subsection{Trivial bundle case}%
\label{sec:applications:trivialBundle}

Let \( M \) be a closed manifold endowed with a volume form \( \mu \). We are interested in the space of smooth maps \( \SectionSpaceAbb{F} = \sFunctionSpace(M, F) \) from \( M \) to a given Hamiltonian \( G \)-manifold \( (F, \omega, J) \). Of course, this set-up fits in the general theory developed in \cref{sec::groupMomentumMap,sec::globalAnalysisSymplecticFibreBundles} by thinking of \(\SectionSpaceAbb{F}\) as the space of sections of the trivial bundle \( M \times F \). 
The symplectic form on $\mathcal{F}$ is expressed in this case as
\begin{equation}
 	\Omega_\phi (Y_1, Y_2) = \int_M \omega_\phi (Y_1, Y_2) \mu.
 \end{equation}
We consider the two natural actions
\begin{equation}
\begin{split}
 \sFunctionSpace(M, G) \times \SectionSpaceAbb{F} &\to \SectionSpaceAbb{F}, 
 \qquad \psi \cdot \phi = \psi \cdot_G \phi, \\
\DiffGroup_\mu(M) \times \SectionSpaceAbb{F} &\to \SectionSpaceAbb{F}, 
\qquad \varphi \cdot \phi = \phi \circ \varphi^{-1},
\end{split}
\end{equation}
where $\cdot_G$ denotes the \( G \)-action on \( F \).
If \( (F, \omega) \) has a \( G \)-equivariant prequantum bundle 
\( (L, \vartheta) \), then, by \cref{thm::symplecticFibreBundle:momentumMapVolume}, the above actions 
have group-valued momentum maps 
\begin{equationcd}
&	\SectionSpaceAbb{F} \to[swap]{ld}{\SectionSpaceAbb{J}_G}\to{rd}
{\SectionSpaceAbb{J}_{\DiffGroup_\mu}}	& \\
\sFunctionSpace(M, \LieA{g}^*)	&		& \csCohomology^2(M, \UGroup(1))
\end{equationcd}
given by
\begin{equation}\label{eq:sec_4_1_momentum}
\quad\SectionSpaceAbb{J}_G(\phi) = J \circ \phi,
\qquad\qquad
\SectionSpaceAbb{J}_{\DiffGroup_\mu}(\phi) = - \phi^* \equivClass{L, \vartheta}.
\end{equation}
Moreover, if the symplectic form is exact, say \( \omega = 
\dif \vartheta \), then the \( \DiffGroup_\mu \)-action admits 
an ordinary momentum map
\begin{equation}\label{eq:trivialBundle:momentumMapExact}
\SectionSpaceAbb{J}_{\DiffGroup_\mu}: \SectionSpaceAbb{F} \to \DiffFormSpace^1(M) \slash \dif \DiffFormSpace^0(M), \qquad 
\phi \mapsto - \equivClass{\phi^* \vartheta}.
\end{equation}

\begin{remark}[Extension to the action of \( \DiffGroup_\omega(F) \)]%
\label{rem::extendMomentumMapOfGaugeGroupToSymplectomorphisms}
The group \( \DiffGroup_\omega(F) \) of symplectomorphisms leaves the symplectic form on \( \SectionSpaceAbb{F} \) invariant and thus it is natural to seek a momentum map for it. This corresponds to replacing \( \AutGroup_\mu(P) \) by \( \AutGroup_{\mu, \omega}(F) \) in the general theory outlined in the previous chapter. However, we are confronted with serious problems if we try to determine the momentum map in this generality: Such a momentum map would take values in the \emph{distributional} dual.
As shown in \parencite{Gay-BalmazVizman2012}, the action of the subgroup 
of Hamiltonian diffeomorphisms of \( F \) admits a momentum map 
\(\SectionMapAbb{J}:\SectionSpaceAbb{F}\to\HamVectorFields(F, \omega)^* \) 
satisfying \( \dualPair{X_h}{\SectionMapAbb{J}(\phi)} = 
\int_M (\phi^* h)\, \mu \), after choosing a basis in 
\( \sCohomology^0(F, \R) \) to identify Hamiltonian vector fields with 
smooth functions. However, the functional
\begin{equation}
T_\phi:\sFunctionSpace(F,\R) \to \R,\qquad h\mapsto \int_M (\phi^* h)\,\mu
\end{equation}
is usually not regular, \ie not represented by integration against a 
smooth function on \( F \).
For example, if \( M \) is the one-point space \( \set{x} \), then 
\( T_\phi \) is the delta distribution at the point \( \phi(x) \).
The distributional character of the momentum map for the 
\( \DiffGroup_\omega(F) \)-action is not surprising.
Conceptually, the momentum map is the push-forward of \( \mu \), and the 
push-forward of a differential form into a higher-dimensional manifold results in a distributional functional.

In order to apply the same strategy as in the general theory above, we would need a Lie group whose Lie algebra consists of distributions; or, even better, we would need \enquote{distributional line bundles}, \ie, singular bundles whose curvature is a distribution.
\end{remark}

\subsubsection{Hydrodynamics}%
\label{sec:applications:hydrodynamics}
The Lie group-valued momentum map can be used to construct a generalized 
Clebsch representation of a vector field that cannot be expressed in 
terms of classical Clebsch variables.

Let us start by recalling the usual setting.
Let \( M \) be a compact \( 3 \)-dimensional orientable boundaryless 
Riemannian manifold, oriented by the Riemannian volume form \( \mu \).
For simplicity, we also assume that \( M \) is connected.
Euler's equation for an incompressible perfect fluid on \( M \) is
\begin{equation}
	\label{eq:applications:hydrodynamics:euler}
	\difp_t v + \nabla_v v = - \grad p
\end{equation}
for \( v \in \VectorFieldSpace(M) \) subject to the incompressibility 
condition \( \difLie_v \mu = 0 \) (\ie, \( \operatorname{div}_\mu v = 0\)). 
By taking the Riemannian dual and using the identity 
\begin{equation}
\difLie_v (v^\flat ) = (\nabla_v v)^\flat + \frac{1}{2} \dif \norm{v}^2
\end{equation}
we can express Euler's equation as
\begin{equation}
\begin{cases}
\difp_t \alpha + \difLie_v \alpha = \dif \tilde{p}, \\
v^\flat = \alpha,
\end{cases}
\end{equation}
where \( \tilde{p} \defeq -p + \frac{1}{2} \norm{v}^2 \).
The Hodge decomposition \( \DiffFormSpace^1(M) = \ker d^* \oplus \img d \) 
(which is \(L^2\)-orthogonal) shows that the projection 
\( \pr_{\ker \dif^*} \) onto the kernel of the codifferential induces 
an isomorphism between divergence free vector fields and 
\( \DiffFormSpace^1(M) \slash \dif \DiffFormSpace^0(M) \).
The resulting isomorphism 
\begin{equation}
A: \VectorFieldSpace_\mu(M)\ni v \mapsto \equivClass[\big]{v^\flat} \in \DiffFormSpace^1(M) \slash \dif \DiffFormSpace^0(M) 
\end{equation}
is called the \emphDef{inertia operator}.
This allows us to view Euler's equation as a dynamical system on 
\( \DiffFormSpace^1(M) \slash \dif \DiffFormSpace^0(M) \):
\begin{equation}
\label{eq:applications:hydrodynamics:eulerModExact}
\begin{cases}
\difp_t \equivClass{\alpha} + \difLie_v \equivClass{\alpha} = 0 \\
A(v) = \equivClass{\alpha}.
\end{cases}
\end{equation}
It turns out that these equations are a Lie-Poisson system for the Hamiltonian \( \SectionMapAbb{H}(\equivClass{\alpha}) = \frac{1}{2} \int_M \norm[\big]{A^{-1} \equivClass{\alpha}}^2 \mu \), see \parencite{MarsdenWeinstein1983}.

Clebsch variables give a parametrization of \( \equivClass{\alpha} \in \DiffFormSpace^1(M) \slash \dif \DiffFormSpace^0(M) \) in such a way that Euler's equation is cast into a classical (\ie, symplectic) Hamiltonian form.
We refer the reader to \parencite{MarsdenWeinstein1983,Gay-BalmazVizman2012} for more details on the following geometric 
construction of the classical Clebsch representation.
Let \( (F, \omega = \dif \vartheta) \) be an exact symplectic manifold 
of dimension \( 2 \) and let \( \SectionSpaceAbb{F} \) denote the space 
of smooth functions from \( M \) to \( F \).
As we have seen in~\eqref{eq:trivialBundle:momentumMapExact}, the map
\begin{equation}
\SectionMapAbb{J}: \SectionSpaceAbb{F} \to \DiffFormSpace^1(M) \slash 
\dif \DiffFormSpace^0(M),
	\qquad
	\phi \mapsto - \equivClass{\phi^* \vartheta}
\end{equation}
is an equivariant momentum map for the action of \( \DiffGroup_\mu (M) \) on \( \SectionSpaceAbb{F} \). Since an equivariant momentum map is Poisson, \( \SectionMapAbb{J} \) maps solutions of the Hamiltonian system \((\SectionSpaceAbb{F},\Omega,\SectionMapAbb{H} \circ \SectionMapAbb{J})\) to solutions of Euler's equation. For the velocity field, \( \SectionMapAbb{J} \) gives the following representation in terms of Clebsch variables:
\begin{equation}
v^\flat = -\pr_{\ker \dif^*}\phi^* \vartheta = -\phi^* \vartheta + \dif f,
\end{equation}
where \( f \in \sFunctionSpace(M) \) ensures that the resulting vector field is divergence free (such a function always exists by the Hodge decomposition \( \DiffFormSpace^1(M) = \ker d^* \oplus \img d \)).
For example, in the case \( F = \R^2 \) with \( \vartheta = - x \dif y \) we obtain
\begin{equation}
	v^\flat = \phi_1 \dif \phi_2 + \dif f,
\end{equation}
which is the classical Clebsch representation for the velocity field \parencite{Clebsch1859,Clebsch1857}.

If a vector field admits a representation in terms of Clebsch variables, then its helicity has to vanish. Indeed, since $\dim F = 2$ and $\partial M = \varnothing$, we have 
\begin{equation}\label{eq:applications:hydrodynamics:helicityVanishesClebsch}
	\Hel(v) \defeq \int_M v^\flat \wedge \dif v^\flat
		= \int_M \phi^* (\vartheta \wedge \dif \vartheta)
		= 0.
\end{equation}
Helicity measures the linkage and/or knottedness of vortex lines.
Hence the helicity constraint implies that topological non-trivial vector fields do not admit a classical Clebsch representation.

We show in the rest of this subsection how the group-valued momentum map gives a generalized Clebsch representation for vector fields with integral helicity.
We continue to work in the general setting where \( (M, \mu) \) is a compact \( 3 \)-dimensional manifold endowed with a volume form \( \mu \) and where \( (F, \omega) \) is a \( 2 \)-dimensional symplectic manifold.
However, instead of requiring that \( \omega \) is exact, we now only assume that it has integral periods.
Thus, there exists a prequantum \( \UGroup(1) \)-bundle \( L \to F \) with connection \( \vartheta \) and curvature \( \omega \).
By~\eqref{eq:sec_4_1_momentum} in \cref{sec:applications:trivialBundle}, the group-valued momentum map for the \( \DiffGroup_\mu(M) \)-action is given by
\begin{equation}
\SectionMapAbb{J}: \SectionSpaceAbb{F} \to \csCohomology^2(M, \UGroup(1))
	\qquad
	\phi \mapsto - \phi^* \equivClass{L, \vartheta}.
\end{equation}
Suppose there exists \( \phi_0 \in \SectionSpaceAbb{F} \) such that the pull-back bundle \( \phi_0^* L \) over \( M \) is trivial.
Then also for every \( \phi \in \SectionSpaceAbb{F} \) in the connected component \( \SectionSpaceAbb{F}_{\phi_0} \) of \( \phi_0 \), the bundle \( \phi^* L \) is trivial, because the composition
\begin{equationcd}
	\SectionSpaceAbb{F}
		\to[r, "\SectionMapAbb{J}"]
	& \csCohomology^2(M, \UGroup(1))
		\to[r, "c"]
	& \sCohomology^2(M, \Z)
\end{equationcd}
is a continuous map.
Hence the restriction 
\(\restr{\SectionMapAbb{J}}{\SectionSpaceAbb{F}_{\phi_0}} \) of 
\( \SectionMapAbb{J} \) to \( \SectionSpaceAbb{F}_{\phi_0} \) takes 
values in \( c^{-1}(0) \isomorph \DiffFormSpace^1(M) \slash 
\clZDiffFormSpace^1(M) \).
By duality, we get the following \emph{generalized Clebsch representation} of a vector field
\begin{equation}
	\label{eq:applications:hydrodynamics:generalizedClebsch}
	v^\flat = - \phi^* \vartheta + \nu,
\end{equation} 
where \( \nu \in \clZDiffFormSpace^1(M) \) and, with a slight abuse of notation, \( \phi^* \vartheta \) denotes the \( 1 \)-form on \( M \) that is induced by the pull-back connection \( \phi^* \vartheta \) in the trivial bundle \( \phi^* L \).

We note that Euler's equation descend to \( \DiffFormSpace^1(M) \slash \clZDiffFormSpace^1(M) \).
In fact, the Lie derivative of a \( 1 \)-form \( \beta \) with integral periods is exact since
\begin{equation}
\int_\gamma \difLie_X \beta 
= \difFracAt{}{\varepsilon}{\varepsilon = 0} 
\underbrace{\int_{\flow^X_\varepsilon(\gamma)} \beta}_{\in \Z} = 0
\end{equation}
for every closed curve \( \gamma \) in \( M \).
Thus Euler's equation~\eqref{eq:applications:hydrodynamics:eulerModExact} implies that the class \( \equivClass{\beta} \in \clZDiffFormSpace^1(M) \slash \dif \DiffFormSpace^0(M) \isomorph \sCohomology^1(M, \Z) \) is constant in time, and that thus the interesting part of the motion takes place in \( \DiffFormSpace^1(M) \slash \clZDiffFormSpace^1(M) \).
We note, however, that it is only possible to define an inertia operator 
\( \DiffFormSpace^1(M) \slash \clZDiffFormSpace^1(M) \to 
\VectorFieldSpace_\mu(M) \) for small times since the principal bundle 
\( \sCohomology^1(M, \R) \to \sCohomology^1(M, \R) \slash 
\sCohomology^1(M, \Z) \) is only locally trivial.

In order to show that the helicity of a vector field admitting a generalized Clebsch representation has to be an integer, we start 
with the following general remarks about the helicity.
On abstract grounds, the helicity functional is defined by
\begin{equation}
\label{eq:applications:hydrodynamics:defHelicity}
\Hel: \DiffFormSpace^1(M) \ni \alpha \mapsto 
\int_M \alpha \wedge \dif \alpha \in \R.
\end{equation}
Note that \( \Hel \) vanishes on closed forms. If one forgets the hydrodynamical context for a moment, then one would mistake such an 
expression as the Abelian Chern--Simons functional of a trivial bundle.
In fact, the similarity between helicity and Chern--Simons theory was already pointed out previously in \parencite{LiuRicca2012}.
The Chern--Simons functional admits a well-known generalization to non-trivial circle bundles; see, \eg, \parencite[Eq.~1.28]{FreedMooreEtAl2006}.
In fact, for a differential character \( h \in \csCohomology^2(M, \UGroup(1)) \), the square \( h\star h\in\csCohomology^4(M, \UGroup(1)) \) is a group homomorphism \( \sCycles_3(M, \Z) \to \UGroup(1) \).
We can evaluate it, in particular, on the \( 3 \)-cycle \( M \).
Writing the evaluation as integration in order to keep the similarity with~\eqref{eq:applications:hydrodynamics:defHelicity}, we obtain the \emphDef{generalized helicity} (or generalized Chern--Simons functional)
\begin{equation}
\Hel: \csCohomology^2(M, \UGroup(1)) \ni h \mapsto \int_M h \star h \in 
\UGroup(1).
\end{equation}
The properties of the star product (see \cref{sec::differentialCharacter:starProduct}) yield the following commutative diagram connecting the ordinary helicity with its generalized version:
\begin{equationcd}[label=eq:applications:hydrodynamics:helicityRelation]
	\DiffFormSpace^1(M) \slash \clZDiffFormSpace^1(M)
		\to[r, "\Hel"]
		\to[d, "\iota"]
	& \R
		\to[d, "\exp"]
	\\
	\csCohomology^2(M, \UGroup(1))
		\to[r, "\Hel"]
	& \UGroup(1).
\end{equationcd}

We finally observe that the generalized helicity of elements in the image of the group-valued momentum map \( \SectionMapAbb{J} \) vanishes, due to a dimension count similar to the one leading to the identity~\eqref{eq:applications:hydrodynamics:helicityVanishesClebsch}, namely,
\begin{equation}\begin{split}
	(\Hel \circ \, \SectionMapAbb{J}) (\phi) 
		&= \Hel (- \phi^* \equivClass{L, \vartheta}) 
		= \int_M (\phi^* \equivClass{L, \vartheta}) \star 
		(\phi^* \equivClass{L, \vartheta}) 
		\\
		&= \int_M \phi^* (\equivClass{L, \vartheta} \star 
		\equivClass{L, \vartheta}) 
		= 1.
\end{split}\end{equation}
Hence, the commutative 
diagram~\eqref{eq:applications:hydrodynamics:helicityRelation} implies 
that the ordinary helicity of \( \phi^* \vartheta \) has to be an integer.
In other words, a vector field admitting a generalized Clebsch 
representation of the 
form~\eqref{eq:applications:hydrodynamics:generalizedClebsch} must 
have integral helicity.

We thus arrive at the following.
\begin{thm}
	Let \( (M, \mu) \) be a compact \( 3 \)-dimensional manifold endowed with a volume form \( \mu \).
	A solution \( v(t) \in \VectorFieldSpace_\mu(M) \) of Euler's equation~\eqref{eq:applications:hydrodynamics:euler} for an incompressible perfect fluid on \( M \) projects via \( \VectorFieldSpace_\mu(M)\ni v \mapsto \equivClass[\big]{v^\flat} \in \DiffFormSpace^1(M) \slash \clZDiffFormSpace^1(M) \) to a solution of the system
	\begin{equation}
		\label{eq:applications:hydrodynamics:eulerModClosedIntegral}
		\begin{cases}
			\difp_t \equivClass{\alpha} + \difLie_v \equivClass{\alpha} = 0, \\
			\equivClass{\alpha} = \equivClass[\big]{v^\flat}.
		\end{cases}
	\end{equation}
	on \( \DiffFormSpace^1(M) \slash \clZDiffFormSpace^1(M) \).

	Let \( (F, \omega) \) be a \( 2 \)-dimensional symplectic manifold that has a prequantum \( \UGroup(1) \)-bundle \( L \to F \) with connection \( \vartheta \).
	The natural action of \( \DiffGroup_\mu(M) \) on \( \SectionSpaceAbb{F} = \sFunctionSpace(M, F) \) preserves the symplectic form \( \mu \hatProduct \omega \) and has a group-valued momentum map
	\begin{equation}
		\SectionSpaceAbb{J}: \SectionSpaceAbb{F} \to \csCohomology^2(M, \UGroup(1)), \qquad \phi \mapsto - \phi^* \equivClass{L, \vartheta}.
	\end{equation}
	Moreover, assume there exists \( \phi_0 \in \SectionSpaceAbb{F} \) such that the pull-back bundle \( \phi_0^* L \) over \( M \) is trivial.
	Then the restriction of \( \SectionSpaceAbb{J} \) to the connected component \( \SectionSpaceAbb{F}_{\phi_0} \) of \( \phi_0 \) takes values in \( \DiffFormSpace^1(M) \slash \clZDiffFormSpace^1(M) \) and yields generalized Clebsch variables for the projected Euler equation~\eqref{eq:applications:hydrodynamics:eulerModClosedIntegral} in the sense of~\eqref{eq:applications:hydrodynamics:generalizedClebsch}.
	Every vector field admitting such a generalized Clebsch representation has integral helicity.
\end{thm}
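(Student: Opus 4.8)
The plan is to assemble the ingredients developed in this subsection; nothing essentially new is needed. I would treat the four assertions in turn. For the descent of Euler's equation, start from the reformulation~\eqref{eq:applications:hydrodynamics:eulerModExact} on \( \DiffFormSpace^1(M) \slash \dif \DiffFormSpace^0(M) \) and observe that it suffices to check that the operator \( \difLie_v \) descends further to the finer quotient \( \DiffFormSpace^1(M) \slash \clZDiffFormSpace^1(M) \). This is exactly the period computation already recorded: for \( \beta \in \clZDiffFormSpace^1(M) \) and any closed curve \( \gamma \) in \( M \), the integral \( \int_\gamma \difLie_v \beta \) is the derivative at time zero of the integer-valued quantity \( \int_{\flow^v_\varepsilon(\gamma)} \beta \), hence zero; since \( \difLie_v \beta \) is also closed, it is exact. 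Plugging in \( \equivClass{\alpha} = \equivClass{v^\flat} \) yields~\eqref{eq:applications:hydrodynamics:eulerModClosedIntegral}. For the momentum map, I would view \( \SectionSpaceAbb{F} = \sFunctionSpace(M, F) \) as the section space of the trivial bundle over \( M \) with fiber \( F \), associated to the trivial principal \( G \)-bundle, for which \( \DiffGroup_{\mu, P}(M) = \DiffGroup_\mu(M) \) holds automatically. Then \( \Omega = \mu \hatProduct \omega \) is the symplectic form of \cref{prop::symplecticStructureOnSectionsOfSymplecticFibreBundle}, it is \( \DiffGroup_\mu(M) \)-invariant, and, choosing a prequantum \( \UGroup(1) \)-bundle \( (L, \vartheta) \) of \( (F, \omega) \) — which exists since \( \omega \) has integral periods — formula~\eqref{eq:sec_4_1_momentum}, a special case of \cref{thm::symplecticFibreBundle:momentumMapVolume}, gives the group-valued momentum map \( \phi \mapsto - \phi^* \equivClass{L, \vartheta} \).

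For the third assertion, the decisive point is that the composition of \( \SectionSpaceAbb{J} \) with the Chern class map \( c \colon \csCohomology^2(M, \UGroup(1)) \to \sCohomology^2(M, \Z) \) is continuous into a discrete group — continuity of \( \phi \mapsto \phi^* \equivClass{L, \vartheta} \) is \cref{prop::differentialCharacter:pullback} — hence locally constant. Thus, whenever \( \phi_0^* L \) is trivial, so that \( c(\SectionSpaceAbb{J}(\phi_0)) = 0 \), the restriction of \( \SectionSpaceAbb{J} \) to the connected component \( \SectionSpaceAbb{F}_{\phi_0} \) takes values in \( c^{-1}(0) \isomorph \DiffFormSpace^1(M) \slash \clZDiffFormSpace^1(M) \). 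Reading off a representative \( 1 \)-form from a trivialization of \( \phi^* L \), exactly as in the exact case treated earlier in this subsection (where one further projects onto \( \ker \dif^* \) to obtain a divergence-free field), produces the generalized Clebsch representation~\eqref{eq:applications:hydrodynamics:generalizedClebsch} with \( \nu \in \clZDiffFormSpace^1(M) \).

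Finally, for the integrality of the helicity I would run the dimension count already indicated: \( (\Hel \circ \SectionSpaceAbb{J})(\phi) = \int_M \phi^* (\equivClass{L, \vartheta} \star \equivClass{L, \vartheta}) = 1 \) in \( \UGroup(1) \), because the star-square lies in \( \csCohomology^4(F, \UGroup(1)) = 0 \) as \( \dim F = 2 \). Commutativity of the diagram~\eqref{eq:applications:hydrodynamics:helicityRelation} then forces the value of \( \Hel \) on the representative \( - \equivClass{\phi^* \vartheta} \in \DiffFormSpace^1(M) \slash \clZDiffFormSpace^1(M) \) to be an integer. Since \( \Hel \) is quadratic and, by Stokes' theorem on the closed manifold \( M \), is unaffected by adding a closed \( 1 \)-form to its argument, any \( v^\flat = - \phi^* \vartheta + \nu \) with \( \nu \in \clZDiffFormSpace^1(M) \) satisfies \( \Hel(v^\flat) = \Hel(\phi^* \vartheta) \in \Z \). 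The one place that requires genuine care, rather than bookkeeping, is the compatibility of the topological data — integral periods of \( \omega \), the Chern class, triviality of \( \phi^* L \) along a connected component — with the functional-analytic set-up; this is what \cref{prop::differentialCharacter:pullback} and the discreteness of \( \sCohomology^2(M, \Z) \) are there to handle, so I do not expect a serious obstacle, only a careful combination of the already established results.
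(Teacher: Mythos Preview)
Your proposal is correct and follows essentially the same route as the paper: the theorem is a summary of the discussion preceding it, and you have identified and assembled all the ingredients (the period computation for the descent of Euler's equation, formula~\eqref{eq:sec_4_1_momentum} from the trivial bundle case, the continuity of \( c \circ \SectionSpaceAbb{J} \) into the discrete group \( \sCohomology^2(M, \Z) \), and the dimension count for the star-square on \( F \)) exactly as the paper does. Your argument is in fact slightly more explicit than the paper's in two places: you spell out why \( \csCohomology^4(F, \UGroup(1)) = 0 \), and you give the Stokes computation showing that adding \( \nu \in \clZDiffFormSpace^1(M) \) does not change the helicity, a step the paper leaves implicit in the phrase ``in other words''.
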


\begin{example}
For concreteness, let \( M = S^3 \), and consider \(F = S^2 \) endowed 
with its usual normalized volume \( \omega \). We take the Hopf bundle 
\( (L, \vartheta) \) as the prequantization of \( (S^2, \omega) \).
Since \( S^3 \) has trivial cohomology in degree \( 2 \), the pull-back of \( L \) by every map \( \phi: S^3 \to S^2 \) is trivial.
Many interesting and topological non-trivial knots on \( S^3 \) can be constructed from the pull-back of \( \vartheta \) by a special choice of \( \phi \). For example, the Hopf vector field \parencite[Example~III.1.9]{ArnoldKhesin1998} corresponds to the classical Hopf fibration
\begin{equation}
\phi: \C^2 \supseteq S^3 \ni (z_1, z_2) \mapsto \equivClass{z_1 : z_2} 
\in \CP^1 \isomorph S^2.
\end{equation}
Other knots that can be realized in this way include the figure of eight knot, linked rings, and the trefoil knot, among others; see 
\parencite{KediaFosterEtAl2016} for details.
The helicity of such vector fields coincides with the Hopf invariant of 
\( \phi \).
Hence, these knots have a non-vanishing but integral helicity.
Hence they \emph{do not} admit a classical Clebsch representation, but have, by construction, a generalized Clebsch representation.
\end{example}

The following example suggests that not every vector field with integral helicity admits a generalized Clebsch representation and that one might need to replace the symplectic manifold \( F \) by a singular symplectic stratified space to ensure such a representation for every vector field with integral helicity.
\begin{example}
Consider the vector field \( v = A \sin z \difp_x + A \cos z \difp_y \) on \( \R^3 \).
Since \( v \) is \( 2 \pi \)-periodic, it naturally lives on the torus \( M = T^3 \).
Note that \( v \) is a special case of an Arnold--Beltrami--Childress (ABC) flow with \( B = 0 = C \) in the usual notation.
In particular, \( v \) provides an example for a simple steady-state solution of Euler's equation.
The vorticity of \( v \) is given by \( \omega = A \cos z \dif z \wedge \dif x + A \sin z \dif y \wedge \dif z \), so that its helicity is
\begin{equation}
\Hel(v) = \int_{T^3} v^\flat \wedge \omega = 
\int_{T^3} A^2 \vol = A^2 \vol(T^3).
	\end{equation}
 	Seen as a vector field on \( \R^3 \), we can represent \( v \) as
 	\begin{equation}
 		\label{eq:applications:hydrodynamics:ABC:representation}
 		v^\flat = f \dif g + \dif h, \qquad
 		\begin{cases}
 			f = y \sin z - x \cos z, \\
 			g = z,\\
 			h = x \sin z + y \cos z.
 		\end{cases}
 	\end{equation}
The pair \( (f, g) \), seen as a map \( \R^3 \to \R^2 \), is 
\( \Z^3 \)-equivariant with respect to the natural action on \( \R^3 \) and the twisted action on \( \R^2 \), where the group element \((n_1, n_2, n_3) \in \Z^3\) acts by
\begin{equation}
 		\R^2 \ni (u, v) \mapsto (2 \pi n_2 \sin v - 2 \pi n_1 \cos v, v + 2 \pi n_3) \in \R^2.
 	\end{equation}
Let \( F = \R^2 \slash \Z^3 \) be the orbit space. The 
representation~\eqref{eq:applications:hydrodynamics:ABC:representation} 
suggests that the induced map \( (f, g): \T^3 \to F \) yields generalized 
Clebsch variables for \( v \). Note, however, that the \( \Z^3 \)-action 
on \( \R^2 \) is not free and that the orbit space \( F \) thus has singularities.
\end{example}
\begin{remark}
	The classical Clebsch representation admits a natural generalization (sometimes called a Weber--Clebsch representation)
	\begin{equation}
		v^\flat = \sum_i \phi_i \dif \psi_i + \dif f,
	\end{equation}
	where one allows for multiple potentials, see \eg \parencite{GrahamHenyey2000}.
	In our setting, this corresponds to considering higher-dimensional symplectic manifolds \( F \).
	We claim that every vector field admits a generalized Clebsch representation if we allow higher-dimensional targets \( F \).
	To see this, let \( v \) be a vector field on \( M \) and let \( v^\flat \in \DiffFormSpace^1(M) \) be the associated \( 1 \)-form.
	We may view \( v^\flat \) as a connection \( 1 \)-form on the trivial bundle \( L = M \times \UGroup(1) \to M \).
	The theory of universal bundles \parencite[Theorem~1]{NarasimhanRamanan1961} entails that there exists a smooth map \( \phi: M \to \CP^N \) for sufficient large \( N \) such that the bundle \( (L, v^\flat) \) with connection is isomorphic to the pull-back of the canonical bundle over \( \CP^N \) endowed with the canonical connection \( \vartheta_{\textrm{can}} \).
	Hence, there exists a gauge transformation \( f \in \sFunctionSpace(M, \UGroup(1)) \) of \( L \) such that
	\begin{equation}
		v^\flat = \phi^* \vartheta_{\textrm{can}} + \difLog f ,
	\end{equation}
	where we again identified the connection \( \phi^* \vartheta_{\textrm{can}} \) in the trivial bundle \( L \) with a \( 1 \)-form on \( M \).
\end{remark}

\subsubsection[Lagrangian embeddings]{Isotropic and Lagrangian embeddings}
By a slight modification of the previous set-up,  the moduli space of Lagrangian embeddings can be realized as a symplectic quotient.  
Let \( (S, \mu) \) be a closed manifold with volume form, and let \( (M, \omega) \) be a symplectic manifold. 
Since the space of smooth embeddings of \( S \) into \( M \) is an open subset of \( \sFunctionSpace(S, M) \) 
\parencite[Theorem~44.1.]{KrieglMichor1997}, the symplectic form \( \Omega = \omega \hatProduct \mu \) on \( \sFunctionSpace(S, M) \) restricts to a symplectic form on \( \EmbeddingSpace(S,M) \).
We assume that \( (M, \omega) \) has a prequantization bundle \( (L, \vartheta) \). By~\eqref{eq:sec_4_1_momentum}, the momentum map for the action of \( \DiffGroup_\mu(S) \) is given by
\begin{equation}
\SectionSpaceAbb{J}_{\DiffGroup_\mu}: \EmbeddingSpace(S, M) \to 
\csCohomology^2(S, \UGroup(1)), \qquad \iota \mapsto - 
\iota^* \equivClass{L, \vartheta}.
\end{equation}
Thus, the inverse image under \( \SectionSpaceAbb{J}_{\DiffGroup_\mu} \) 
of the normal subgroup of all flat circle bundles on \( S \) consists precisely of isotropic embeddings.
For \( \dim(S) = \dim M \slash 2 \), the set \( \SectionSpaceAbb{J}_{\DiffGroup_\mu}^{-1}(\text{flat}) \) 
corresponds to Lagrangian embeddings.
More generally, the curvature \( -\iota^* \omega \) of 
\( \SectionSpaceAbb{J}_{\DiffGroup_\mu}(\iota) \) vanishes if and only 
if \( \iota \) is an isotropic embedding. Nonetheless, the Chern class 
of \( \iota^* \equivClass{L, \vartheta} \) may still be non-trivial in 
this case. The exact 
sequence~\eqref{eq::differentialCharacter:definingDiagram} implies that 
\( c(\iota^* \equivClass{L, \vartheta}) \) has to be completely torsion. 
We are not aware that this torsion class in \( \sCohomology^2(S, \Z) \) 
is discussed anywhere in the literature.

If the symplectic form is exact, say \( \omega = \dif \vartheta \), then 
the Chern class of \( \iota^* \equivClass{L, \vartheta} \) vanishes.
Nonetheless, the group-valued momentum map contains secondary topological 
data which corresponds to the well-known Liouville class as we will see 
now. Indeed, then the line bundle \( (L, \vartheta) \) is trivial so that 
the momentum map takes values in the subgroup of topological trivial
bundles. Hence \( \iota^* \equivClass{L, \vartheta} \) is completely 
characterized by its curvature \( \iota^* \omega \) and a class in the 
torus \( \sCohomology^1(S, \R) \slash \sCohomology^1(S, \Z) \); 
see~\eqref{eq::differentialCharacter:definingDiagram}.
For an isotropic embedding \( \iota \), the curvature \( \iota^* \omega \) 
vanishes and the topological class equals 
\(\equivClass{\iota^*\vartheta} \) and hence it coincides with the 
\emph{Liouville class} of the embedding \( \iota \) 
\parencite[p.~30]{BatesWeinstein1997}.

Recall that an isotropic embedding is called \emphDef{prequantizable} if 
its Liouville class vanishes; see 
\parencite[Definition~4.4]{BatesWeinstein1997}.
More generally, we say that an isotropic embedding is 
\emphDef{torsion-free} if the torsion class \( c(\iota^* \equivClass{L, 
\vartheta}) \) in \( \sCohomology^2(S, \Z) \) discussed above vanishes.
Summarizing, we arrive at the following correspondence:
\begin{center}
\small
\begin{tabular}{l l}
	\toprule
	Moduli space of 					&	Symplectic reduction \( \SectionSpaceAbb{J}_{\DiffGroup_\mu}^{-1}(\cdot) \slash \DiffGroup_\mu(S) \) at \\
	\midrule
	Lagrangian embeddings				&	flat bundles \\
	Torsion-free Lagrangian embeddings		&	flat and trivial bundles \\
	Prequantizable Lagrangian embeddings	&	0 \\
	\bottomrule
\end{tabular}
\end{center}
Here, we are a bit sloppy with the notion \enquote{moduli space}.
As we only take the quotient by the group \( \DiffGroup_\mu(S) \subseteq 
\DiffGroup(S) \), the resulting reduced space still remembers the volume
form. Hence, being more precise, the symplectic quotient 
\( \SectionSpaceAbb{J}_{\DiffGroup_\mu}^{-1}(\cdot) \slash 
\DiffGroup_\mu(S) \) yields the moduli space of Lagrangian (or isotropic) 
weighted submanifolds of type \( (S, \mu) \).

In the case of an exact symplectic form, a non-equivariant momentum map for the action of the diffeomorphism group \( \DiffGroup_\mu(S) \) is discussed in \parencite[Example~5.32]{McDuffSalamon1999}.
Recently, \textcite{Gay-BalmazVizman2016} showed that the quotient of isotropic embeddings modulo volume-preserving diffeomorphisms is indeed a symplectic manifold and that its connected components correspond to coadjoint orbits of \( \HamDiffGroup(M, \omega) \).
These results are obtained under the additional assumptions that the 
first cohomology group of \( S \) vanishes.
It would be of interest to study the symplectic quotient without this topological assumption and search for relations with coadjoint 
orbits of the whole symplectomorphism group.
In light of our refined results, one would like to conclude:
\begin{conjecture}
The moduli space of weighted prequantizable Lagrangian embeddings is a symplectic manifold whose connected components are related to the coadjoint orbits of the symplectomorphism group.
\end{conjecture}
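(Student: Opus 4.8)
The plan is to realize the moduli space in question as the symplectic orbit reduced space $\SectionSpaceAbb{J}_{\DiffGroup_\mu}^{-1}(0)\slash\DiffGroup_\mu(S)$ of the $\csCohomology^2(S,\UGroup(1))$-valued momentum map for the reparametrization action of $\DiffGroup_\mu(S)$ on $\EmbeddingSpace(S,M)$, reduction being performed at the identity of the dual group, and then to transport the remaining symplectomorphism symmetry to the reduced space and identify its orbits with coadjoint orbits. First I would record what has already been observed above: for an exact form $\omega=\dif\vartheta$ the prequantization bundle $(L,\vartheta)$ is trivial, so $\SectionSpaceAbb{J}_{\DiffGroup_\mu}$ takes values in topologically trivial bundles and its zero level set consists precisely of the isotropic embeddings with vanishing Liouville class $\equivClass{\iota^*\vartheta}$, that is, the prequantizable Lagrangian embeddings when $\dim S=\dim M\slash 2$. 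Next I would show that this level set is a (weak) submanifold: the differential of $\iota\mapsto\iota^*\omega$ is a submersion onto exact $2$-forms near a Lagrangian, and the residual Liouville-class map is a submersion onto the torus $\sCohomology^1(S,\R)\slash\sCohomology^1(S,\Z)$, so that $\SectionSpaceAbb{J}_{\DiffGroup_\mu}^{-1}(0)$ is a union of isodrastic leaves. Since precomposing an embedding by a nontrivial diffeomorphism yields a distinct embedding, the $\DiffGroup_\mu(S)$-action is free, and it is proper as the restriction of the proper $\DiffGroup(S)$-action on the embedding space to a closed subgroup. Invoking the infinite-dimensional symplectic orbit reduction theory announced in the introduction (only its version at the identity is needed here, which reduces to the classical regular case), the quotient then inherits a reduced (weak) symplectic form and is a Fr\'echet manifold.

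For the coadjoint-orbit part, the structural input is that $\EmbeddingSpace(S,M)$, equipped with the commuting actions of $\DiffGroup_\mu(S)$ by reparametrization and of the symplectomorphism group $\DiffGroup_\omega(M)$ by post-composition, is a (singular) dual pair. One leg is $\SectionSpaceAbb{J}_{\DiffGroup_\mu}$; the other is the momentum map of $\DiffGroup_\omega(M)$, which --- extending the computation of \cref{rem::extendMomentumMapOfGaugeGroupToSymplectomorphisms} from $\HamDiffGroup(M,\omega)$ to the full group --- sends $\iota$ to the distributional functional $X\mapsto\int_S\iota^*(X\contr\vartheta)\,\mu$, i.e.\ to the pushforward of the weighted submanifold $(\iota,\mu)$ into $M$. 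Granting the infinite-dimensional analogue of the classical statement that the symplectic reduction of one leg of a dual pair is a coadjoint orbit of the group acting through the other leg, the residual $\DiffGroup_\omega(M)$-action on $\SectionSpaceAbb{J}_{\DiffGroup_\mu}^{-1}(0)\slash\DiffGroup_\mu(S)$ has momentum map induced by this pushforward, and each connected component maps equivariantly --- as a symplectic covering --- onto a coadjoint orbit. What must be checked here is transitivity on components: a path of prequantizable Lagrangian embeddings inside the zero level set has constant Liouville class, which is exactly Weinstein's isodrasticity condition, hence by the isotopy extension theorem \parencite{McDuffSalamon1999} it is generated by a time-dependent Hamiltonian --- in particular symplectic --- vector field on $M$; thus each $\DiffGroup_\omega(M)$-orbit is open and closed in the level set, so the reduced components are exactly these orbits.

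The main obstacle, as the label \enquote{conjecture} signals, is essentially analytic and twofold. First, the $\DiffGroup_\omega(M)$-momentum map is genuinely distributional --- it is not represented by a smooth section of any coadjoint bundle --- so the dual pair, the reduction theorem, and the identification with coadjoint orbits must all be carried out in a distributional (or diffeological) enlargement of the locally convex category; making precise the slice theorem and the property that the $\Omega$-orthogonal complement of $\ker\tangent\SectionSpaceAbb{J}_{\DiffGroup_\mu}$ consists exactly of the orbit directions, in that setting, is precisely the heart of the infinite-dimensional orbit reduction theory deferred to the companion paper. Second, one must dispense with the hypothesis $\sCohomology^1(S,\R)=0$ used by \textcite{Gay-BalmazVizman2016}: without it the set of prequantizable Lagrangian embeddings can split into several connected components distinguished by the monodromy of the Liouville class along loops in the space of Lagrangians, and one has to verify that passing from $\HamDiffGroup(M,\omega)$ to the larger group $\DiffGroup_\omega(M)$ preserves --- while possibly coarsening in a controlled way --- the correspondence between these components and coadjoint orbits. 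This last point is where I expect the argument to be delicate, and why the statement is offered as a conjecture rather than a theorem.
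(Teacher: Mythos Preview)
The paper does not prove this statement --- it is explicitly labeled a \emph{conjecture}, and the text immediately following it makes clear that the authors regard a rigorous treatment as contingent on two ingredients not yet available: the infinite-dimensional singular symplectic orbit reduction theory (announced in the introduction as the subject of a future article) and the development of the dual pair picture for group-valued momentum maps. There is therefore no proof in the paper to compare against.

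Your proposal is not a proof either, and you are appropriately explicit about this. The strategy you outline --- realize the moduli space as $\SectionSpaceAbb{J}_{\DiffGroup_\mu}^{-1}(0)\slash\DiffGroup_\mu(S)$, invoke orbit reduction, then use a dual pair argument with the $\DiffGroup_\omega(M)$-action to relate components to coadjoint orbits --- is precisely the program the paper gestures at, and your identification of the two main obstacles (the distributional nature of the $\DiffGroup_\omega(M)$-momentum map, and the need to remove the hypothesis $\sCohomology^1(S,\R)=0$ from \textcite{Gay-BalmazVizman2016}) matches the paper's own reservations. In that sense your proposal is a faithful elaboration of what the authors sketch, not a deviation from it. One minor point: your claim that the $\DiffGroup_\omega(M)$-momentum map sends $\iota$ to $X\mapsto\int_S\iota^*(X\contr\vartheta)\,\mu$ presupposes exactness of $\omega$; in the general prequantizable case the correct expression involves the pull-back of the prequantum connection and one must be more careful about how the pairing with $\VectorFieldSpace_\omega(M)$ is set up.
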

Moreover, it is an interesting question whether the moduli space of (torsion-free) Lagrangian embeddings is also symplectic.
A systematic study along the same lines as \parencite{Gay-BalmazVizman2016} requires to develop the dual pair picture for group-valued momentum maps.

\subsection{Reduction of structure group}
\label{sec::reductionOfStructureGroup}

Geometric objects usually correspond to the reduction of a principal 
\( G \)-bundle to a Lie subgroup \( H \subseteq G \).
In this setting, the fiber model \( \FibreBundleModel{F} \) is the homogeneous space \( G \slash H \). A section \( \phi \) of the associated bundle \( F = P \times_G (G \slash H) \) results in a reduction of \( \pi: P \to M \) to the principal \( H \)-bundle 
\begin{equation}
\label{eq::reducedBundleDefinition}
Q_\phi \defeq \set{p\in P \given \phi(\pi(p)) = \equivClass{p, e H}} \to M.
\end{equation}
Conversely, every reduction of \( P \) to a \( H \)-subbundle defines a section of \( F \).

Let \( \mu \) be a volume form on the base manifold \( M \) of \( P \).
If \( \FibreBundleModel{F} = G \slash H \) is endowed with a \( G \)-invariant symplectic form \( \FibreBundleModel{\omega} \) then, according to \cref{prop::symplecticStructureOnSectionsOfSymplecticFibreBundle}, the space of \( H \)-reductions of \( P \) carries a natural symplectic structure \( \Omega = \mu \hatProduct \omega \) with \( \omega \) being 
the induced fiber symplectic structure on \( F \).
In this subsection, we discuss the content of \cref{thm::symplecticFibreBundle:momentumMapVolume} for this particular setting.

\subsubsection{Pull-back of prequantum bundles as associated bundles}
\label{subsubsec_pull_back_prequantum_bundles}

As in the general case, assume that \( (\FibreBundleModel{F} = 
G \slash H, \FibreBundleModel{\omega}, \FibreBundleModel{J}) \) 
is a Hamiltonian \( G \)-space.
Let \( \FibreBundleModel{L} \to \FibreBundleModel{F} \) be a 
\( G \)-equivariant prequantum bundle, and let 
\( L = P \times_G \FibreBundleModel{L} \).
As a preparation, we now discuss how the pull-back bundle \( \phi^* L \) 
can be identified with an associated bundle to \( Q_\phi \) for every 
section \( \phi \) of \( F \). Evaluating the momentum map 
\( \FibreBundleModel{J}: \FibreBundleModel{F} \to \LieA{g}^* \) 
at the point \( eH \) yields a Lie algebra homomorphism
\begin{equation}
\restr{\FibreBundleModel{J}(eH)}{\LieA{h}}: \LieA{h} \to \R.
\end{equation}
It is a well-known fact in prequantization theory 
\parencite[Theorem~5.7.1]{Kostant1970} that \( G \)-equivariant 
prequantizations over \( G\slash H \) are in one-to-one correspondence 
with Lie group homomorphisms \( \rho: H \to \UGroup(1) \) integrating 
\( \restr{\FibreBundleModel{J}(eH)}{\LieA{h}} \).
Indeed, given an equivariant prequantization bundle 
\( \FibreBundleModel{L} \), such a homomorphism can be extracted from 
the circle action in the fiber over \( e H \), that is
\begin{equation}
	h \cdot \FibreBundleModel{l}  = \FibreBundleModel{l} \cdot \rho(h), 
	\quad \FibreBundleModel{l} \in \FibreBundleModel{L}_{eH}.
\end{equation}
Conversely, a Lie group homomorphism \( \rho: H \to \UGroup(1) \) yields a \( G \)-equivariant prequantization bundle with connection as the 
associated bundle \( \FibreBundleModel{L} = G \times_\rho \UGroup(1) \) to the canonical principal \( H \)-bundle \( G \to G \slash H \).
The \( G \)-action on \( \FibreBundleModel{L} \) is given by left translation in the \( G \)-factor, \ie, \( g \cdot \equivClass{a, z} = \equivClass{ga,z} \), and the connection \( 1 \)-form \( \FibreBundleModel{\vartheta} \) on \( \FibreBundleModel{L} \) is defined by
\begin{equation}
\FibreBundleModel{\vartheta}_{\equivClass{a,z}} \bigl(\equivClass{a \ldot A, v}\bigr) 
= \dualPair{\FibreBundleModel{J}(eH)}{A} + v,
\end{equation}
where \( A \in \LieA{g} \) and \( v \in \TBundle_z \UGroup(1) \isomorph \R \).

Now we establish a natural isomorphism \( \phi^* L \isomorph Q_\phi \times_\rho \UGroup(1) \).
Since the left \( G \)- and the right \( H \)-actions on \( G \) commute, we can identify the bundle \( L = P \times_G \FibreBundleModel{L} \to F \) 
with the quotient of \( P \times G \times \UGroup(1) \) by the simultaneous actions of \( G \) and \( H \), namely,
\begin{equation}
(p, a, z) \cdot (g, h) \defeq \bigl(p \cdot g, g^{-1} a h, \rho(h^{-1}) z\bigr).
\end{equation}
From this viewpoint, the projection \(L \to F\) maps \(\equivClass{p,a,z}\) to \( \equivClass{p, aH} \).
Thus, for a section \( \phi \) of \( F \to M \), the pull-back bundle is identified with
\begin{equation}
	\phi^* L 
		= \set{(m, \equivClass{p, a, z}) \in M \times L \given \phi(m) = \equivClass{p, aH}}.
\end{equation}
Recall that \( Q_\phi \) denotes the reduced \( H \)-bundle defined in~\eqref{eq::reducedBundleDefinition}.
\begin{lemma}
\label{prop::applications:reductionStructureGroup:indenificationPullbackBundle}
The map
\begin{equation}\
\label{eq::prequantization:pullbackBundleIsomorphismsAssociatedReduced}
	\phi^* L \to Q_\phi \times_\rho \UGroup(1), \qquad 
	(m,\equivClass{p,a,z}) \mapsto \equivClass{p \cdot a, z}
\end{equation}
is a principal bundle isomorphism over \( M \).
\end{lemma}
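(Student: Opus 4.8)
The plan is to write down an explicit inverse to the map~\eqref{eq::prequantization:pullbackBundleIsomorphismsAssociatedReduced} and then check that both maps are well defined, are mutually inverse, intertwine the principal $\UGroup(1)$-actions, and cover $\id_M$. Since $P$, $Q_\phi$, $L$ and $\phi^* L$ are all finite-dimensional locally trivial bundles, a smooth $\UGroup(1)$-equivariant bijection covering $\id_M$ is automatically a principal bundle isomorphism (smoothness being visible in any compatible pair of local trivializations), so nothing further is needed once these points are settled.

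First I would verify well-definedness of the map. An element of $\phi^* L$ is a pair $(m, \equivClass{p, a, z})$ with $\pi(p) = m$, $\phi(m) = \equivClass{p, aH}$, where the class $\equivClass{p, a, z}$ is taken modulo the commuting $G$- and $H$-actions $(p, a, z) \cdot (g, h) = (p \cdot g,\, g^{-1} a h,\, \rho(h^{-1}) z)$. Replacing the representative by $(p, a, z) \cdot (g, h)$ sends $p \cdot a$ to $(p \cdot g)(g^{-1} a h) = (p \cdot a) \cdot h$ and $z$ to $\rho(h^{-1}) z$, and $\equivClass{(p \cdot a) \cdot h,\, \rho(h^{-1}) z} = \equivClass{p \cdot a, z}$ in $Q_\phi \times_\rho \UGroup(1)$ by the definition of the associated bundle; hence the image is independent of the chosen representative. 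I also need $p \cdot a$ to actually lie in $Q_\phi$, which holds because $\pi(p \cdot a) = m$ and $\equivClass{p \cdot a,\, eH} = \equivClass{p,\, aH} = \phi(m)$ --- precisely the membership condition~\eqref{eq::reducedBundleDefinition}.

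Next I would introduce the candidate inverse
\[
	Q_\phi \times_\rho \UGroup(1) \longrightarrow \phi^* L, \qquad \equivClass{q, z} \longmapsto \bigl( \pi(q),\ \equivClass{q,\, e,\, z} \bigr),
\]
which lands in $\phi^* L$ since $q \in Q_\phi$ forces $\phi(\pi(q)) = \equivClass{q,\, eH}$. Its well-definedness amounts to the identity $\equivClass{q \cdot h,\, e,\, \rho(h^{-1}) z} = \equivClass{q,\, e,\, z}$, obtained by applying the $G \times H$-action with $(g, h) = (h^{-1}, h^{-1})$. A short computation shows that the two maps compose to the identity in both orders: going $\phi^* L \to Q_\phi \times_\rho \UGroup(1) \to \phi^* L$ uses the $G$-part of the action with $g = a^{-1}$, $h = e$, while the other composite is immediate. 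Finally, $\UGroup(1)$-equivariance is clear: the circle acts on $\phi^* L$ through the fiber circle action on $\FibreBundleModel{L} = G \times_\rho \UGroup(1)$, which in the $P \times G \times \UGroup(1)$ picture reads $\equivClass{p, a, z} \cdot w = \equivClass{p, a, zw}$, and this is carried to $\equivClass{p \cdot a,\, zw} = \equivClass{p \cdot a, z} \cdot w$. Covering $\id_M$ follows from $\pi(p \cdot a) = \pi(p)$.

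The main (and essentially only) obstacle is bookkeeping: one must track carefully which of the two commuting factors in the double quotient of $P \times G \times \UGroup(1)$ each action affects and pick the correct group elements in the well-definedness and inverse checks. There is no conceptual subtlety beyond the identification of $\phi^* L$ with that double quotient, which was established immediately before the statement.
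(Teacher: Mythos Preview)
Your proof is correct and follows essentially the same approach as the paper: verify that $p\cdot a \in Q_\phi$, check well-definedness under the $G\times H$-action, write down the explicit inverse $\equivClass{q,z}\mapsto (\pi(q),\equivClass{q,e,z})$, and confirm it is indeed inverse. You supply a few additional verifications (well-definedness of the inverse, $\UGroup(1)$-equivariance, covering $\id_M$) that the paper leaves implicit, but there is no substantive difference.
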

\begin{proof}
First, note that \( p \cdot a \in Q_\phi \), because
\begin{equation}
	\phi(\pi(p\cdot a))
		=\phi(m)=\equivClass{p,aH}
		= \equivClass{p \cdot a, eH}.
\end{equation} 
Second, the definition of the map in~\eqref{eq::prequantization:pullbackBundleIsomorphismsAssociatedReduced} 
does not depend on the chosen representative.
Indeed, the point \((m, \equivClass{p\cdot g, g^{-1}ah, \rho(h^{-1})z})\) is sent under~\eqref{eq::prequantization:pullbackBundleIsomorphismsAssociatedReduced} to \(\equivClass{p\cdot g g^{-1}ah,\rho(h^{-1})z}=\equivClass{p\cdot a,z}\).
Third, the vector bundle map
\begin{equation}
	Q_\phi \times_\rho \UGroup(1) \to \phi^* L, \qquad \equivClass{q, z} 
	\mapsto (\pi (q), \equivClass{q, e, z})
\end{equation}
is clearly inverse to~\eqref{eq::prequantization:pullbackBundleIsomorphismsAssociatedReduced}, 
so that we obtain a principal bundle isomorphism \( \phi^* L \isomorph Q_\phi \times_\rho \UGroup(1) \), as claimed.
\end{proof}
The next lemma shows that this isomorphism also identifies the natural 
connections on both bundles. Recall from~\eqref{eq:connectionOnL} the 
connection \( \vartheta^\Gamma \) on \( L \) constructed from a 
connection \( \Gamma \) in \( P \) and the connection 
\( \FibreBundleModel{\vartheta} \) on \( \FibreBundleModel{L} \).

\begin{lemma}%
\label{prop::applications:reductionStructureGroup:indenificationPullbackPrequantum}
Let \( \Gamma \) be a principal connection on \( P \) and assume that it reduces to a connection on \( Q_\phi \). Then the pull-back of the connection \( \vartheta^\Gamma \) from \( L \) to \( \phi^* L \) coincides with the induced connection \( \rho_* \Gamma \) on the associated bundle \( Q_\phi \times_\rho \UGroup(1) \) under the identification \( \phi^* L \isomorph Q_\phi \times_\rho \UGroup(1) \) of \cref{prop::applications:reductionStructureGroup:indenificationPullbackBundle}. 
\end{lemma}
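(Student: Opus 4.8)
The plan is to verify the identity of connections by comparing horizontal distributions under the principal bundle isomorphism $\Psi\colon\phi^*L\to Q_\phi\times_\rho\UGroup(1)$, $(m,\equivClass{p,a,z})\mapsto\equivClass{p\cdot a,z}$, established in \cref{prop::applications:reductionStructureGroup:indenificationPullbackBundle}. The canonical map $\phi^*L\to L$ covering $\phi\colon M\to F$ is a morphism of principal $\UGroup(1)$-bundles, so $\phi^*\vartheta^\Gamma$ is again a connection $1$-form on $\phi^*L$; and $\Psi$ is $\UGroup(1)$-equivariant, so $\Psi^*(\rho_*\Gamma)$ is a connection $1$-form on $\phi^*L$ as well. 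Two $\UGroup(1)$-connection forms on the same bundle agree as soon as their horizontal subbundles coincide, and a horizontal subbundle is determined by the property that it projects isomorphically onto $\TBundle M$ fiberwise. Hence it suffices to show: for every $m\in M$ and $X_m\in\TBundle_mM$ there is a tangent vector $W$ to $\phi^*L$ lying over $X_m$ that is $\phi^*\vartheta^\Gamma$-horizontal and is sent by $\tangent\Psi$ to the $\rho_*\Gamma$-horizontal lift of $X_m$.

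The construction of $W$ is the core of the argument. Fix $q\in Q_\phi$ with $\pi(q)=m$ and let $X^\Gamma_q\in\TBundle_qP$ denote the $\Gamma$-horizontal lift of $X_m$; by the reduction hypothesis $X^\Gamma_q\in\TBundle_qQ_\phi$. In the model $\phi^*L=\set{(m',\equivClass{p,a,z})\given\phi(m')=\equivClass{p,aH}}$ from \cref{prop::applications:reductionStructureGroup:indenificationPullbackBundle}, I would take $W\defeq\bigl(X_m,\equivClass{X^\Gamma_q,0}\bigr)$ at the point $(m,\equivClass{q,e,z})$, where $\equivClass{X^\Gamma_q,0}\in\TBundle L$ is the class of the pair consisting of $X^\Gamma_q$ and the zero vector in $\TBundle\FibreBundleModel{L}$. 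Three checks are needed. First, $W$ is tangent to $\phi^*L\subseteq M\times L$, i.e.\ $\tangent_m\phi(X_m)=\equivClass{X^\Gamma_q,0_{eH}}$ in $\TBundle_{\phi(m)}F$: writing $\phi=\equivClass{s,eH}$ for a local section $s$ of $Q_\phi\to M$ with $s(m)=q$ and decomposing $\tangent_ms(X_m)=X^\Gamma_q+C^*_q$ into $\Gamma$-horizontal and vertical parts, the reduction hypothesis forces $C^*_q$ to be $Q_\phi$-vertical, hence $C\in\LieA{h}$; since the fundamental vector field $C^*$ vanishes at $eH\in G\slash H$, the contribution $\equivClass{C^*_q,0_{eH}}$ is zero in $\TBundle F$, which gives the claim. (In effect, the reduction hypothesis is precisely the statement that $\phi$ is horizontal for the connection induced by $\Gamma$ on $F$.) Second, $W$ is $\phi^*\vartheta^\Gamma$-horizontal: in the formula~\eqref{eq:connectionOnL} for $\vartheta^\Gamma$ one decomposes $X^\Gamma_q=Z^\Gamma_q+B^*_q$, but here $B=0$ because $X^\Gamma_q$ is already $\Gamma$-horizontal, and the $\FibreBundleModel{L}$-component of $W$ is zero, so $\vartheta^\Gamma(\tangent\pr_L(W))=\FibreBundleModel{\vartheta}(0)=0$. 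Third, $\tangent\Psi(W)=\equivClass{X^\Gamma_q,0}$ in $\TBundle(Q_\phi\times_\rho\UGroup(1))$, which by the very definition of the associated connection is the $\rho_*\Gamma$-horizontal lift of $X_m$; recall that $\rho$ integrates $\restr{\FibreBundleModel{J}(eH)}{\LieA{h}}$, so that $\rho_*\Gamma$ is well defined and the coefficient $\FibreBundleModel{J}(eH)$ appearing in the formula for $\FibreBundleModel{\vartheta}$ matches the derivative of $\rho$.

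The main obstacle I anticipate is the bookkeeping in the iterated associated-bundle constructions: keeping the identifications $\TBundle L=\TBundle P\times_{\TBundle G}\TBundle\FibreBundleModel{L}$ and $\TBundle F=\TBundle P\times_{\TBundle G}\TBundle(G\slash H)$ consistent, and always writing tangent vectors with representatives adapted to the base point $(m,\equivClass{q,e,z})$ used by $\Psi$; one must be careful that $\LieA{h}$ acts trivially on $G\slash H$ at $eH$ but acts through $\rho'$ on $\FibreBundleModel{L}$ at $\equivClass{e,z}$, which is exactly what produces the connection $\rho_*\Gamma$ rather than a flat connection. Once the horizontality of $\phi$ under the reduction hypothesis is in place, the remaining two checks are direct substitutions into~\eqref{eq:connectionOnL} and the defining formula for $\FibreBundleModel{\vartheta}$, and $\UGroup(1)$-equivariance of $\Psi$ is already contained in \cref{prop::applications:reductionStructureGroup:indenificationPullbackBundle}, so no additional work is required to conclude $\phi^*\vartheta^\Gamma=\Psi^*(\rho_*\Gamma)$.
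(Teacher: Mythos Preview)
Your argument is correct, but it takes a somewhat different route from the paper's. The paper works on the other side of the isomorphism: it pulls both connections back to \(Q_\phi\times_\rho\UGroup(1)\) via \(\Psi^{-1}\colon\equivClass{q,z}\mapsto(\pi(q),\equivClass{q,e,z})\) and evaluates each 1-form on a \emph{general} tangent vector \(\equivClass{Z^\Gamma_q+q\ldot B,v}\) (with \(B\in\LieA{h}\), using that \(\Gamma\) reduces to \(Q_\phi\)). Both values come out to \(\dualPair{\FibreBundleModel{J}(eH)}{B}+v\), and the identification \(\tangent_e\rho=\restr{\FibreBundleModel{J}(eH)}{\LieA{h}}\) finishes the proof in two lines. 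By contrast, you stay on \(\phi^*L\subset M\times L\), compare horizontal distributions, and then invoke that a principal connection is determined by its horizontal subbundle. This forces you to spend a step verifying tangency of your lift \(W\) to the submanifold \(\phi^*L\) (your local-section argument with \(C\in\LieA{h}\)), which the paper avoids entirely because on \(Q_\phi\times_\rho\UGroup(1)\) there is no constraint. Your approach buys a cleaner geometric explanation of where the reduction hypothesis enters: it is exactly the statement that \(\phi\) is horizontal for the connection induced by \(\Gamma\) on \(F\). The paper's approach buys brevity and a direct comparison of the full 1-forms rather than just their kernels.
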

\begin{proof}
We have seen that \( L \) can be realized as the quotient of \( P \times G \times \UGroup(1) \) by the simultaneous actions of \( G \) and \( H \).
From this point of view, the connection \( \vartheta^\Gamma \) on \( L \)  takes the form
\begin{equation}\begin{split}
\vartheta^\Gamma_{\equivClass{p, a, z}}\bigl(\equivClass{Z^\Gamma_p + 
p \ldot B, a \ldot A, v}\bigr) 
&= \FibreBundleModel{\vartheta}_{\equivClass{a,z}}\bigl(B \ldot\equivClass{a,z}  
+ \equivClass{a \ldot A, v}\bigr)
		\\
&= \FibreBundleModel{\vartheta}_{\equivClass{a, z}} \bigl(\equivClass{a 
		\ldot \AdAction^{-1}_a B + a \ldot A, v}\bigr)
		\\
&= \dualPair{\FibreBundleModel{J}(eH)}{\AdAction^{-1}_a B + A} + v,
\end{split}
\end{equation}
where \( p \in P \), \( a \in G \), \( z \in \UGroup(1) \), 
\( Z_p^\Gamma \in \TBundle_p P \) is horizontal, \( B \in \LieA{g} \) 
and \( z \in \TBundle_z \UGroup(1) \isomorph \R \).
Note that the following diagram of fiber bundle morphisms is commutative
\begin{equationcd}
	Q_\phi \times_\rho \UGroup(1) \to[r] 
		& \phi^* L \to[r]
		& L
		\\[-5ex]
	\equivClass{q, z} \mapsto{r} \mapsto{d}
		& (\pi(q), \equivClass{q, e, z}) \mapsto{r} \mapsto{d}
		& \equivClass{q, e, z} \mapsto{d}
		\\
	\pi(q) \mapsto{r}
		& \pi(q) \mapsto{r}
		& \equivClass{q, eH}
		\\[-5ex]
	M \to[r, "\id"]
		& M \to[r, "\phi"]
		& F
\end{equationcd}
Thus the pull-back of \( \vartheta^\Gamma \) to \(Q_\phi\times_\rho\UGroup(1) \) is given by
\begin{equation}
(\phi^* \vartheta^\Gamma)_{\equivClass{q, z}}\bigl(\equivClass{Z_q^\Gamma + q \ldot B, v}\bigr) = 
\vartheta^\Gamma_{\equivClass{q, e, z}}\bigl(\equivClass{Z^\Gamma_q + 
q \ldot B, 0, v}\bigr)
= \dualPair{\FibreBundleModel{J}(eH)}{B} + v.
\end{equation}
On the other hand, recall that \( \restr{\FibreBundleModel{J}(eH)}{\LieA{h}} \) is, by definition, the derivative of the Lie group homomorphism \( \rho \).
Hence, the connection \( \rho_* \Gamma \) on \( Q_\phi \times_\rho \UGroup(1) \) induced by \( \Gamma \) satisfies
\begin{equation}
	(\rho_* \Gamma)_{\equivClass{q, z}}\bigl(\equivClass{Z_q^\Gamma + q \ldot B, v}\bigr)
		= \tangent_e \rho (B) + v = \dualPair{\FibreBundleModel{J}(eH)}{B} + v.
\end{equation}
Thus, the connections \( \phi^* \vartheta^\Gamma \) and \( \rho_* \Gamma \) coincide.
\end{proof}

Thus, we get the following as a special case of \cref{thm::symplecticFibreBundle:momentumMapVolume}.
\begin{thm}[Momentum map for reductions of structure group]
\label{prop::applications:reductionStructureGroup:momentumMap}
Let \( \pi: P \to M \) be a finite-dimensional principal \( G \)-bundle over the closed volume manifold \( (M, \mu) \) and assume that \( \DiffGroup_{\mu, P}(M) = \DiffGroup_{\mu}(M) \).
Let \( H \) be a Lie subgroup of \( G \) and let \( \FibreBundleModel{\omega} \) be a \( G \)-invariant symplectic form on \( G \slash H \) with equivariant momentum map \( \FibreBundleModel{J}: G \slash H \to \LieA{g}^* \).
Then the following holds:
\begin{enumerate}
\item
The space \( \SectionSpaceAbb{F} \) of reductions of \( P \) to a principal \( H \)-subbundle is a symplectic Fr\'echet manifold with weak symplectic form \( \Omega = \mu \hatProduct \omega \), where \( \omega \) is the fiber symplectic form on \( P \times_G (G \slash H) \) induced by \( \FibreBundleModel{\omega} \).

\item
The group \( \AutGroup_\mu(P) \) of  bundle automorphisms, whose induced base diffeomorphisms are volume-preserving, acts symplectically on \( (\SectionSpaceAbb{F}, \Omega) \).
Moreover, for a principal connection \( \Gamma \) in \( P \) that is reducible to each \( Q_\phi \) and for a Lie group homomorphism \( \rho: H \to \UGroup(1) \) integrating \( \restr{\FibreBundleModel{J}(eH)}{\LieA{h}} \), the \( \AutGroup_\mu(P) \)-action on \( \SectionSpaceAbb{F} \) has a group-valued momentum map \( \SectionSpaceAbb{J}_{\AutGroup}: \SectionSpaceAbb{F} \to \CoGauAlgebra(P) \times 
\csCohomology^2(M, \UGroup(1)) \) given by
\begin{equation}
\SectionSpaceAbb{J}_{\AutGroup}(\phi)= \Bigl(J_* \phi, \equivClass*{Q_\phi \times_{\rho^{-1}} \UGroup(1), (\rho^{-1})_* \Gamma}\Bigr),
\end{equation}
where \( J: P \times_G (G \slash H) \to \CoAdBundle P \) is the bundle map induced by \( \FibreBundleModel{J} \).
The momentum map \( \SectionSpaceAbb{J}_{\AutGroup} \) is equivariant relative to the natural actions of \( \AutGroup_\mu(P) \) (see \cref{ex:dualPairLieGroups:volumePreservingDiffeos:coconjugation}).

\item
A lift \( \chi: \DiffGroup_\mu(M) \to \AutGroup_\mu(P) \) of the group of volume-preserving diffeomorphisms to principal bundle automorphisms yields a symplectic group action of \( \DiffGroup_\mu(M) \) on \( \SectionSpaceAbb{F} \) with group-valued momentum map \( \SectionSpaceAbb{J}_{\DiffGroup}: \SectionSpaceAbb{F} \to \csCohomology^2(M, \UGroup(1)) \) given by
\begin{equation}
\SectionSpaceAbb{J}_{\DiffGroup}(\phi) = \equivClass*{Q_\phi \times_{\rho^{-1}} \UGroup(1), (\rho^{-1})_* \Gamma} + 
\iota \circ \tau_{\Gamma, \chi}^* \circ J_* (\phi),
	 \end{equation}
where \( \iota: \csAlgebra^2(M, \UGroup(1)) \to \csCohomology^2(M, \UGroup(1)) \) and \( \tau_{\Gamma, \chi}^*: \CoGauAlgebra(P)\to \csAlgebra^2(M, \UGroup(1))\) are the same maps as in \cref{thm::symplecticFibreBundle:momentumMapVolume}.
\qedhere
\end{enumerate}
\end{thm}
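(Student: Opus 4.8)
The plan is to obtain the statement by specializing \cref{thm::symplecticFibreBundle:momentumMapVolume} to the fiber model \( \FibreBundleModel{F} = G \slash H \) and then translating the abstract data appearing there into the concrete geometric quantities built from \( H \)-reductions. For part~(i), recall from the discussion around~\eqref{eq::reducedBundleDefinition} that sections \( \phi \) of \( F = P \times_G (G \slash H) \) are in natural bijection with reductions \( Q_\phi \) of \( P \) to the subgroup \( H \). Since \( (G \slash H, \FibreBundleModel{\omega}) \) carries, by hypothesis, a \( G \)-invariant symplectic form, \cref{prop::symplecticStructureOnSectionsOfSymplecticFibreBundle} equips \( \SectionSpaceAbb{F} \) with the weak symplectic form \( \Omega = \mu \hatProduct \omega \), and this transports to the space of \( H \)-reductions along the above bijection. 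Moreover \( (G \slash H, \FibreBundleModel{\omega}, \FibreBundleModel{J}) \) is a Hamiltonian \( G \)-manifold, so \cref{thm::symplecticFibreBundle:momentumMapVolume} applies directly and produces the momentum maps \( \SectionSpaceAbb{J}_{\AutGroup}(\phi) = (J_* \phi, -\phi^* h_\Gamma) \) and \( \SectionSpaceAbb{J}_{\DiffGroup}(\phi) = -\phi^* h_\Gamma + \iota \circ \tau_{\Gamma, \chi}^* \circ J_*(\phi) \), where \( J = \id_P \times_G \FibreBundleModel{J} \). The \( J_* \phi \)-term is already in the asserted form, so the only remaining task is to identify \( -\phi^* h_\Gamma \) with \( \equivClass{Q_\phi \times_{\rho^{-1}} \UGroup(1), (\rho^{-1})_* \Gamma} \).

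For this I would first invoke the prequantization correspondence recalled above \parencite[Theorem~5.7.1]{Kostant1970}: a \( G \)-equivariant prequantum bundle \( \FibreBundleModel{L} \to G \slash H \) may be taken to be \( G \times_\rho \UGroup(1) \) for a Lie group homomorphism \( \rho: H \to \UGroup(1) \) integrating \( \restr{\FibreBundleModel{J}(eH)}{\LieA{h}} \), with connection \( \FibreBundleModel{\vartheta} \) as described there. By construction, the differential character \( h_\Gamma \in \csCohomology^2(F, \UGroup(1)) \) of \cref{thm::symplecticFibreBundle:momentumMapVolume} is the holonomy of \( (L, \vartheta^\Gamma) \) with \( L = P \times_G \FibreBundleModel{L} \) and \( \vartheta^\Gamma \) as in~\eqref{eq:connectionOnL}; hence \( \phi^* h_\Gamma \) is the holonomy of \( (\phi^* L, \phi^* \vartheta^\Gamma) \). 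Now \cref{prop::applications:reductionStructureGroup:indenificationPullbackBundle} identifies \( \phi^* L \) with \( Q_\phi \times_\rho \UGroup(1) \) as principal circle bundles over \( M \), and — using precisely the standing hypothesis that \( \Gamma \) reduces to each \( Q_\phi \) — \cref{prop::applications:reductionStructureGroup:indenificationPullbackPrequantum} identifies \( \phi^* \vartheta^\Gamma \) with the induced connection \( \rho_* \Gamma \). Therefore \( \phi^* h_\Gamma = \equivClass{Q_\phi \times_\rho \UGroup(1), \rho_* \Gamma} \) in \( \csCohomology^2(M, \UGroup(1)) \).

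Finally I would record that taking the inverse in the Abelian group \( \csCohomology^2(M, \UGroup(1)) \) amounts, on the level of circle bundles with connection, to passing to the conjugate bundle with negated connection form, and that for a bundle associated to the \( H \)-bundle \( Q_\phi \) via the character \( \rho \) this conjugate is the bundle associated via \( \rho^{-1} \), carrying the connection \( (\rho^{-1})_* \Gamma = -\rho_* \Gamma \) (since \( \tangent_e(\rho^{-1}) = -\tangent_e \rho \)). Thus \( -\phi^* h_\Gamma = \equivClass{Q_\phi \times_{\rho^{-1}} \UGroup(1), (\rho^{-1})_* \Gamma} \), and substituting into the formulas from \cref{thm::symplecticFibreBundle:momentumMapVolume}(ii)--(iii) gives the claimed expressions for \( \SectionSpaceAbb{J}_{\AutGroup} \) and \( \SectionSpaceAbb{J}_{\DiffGroup} \). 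The equivariance of \( \SectionSpaceAbb{J}_{\AutGroup} \) with respect to the natural \( \AutGroup_\mu(P) \)-action (\cref{ex:dualPairLieGroups:volumePreservingDiffeos:coconjugation}) and the independence of \( \SectionSpaceAbb{J}_{\DiffGroup} \) from the choice of \( \Gamma \) are then inherited directly from the general theorem. The only step requiring genuine care is the bookkeeping of the two sign/inverse conventions — the minus sign in \( -\phi^* h_\Gamma \) matched against the group inverse realized by \( \rho \mapsto \rho^{-1} \) — together with checking that the hypothesis \enquote{\( \Gamma \) reduces to \( Q_\phi \)} is exactly the hypothesis needed to invoke \cref{prop::applications:reductionStructureGroup:indenificationPullbackPrequantum}; everything else is a direct transcription.
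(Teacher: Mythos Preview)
Your proposal is correct and follows exactly the paper's approach: the theorem is stated as a direct specialization of \cref{thm::symplecticFibreBundle:momentumMapVolume}, with the identification \( \phi^* h_\Gamma = \equivClass{Q_\phi \times_\rho \UGroup(1), \rho_* \Gamma} \) coming from \cref{prop::applications:reductionStructureGroup:indenificationPullbackBundle,prop::applications:reductionStructureGroup:indenificationPullbackPrequantum}. Your handling of the sign via \( \rho \mapsto \rho^{-1} \) makes explicit a step the paper leaves implicit in its formula.
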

Clearly there is also a version of this theorem for symplectic base manifolds similar to \cref{thm::symplecticFibreBundle:momentumMapSymplectic}.
We leave it to the interested reader to spell out the details.

In the following, we are interested in the special case where \( P \) is the frame bundle of \( M \) so that there is a canonical lift of diffeomorphisms of \( M \) to bundle automorphisms of \( P = \FrameBundle M \).
Thus, this setting falls in the realm of the last point in \cref{prop::applications:reductionStructureGroup:momentumMap}.

\subsubsection{K\"ahler geometry}
Let \( (M, \sigma) \) be a symplectic $2n$-dimensional manifold. 
We are interested in the space \( \SectionSpaceAbb{I} \) of almost complex structures compatible with \( \sigma \), \ie, almost complex structures 
\( J \) for which \( \sigma(\cdot, J \cdot) \) is a Riemannian metric 
on \( M \).
Each compatible almost complex structure on \( M \) gives rise to an almost K\"ahler manifold structure on \( M \) and thus reduces the symplectic frame bundle \( \FrameBundle M \to M \) to an \( \UGroup(n) \)-bundle. Hence \( \SectionSpaceAbb{I} \) is identified 
with the space of sections of \( \FrameBundle M \times_{\SpGroup(2n, \R)} \FibreBundleModel{F} \) with fiber model \( \FibreBundleModel{F} = \SpGroup(2n, \R) \slash \UGroup(n) \).
This homogeneous space \( \FibreBundleModel{F} \) can be identified with the Siegel upper half space and thus is a symplectic manifold.

To get some additional insight in the problem, we will follow \parencite{Ohsawa2015,SkerrittVizman2018} and realize the Siegel upper 
half space as a 
symplectic quotient.
Consider the vector space \( \MatrixSpace(2n \times 2n, \R) \) of 
real \( 2n \times 2n \) matrices endowed with the constant symplectic 
form
\begin{equation}
 	\omega(X, Y) = \tr (X^\T \symplMatrix Y),
\end{equation} 
where \( \symplMatrix \) denotes the standard symplectic matrix (whose right upper \( n \times n \)-corner is the identity).
Left multiplication by \( \SpGroup(2n, \R) \) leaves the symplectic form invariant.
Similarly, \( \OGroup(2n) \) acts by right multiplication by the inverse (so it is also a left action), and this action is symplectic as well.
These actions possess the momentum maps 
\begin{equationcd}
 	\SpAlgebra(2n, \R)
& \MatrixSpace(2n\times 2n,\R)\to[r,"J_{\OGroup}"]\to[l,"J_{\SpGroup}", swap]
 		& \OAlgebra(2n) \\[-5ex]
 	- X X^\T \symplMatrix
 		& X \mapsto{l}\mapsto{r}
 		& X^\T \symplMatrix X,
\end{equationcd} 
where we identified the duals with the Lie algebras itself using the bilinear form \( \frac{1}{2} \tr(X Y^\T) \).
Note that \( \UGroup(n) \) is the stabilizer of \( \symplMatrix \) under the (co)adjoint action of either \( \SpGroup(2n, \R) \) 
or \( \OGroup(2n) \). Hence, we identify the symplectic quotients at 
\( \symplMatrix \) with the spaces
\begin{equation}\label{eq:applications:kaehler:reducedSpacesModel}
	J_{\SpGroup}^{-1}(\symplMatrix) \slash \SpGroup(2n, \R)_\symplMatrix 
		= \OGroup(2n) \slash \UGroup(n), 
	\qquad 
	J_{\OGroup}^{-1}(\symplMatrix) \slash \OGroup(2n)_\symplMatrix
		= \SpGroup(2n, \R) \slash \UGroup(n).
\end{equation}
It is intriguing that this duality on the level of momentum maps and 
symplectic quotients corresponds to the two ways to get an almost 
K\"ahler manifold: start either from a symplectic manifold or a 
Riemannian manifold, and then choose a compatible almost complex structure.

Using the momentum maps, we can determine the prequantizations. 
For this, recall from \cref{subsubsec_pull_back_prequantum_bundles} that equivariant prequantization bundles over the homogeneous space \( G \slash H \) are in bijective correspondence with Lie group homomorphisms \( \rho: H \to \UGroup(1) \) integrating the Lie algebra homomorphism \( \restr{J(eH)}{\LieA{h}}: \LieA{h} \to \R \). In the present setting, we have \( J_{\SpGroup}(e \UGroup(n)) = - \symplMatrix = - J_{\OGroup}(e \UGroup(n)) \). 
The restriction of the corresponding functionals to \( \UAlgebra(n) \) yields the Lie algebra homomorphism \( \mp \frac{1}{2} \tr(\symplMatrix \, \cdot): \UAlgebra(n) \to \R \), which equals \( \pm \tr_\C: \UAlgebra(n) \to \I \R \) when \( \UAlgebra(n) \) is viewed as a subalgebra of complex matrices. Thus the integrating group homomorphism  is given by
\begin{equation}
\label{eq::Kaehler:characterPrequantization}
\rho_\pm: \UGroup(n) \to \UGroup(1), \qquad A \mapsto 
(\det\nolimits_\C A)^{\pm 1}.
\end{equation}
In our sign convention, \( \rho_+ \) corresponds to \( J_{\OGroup} \)
while \( \rho_- \) integrates \( J_{\SpGroup} \).

Let us now return to the non-linear setting given by a symplectic 
manifold \( (M, \sigma) \) with associated symplectic frame bundle 
\( \FrameBundle M \to M \). 
For every almost complex structure \( I \) on \( M \), let 
\( \FrameBundle_I M \) be the corresponding \( \UGroup(n) \)-reduction 
of \( \FrameBundle M \) and let \( \Gamma_I \) be the Levi-Civita connection, viewed as a connection on \( \FrameBundle_I M \).
With a slight abuse of notation, we also denote the extended connection on \( \FrameBundle M \) by \( \Gamma_I \).
The set \( \SectionSpaceAbb{I} \) of almost complex structures compatible with \( \sigma \) is identified with the space of sections of \( F = \FrameBundle M \times_{\SpGroup(2n, \R)} \FibreBundleModel{F}\) with 
\( \FibreBundleModel{F} = \SpGroup(2n, \R) \slash \UGroup(n) \).
As discussed above, the fiber model \( \FibreBundleModel{F} \) is a symplectic manifold and hence our general theory implies that 
\( \SectionSpaceAbb{I} \) is a symplectic manifold and that the group 
of symplectomorphisms acts symplectically on it.
According to \cref{prop::applications:reductionStructureGroup:momentumMap}, 
the momentum map for the action of the group of symplectomorphisms involves 
the associated bundle \( (\FrameBundle_I M \times_{\rho_{-}^{-1}} 
\UGroup(1), (\rho_{-}^{-1})_* \Gamma_I) \).
The group homomorphism \( \rho_-^{-1}: \UGroup(n) \to \UGroup(1) \) 
is the determinant and thus the associated bundle 
\( \FrameBundle_I M \times_{\rho_{-}^{-1}} \UGroup(1) \) coincides with 
the canonical circle bundle\footnotemark{} \( \KBundle_I M \) endowed 
with its natural Chern connection determined by the almost complex structure \( I \).
\footnotetext{Given an almost complex structure \( I \) on \( M \), the 
complex line bundle \( \ExtBundle^{n,0} M \) of 
holomorphic forms is called the \emphDef{canonical line bundle} and its 
dual \( \ExtBundle^{0,n} M \) the \emphDef{anti-canonical line bundle}.
The volume form \( \frac{\sigma^n}{n!} \) is a non-vanishing section 
of \( \ExtBundle^{n,n} M = \ExtBundle^{n,0} M \otimes \ExtBundle^{0,n} M \) and 
thus can be viewed as a Hermitian metric on the (anti-)canonical bundle.
The associated Hermitian frame bundles \( \KBundle_I M \) and 
\(\KBundle^{-1}_I M\) are principal \(\UGroup(1)\)-bundles 
and are called the \emphDef{canonical}, respectively, 
\emphDef{anti-canonical circle bundles}. }
Moreover, note that the adjoint of \( \tau_{\Gamma_I, \chi}: 
\VectorFieldSpace_{\sigma}(M) \to \GauAlgebra(\FrameBundle M) \) appears 
in the construction of the momentum map.
As shown in \cref{rem:symplecticFibreBundle:tauMapVolume}, \( \tau_{\Gamma_I, \chi}^*(s) \) involves the covariant derivative of \( s = J_*(I) \) and thus this term vanishes, because \( I \) is parallel with respect to \( \Gamma_I \).
To summarize, we obtain an important special case of \cref{prop::applications:reductionStructureGroup:momentumMap} (in its symplectic version analogous to \cref{thm::symplecticFibreBundle:momentumMapSymplectic}).
\begin{thm}
Let \( (M, \sigma) \) be a compact symplectic manifold of dimension 
\( 2n \).
Then the set \( \SectionSpaceAbb{I} \) of almost complex structures compatible with \( \sigma \) is naturally endowed with the structure of a symplectic Fréchet manifold.
If \( (M, \sigma) \) has a prequantum bundle \( h_\sigma \in 	
\csCohomology^2(M, \UGroup(1)) \), then the symplectic action of 
\( \DiffGroup_\sigma(M) \) on \( \SectionSpaceAbb{I} \) has a group-valued momentum map
\begin{equation}
\SectionSpaceAbb{J}_{\DiffGroup}:
\SectionSpaceAbb{I} \to \csCohomology^{2n}(M, \UGroup(1)),
			\qquad 
I \mapsto \KBundle_I M \star h_\sigma^{n-1},
\end{equation}
where \( \KBundle_I M \) is the canonical bundle, viewed as an element of \( \csCohomology^2(M, \UGroup(1)) \).
\end{thm}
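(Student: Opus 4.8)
The plan is to obtain this statement by specializing the reduction-of-structure-group momentum map \cref{prop::applications:reductionStructureGroup:momentumMap} — in its version for symplectic base manifolds, \ie, the analogue of \cref{thm::symplecticFibreBundle:momentumMapSymplectic} — to the natural bundle \( P = \FrameBundle M \) with structure group \( G = \SpGroup(2n, \R) \) and subgroup \( H = \UGroup(n) \). First I would recall that a compatible almost complex structure \( I \) is precisely a reduction of the symplectic frame bundle to \( \UGroup(n) \), so that \( \SectionSpaceAbb{I} \) is canonically the section space of \( F = \FrameBundle M \times_{\SpGroup(2n, \R)} \FibreBundleModel{F} \) with \( \FibreBundleModel{F} = \SpGroup(2n, \R) \slash \UGroup(n) \). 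The realization \eqref{eq:applications:kaehler:reducedSpacesModel} of \( \FibreBundleModel{F} \) as a symplectic quotient endows this Siegel upper half space with a \( \SpGroup(2n, \R) \)-invariant symplectic form \( \FibreBundleModel{\omega} \) together with an equivariant momentum map \( \FibreBundleModel{J} \); since \( M \) is compact, \cref{prop::symplecticStructureOnSectionsOfSymplecticFibreBundle} then produces the weak symplectic form \( \Omega = \mu_\sigma \hatProduct \omega \), \( \mu_\sigma = \sigma^n \slash n! \), which proves the first assertion.

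For the momentum map I would use that \( \FrameBundle M \) is a natural bundle: every symplectomorphism preserves the isomorphism class of \( \FrameBundle M \), so \( \DiffGroup_{\sigma, \FrameBundle M}(M) = \DiffGroup_\sigma(M) \), and \( u_m \mapsto \tangent_m \phi \circ u_m \) defines a lift \( \chi: \DiffGroup_\sigma(M) \to \AutGroup_\sigma(\FrameBundle M) \) splitting \eqref{eq::hatProduct:AutGroupExactSequenceSymplectic}. The prequantization \( h_\sigma \) of \( (M, \sigma) \) is assumed, and a \( G \)-equivariant prequantization of \( \FibreBundleModel{F} \) exists by Kostant's correspondence precisely because the character \( \rho \) below exists. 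Hence \cref{prop::groupMomentumMap:forExtension,prop::groupMomentumMap:forLift} apply and give a \( \csCohomology^{2n}(M, \UGroup(1)) \)-valued momentum map of the form
\[
\SectionSpaceAbb{J}_{\DiffGroup}(I) = \equivClass*{Q_I \times_{\rho^{-1}} \UGroup(1),\, (\rho^{-1})_* \Gamma} \star h_\sigma^{n-1} + \iota \circ \tau_{\Gamma, \chi}^* \circ J_*(I),
\]
for any principal connection \( \Gamma \) in \( \FrameBundle M \) and any Lie group homomorphism \( \rho: \UGroup(n) \to \UGroup(1) \) integrating \( \restr{\FibreBundleModel{J}(e\UGroup(n))}{\UAlgebra(n)} \), where \( Q_I \subseteq \FrameBundle M \) is the \( \UGroup(n) \)-reduction determined by \( I \). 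The computation carried out just before the statement identifies \( \restr{\FibreBundleModel{J}(e\UGroup(n))}{\UAlgebra(n)} \) with \( \pm \tr_\C \), so by \eqref{eq::Kaehler:characterPrequantization} one may take \( \rho = \rho_- \), whence \( \rho^{-1} = \rho_-^{-1} = \det\nolimits_\C \); \cref{prop::applications:reductionStructureGroup:indenificationPullbackBundle,prop::applications:reductionStructureGroup:indenificationPullbackPrequantum} then identify \( \bigl(Q_I \times_{\rho_-^{-1}} \UGroup(1),\, (\rho_-^{-1})_* \Gamma\bigr) \) with the canonical circle bundle \( \KBundle_I M \) carrying the Chern connection induced by \( \Gamma \).

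It remains to kill the correction term \( \iota \circ \tau_{\Gamma, \chi}^* \circ J_*(I) \). Here I would invoke the connection-independence of \( \SectionSpaceAbb{J}_{\DiffGroup} \) from \cref{prop::groupMomentumMap:forLift}: to evaluate the momentum map at a fixed \( I \) one is free to take \( \Gamma = \Gamma_I \) to be the canonical Hermitian (Chern) connection of \( (M, \sigma, I) \), which is reducible to \( Q_I \). With this choice \( I \) is parallel, hence the section \( J_*(I) \in \CoGauAlgebra(\FrameBundle M) \) — which under the identifications above is represented by \( I \) itself — is covariantly constant, and the covariant-derivative description of \( \tau^*_{\Gamma, \chi} \) from \cref{rem:symplecticFibreBundle:tauMapVolume} shows that \( \tau^*_{\Gamma_I, \chi} \circ J_*(I) = 0 \). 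Substituting, \( \SectionSpaceAbb{J}_{\DiffGroup}(I) = \KBundle_I M \star h_\sigma^{n-1} \) for every \( I \), as claimed.

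The hard part will be this last step: one must make precise the use of the point-dependent connection \( \Gamma_I \) inside a theorem phrased for a single fixed splitting, and one must check that the covariant-derivative description of \( \tau^*_{\Gamma, \chi} \) from \cref{rem:symplecticFibreBundle:tauMapVolume}, derived there for the \( \SLGroup(n, \R) \)-frame bundle with torsion-free connections, transfers to the symplectic frame bundle and to the generally torsion-carrying Chern connection \( \Gamma_I \). I expect the cleanest route is to avoid the explicit formula: unwinding \( \tau_{\Gamma_I, \chi}(X) = \alpha_{\Gamma_I}(\hat{X}) \) and pairing with \( J_*(I) \) via the integration pairing between \( \GauAlgebra(\FrameBundle M) \) and \( \CoGauAlgebra(\FrameBundle M) \) reduces the vanishing to \( \nabla^{\Gamma_I} I = 0 \), which holds because \( \Gamma_I \) is a \( \UGroup(n) \)-connection.
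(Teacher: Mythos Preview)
Your proposal is correct and follows the paper's argument almost verbatim: specialize the symplectic version of \cref{prop::applications:reductionStructureGroup:momentumMap} to the symplectic frame bundle, identify \( \rho_-^{-1} = \det_\C \) so that the associated bundle is \( \KBundle_I M \), and kill the \( \tau_{\Gamma,\chi}^* \circ J_* \) term by choosing the \( I \)-dependent connection \( \Gamma_I \) and invoking \( \nabla^{\Gamma_I} I = 0 \) together with \cref{rem:symplecticFibreBundle:tauMapVolume}. The only difference is nomenclature---the paper calls \( \Gamma_I \) the Levi-Civita connection rather than the Chern connection---and it does not address the point-dependence and torsion concerns you flag any more explicitly than you do.
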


By definition, the Chern--Ricci form \( \Ric_I \) is the curvature of 
the Chern connection on the anti-canonical bundle.
If \( I \) is integrable, then \( \Ric_I \) coincides with the Ricci form of the associated K\"ahler metric. We hence have
\begin{equation}\label{eq:applications:Kaehler:curvatureMomentumMap}
	\curv \circ \, \SectionSpaceAbb{J}_{\DiffGroup}:
		\SectionSpaceAbb{I} \to \DiffFormSpace^{2n}(M, \UGroup(1)), 
		\qquad 
	I \mapsto - \Ric_I \wedge \sigma^{n-1} = - \frac{S_I}{n} \sigma^n,
\end{equation}
where \( S_I = \Tr_\sigma \Ric_I \) is the Chern scalar curvature.
It has been shown by \textcite[Proposition~9]{Donaldson1997} that, under 
rather restrictive topological assumptions, the map \( I \mapsto S_I \) 
yields a momentum map for the \( \DiffGroup_\sigma(M) \)-action on 
\( \SectionSpaceAbb{I} \).
As in the examples above, the curvature momentum map is, in general, 
not the momentum map for the full symplectomorphism group but only for 
the subgroup of \emph{Hamiltonian symplectomorphisms}.
Moreover, the group-valued momentum map contains additional topological 
information: the Chern class of the bundle \( \KBundle^{-1}_I M \star 
h_\sigma^{n-1} \), which is given as the cup-product \( c_I(M) \cup 
c(h_\sigma)^{n-1} \) of the Chern class \( c(h_\sigma) \) with the first 
Chern class \( c_I(M) \defeq c(\KBundle_I M) \in \sCohomology^2(M, \Z) \) 
of the almost complex manifold \( (M, I) \).
In real cohomology, this class simplifies to \( c_I(M) \cup \equivClass{\sigma}^{n-1} \). 

\Cref{eq:applications:Kaehler:curvatureMomentumMap} shows that the curvature of the momentum map is essentially the scalar curvature.
Hence, taking different symplectic quotients yield various moduli spaces
 of almost K\"ahler metrics with prescribed scalar curvature.
The following cases are of special interest.
\begin{enumerate}
	\item For \( c \in \csCohomology^{2n}(M, \UGroup(1)) \), the level set \( \SectionSpaceAbb{J}_{\DiffGroup}^{-1}(c) \) consists of all almost complex structures \( I \) whose canonical bundle satisfies \( \KBundle_I M \star h_\sigma^{n-1} = c \).
	\item Recall from the exact sequence~\eqref{eq::differentialCharacter:definingDiagram} that the set \( c + \sCohomology^{2n-1}(M, \UGroup(1)) \) comprises all differential characters with the same curvature as \( c \). Thus the inverse image \( \SectionSpaceAbb{J}_{\DiffGroup}^{-1}\bigl(c + \sCohomology^{2n-1}(M, \UGroup(1))\bigr) \) is the set of all almost complex structures \( I \) with prescribed scalar curvature \( S_I = -n \frac{\curv c}{\sigma^n} \).
	\item Similarly, the set \( c + \JacTorus^{n-1}(M) \) parametrizes differential characters having the same curvature and topological class as \( c \). Thus the inverse image \( \SectionSpaceAbb{J}_{\DiffGroup}^{-1}\bigl(c + \JacTorus^{n-1}(M))\bigr) \) is identified with the set of all almost complex structures whose scalar curvature is prescribed by the above relation and, in addition, for which \( c_I(M) \cup c(h_\sigma)^{n-1} \) coincides with the Chern class of \( c \). 
\end{enumerate}
Ignoring the particularities of the infinite-dimensional setting, one 
would thus conjecture that all the corresponding moduli spaces are symplectic manifolds or at least symplectic stratified spaces.
\begin{remark}[K\"ahler--Einstein metrics]
If the curvature of \( c \) is a constant multiple of \( \sigma^n \), then the inverse image \( \SectionSpaceAbb{J}_{\DiffGroup}^{-1}(c + \JacTorus^{n-1}(M)) \) is equal to the set of K\"ahler--Einstein metrics. Indeed, an (almost) complex structure \( I \) whose Ricci form satisfies the Einstein relation \( \Ric_I = \lambda \sigma \) has constant scalar curvature \( S_I = n \lambda \).
Conversely, every \emph{integrable} complex structure \( I \) with constant scalar curvature and whose Chern class satisfies 
\( c_I(M) = \lambda \equivClass{\sigma} \) in real cohomology is automatically K\"ahler--Einstein. Indeed, in this case, \( \Ric_I \) and \( \lambda \sigma \) are two harmonic forms representing the same cohomology class and thus are equal. The issue of existence of K\"ahler-Einstein metrics has been an important subject of study for decades and was recently settled with the resolution of the Yau--Tian--Donaldson conjecture (we refer the reader to the recent review \parencite{Donaldson2015} and references therein for more details).
The infinite-dimensional symplectic picture we have just developed suggests that K\"ahler metrics with constant scalar curvature are 
actually a more natural object to study, especially since their moduli spaces are symplectic quotients. However, the existence problem is harder than for K\"ahler--Einstein metrics, and thus it is not clear how the analytic question concerning the infinite-dimensional symplectic reduction pan out. 
\end{remark}
\begin{remark}[Relation to GIT quotient]
It is a recurrent theme in K\"ahler geometry that the existence of certain special metrics (K\"ahler-Einstein, positive Ricci curvature, \ldots) has an algebraic stability obstruction.
Donaldson \parencite{Donaldson2001,Donaldson1997} related this geometric-algebraic equivalence to the interplay of the symplectic 
momentum map geometry with geometric invariant theory.
The fundamental idea is to detect zeros of the momentum map using an algebraic condition phrased in terms of the prequantum bundle.
 However, this approach relies on the 
existence of a classical momentum map and thus requires certain topological classes to vanish. In light of our refined results, it would be 
interesting to see how the group-valued momentum map interacts with geometric invariant theory. In this direction, 
\parencite{DiezJanssensNeebVizmann2019} shows how the hat product of differential characters can be used to construct a prequantization 
bundle over certain infinite-dimensional function spaces.
However, it is not yet clear what the existence of a group-valued momentum map means in terms of the prequantum geometry.
\end{remark}

\begin{remark}
The anti-canonical bundle often plays the role of the algebraic counterpart to a geometric property. For example, a compact complex 
manifold admits a K\"ahler metric of positive Ricci curvature only if it has an ample anti-canonical bundle (by Yau's proof of the Calabi conjecture combined with Kodaira's theorem). Our general group-valued momentum map picture explains, at least in principle, the importance of the (anti-)canonical bundle in many important questions concerning the geometry of (almost) K\"ahler manifolds: \( \KBundle_I M \) is the \enquote{conserved quantity} of the group of symplectomorphisms.
\end{remark}

Recall that we realized the Siegel upper half space \( \SpGroup(2n, \R) \slash \UGroup(n) \) as a symplectic quotient. This additional structure transfers to the space of sections, and puts the moduli space of constant scalar curvature metrics in the framework of reduction by stages.
For the moment, let us return to the following general set-up.
Let \( (\FibreBundleModel{F}, \FibreBundleModel{\omega}) \) be a finite-dimensional symplectic manifold. Consider two Lie groups \( G \) and \( K \) that act on \( \FibreBundleModel{F} \) from the left and preserve the symplectic form.
In the K\"ahler setting, we have \(\FibreBundleModel{F}=\GLGroup(2n,\R)\), \( G = \OGroup(2n) \), and \( K = \SpGroup(2n, \R) \).
Assume that the two actions are free, proper, commute, and have momentum maps
\begin{equationcd}
 	\LieA{g}^*	&	\FibreBundleModel{F}\to[swap]{l}
	{\FibreBundleModel{J}_G}\to{r}{\FibreBundleModel{J}_K}	
	&	\LieA{k}^*.
\end{equationcd}
Moreover, we assume that \( \FibreBundleModel{J}_K: \FibreBundleModel{F} \to \LieA{k}^* \) is invariant under \( G \) and that, vice-versa, \( \FibreBundleModel{J}_G \) is invariant under \( K \).
Hence we are in the setting of commuting reduction by stages \parencite[Section~15.5]{MarsdenMisiolekEtAl2007}.
Let \( \mu \in \LieA{g}^* \). The general theory \parencite[Lemma~4.1.2]{MarsdenMisiolekEtAl2007} tells us that the symplectic reduction by \( G \) yields the first stage reduced Hamiltonian \( K \)-space \( (\FibreBundleModel{F}_\mu, \FibreBundleModel{\omega}_\mu, \FibreBundleModel{J}_\mu) \).
The momentum map of the reduced space is induced by \( \FibreBundleModel{J}_K: \FibreBundleModel{F} \to \LieA{k}^* \) in the obvious way. Reducing once more by the remaining \(K\)-action gives a symplectic quotient that is symplectically diffeomorphic to the corresponding symplectic quotient of the product group \(G \times K\).

Let us now discuss the corresponding story in the bundle picture and 
for the spaces of sections.
For this, let \( \pi: P \to M \) be a principal \( K \)-bundle.
The spaces occurring in the symplectic reduction diagram
\begin{equationcd}
	\FibreBundleModel{J}^{-1}_G(\mu) \toInject{r}\toSurject{d}{\slash \, G}
		&	\FibreBundleModel{F} \\
	\FibreBundleModel{F}_\mu
		&
\end{equationcd}
all carry a natural \( K \)-action such that the inclusion and projection maps are equivariant.
Thus we obtain a similar diagram for the section spaces of the associated bundles
\begin{equationcd}
\sSectionSpace\bigl(P \times_K \FibreBundleModel{J}^{-1}_G (\mu)\bigr) \toInject{r}
\toSurject{d}{\slash \, \sFunctionSpace(M, G)}
		&	\SectionSpaceAbb{F} = \sSectionSpace\bigl(P \times_K \FibreBundleModel{F}\bigr) \\
	\sSectionSpace\bigl(P \times_K \FibreBundleModel{F}_\mu\bigr).
		&
\end{equationcd}
The reduced space \( \sSectionSpace(P \times_K \FibreBundleModel{F}_\mu) \) is symplectic by our general theory, since it is the section space of a symplectic fiber bundle. Hence it is natural to suppose that \( \sSectionSpace(P \times_K \FibreBundleModel{F}_\mu) \) is the symplectic reduction starting from \( \SectionSpaceAbb{F} \).
This is indeed the case as we will see now.
Consider the action of \( \sFunctionSpace(M, G) \) on \( \SectionSpaceAbb{F} = \sSectionSpace(P \times_K \FibreBundleModel{F}) \) 
induced by the \( G \)-action on \( \FibreBundleModel{F} \).
This action is well-defined since the \( G \)- and \( K \)-actions on \( \FibreBundleModel{F} \) commute.
Although this setting is not completely the same as the action of a gauge group on the space of sections of an associated bundle --- 
\( \sFunctionSpace(M, G) \) is the gauge group of the trivial bundle not of \( P \) ---, the general strategy of \cref{prop::momentumMap:gaugeGroup} carries through and yields the momentum map
\begin{equation}
\SectionSpaceAbb{J}_{G}: \SectionSpaceAbb{F} \to 
\sFunctionSpace(M, \LieA{g}^*), \qquad \phi \mapsto J_G \circ \phi,
\end{equation}
where \( J_G: P \times_K \FibreBundleModel{F} \ni \equivClass{p, \FibreBundleModel{f}} \mapsto \bigl(\pi(p), \FibreBundleModel{J}_G(\FibreBundleModel{f})\bigr) \in M \times \LieA{g}^* \) is the bundle map induced by the \( K \)-invariant map 
\( \FibreBundleModel{J}_G: \FibreBundleModel{F} \to \LieA{g}^* \).
From this expression, it is easy to see that \( \sSectionSpace(P \times_K \FibreBundleModel{F}_\mu) \) is the symplectic reduced space of \( \SectionSpaceAbb{F} \) by \( \sFunctionSpace(M, G) \) at the constant map \( \mu \in \sFunctionSpace(M, \LieA{g}^*) \).

In the K\"ahler setting, we have \(\FibreBundleModel{F}=\GLGroup(2n,\R)\), \( G = \OGroup(2n) \), and \( K = \SpGroup(2n, \R) \) so that the \( G \)-reduced space \( \FibreBundleModel{F}_\mu \) is the upper Siegel half space for \( \mu = \symplMatrix \).
The space \( \SectionSpaceAbb{F} \) of sections is identified with the space \( \TensorFieldSpace^2(M) \) of invertible \( 2 \)-tensors, and the first reduced stage \( \sSectionSpace(P \times_K \FibreBundleModel{F}_\mu) \) coincides with the space \( \SectionSpaceAbb{I} \) of almost complex structures compatible with the symplectic form \( \sigma \). As discussed above, the second reduction by \( \DiffGroup_\sigma(M) \) at the subset \( c + \csCohomology^{2n-1}(M, \UGroup(1)) \) yields the moduli 
space \( \SectionSpaceAbb{SC} \) of almost complex structures \( I \) with prescribed scalar curvature.
Thus we get the following reduction by stages diagram:
\begin{equation}\begin{tikzcd}[column sep=5.9em, row sep=0.02em]
	\TensorFieldSpace^2(M)
\ar[r, twoheadrightarrow, "{\sslash_{\symplMatrix} \sFunctionSpace(M, \OGroup(2n))}"]
\ar[rr, twoheadrightarrow, "{\sslash_{\symplMatrix, c} \sFunctionSpace(M, \OGroup(2n)) \times 
\DiffGroup_\sigma(M)}", swap, bend right]
& \SectionSpaceAbb{I}
\ar[r,twoheadrightarrow,"{\sslash_c \DiffGroup_\sigma(M)}"]
& \SectionSpaceAbb{SC}.
	\\
& ?
&
\end{tikzcd}\end{equation}

We believe that this diagram commutes, \ie, the two-stage reduced space \( \SectionSpaceAbb{SC} \) can be obtained by the one-shot reduction from \( \TensorFieldSpace^2(M) \) by the product group \( \sFunctionSpace(M, \OGroup(2n)) \times \DiffGroup_\sigma(M) \).
In the finite-dimensional setting, such questions about commuting reductions are well-understood \parencite{MarsdenMisiolekEtAl2007}.
However, the framework of reduction by stages is completely missing in infinite dimensions. Once the general theory of reduction by stages in infinite dimensions is formulated, one can state that the curved arrow is a one-shot reduction. This general theory is the subject of a future paper.

\subsubsection{Teichm\"uller space and weighted Lagrangian subbundles}
Coadjoint orbits are the canonical examples of symplectic homogeneous spaces. Let \( G \) be a finite-dimensional Lie group and 
\( \mu \in \LieA{g}^* \). The coadjoint orbit \( \CoAdOrbit_\mu \isomorph G \slash G_\mu \) through \( \mu \) is endowed with the 
Kostant--Kirillov--Souriau symplectic form
\begin{equation}\label{orbit_symplectic_form}
	\omega_{\nu} (\CoadAction_A \nu, \CoadAction_B \nu) 
		= \dualPair{\nu}{\commutator{A}{B}}, \quad \nu \in \CoAdOrbit_\mu 
		\subset\mathfrak{g}^\ast, 
		\quad A, B\in \mathfrak{g}.
\end{equation}
Let \( M \) be a finite-dimensional closed manifold carrying a 
\( G \)-structure, \ie, the structure group of the frame bundle 
is reduced to \( G \).
A section of \( \FrameBundle M \times_G (G \slash G_\mu) \) corresponds to a further reduction to \( G_\mu \).
Thus, our general theory implies that the space of all such 
\( G_\mu \)-reductions carries a symplectic structure, which is 
naturally induced by the coadjoint orbit symplectic form~\eqref{orbit_symplectic_form}. The momentum map for the action 
of the group of diffeomorphisms of \( M \) on the space of all such
 \( G_\mu \)- structures can be calculated with the help of \cref{prop::applications:reductionStructureGroup:momentumMap}.
Recall that the momentum map involved the prequantization of the fiber model. In the present setting, prequantizations of \( G \slash G_\mu \) bijectively correspond to Lie group homomorphisms \( \rho_\mu: G_\mu \to \UGroup(1) \) that integrate the Lie algebra homomorphism (see, \cref{subsubsec_pull_back_prequantum_bundles})
\begin{equation}
\check{\rho}_\mu = \restr{\FibreBundleModel{J}(e G_\mu)}{\LieA{g}_\mu}: 
\LieA{g}_\mu \to \R,
\end{equation}
where \( \FibreBundleModel{J}: G \slash G_\mu \to \LieA{g}^* \) is the momentum map for the \( G \)-action on the coadjoint orbit.
It is a straightforward calculation to verify that the momentum map is just the canonical embedding of the coadjoint orbit in \(\LieA{g}^*\).
Thus we are looking for group homomorphisms \( \rho_\mu \) that integrate the pairing with \( \mu \), that is,
\begin{equation}
\check{\rho}_\mu: \LieA{g}_\mu \to \R, \quad A \mapsto \dualPair{\mu}{A}.
\end{equation}
In summary, starting from a \( G \)-structure we can derive different geometric moduli spaces that arise from an infinite-dimensional symplectic reduction by performing the following steps:
\begin{enumerate}
	\item
		Classify all coadjoint orbits \( \CoAdOrbit_\mu \) of \( G \).
	\item
		For each \( \mu \in \LieA{g}^* \) in this classification, determine the stabilizer \( G_\mu \) and find an identification of \( G \slash G_\mu \) in terms of a more geometric object.
		In practice, this comes down to finding a transitive \( G \)-action on some space having stabilizer \( G_\mu \).
	\item
		Calculate \( \check{\rho}_\mu: \LieA{g}_\mu \to \R \) and integrate it to a Lie group homomorphism \( \rho_\mu: G_\mu \to \UGroup(1) \).
		This integration is usually possible only for certain values of \( \lambda \).
		These are the so-called prequantizable coadjoint orbits.
	\item
		Give a geometric interpretation of the associated bundle \( \FrameBundle_\mu M \times_{\rho_\mu} \UGroup(1) \), where \( \FrameBundle_\mu M \) is a given \( G_\mu \)-reduction of the \( G \)-frame bundle \( \FrameBundle M \). 
\end{enumerate}

In this subsection, we carry out this program for the coadjoint orbits of \( \SLGroup(2, \R) \). The low dimensionality of the spaces involved allows to explicitly determine the coadjoint orbits and the stabilizer subgroups \( G_\mu \). Even in this relatively simple example, the geometric structures involved turn out to be of special interest: we recover the Teichm\"uller moduli space as a symplectic reduction; another coadjoint orbit yields a hyperbolic cousin of the classical Teichm\"uller space and yet another orbit is related to Lagrangian distributions endowed with a density. Throughout this subsection, let \( (M, \sigma) \) be closed surface endowed with a symplectic form \( \sigma \) (which can, equivalently, be viewed as a volume form).
\begin{figure}
	\centering
	\begin{tikzpicture}[scale=1]
		\draw plot[domain=-2.35:2.35] ({-\x},{\x});
		\draw plot[domain=-2.35:2.35] ({\x},{\x});
         \begin{scope}
			\clip (-2.35,-2.35) rectangle (2.35,2.35);
	\foreach \lsqrt in {0.3,0.5,0.8,1.2,1.5,1.9}{
				\draw[lightgray] plot[domain=-1.5:1.5] ({\lsqrt*cosh(\x)},{\lsqrt*sinh(\x)});
				\draw[lightgray] plot[domain=-1.5:1.5] ({-\lsqrt*cosh(\x)},{\lsqrt*sinh(\x)});

				\draw[lightgray] plot[domain=-1.5:1.5] ({\lsqrt*sinh(\x)}, {\lsqrt*cosh(\x)});
				\draw[lightgray] plot[domain=-1.5:1.5] ({\lsqrt*sinh(\x)}, {-\lsqrt*cosh(\x)});
			}

			\pgfmathsetmacro{\lsqrt}{1.2}

			\draw plot[domain=-1.5:1.5] ({\lsqrt*cosh(\x)},{\lsqrt*sinh(\x)});
			\draw plot[domain=-1.5:1.5] ({-\lsqrt*cosh(\x)},{\lsqrt*sinh(\x)});

			\draw plot[domain=-1.5:1.5] ({\lsqrt*sinh(\x)}, {\lsqrt*cosh(\x)});
			\draw plot[domain=-1.5:1.5] ({\lsqrt*sinh(\x)}, {-\lsqrt*cosh(\x)});

			\node[draw, circle, fill, inner sep=1pt] at (0,0) {};
			\node[draw, circle, fill, inner sep=1pt, label=above:$N_e^1$] at (0,\lsqrt) {};
			\node[draw, circle, fill, inner sep=1pt, label=right:$N_h^1$] at (\lsqrt,0) {};
			\node[draw, circle, fill, inner sep=1pt, label=right:$N_p^+$] at (0.5,0.5) {};
			\node[draw, circle, fill, inner sep=1pt, label={[xshift=-0.2cm, yshift=0.1]below:$N_p^-$}] at (0.5,-0.5) {};
		\end{scope}
	\end{tikzpicture}
	\caption{Coadjoint orbits of \( \SLGroup(2, \R) \) projected onto the \( e_- \)- \( h \) plane.}
\end{figure}
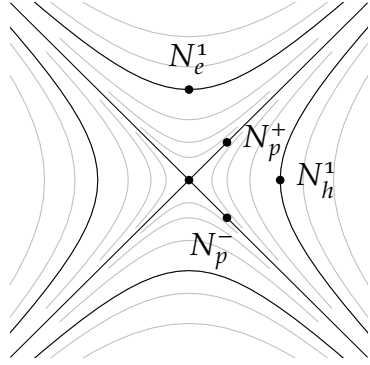

We begin with a systematic discussion of the coadjoint orbits of \( \SLGroup(2, R) \).
The Lie algebra \( \SLAlgebra(2, \R) \) consists of real \( 2 \times 2 \) matrices with vanishing trace.
We choose the following generators
\begin{equation}
    e_+ = \Matrix{1 & 0 \\ 0 & -1} \quad
    e_- = \Matrix{0 & 1 \\ 1 & 0} \quad
    h = \Matrix{0 & 1 \\ -1 & 0}
\end{equation}
subject to the commutation relations \( \commutator{h}{e_\pm} = \mp \, 2 e_\mp \) and \( \commutator{e_+}{e_-} = 2h \).
The Killing form \( \kappa(A, B) = \sfrac{1}{2} \tr(AB) \) is non-degenerate and thus yields an identification of \( \LieA{g} \) 
with \( \LieA{g}^* \). The generators form an orthonormal basis of \( \LieA{g} \) (and thus also of \( \LieA{g}^* \)) with signature \( (++-) \). The Killing form is \( \AdAction \)-invariant and so identifies the coadjoint action with the adjoint action. Every non-vanishing element \( A \in \SLAlgebra(2, \R) \) can be brought into one of the following four normal forms under the adjoint action:
\begin{equation}
    N_e^\lambda = \lambda \Matrix{0 & 1 \\ -1 & 0}, \quad
    N_h^\lambda = \lambda \Matrix{0 & 1 \\ 1 & 0}, \quad
    N_p^+ = \Matrix{0 & 1 \\ 0 & 0}, \quad
    N_p^- = \Matrix{0 & 0 \\ 1 & 0}.
\end{equation}

The orbit through \( N_e^\lambda \) is an elliptic hyperboloid (or two-sheeted hyperboloid). Positive \( \lambda \) parametrize the family of the upper sheets and negative \( \lambda \) correspond to the lower sheets.
The stabilizer of \( N_e^\lambda \) is independent of \( \lambda \) and is isomorphic to \( \SOGroup(2) \). The coadjoint orbit for 
\( \lambda = 1 \) is symplectomorphic to the reduced space \( J^{-1}_{\OGroup}(\symplMatrix) \slash \OGroup(2)_{\symplMatrix} = 
\SpGroup(2, \R) \slash \UGroup(1) \) discussed in the previous section (see~\eqref{eq:applications:kaehler:reducedSpacesModel} with \( n = 1 \)).
Sections of \( \FrameBundle M \times_{\SLGroup(2, \R)} (\SLGroup(2, \R) \slash \SOGroup(2))\) correspond to reductions of the 
\(\SLGroup(2, \R)\)-frame bundle to \( \SOGroup(2) \), \ie, Riemannian metrics \( g \) compatible with the prescribed volume form \( \sigma \).
To make this interpretation in terms of Riemannian metrics more explicit, we identify the coadjoint orbit \( \mathcal{O}_{N_e^\lambda} \isomorph \SLGroup(2, \R) \slash \SOGroup(2) \) with the space \( \posSSymSpace(2, \R) \) of symmetric positive definite matrices having determinant \( 1 \) via
\begin{equation}
	\mathcal{O}_{N_e^\lambda} \ni \AdAction_g N_e^\lambda 
		\mapsto g g^\T \in \posSSymSpace(2, \R), 
		\quad g \in \SLGroup(2, \R).
\end{equation}
Using this diffeomorphism, the coadjoint orbit symplectic form can be pushed forward to yield the following symplectic form on 
\( \posSSymSpace(2, \R) \):
\begin{equation}
\omega_B (C_1, C_2) = - \frac{1}{4} \tr(B^{-1} C_1 \, B^{-1}
 N_e^\lambda \, B^{-1} C_2),
\end{equation}
where \( B\in \posSSymSpace(2, \R) \), and \( C_1, C_2 \) are traceless symmetric matrices.

The orbit through \( N_h^\lambda \) is a hyperbolic hyperboloid (or one-sheeted hyperboloid). The stabilizer of \( N_h^\lambda \) is again independent of \( \lambda \), consists of elements of the form
\begin{equation}
	S = \begin{pmatrix} \pm \cosh r & \sinh r \\ \sinh r & \pm \cosh r 
	\end{pmatrix},
		\quad r \in \R,
\end{equation}
and hence is isomorphic to \( \SOGroup(1,1) \).
Sections of \( \FrameBundle M \times_{\SLGroup(2, \R)} (\SLGroup(2, \R) \slash \SOGroup(1,1)) \) are again reductions of the 
\( \SLGroup(2, \R) \)-frame bundle, this time, to \( \SOGroup(1,1) \).
Geometrically, these are semi- (or pseudo-, in another terminology) Riemannian metrics \( \eta \). Hence the space of semi-Riemannian metrics inducing the given volume form \( \sigma \) is an infinite-dimensional symplectic manifold.

The parabolic orbits, that is, the two light-cones, are orbits through \( N_p^+ \) and \( N_p^- \). As the notation suggests, \( N_p^+ \) corresponds to the upper cone. Its stabilizer \( P^+ \) consists of elements of the form
\begin{equation}
	S = \begin{pmatrix} \pm 1 & r \\ 0 & \pm 1 \end{pmatrix}
\end{equation}
for \( r \in \R \).
Similarly, the lower cone is the orbit through \( N_p^- \) and the elements of its stabilizer \( P_- \) are of the form
\begin{equation}
	S = \begin{pmatrix} \pm 1 & 0 \\ r & \pm 1 \end{pmatrix}
\end{equation}
for \( r \in \R \).

In order to illuminate the geometric structure that leads to a reduction to the stabilizer subgroups \( P^\pm \), it is actually helpful to consider the higher dimensional case.
Thus, let \( (M, \sigma) \) be, for the moment, a \( 2n \)-dimensional symplectic manifold. An \( n \)-dimensional distribution \( D \) on \( M \) yields a reduction of the symplectic frame bundle to the group consisting of elements of the form
\begin{equation}
	\Matrix{A & B \\ 0 & C},
\end{equation}
where \( A,C \in \GLGroup(n, \mathbb{R}) \) and \( B \) is an arbitrary \( n \times n \) matrix. Indeed, the reduced bundle consists only of frames \( u_m: \R^{2n} \to \TBundle_m M \) that map the last \( n \) standard vectors to vectors spanning \( D_m \).
If we would, instead, require that the first \( n \) vectors span \( D_m \), we obtain a reduction to elements of the form
\begin{equation}
	\Matrix{A & 0 \\ B & C}.
\end{equation}
If the distribution carries an additional structure, then this is reflected in further conditions on the matrix \( C \).
For example, a distribution endowed with a density \( \abs{\nu} \) leads to the condition \( \abs{\det C} = 1 \). We then say that the distribution is weighted. 

Specializing again to the case \( n = 1 \) at hand, we see that a reduction of the symplectic frame bundle to \( P^\pm \) corresponds to a weighted Lagrangian subbundle \( (L, \abs{\nu}) \) of \( \TBundle M \).
Thus the space of all weighted Lagrangian distributions carries two natural symplectic structures (depending on the point \( N_p^\pm \) chosen for the identification).

\begin{table}[tbp]
	\centering
	\footnotesize
	\newcolumntype{C}[1]{%
		>{\vrule height 2ex depth 1.5ex width 0pt\centering}%
			p{#1}%
		<{}}
	\begin{tabular}{C{2.35cm} C{3.4cm} C{3.6cm} C{3.4cm}}
		\toprule
			\( \mu \)
			& \( N_e^\lambda \)
			& \( N_h^\lambda \)
			& \( N_p^\pm \)
			\tabularnewline
		\midrule
			Type
			& elliptic
			& hyperbolic
			& parabolic
			\tabularnewline
			\( G_\mu \)
			& \( \SOGroup(2) \)
			& \( \SOGroup(1,1) \)
			& \( P^\pm \)
			\tabularnewline
			Quantizable
			& \( \lambda \in \Z \)
			& always
			& always
			\tabularnewline
			\( \rho_\mu \)
			& \( \rho_{N_e^\lambda} \)
			& \( \rho_{N_h^\lambda}, \sqrt{\rho_{N_h^{2\lambda}}} \)
			& \( 1, \rho_{N_p^\pm} \)
			\tabularnewline
		\midrule
			\( \SectionSpaceAbb{F}_\mu \)
			& Riemannian metric \( g \)
			& semi-Riemannian metric \( \eta \)
			& weighted Lagrangian distribution \( (L, \abs{\nu}) \)
			\tabularnewline
			\( P_\mu \)
			& \( \KBundle^{-1}_g M \)
			& \( \KBundle_\eta M, \KBundle^{\frac{1}{2}}_\eta M \)
			& \( \OrientationBundle L \)
			\tabularnewline
			Characteristic class
			& \( \chernClass_1(M) \)
			& ?
			& \( \stiefelWhitneyClass_1(L) \)
			\tabularnewline
			Moduli space
			& Teichm\"uller space
			& hyperbolic Teichm\"uller space
			& ?
			\tabularnewline
		\bottomrule
	\end{tabular}
\caption{\footnotesize Overview over the properties of the coadjoint orbits.
The information concerning the geometric structures induced by the coadjoint orbits will become clear after the statement of \cref{prop:applications:coadjontOrbitsSL:momentumMapsInfinite} and the comments following it.
Here, \( \SectionSpaceAbb{F}_\mu \) denotes the space of sections of \( \FrameBundle M \times_{\SLGroup(2, \R)}(\SLGroup(2, \R)\slash G_\mu)\).
Furthermore, \( P_\mu \) is the associated circle bundle of the reduced \( G_\mu \)-frame bundle using the group homomorphism \( \rho_\mu \).
}%
	\label{tab:applications:coadSL:overview}
\end{table}
In order to determine the prequantizations of the orbits, we have to calculate
\begin{equation}
\check{\rho}_\mu: \LieA{g}_\mu \to \R, \quad A \mapsto \dualPair{\mu}{A}
\end{equation}
for each orbit. Note that we always have \( \LieA{g}_\mu = \R\mu \) since the stabilizers are \( 1 \)-dimensional.
We obtain
\begin{align}
    \check{\rho}_{N_e^\lambda} (a \cdot N_e^\lambda) &= - a \lambda, \\
    \check{\rho}_{N_h^\lambda} (a \cdot N_h^\lambda) &= a \lambda, \\
    \check{\rho}_{N_p^\pm} (a \cdot N_p^\pm) &= 0.
\end{align}
If \( \lambda \) satisfies the prequantization condition \( \lambda \in \Z \), then \( \check{\rho}_{N_e^\lambda} \) integrates to the Lie group homomorphism
\begin{equation}
	\rho_{N_e^\lambda}: \SOGroup(2) \to \UGroup(1),
	\quad 
	\Matrix{\cos \vartheta & \sin \vartheta \\ - \sin \vartheta & 
	\cos \vartheta} \mapsto e^{-\vartheta \lambda \I},
\end{equation}
Note that for \( \lambda = 1 \), this is just the inverse of the canonical identification of \( \SOGroup(2) \) with \( \UGroup(1) \).
Thus, the associated bundle \( \FrameBundle_{\SOGroup(2)} M \times_{\rho_{N_e^1}} \UGroup(1) \) is the anti-canonical bundle \( \KBundle^{-1}_g M \) constructed from the Riemannian metric \( g \).
Other values of \( \lambda \in \Z \) correspond to higher powers \( \KBundle^{-\lambda}_g M \).

In contrast, \( \SOGroup(1,1) \) is not connected and \( \check{\rho}_{N_h^\lambda} \) admits two types of integrating group homomorphisms for all \( \lambda \in \R \)
\begin{align}
	\rho_{N_h^\lambda}: \SOGroup(1,1) \to \UGroup(1), 
		&\quad \Matrix{\pm \cosh r & \sinh r \\ \sinh r & 
		\pm \cosh r} \mapsto e^{\pm r \lambda \I},
	\\
	\sqrt{\rho_{N_h^{2\lambda}}}: \SOGroup(1,1) \to \UGroup(1), 
		&\quad \Matrix{\pm \cosh r & \sinh r \\ \sinh r & \pm \cosh r} 
\mapsto 
		\begin{cases}
			e^{r \lambda \I} & \text{for } + \\
			- e^{-r \lambda \I} & \text{for } -
		\end{cases}
\end{align}
As the notation suggests, we have \( \sqrt{\rho_{N_h^{2\lambda}}}^2 = \rho_{N_h^{2\lambda}} \). But, of course, also \( \rho_{N_h^{\lambda}}^2 = \rho_{N_h^{2\lambda}} \) holds, so that \( \sqrt{\rho_{N_h^{2\lambda}}} \) is the \enquote{non-canonical square root}. In analogy to the previous case, the associated bundle \( \FrameBundle_{\SOGroup(1,1)} M \times_{\rho_{N_h^2}} \UGroup(1) \) is the canonical bundle \( \KBundle_\eta M \) induced by the semi-Riemannian metric \( \eta \).
The bundles \( \rho_{N_h^1} \) and \( \sqrt{\rho_{N_h^2}} \) yield the 
two inequivalent square roots \( \KBundle^{+\frac{1}{2}}_\eta M \) and 
\( \KBundle^{-\frac{1}{2}}_\eta M \) of \( \KBundle_\eta M \). Thus, in contrast to the Riemannian case, also the square roots of the canonical bundle are of particular importance based on the coadjoint orbit geometry.
Analogously, for \( \lambda \in \R \), we use the notations 
\( \KBundle^{+\frac{\lambda}{2}}_\eta M \) and 
\( \KBundle^{-\frac{\lambda}{2}}_\eta M \) for the associated bundles 
\( \FrameBundle_{\SOGroup(1,1)} M \times_{\rho_{N_h^\lambda}} \UGroup(1) \) and \( \FrameBundle_{\SOGroup(1,1)} M \times_{\sqrt{\rho_{N_h^{2\lambda}}}} \UGroup(1) \), respectively.

Since \( \check{\rho}_{N_p^\pm} = 0 \), the integrating group 
homomorphism \( \rho_{N_p^\pm}: P^\pm \to \UGroup(1) \) may be chosen 
to be the trivial homomorphism. However, \( P^\pm \) is also not connected 
and thus there is a second non-trivial integrating homomorphisms 
\( \rho_{N_p^\pm} \), which sends \( \smallMatrix{-1 & r \\ 0 & - 1} \), 
respectively \( \smallMatrix{-1 & 0 \\ r & - 1} \), to 
\( -1 \in \UGroup(1) \).
Note that the latter homomorphism factors through a homomorphism with values in \( \Z_2 \).

In order to illuminate the significance of the associated \( \Z_2 \)-bundle \( \FrameBundle_{P^\pm}M\times_{\rho_{N_p^\pm}} \Z_2 \), it is again helpful to leave the safe \( 2 \)-dimensional harbor and move to higher dimensional terrain.
Thus, let \( (M, \sigma) \) be, temporarily, a \( 2n \)-dimensional symplectic manifold. Recall that a weighted Lagrangian distribution on \( M \) yields a reduction of the symplectic frame bundle to the structure group consisting of symplectic block matrices \( \smallMatrix{A & B \\ 0 & C} \) with \( C \in \SLGroup^\pm(n, \R) \defeq \set{C \in \SLGroup(n, \R) \given \det C = \pm 1} \).
The higher dimensional analogue of \( \rho_{N_p^+} \) is the group homomorphism
\begin{equation}
	\rho_{N_p^+}: \Matrix{A & B \\ 0 & C} \mapsto \det C \in \Z_2.
\end{equation}
Thus a global section of the associated flat bundle \( \FrameBundle M \times_{\rho_{N_p^+}} \Z_2 \) corresponds to a consistent choice of orientation of the Lagrangian subspaces. We hence refer to this associated \( \Z_2 \)-bundle as the \emphDef{orientation bundle} \( \OrientationBundle L \) of the Lagrangian distribution \( L \).
The orientation bundle is trivial if and only if the first Stiefel--Whitney class \( \stiefelWhitneyClass_1(L) \) of the Lagrangian distribution vanishes. 

We have thus assembled all ingredients to apply \cref{prop::applications:reductionStructureGroup:momentumMap} (see also \cref{tab:applications:coadSL:overview} for a summary).
\begin{thm}
\label{prop:applications:coadjontOrbitsSL:momentumMapsInfinite}
Let \(M\) be a closed surface endowed with a symplectic form \( \sigma \).
Then the following hold:
\begin{enumerate}
\item
The space \(\MetricSpace_\sigma(M)\) of Riemannian metrics on \(M\) compatible with \( \sigma \) is endowed  with the symplectic form
\begin{equation}\label{eq:applications:coadSL:symplecticFormOnMetrics}
\Omega_g (h_1, h_2) = - \frac{\lambda}{4}\int_M\tr(g^{-1} h_1\cdot 
g^{-1}\sigma\cdot g^{-1}h_2) \, \sigma.
\end{equation}
If \( \lambda \in \Z \), then the action of \( \DiffGroup_\sigma (M) \) has a group-valued momentum map given by
\begin{equation}\label{eq:applications:coadSL:momentumMapForMetrics}
\MetricSpace_\sigma (M) \to \csCohomology^2(M, \UGroup(1)), 
\quad g \mapsto \KBundle_g M.
\end{equation}
\item
The space \( \MetricSpace^{+-}_\sigma (M) \) of semi-Riemannian metrics with signature \((+-)\) and compatible with \( \sigma \)  carries a symplectic form defined as in the Riemannian case by~\eqref{eq:applications:coadSL:symplecticFormOnMetrics}.
For all \( \lambda \in \R \), the action of \( \DiffGroup_\sigma (M) \) 
has two group-valued momentum maps given by
\begin{align}
\MetricSpace^{+-}_\sigma (M) &\to \csCohomology^2(M, \UGroup(1)), 
\quad \eta \mapsto \KBundle^{+\frac{\lambda}{2}}_\eta M
\intertext{and}
\MetricSpace^{+-}_\sigma (M) &\to \csCohomology^2(M, \UGroup(1)),
\quad \eta \mapsto \KBundle^{-\frac{\lambda}{2}}_\eta M.
\end{align}
\item
The space of weighted Lagrangian distributions is a symplectic manifold and the group-valued momentum map for the action of 
\( \DiffGroup_\sigma(M) \) is the assignment of the orientation bundle \( \OrientationBundle L \) to each Lagrangian distribution \( L \).
			\qedhere
\end{enumerate}
\end{thm}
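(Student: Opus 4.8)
The statement follows by specializing \cref{prop::applications:reductionStructureGroup:momentumMap} --- more precisely, its symplectic counterpart in the spirit of \cref{thm::symplecticFibreBundle:momentumMapSymplectic}, since a closed surface has $n = 1$, so that the Liouville volume form coincides with $\sigma$ and the factor $h_\sigma^{n-1} = h_\sigma^{0}$ occurring there is trivial --- to each of the three families of coadjoint orbits of $\SLGroup(2, \R)$. The plan is to feed into that proposition the data assembled above: the coadjoint orbit $\CoAdOrbit_\mu \isomorph \SLGroup(2, \R) \slash G_\mu$ with its Kostant--Kirillov--Souriau form~\eqref{orbit_symplectic_form} as the symplectic model fiber $\FibreBundleModel{F}$, the canonical inclusion $\CoAdOrbit_\mu \hookrightarrow \SLAlgebra(2, \R)^* \isomorph \SLAlgebra(2, \R)$ as the equivariant fiber momentum map $\FibreBundleModel{J}$, and the homomorphisms $\rho_\mu$ computed above as the prequantizations of $\CoAdOrbit_\mu$.

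First I would record the symplectic side. Each coadjoint orbit carries the $\SLGroup(2, \R)$-invariant form~\eqref{orbit_symplectic_form}, so \cref{prop::symplecticStructureOnSectionsOfSymplecticFibreBundle} applies and shows that the space of sections of $\FrameBundle M \times_{\SLGroup(2,\R)} \CoAdOrbit_\mu$ is a symplectic Fr\'echet manifold; by the discussion preceding the theorem, this section space is the space of $\sigma$-compatible Riemannian metrics, of $\sigma$-compatible semi-Riemannian metrics of signature $(+-)$, or of weighted Lagrangian distributions, according to whether $\mu$ lies on an elliptic, hyperbolic, or parabolic orbit. To obtain the explicit formula~\eqref{eq:applications:coadSL:symplecticFormOnMetrics} in the elliptic case (the hyperbolic case being identical after the analogous identification of $\CoAdOrbit_{N_h^\lambda}$ with symmetric matrices of signature $(+-)$), I would substitute the push-forward of the Kostant--Kirillov--Souriau form onto $\posSSymSpace(2, \R)$ computed above into the pointwise description~\eqref{eq:symplecticFormSectionSpace:symplecticForm:explicit} of $\Omega = \mu \hatProduct \omega$ with $\mu = \sigma$.

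Next, the momentum map. Since $\FrameBundle M$ is natural, there is a lift $\chi$ of $\DiffGroup_\sigma(M)$ to $\AutGroup_\sigma(\FrameBundle M)$, and \cref{prop::applications:reductionStructureGroup:momentumMap}(3) gives, for any principal connection $\Gamma$ reducible to each reduced bundle $Q_\phi$, the $\DiffGroup_\sigma(M)$-momentum map
\[
	\phi \mapsto \equivClass*{Q_\phi \times_{\rho_\mu^{-1}} \UGroup(1),\, (\rho_\mu^{-1})_* \Gamma} + \iota \circ \tau_{\Gamma, \chi}^* \circ J_*(\phi).
\]
The associated bundle in the first term was identified in the preceding discussion with (a power of) the canonical circle bundle $\KBundle_g M$ of the metric (with the connection induced by the Levi-Civita connection), with the square roots $\KBundle^{\pm \lambda/2}_\eta M$ of the canonical bundle of $\eta$, and with the orientation bundle $\OrientationBundle L$, for the elliptic, hyperbolic, and parabolic orbits respectively --- with the quantization condition $\lambda \in \Z$ entering in the elliptic case, every $\lambda$ admissible otherwise, and two choices of $\rho_\mu$ (hence two momentum maps) in the hyperbolic case. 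It remains to see that the correction term $\iota \circ \tau_{\Gamma,\chi}^* \circ J_*(\phi)$ vanishes. By \cref{rem:symplecticFibreBundle:tauMapVolume}, for a torsion-free $\Gamma$ this term equals $-\iota\equivClass{c(\nabla J_*(\phi))}$; in the elliptic (resp. hyperbolic) case I would take $\Gamma$ to be the Levi-Civita connection of $g$ (resp. of $\eta$), which is torsion-free and reduces to $Q_\phi$, and then $J_*(\phi)$, being represented in adapted frames by the constant $G_\mu$-invariant matrix $N_e^\lambda$ (resp. $N_h^\lambda$), is parallel, so $\nabla J_*(\phi) = 0$; in the parabolic case one first produces, by a partition-of-unity argument (torsion-freeness, $\nabla \sigma = 0$, and preservation of the line field $L$ being affine conditions on the connection, hence stable under convex combinations), a torsion-free $\sigma$-preserving connection reducing to the $P^\pm$-subbundle $Q_\phi$, with respect to which $J_*(\phi) = \equivClass{p, N_p^\pm}$ is again parallel. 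Since \cref{prop::applications:reductionStructureGroup:momentumMap} guarantees that $\SectionSpaceAbb{J}_{\DiffGroup}$ is independent of the chosen connection, verifying the vanishing for this one choice of $\Gamma$ completes the argument.

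The main obstacle is computational rather than conceptual: it consists in the explicit identification of the associated bundles $\FrameBundle_{G_\mu} M \times_{\rho_\mu} \UGroup(1)$ with the powers and square roots of the (anti-)canonical bundle and with the orientation bundle --- which requires carefully unwinding the reduction together with the precise form of each $\rho_\mu$ in~\eqref{eq::Kaehler:characterPrequantization} and the list of homomorphisms following it --- together with the explicit evaluation of the push-forward orbit symplectic form leading to~\eqref{eq:applications:coadSL:symplecticFormOnMetrics}. A secondary point requiring care is the parabolic case: there $\OrientationBundle L$, a priori a $\Z_2$-bundle, must be viewed inside $\csCohomology^2(M, \UGroup(1))$ via $\Z_2 \subset \UGroup(1)$, so that its curvature vanishes and the momentum map records only the discrete class $\stiefelWhitneyClass_1(L)$; and one should confirm that the adapted torsion-free connection used above indeed exists.
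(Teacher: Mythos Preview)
Your proposal is correct and follows essentially the same route as the paper: the theorem is obtained by feeding the coadjoint-orbit data assembled in the preceding discussion into \cref{prop::applications:reductionStructureGroup:momentumMap}, exactly as the paper indicates with the sentence ``We have thus assembled all ingredients to apply \cref{prop::applications:reductionStructureGroup:momentumMap}'' immediately before the statement. You are in fact more explicit than the paper about why the correction term \( \iota \circ \tau_{\Gamma,\chi}^* \circ J_*(\phi) \) vanishes (using the Levi--Civita connection and parallelism of \( J_*(\phi) \), mirroring the K\"ahler argument in Section~4.2.2), and you correctly flag the extra care needed in the parabolic case; the paper simply leaves these points implicit.
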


The curvature of the canonical bundle \( \KBundle_g M \) is given by \( - S_I \sigma \), where \( S_I \) denotes the scalar curvature. 
Hence, symplectic reduction at the subset \( \curv^{-1}(\sigma) \) of all bundles with constant curvature \( \sigma \) yields the Riemann moduli space (for \( \lambda = 1 \)):
\begin{equation}
\SectionSpaceAbb{J}_{\DiffGroup}^{-1} (\curv^{-1}(\sigma)) \slash 
\DiffGroup_\sigma(M) =
\set{I \in \SectionSpaceAbb{I} \given S_I = -1} \slash \DiffGroup_\sigma(M).
\end{equation}
Instead of taking the quotient with respect to \( \DiffGroup_\sigma(M) \), we could restrict attention to the connected component of the identity \( \DiffGroup_\sigma(M)^\circ \). The action of \( \DiffGroup_\sigma(M)^\circ \) is free and its momentum map is given by the same formula~\eqref{eq:applications:coadSL:momentumMapForMetrics}.
Thus, the reduced space coincides with \( \set{I \in \SectionSpaceAbb{I} \given S_I = -1} \slash \DiffGroup_\sigma(M)^\circ \) and hence yields the Teichm\"uller space.
Formula~\eqref{eq:applications:coadSL:symplecticFormOnMetrics} for the symplectic form on the space of Riemannian metrics shows that the reduced symplectic form is proportional to the Weil--Petersson symplectic form on the Teichm\"uller space. However, in contrast to classical symplectic reduction, we take the inverse image of a set and not just of a point.
We show now that \( \DiffGroup_\sigma(M) \) acts (infinitesimally) transitively on \( \curv^{-1}(\sigma) \), so that the reduction is a \emph{symplectic orbit reduction} \parencite[Section~6.3]{OrtegaRatiu2003}.
Suppose that \( \sigma \) is normalized to have total volume \( 1 \).
Then there exists a differential character \( h_\sigma \in \csCohomology^2(M, \UGroup(1)) \) with curvature \( \sigma \).
Consider the flux homomorphism\footnote{\( \flux \) is actually only a cocycle and not necessarily a group homomorphism on \(\DiffGroup_\sigma(M) \). Only when restricted to \( \DiffGroup_\sigma(M) \intersect \DiffGroup(M)^\circ \) is the flux a bona-fide group homomorphism.} relative to \( h_\sigma \)
\begin{equation}
\flux_{h_\sigma}: \DiffGroup_\sigma(M) \to \sCohomology^1(M, \UGroup(1)), 
		\quad \phi \mapsto (\phi^{-1})^*h_\sigma - h_\sigma.
\end{equation}
By \cref{prop::differentialCharacter:pullback}, the corresponding Lie algebra homomorphism is 
\begin{equation}
\flux_\sigma: \VectorFieldSpace_\sigma(M) \to \sCohomology^1(M, \R), 
		\quad X \mapsto \equivClass{X \contr \sigma},
\end{equation}
which is clearly surjective. Thus \( \DiffGroup_\sigma(M) \) acts infinitesimally transitive on \( \curv^{-1}(\sigma) \).
In summary, we obtain the following.
\begin{thm}
The Riemann moduli space
\begin{equation}
\set{I \in \SectionSpaceAbb{I} \given S_I = -1} \slash \DiffGroup_\sigma(M)
\end{equation}
and the Teichm\"uller space 
\begin{equation}
\set{I \in \SectionSpaceAbb{I} \given S_I = -1} \slash 
\DiffGroup_\sigma(M)^\circ  
\end{equation}
are symplectic orbit reduced spaces.
\end{thm}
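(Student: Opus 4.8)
The plan is to realize both quotients as orbit reduced spaces for the group-valued momentum map $\SectionSpaceAbb{J}_{\DiffGroup}\colon \SectionSpaceAbb{I} \to \csCohomology^2(M, \UGroup(1))$, $I \mapsto \KBundle_I M$, of \cref{prop:applications:coadjontOrbitsSL:momentumMapsInfinite} (with $n=1$, so the factor $h_\sigma^{n-1}$ is trivial). First I would normalize $\sigma$ to total volume $1$, fix a differential character $h_\sigma \in \csCohomology^2(M, \UGroup(1))$ with $\curv h_\sigma = \sigma$, and record that $\SectionSpaceAbb{J}_{\DiffGroup}$ is equivariant for the coconjugation action $(\phi, h) \mapsto (\phi^{-1})^* h$ of $\DiffGroup_\sigma(M)$ on $\csCohomology^2(M, \UGroup(1))$ (\cref{ex:dualPairLieGroups:volumePreservingDiffeos:coconjugation}). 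Since $\curv \KBundle_I M = -S_I \sigma$, the preimage $\SectionSpaceAbb{J}_{\DiffGroup}^{-1}(\curv^{-1}(\sigma))$ is exactly $\set{I \in \SectionSpaceAbb{I} \given S_I = -1}$. The whole task is therefore to show that $\curv^{-1}(\sigma)$ is a \emph{single} coconjugation orbit $\CoAdOrbit \defeq \DiffGroup_\sigma(M) \cdot h_\sigma$, so that the preimage of a subset becomes the preimage of one orbit.

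The key step is the identity $\curv^{-1}(\sigma) = \CoAdOrbit$. By the defining exact sequence~\eqref{eq::differentialCharacter:definingDiagram}, the fibre $\curv^{-1}(\sigma)$ is a torsor over the group $\sCohomology^1(M, \UGroup(1))$ of flat circle bundles, which for a closed orientable surface is a torus; hence $\curv^{-1}(\sigma)$ is connected and has tangent space $\clDiffFormSpace^1(M) \slash \dif \DiffFormSpace^0(M) \isomorph \sCohomology^1(M, \R)$ at each of its points. By \cref{prop::differentialCharacter:pullback}, the infinitesimal generator of the coconjugation action at any $h \in \curv^{-1}(\sigma)$ sends $X \in \VectorFieldSpace_\sigma(M)$ to $-\equivClass{X \contr \curv h} = -\equivClass{X \contr \sigma}$, i.e.\ it is (minus) the flux map $\flux_\sigma\colon \VectorFieldSpace_\sigma(M) \to \sCohomology^1(M, \R)$, which is surjective. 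Thus the infinitesimal action is transitive at every point of $\curv^{-1}(\sigma)$; as $\sCohomology^1(M, \R)$ is finite dimensional the relevant derivative admits a continuous linear section, so the orbit map is a submersion and every orbit inside $\curv^{-1}(\sigma)$ is open. Connectedness then forces $\CoAdOrbit = \curv^{-1}(\sigma)$, whence $\set{I \given S_I = -1} = \SectionSpaceAbb{J}_{\DiffGroup}^{-1}(\CoAdOrbit)$.

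It remains to read off the two assertions. By the (possibly singular) infinite-dimensional symplectic \emph{orbit} reduction obtained by combining the point-reduction techniques of \parencite{DiezThesis} with the orbit-reduction strategy of \parencite[Section~6.3]{OrtegaRatiu2003}, the quotient $\SectionSpaceAbb{J}_{\DiffGroup}^{-1}(\CoAdOrbit) \slash \DiffGroup_\sigma(M) = \set{I \given S_I = -1} \slash \DiffGroup_\sigma(M)$ is by definition a symplectic orbit reduced space, which is the first claim. For the second, I restrict to $\DiffGroup_\sigma(M)^\circ$: its Lie algebra is again $\VectorFieldSpace_\sigma(M)$, so the same flux computation shows $\DiffGroup_\sigma(M)^\circ \cdot h_\sigma$ is still open in $\curv^{-1}(\sigma)$ and hence, by connectedness, all of it; the momentum map for the restricted action is given by the same formula by \cref{prop::momentumMap:forSubgroups}. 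Since an isometry of a constant-curvature metric that is isotopic to the identity is the identity, the $\DiffGroup_\sigma(M)^\circ$-action on $\set{S_I = -1}$ is free, so the orbit reduced space is a genuine symplectic manifold — the Teichm\"uller space — whose reduced form is, by~\eqref{eq:applications:coadSL:symplecticFormOnMetrics}, proportional to the Weil--Petersson form.

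The main obstacle is foundational rather than computational: as announced in the introduction, a general theory of singular symplectic \emph{orbit} reduction in infinite dimensions is not yet available, so the phrase ``is a symplectic orbit reduced space'' rests on that forthcoming development; the accompanying technical points to be checked are that the orbit map $\DiffGroup_\sigma(M) \to \curv^{-1}(\sigma)$ is honestly a submersion in the Fr\'echet category (validating the open-orbit argument) and the usual freeness and properness facts needed for the quotients to be well behaved.
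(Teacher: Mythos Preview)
Your proposal is correct and follows essentially the same route as the paper: identify the constant-scalar-curvature locus as $\SectionSpaceAbb{J}_{\DiffGroup}^{-1}(\curv^{-1}(\sigma))$, compute the infinitesimal coconjugation action via the flux map $X \mapsto \equivClass{X \contr \sigma}$, observe its surjectivity onto $\sCohomology^1(M,\R)$, and invoke orbit reduction. The paper stops at \emph{infinitesimal} transitivity and declares this sufficient for orbit reduction; you go a step further and argue that $\curv^{-1}(\sigma)$ is genuinely a single orbit, using that it is a torsor over the torus $\sCohomology^1(M,\UGroup(1))$ (hence connected) together with an open-orbit argument --- this is a welcome refinement the paper leaves implicit. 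Your explicit treatment of freeness for $\DiffGroup_\sigma(M)^\circ$ and your candid acknowledgment of the foundational gap concerning infinite-dimensional orbit reduction also match the paper's own caveats.
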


An analogous discussion for the coadjoint orbit \( \SLGroup(2, \R) \slash \SOGroup(1,1) \) yields a hyperbolic version of the Teichmüller space.
Instead of Riemannian metrics, the moduli space then consists of semi-Riemannian metrics with prescribed curvature modulo symplectomorphisms.
It also carries a symplectic form analogous to the one induced by the Weil--Petersson metric, and it is a symplectic orbit reduced space.  
Formally, we just have to replace \enquote*{elliptic} by \enquote*{hyperbolic}, \enquote*{Riemannian} by \enquote*{semi-Riemannian} 
and \enquote*{complex structure} by \enquote*{para-complex structure}.
The analysis of hyperbolic operators is, of course, more delicate than elliptic operators and thus it is not clear that the hyperbolic 
Teichm\"uller space is indeed a smooth manifold.
From a physics point of view, elements of the hyperbolic Teichm\"uller space parametrize equivalence classes of solutions of Einstein's equation in two dimensions.
In the physics literature \parencite{Strobl2000} one finds arguments that such moduli spaces of semi-Riemannian metrics with prescribed curvature are in fact finite dimensional, similar to the Riemannian case.
We are not aware of any rigorous mathematical proof of this statement.

\subsection{Action of the quantomorphism group}
Let \( (M, \omega) \) be a compact $2n$-dimensional symplectic manifold and let \( \mu_\omega = \omega^n \slash n! \) be the Liouville volume form.
Assume that \( \omega \) has integral periods so that there exists a prequantum circle bundle \( \pi: P \to M \) with connection \( \Gamma \) having curvature \( \omega \).
As the fiber model we take \( \FibreBundleModel{F} = \C \) with the usual symplectic structure \( \omega_\C \) endowed with \( k \)-times the natural \( \UGroup(1) \)-action for some \( k \in \N \).
The associated line bundle \( F^k = P \times_{k \cdot \UGroup(1)} \C \) is the \( k \)-th tensor product of the line bundle \( F^1 \).
As in \cref{sec:momentum_map_for_automorphism_group}, the connection \( \Gamma \) in \( P \) and the symplectic structure \( \omega_\C \) on \( \C \) combine to an extension \( \hat{\omega}^\Gamma_{\C} \in \DiffFormSpace^2(F^k) \) of the induced fiber symplectic structure on \( F^k \). 

\Cref{thm::symplecticFibreBundle:momentumMapSymplectic} determines the momentum map of the action on \( \sSectionSpace(F^k) \) of the group \( \AutGroup_\omega(P) \) of automorphisms of \( P \) that preserve the symplectic structure on the base.
In this section, we put the focus on the subgroup \( \AutGroup_\Gamma(P) \subseteq \AutGroup_\omega(P) \) consisting of automorphisms that even preserve the connection \( \Gamma \).
Due to its importance in the theory of geometric quantization, \( \AutGroup_\Gamma(P) \) is often called the \emphDef{quantomorphism group} of \( (P, \Gamma) \).
Recall that the Lie algebra \( \AutAlgebra_\Gamma(P) \) of \( \AutGroup_\Gamma(P) \) is identified with the Poisson algebra \( \sFunctionSpace(M) \) using the Koszul prescription
\begin{equation}
	\sFunctionSpace(M) \ni f \mapsto X^\Gamma_f + \pi^* f \difp_\vartheta 
	\in \AutAlgebra_\Gamma(P),
\end{equation}
where \( \difp_\vartheta \) is the canonical vector field along the fiber and \( X^\Gamma_f \) is the horizontal lift of the Hamiltonian vector field \( X_f \).
Accordingly, it is natural to choose \( \AutAlgebra_\Gamma(P)^* = \DiffFormSpace^{2n}(M) \) as the dual of \( \AutAlgebra_\Gamma(P) \).
On the level of vector spaces, the Lie algebra \( \AutAlgebra_\omega(P) \) of \( \AutGroup_\omega(P) \) is identified with \( \sFunctionSpace(M) \times \VectorFieldSpace_\omega(M) \).
Hence, a natural choice for the dual group of \( \AutGroup_\omega(P) \) is the Abelian Lie group \( \AutGroup_\omega(P)^* = \sFunctionSpace(M) \times \csCohomology^{2n}(M, \UGroup(1)) \), whose Lie algebra is \( \sFunctionSpace(M) \times \DiffFormSpace^{2n - 1}(M) \slash 
	\dif \DiffFormSpace^{2n-2}(M) \).
Under the above identification, the inclusion of \( \AutAlgebra_\Gamma(P) \) into \( \AutAlgebra_\omega(P) \) is given by
\begin{equation}
	\sFunctionSpace(M) \ni f \mapsto (f, X_f) \in \sFunctionSpace(M) \times 
	\VectorFieldSpace_\omega(M).
\end{equation}
The dual projection, with respect to the pairings~\eqref{eq::diffAction:dualPairOSymplPresVectorFields} and~\eqref{eq::diffAction:dualPairOfGaugeGroup}, has the form
\begin{equation}
	\sFunctionSpace(M) \times \DiffFormSpace^{2n - 1}(M) \slash 
	\dif \DiffFormSpace^{2n-2}(M) \ni (g, \equivClass{\alpha}) 
	\mapsto g \mu_\omega - \frac{1}{(n-1)!} \dif \alpha \in \DiffFormSpace^{2n}(M)
\end{equation}
and lifts to the dual group \( \AutGroup_\omega(P)^* \) as follows:
\begin{equation}
	\sFunctionSpace(M) \times \csCohomology^{2n}(M, \UGroup(1)) \ni (g, h) 
	\mapsto g \mu_\omega - \frac{1}{(n-1)!} \curv h \in \DiffFormSpace^{2n}(M).
\end{equation}
Clearly, the momentum map \( \SectionMapAbb{J}_{\AutGroup_\Gamma} \) for the quantomorphism group is the composition of this \enquote{projection} with the momentum map for the automorphism group.
Using \cref{thm::symplecticFibreBundle:momentumMapSymplectic}, we thus obtain
\begin{equation}
 	\SectionMapAbb{J}_{\AutGroup_\Gamma}: \sSectionSpace(F^k) \to   
	\DiffFormSpace^{2n}(M), \quad \phi \mapsto - k \norm{\phi}^2 \mu_\omega + 
	\frac{1}{(n-1)!} \phi^* \hat{\omega}^\Gamma_\C \wedge \omega^{n-1}.
\end{equation}
Using the identity \( \phi^* \hat{\omega}^\Gamma_\C = \omega_\C (\nabla^\Gamma \phi, \nabla^\Gamma \phi) + (J_* \phi) \, \curv \Gamma \) of \parencite[Lemma~13]{Donaldson2003}, we can rewrite the momentum map as follows:
\begin{equation}\begin{split}
	\SectionMapAbb{J}_{\AutGroup_\Gamma}(\phi) 
		&= - k \norm{\phi}^2 \mu_\omega + \frac{1}{(n-1)!} 
		\Bigl(\omega_\C (\nabla^\Gamma \phi, \nabla^\Gamma \phi) - 
		k \norm{\phi}^2 \omega\Bigr) \wedge \omega^{n-1}
		\\
		&= - k (n + 1 ) \norm{\phi}^2 \mu_\omega + 
		\frac{\I}{2} \nabla^\Gamma \phi \wedge 
		\nabla^\Gamma \bar{\phi} \wedge \frac{\omega^{n-1}}{(n-1)!} \, .
\end{split}\end{equation}
Thus, in summary, we obtain the following.
\begin{thm}
	Let \( (M, \omega) \) be a compact $2n$-dimensional symplectic manifold.
	Assume that \( \omega \) has integral periods so that there exists a prequantum circle bundle \( \pi: P \to M \) with connection \( \Gamma \) having curvature \( \omega \).
	Then the space \( \sSectionSpace(F^k) \) of smooth sections of the associated line bundle \( F^k = P \times_{k \cdot \UGroup(1)} \C \) carries a natural symplectic structure \( \Omega = \mu_\omega \hatProduct \omega_\C \).
	Moreover, the quantomorphism group \( \AutGroup_\Gamma(P) \) of \( P \) acts symplectically on \( \sSectionSpace(F^k) \) and has a classical momentum map \( \SectionMapAbb{J}_{\AutGroup_\Gamma}: \sSectionSpace(F^k) \to \DiffFormSpace^{2n}(M) \) given by
	\begin{equation}
		\SectionMapAbb{J}_{\AutGroup_\Gamma}(\phi)
			= - k (n + 1 ) \norm{\phi}^2 \mu_\omega + 
		\frac{\I}{2} \nabla^\Gamma \phi \wedge 
		\nabla^\Gamma \bar{\phi} \wedge \frac{\omega^{n-1}}{(n-1)!} \, .
			\qedhere
	\end{equation}
\end{thm}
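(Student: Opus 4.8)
The plan is to obtain the quantomorphism momentum map $\SectionMapAbb{J}_{\AutGroup_\Gamma}$ by restricting, via \cref{prop::momentumMap:forSubgroups}, the group-valued momentum map of \cref{thm::symplecticFibreBundle:momentumMapSymplectic} for the full group $\AutGroup_\omega(P)$ to the subgroup $\AutGroup_\Gamma(P)$. First I would assemble the fiberwise data needed to apply \cref{thm::symplecticFibreBundle:momentumMapSymplectic} with $\sigma = \omega$: the typical fiber is $(\C, \omega_\C)$ with the $k$-fold $\UGroup(1)$-action, a Hamiltonian $\UGroup(1)$-manifold whose momentum map is $\FibreBundleModel{J}(z) = -k\norm{z}^2$ in the sign conventions of the paper, and since $\omega_\C$ is exact with a $\UGroup(1)$-invariant primitive, the trivial bundle $\C \times \UGroup(1)$ carrying that primitive as connection is a $\UGroup(1)$-equivariant prequantization $\FibreBundleModel{L}$. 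As $\dif\Gamma$ descends to $\omega$, the pair $(P, \Gamma)$ provides a prequantization $h_\omega \in \csCohomology^2(M, \UGroup(1))$ of $\omega$, and $\Gamma$ together with $\FibreBundleModel{\vartheta}$ yields the prequantization $h_\Gamma \in \csCohomology^2(F^k, \UGroup(1))$ of $(F^k, \omega)$ with $\curv h_\Gamma = \hat{\omega}^\Gamma_\C$. Thus \cref{thm::symplecticFibreBundle:momentumMapSymplectic}(2) gives that $\AutGroup_\omega(P)$ acts symplectically on $\sSectionSpace(F^k)$ with momentum map $\phi \mapsto (J_*\phi, -\phi^* h_\Gamma \star h_\omega^{n-1})$; here $\UGroup(1)$ is abelian so the coadjoint bundle is trivial, $\CoGauAlgebra(P) \cong \sFunctionSpace(M)$, and the first component is the function $J_*\phi = -k\norm{\phi}^2$.

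Next I would observe that $\AutGroup_\Gamma(P) \subseteq \AutGroup_\omega(P)$ --- a $\Gamma$-preserving automorphism preserves $\dif\Gamma = \pi^*\omega$, hence its induced base diffeomorphism preserves $\omega$ --- so the quantomorphism group inherits a symplectic action on $\sSectionSpace(F^k)$. Under the Koszul identifications $\AutAlgebra_\Gamma(P) \cong \sFunctionSpace(M)$, $f \mapsto X^\Gamma_f + \pi^* f\,\difp_\vartheta$, and $\AutAlgebra_\omega(P) \cong \sFunctionSpace(M) \times \VectorFieldSpace_\omega(M)$, this inclusion becomes $f \mapsto (f, X_f)$. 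The key step, and the one requiring care, is to verify the hypothesis of \cref{prop::momentumMap:forSubgroups}: that the map
\[
	\rho\colon \sFunctionSpace(M) \times \csCohomology^{2n}(M, \UGroup(1)) \to \DiffFormSpace^{2n}(M), \qquad (g, h) \mapsto g\mu_\omega - \frac{1}{(n-1)!}\curv h,
\]
is a Lie group homomorphism whose associated Lie algebra morphism is the adjoint of $f \mapsto (f, X_f)$ relative to the pairings~\eqref{eq::diffAction:dualPairOfGaugeGroup} and~\eqref{eq::diffAction:dualPairOSymplPresVectorFields}. Using $X_f \contr \omega = \dif f$, Stokes' theorem, and $\omega \wedge \omega^{n-1} = n!\,\mu_\omega$ one computes that the adjoint of $f \mapsto X_f$ on $\VectorFieldSpace_\omega(M)^* = \csAlgebra^{2n}(M, \UGroup(1))$ is $\equivClass{\alpha} \mapsto -\frac{1}{(n-1)!}\dif\alpha$, which integrates to $h \mapsto -\frac{1}{(n-1)!}\curv h$ because $\csCohomology^{2n}(M, \UGroup(1))$ is abelian and $\curv$ is additive; combining this with the identity contribution of the $\sFunctionSpace(M)$-slot produces $\rho$. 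Since $\rho$ factors through the curvature, it takes values in the vector space $\DiffFormSpace^{2n}(M)$, which is exactly why the resulting momentum map is classical rather than genuinely group-valued.

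Composing, \cref{prop::momentumMap:forSubgroups} gives $\SectionMapAbb{J}_{\AutGroup_\Gamma} = \rho \circ \SectionMapAbb{J}_{\AutGroup}$, namely
\[
	\SectionMapAbb{J}_{\AutGroup_\Gamma}(\phi) = -k\norm{\phi}^2\mu_\omega + \frac{1}{(n-1)!}\,\phi^*\hat{\omega}^\Gamma_\C \wedge \omega^{n-1},
\]
where I used $\curv(\phi^* h_\Gamma \star h_\omega^{n-1}) = \phi^*\hat{\omega}^\Gamma_\C \wedge \omega^{n-1}$ (the curvature is multiplicative over the star product and natural under pull-back, and $\curv h_\omega = \omega$). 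Finally I would substitute Donaldson's identity $\phi^*\hat{\omega}^\Gamma_\C = \omega_\C(\nabla^\Gamma\phi, \nabla^\Gamma\phi) + (J_*\phi)\,\curv\Gamma$ with $\curv\Gamma = \omega$ and $J_*\phi = -k\norm{\phi}^2$, and use $\omega \wedge \omega^{n-1} = n!\,\mu_\omega$ together with $\omega_\C(\nabla^\Gamma\phi, \nabla^\Gamma\phi) = \frac{\I}{2}\,\nabla^\Gamma\phi \wedge \nabla^\Gamma\bar{\phi}$ to collect the two terms proportional to $\norm{\phi}^2\mu_\omega$ into $-k(n+1)\norm{\phi}^2\mu_\omega$, arriving at the stated formula. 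The only real obstacle is the bookkeeping in the middle paragraph --- getting the normalization $\frac{1}{(n-1)!}$ and all the signs consistent across the two pairings and the Koszul identifications; the rest is a direct assembly of results already proved.
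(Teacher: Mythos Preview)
Your proposal is correct and follows essentially the same route as the paper: identify the inclusion $\AutAlgebra_\Gamma(P)\hookrightarrow\AutAlgebra_\omega(P)$ as $f\mapsto(f,X_f)$, compute its dual as the map $(g,h)\mapsto g\mu_\omega-\tfrac{1}{(n-1)!}\curv h$, compose with the $\AutGroup_\omega(P)$-momentum map from \cref{thm::symplecticFibreBundle:momentumMapSymplectic}, and then apply Donaldson's identity for $\phi^*\hat{\omega}^\Gamma_\C$ to simplify. The only difference is presentational: you frame the restriction step explicitly through \cref{prop::momentumMap:forSubgroups}, whereas the paper simply states that the quantomorphism momentum map is the composition of the dual projection with $\SectionMapAbb{J}_{\AutGroup}$; the content is identical (watch the sign convention $X_f\contr\omega=-\dif f$ from \cref{sec::conventions} when you do the bookkeeping you flagged).
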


A similar expression for the momentum map has been found by \Textcite[eq.~11]{Donaldson2001}, but with a few typos in the relative prefactors (in the derivation, \parencite[eq.~13]{Donaldson2001} misses a factor of \( n \)).
We emphasize that \( \SectionMapAbb{J}_{\AutGroup_\Gamma} \) is a classical momentum map with values in the dual vector space of \( \AutAlgebra_\Gamma(P) \).
The existence of a classical momentum map should not come as a surprise, because the quantomorphism group plays the role of the group of exact symplectomorphisms and as such is not sensitive to the same topological properties of \( M \) that the group of all symplectomorphisms sees.

\subsection{Gauge theory}

\Textcite{AtiyahBott1983} showed that the curvature furnishes a momentum map for the action of the group of gauge transformations on the space of connections over a surface.
In this section, we outline how this setting fits in our general framework.
Moreover, we also discuss the action of the whole group of bundle automorphisms, whose existence is tightly connected to the triviality of a certain class in the third cohomology of the base.

As we have already seen in \cref{sec:momentum_map_for_the_gauge_group}, the group of gauge transformations admits an ordinary momentum map, and thus the concept of a group-valued momentum map is not needed for this action.
Nonetheless, even in this case, our general framework provides the interesting insight that the momentum map geometry of the infinite-dimensional space of connection is induced by a finite-dimensional symplectic system. 

Originally, \citeauthor{AtiyahBott1983} studied the Yang--Mills equation over a Riemannian surface.
We consider the more general situation where the base is a compact \( 2n \)-dimensional symplectic manifold \( (M, \sigma) \); see also \parencite{Donaldson1987a} for an almost identical setting.

Let \( \pi: P \to M \) be a principal \( G \)-bundle, and let \( \AdBundle P \to M \) be the adjoint bundle of \( P \).
Recall that a connection \( \Gamma \) on \( P \) is a splitting of the exact sequence of vector bundles over \( M \)
\begin{equationcd}[label=eq:application:gaugeTheory:atiyahSeq]
	0 \to[r]	
		& \AdBundle P \to[r] 
		& (\TBundle P) \slash G \to[r, "\tangent \pi"] 
		& \TBundle M \to[r] 
		&0, 
\end{equationcd}
where \( \TBundle \pi \) also denotes the natural map on \( (\TBundle P) \slash G \) induced by \( \TBundle \pi: \TBundle P \to  \TBundle M \).
The bundle \( \ConnBundle P \to M \) of principal connections on \( P \) is, by definition, the subbundle of \( \LinMapBundle(\TBundle M, (\TBundle P) \slash G) \to  M \) whose fiber \( \ConnBundle_m P \) at \( m \in M \)  consists of all linear maps \( \Gamma_m:\TBundle_m M \to (\TBundle P \slash G)_m \) satisfying \( \tangent \pi (\Gamma_m(X_m)) = X_m \) for all  \( X_m \in \TBundle_m M \).
A smooth section \( m \mapsto \Gamma_m \) of \( \ConnBundle P \) determines a principal \( G \)-connection on \( P \) as follows.
The image of \( \Gamma_m \) yields a consistent choice of a subspace of \(\TBundle_p P \) which is complementary to the vertical subspace for any \( p\in \pi^{-1}(m) \).
This induces a complementary subbundle of \( \TBundle P \) to the vertical subbundle that is invariant under the \( G \)-action, \ie, a horizontal \( G \)-equivariant distribution, and hence a principal connection.
Conversely, the horizontal lift of a principal connection on \( P \) yields a splitting of~\eqref{eq:application:gaugeTheory:atiyahSeq} and hence a section of \( \ConnBundle P \). 
Thus, the space \( \ConnSpace (P) \) of principal connections on \( P \) is identified with the space of sections of \( \ConnBundle P \).
Note that \( \ConnBundle P \to M \) is an affine bundle modeled over the vector bundle \( \LinMapBundle(\TBundle M, \AdBundle P)\to M \).

In order to apply our general framework, we need to realize \( \ConnBundle P \) as an associated fiber bundle with a symplectic fiber.
This will require the notion of the principal jet prolongation \( \pJetBundle P \) of \( P \). We now recall this construction (mostly 
suppressing the jet-theoretic background) and refer the reader to \parencite[Section~15.6 and~17.5]{KolarSlovakEtAl1993} for more details.
\begin{itemize}
\item
The principal jet prolongation of the Lie group \( G \) is by definition the semidirect product
\begin{equation}
\pJetGroup G = (\SpGroup(2n, \R) \times G) \rSemiProduct 
\LinMapSpace(\R^{2n}, \LieA{g})
\end{equation}
with multiplication
\begin{equation}
(a, g, S) \cdot (b, h, T) 
= (a \cdot b, g \cdot h, \AdAction_{h^{-1}} S \circ b + T)
\end{equation}
and inversion 
\begin{equation}
(a, g, S)^{-1} = (a^{-1}, g^{-1}, - \AdAction_g S \circ a^{-1}).
\end{equation}
The Lie algebra of \( \pJetGroup G \) is the semidirect product
\begin{equation}
	\pJetAlgebra \LieA{g} = (\SpAlgebra(2n, \R) \times \LieA{g}) \rSemiProduct 
\LinMapSpace(\R^{2n}, \LieA{g}).
\end{equation}
\item
The principal jet prolongation \( \pJetBundle P \) is the product fiber bundle \( \FrameBundle M \times_M \JetBundle^1 P \) over \( M \), 
where \( \FrameBundle M \) denotes the symplectic frame bundle.
Equivalently, the fiber \( \pJetBundle_m P \) over \(m \in M\) consists of pairs \((u_m, v_p)\) of a linear symplectomorphism \( u_m: \R^{2n} \to \TBundle_m M \) of \( \R^{2n} \) endowed with its standard symplectic form \( \omega_0 \) and \( (\TBundle_m M, \sigma_m) \), and a linear map \( v_p: \TBundle_m M \to \TBundle_p P \) satisfying \( \tangent_p \pi \circ v_p = \id_{\TBundle_m M} \).
The right action of \( \pJetGroup G \) on \( \pJetBundle P \) defined by
\begin{equation}\label{eq:application:gaugeTheory:actionJetGroupBundle}
(u_m, v_p) \cdot (a, g, S) \defeq \bigl({u_m} \circ a, v_p \ldot g 
+ (p \cdot g) \ldot (S \circ a^{-1} \circ u_m^{-1})\bigr)
\end{equation}
endows \( \pJetBundle P \) with the structure of a principal \( \pJetGroup G \)-bundle.
Here, the lower dot notation stands for the natural action of \( G \) on \( \TBundle P \) and of \( \LieA{g} \)
 on \( P \).
\item
\(\pJetGroup G\) acts affinely on \( \LinMapSpace(\R^{2n}, \LieA{g}) \) by
\begin{equation}
(a, g, S) \cdot T = \AdAction_g (S + T) \circ a^{-1}.
\end{equation}
The associated bundle \( \pJetBundle P \times_{\pJetGroup G} \LinMapSpace(\R^{2n}, \LieA{g}) \) is the bundle \( \ConnBundle P \) of 
connections. The horizontal lift \( \Gamma_m \) corresponding to the equivalence class \( \equivClass{u_m, v_p, T\,} \in \pJetBundle P \times_{\pJetGroup G} \LinMapSpace(\R^{2n}, \LieA{g}) \) is given by
\begin{equation}
\label{eq:application:gaugeTheory:horizontalLiftAssociatedBundle}
\Gamma_m = v_p - p \ldot \bigl(T \circ u_m^{-1}\bigr): \TBundle_m M \to \TBundle_p P.
\end{equation}
\end{itemize}
Assume that \( \LieA{g} \) is endowed with an \( \AdAction \)-invariant scalar product. We denote by \( \tr(\cdot \wedge \cdot) \) the induced wedge product of linear maps with values in \( \LieA{g} \).
The linear symplectic form \( \FibreBundleModel{\omega} \) on \( \LinMapSpace(\R^{2n}, \LieA{g}) \) is defined by the identity
\begin{equation}
\FibreBundleModel{\omega}(T_1, T_2) \, \omega_0^n = \tr(T_1 \wedge T_2) \wedge \omega_0^{n-1},
\end{equation}
where \( \omega_0 \) denotes the standard symplectic form on \( \R^{2n} \).
Clearly, \( \FibreBundleModel{\omega} \) is invariant under the action of \( \pJetGroup G \), because \( a \in \SpGroup(2n, \R) \) leaves \( \omega_0 \) invariant and because the scalar product on \( \LieA{g} \) is \( \AdAction \)-invariant.
The associated momentum map \( \FibreBundleModel{J}: \LinMapSpace(\R^{2n}, \LieA{g}) \to \SpAlgebra(2n, \R)^* \times \LieA{g}^* \times \LinMapSpace(\R^{2n}, \LieA{g})^* \) is given by
\begin{equation}\label{eq:applications:gaugeTheoryFibreMomentumMap}
\dualPair{\FibreBundleModel{J}(T)}{(A, \xi, L)} \, \omega_0^n
= \Bigl(\frac{1}{2} \tr\bigl(\xi, \wedgeLie{T}{T\,}\bigr) - \tr\bigl(L \wedge T\,\bigr) + 
\frac{1}{2} \tr\bigl((T \circ A) \wedge T\,\bigr)\Bigr) \wedge 
\omega_0^{n-1}
\end{equation}
for \( A \in \SpAlgebra(2n, \R) \), \( \xi \in \LieA{g} \) and \( L \in \LinMapSpace(\R^{2n}, \LieA{g}) \).
Thus the typical fiber \( \LinMapSpace(\R^{2n}, \LieA{g}) \) of the connection bundle is a symplectic system with \( \pJetGroup G \)-symmetry.
However, this system does not quite fit into the general setting discussed in \cref{sec::globalAnalysisSymplecticFibreBundles}, because the momentum map \( \FibreBundleModel{J} \) is not equivariant.
In fact, a straightforward calculation shows that the non-equivariance \( 1 \)-cocycle \( \sigma\bigl(a, g, S\bigr) = \FibreBundleModel{J}\bigl((a, g, S) \cdot T \, \bigr) - \CoAdAction_{(a, g, S)^{-1}} \FibreBundleModel{J}(T) \) is given by
\begin{equation}
	\sigma\bigl(a, g, S\bigr) = \CoAdAction_{(a, g, 0)^{-1}} \FibreBundleModel{J}(S).
\end{equation}
The associated Lie algebra \( 2 \)-cocycle \( \Sigma: \pJetAlgebra \LieA{g} \times \pJetAlgebra \LieA{g} \to \R \) is
\begin{equation}
	\Sigma\bigl( (A_1, \xi_1, L_1), (A_1, \xi_1, L_1) \bigr) = \FibreBundleModel{\omega}(L_1, L_2).
\end{equation}
For the same reason, we need to pass to a central extension of \( \pJetGroup G \) to obtain an equivariant prequantization bundle.
Endow the trivial circle bundle \( \FibreBundleModel{L} = \LinMapSpace(\R^{2n}, \LieA{g}) \times \UGroup(1) \) with the connection \( 1 \)-form
\begin{equation}
	\FibreBundleModel{\theta}_{(T, z)}(U, u) = \frac{1}{2} \FibreBundleModel{\omega}(T, U) + u,
\end{equation}
where \( T, U \in \LinMapSpace(\R^{2n}, \LieA{g}) \), \( z \in \UGroup(1) \) and \( u \in \R \isomorph \UAlgebra(1) \).
The curvature of \( \FibreBundleModel{\theta} \) is \( \FibreBundleModel{\omega} \), so that the trivial bundle \( \FibreBundleModel{L} \) endowed with the connection \( \FibreBundleModel{\theta} \) is a prequantum bundle of the symplectic vector space \( (\LinMapSpace(\R^{2n}, \LieA{g}), \FibreBundleModel{\omega}) \).
\begin{lemma}
	The group of connection-preserving bundle automorphisms of the prequantum bundle \( (\FibreBundleModel{L}, \FibreBundleModel{\theta}) \) covering the action of \( \pJetGroup G \) on \( \LinMapSpace(\R^{2n}, \LieA{g}) \) is identified with \( \widehat{\pJetGroup G} = \pJetGroup G \times \UGroup(1) \) endowed with the multiplication
	\begin{equation}\label{eq:application:gaugeTheory:multJetHeisenberg}\begin{split}
		(a, g, &S, u) \star (b, h, T, v)
			\\
			&= \left(a \cdot b, g \cdot h, \AdAction_{h^{-1}} S \circ b + T,  \exp\Bigl(- \frac{\I}{2} \FibreBundleModel{\omega}(S, \AdAction_h T \circ b^{-1})\Bigr) u v\right),
	\end{split}\end{equation}
	where \( (a, g, S), (b, h, T) \in \pJetGroup G \) and \( u, v \in \UGroup(1) \).
	The group \( \widehat{\pJetGroup G} \) is a non-trivial central extension of \( \pJetGroup G \) by \( \UGroup(1) \) whose induced Lie algebra \( 2 \)-cocycle is \( \Sigma \).
\end{lemma}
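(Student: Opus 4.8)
The plan is to describe \emph{all} connection-preserving automorphisms of the trivial prequantum bundle explicitly and then read off the group law by composition. Write $V \defeq \LinMapSpace(\R^{2n}, \LieA{g})$ and let $\vartheta_0 \in \DiffFormSpace^1(V)$ be the symplectic potential $\vartheta_0|_T(U) = \frac{1}{2} \FibreBundleModel{\omega}(T, U)$, so that $\FibreBundleModel{\theta} = \pr_V^* \vartheta_0 + \dif s$, with $\dif s$ the standard flat connection form in the $\UGroup(1)$-factor. Since $V$ is a vector space, every principal $\UGroup(1)$-bundle automorphism of $\FibreBundleModel{L}$ covering a diffeomorphism $\phi$ of $V$ has the form $\Psi(T, z) = (\phi(T),\, e^{\I f(T)} z)$ for a \emph{global} smooth function $f \colon V \to \R$, and the condition $\Psi^* \FibreBundleModel{\theta} = \FibreBundleModel{\theta}$ is equivalent to $\dif f = \vartheta_0 - \phi^* \vartheta_0$. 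When $\phi = \phi_{(a,g,S)}$ is the affine symplectomorphism $T \mapsto \AdAction_g(S+T) \circ a^{-1}$, with linear part $A_{(a,g)} T = \AdAction_g T \circ a^{-1}$ and translation part $b_{(a,g,S)} = \AdAction_g S \circ a^{-1}$, a one-line computation using that $A_{(a,g)}$ preserves $\FibreBundleModel{\omega}$ gives $\vartheta_0 - \phi^* \vartheta_0 = -\frac{1}{2} \FibreBundleModel{\omega}(b_{(a,g,S)}, A_{(a,g)} \cdot\,)$, a \emph{constant} (hence closed) $1$-form; thus $f$ exists and is unique up to an additive real constant, i.e.\ up to multiplication of $\Psi$ by an element of the central $\UGroup(1)$ of connection-preserving gauge transformations. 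Moreover, the $\AdAction$-invariance of the scalar product on $\LieA{g}$ together with the $\SpGroup(2n, \R)$-invariance of $\omega_0$ collapse the primitive to $f_{(a,g,S)}(T) = -\frac{1}{2} \FibreBundleModel{\omega}(S, T)$, so that every such automorphism is of the form
\begin{equation}
	\Psi_{(a,g,S,u)}(T, z) = \Bigl(\AdAction_g(S+T) \circ a^{-1},\ \exp\bigl(-\tfrac{\I}{2} \FibreBundleModel{\omega}(S, T)\bigr)\, u\, z\Bigr), \qquad u \in \UGroup(1).
\end{equation}
In particular the automorphism group is $\pJetGroup G \times \UGroup(1)$ as a set and is a central extension of $\pJetGroup G$ by $\UGroup(1)$.

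Next I would compute the composition $\Psi_{(a,g,S,u)} \circ \Psi_{(b,h,T,v)}$. The base maps compose according to the already-established multiplication of $\pJetGroup G$, so only the phases need tracking. Substituting $W' = \AdAction_h(T+W) \circ b^{-1}$ into $f_{(a,g,S)}$ and expanding, one obtains $f_{(a,g,S)}(W') + f_{(b,h,T)}(W) = f_{(a,g,S)(b,h,T)}(W) - \frac{1}{2} \FibreBundleModel{\omega}(S, \AdAction_h T \circ b^{-1})$, where the reorganization of the $W$-dependent terms into $f_{(a,g,S)(b,h,T)}(W) = -\frac{1}{2}\FibreBundleModel{\omega}(\AdAction_{h^{-1}} S \circ b + T,\, W)$ uses once more that $\FibreBundleModel{\omega}$ is invariant under $\AdAction$ and under precomposition with symplectic linear maps. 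Hence $\Psi_{(a,g,S,u)} \circ \Psi_{(b,h,T,v)} = \Psi_{(a,g,S,u) \star (b,h,T,v)}$ with $\star$ precisely the multiplication in the statement, which identifies the automorphism group with $\widehat{\pJetGroup G}$; the $\UGroup(1)$-factor is central because $\FibreBundleModel{\omega}(0, \cdot) = 0$.

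Finally I would pass to the infinitesimal level. Differentiating $\star$ at the identity, the $\pJetAlgebra \LieA{g}$-component of the bracket is the bracket of $\pJetGroup G$, while the $\R$-component is obtained by differentiating the group cocycle $(a,g,S;b,h,T) \mapsto -\frac{1}{2} \FibreBundleModel{\omega}(S, \AdAction_h T \circ b^{-1})$ twice at the identity and antisymmetrizing; since this cocycle is already bilinear and couples only the $\LinMapSpace(\R^{2n}, \LieA{g})$-components, it reproduces $\FibreBundleModel{\omega}(L_1, L_2)$, i.e.\ the cocycle $\Sigma$. Nontriviality of the extension then follows because $\LinMapSpace(\R^{2n}, \LieA{g})$ is an abelian ideal of $\pJetAlgebra \LieA{g}$ on which every coboundary vanishes (for $\beta \in (\pJetAlgebra \LieA{g})^*$ one has $\dif^{\mathrm{CE}} \beta(L_1, L_2) = -\beta(\commutator{L_1}{L_2}) = 0$), whereas $\restr{\Sigma}{\LinMapSpace(\R^{2n}, \LieA{g})} = \FibreBundleModel{\omega}$ is nondegenerate, hence not a coboundary; equivalently, the restriction of $\widehat{\pJetGroup G}$ over $\LinMapSpace(\R^{2n}, \LieA{g}) \times \UGroup(1)$ is the Heisenberg group of the symplectic vector space $(\LinMapSpace(\R^{2n}, \LieA{g}), \FibreBundleModel{\omega})$.

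The main obstacle I anticipate is the bookkeeping in the composition step: getting the cross-term and, crucially, its sign exactly right requires careful and repeated use of the invariance properties of $\FibreBundleModel{\omega}$ and precise handling of the order of the $\AdAction$- and $\SpGroup(2n,\R)$-twists in the affine $\pJetGroup G$-action, so that the leftover constant is \emph{literally} $-\frac{1}{2}\FibreBundleModel{\omega}(S, \AdAction_h T \circ b^{-1})$ and not merely a cohomologous variant — otherwise the identified extension and the claimed Lie algebra cocycle $\Sigma$ would match only up to an automorphism of the extension.
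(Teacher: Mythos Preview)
Your proposal is correct and follows essentially the same route as the paper: both write an arbitrary automorphism covering $(a,g,S)$ as $(T,z)\mapsto\bigl((a,g,S)\cdot T,\ \phi_{(a,g,S)}(T)\,z\bigr)$, impose the connection-preserving condition, and solve to obtain $\phi_{(a,g,S)}(T)=\exp\bigl(-\tfrac{\I}{2}\FibreBundleModel{\omega}(S,T)\bigr)\,u$. The paper simply states that one ``easily verifies'' the resulting composition law, whereas you actually carry out the phase-tracking; you also supply the infinitesimal computation of $\Sigma$ and the nontriviality argument (via restriction to the abelian ideal), both of which the paper asserts without proof.
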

Roughly speaking, one can view \( \widehat{\pJetGroup G} \) as the principal jet prolongated Heisenberg group of the symplectic vector space \( (\LinMapSpace(\R^{2n}, \LieA{g}), \FibreBundleModel{\omega}) \).
\begin{proof}
	Every bundle automorphism \( \phi \) of \( \LinMapSpace(\R^{2n}, \LieA{g}) \times \UGroup(1) \) covering the action of \( (a, g, S) \in \pJetGroup G \) is of the from
	\begin{equation}
		(T, z) \mapsto \bigl( (a, g, S) \cdot T, \phi_{(a, g, S)}(T) \, z\bigr)
	\end{equation}
	for some smooth map \( \phi_{(a, g, S)}: \LinMapSpace(\R^{2n}, \LieA{g}) \to \UGroup(1) \).
	The automorphism \( \phi \) preserves the connection \( \FibreBundleModel{\theta} \) if and only if
	\begin{equation}\begin{split}
		\difLog_T \, \phi_{(a, g, S)} (U)
			&= \frac{1}{2} \FibreBundleModel{\omega}(T, U) - \frac{1}{2} \FibreBundleModel{\omega}\bigl((a, g, S) \cdot T, (a, g, S) \ldot U\bigr)
			\\
			&= \frac{1}{2} \FibreBundleModel{\omega}(T, U) - \frac{1}{2} \FibreBundleModel{\omega}\bigl(\AdAction_g (S + T) \circ a^{-1}, \AdAction_g U \circ a^{-1}\bigr)
			\\
			&= - \frac{1}{2} \FibreBundleModel{\omega}\bigl(S, U\bigr).
	\end{split}\end{equation}
	Consequently, \( \phi_{(a, g, S)}(T) = \exp\bigl(- \frac{\I}{2} \FibreBundleModel{\omega}(S, T)\bigr) \, u \) for some \( u \in \UGroup(1) \).
	One easily verifies that the assignment of the bundle automorphism
	\begin{equation}
		\phi_{(a, g, S, u)}(T, z) = \left( (a, g, S) \cdot T, \exp\Bigl(- \frac{\I}{2} \FibreBundleModel{\omega}(S, T)\Bigr) u z\right)
	\end{equation}
	to \( (a, g, S, u) \in \widehat{\pJetGroup G} \) is a group isomorphism with respect to the product~\eqref{eq:application:gaugeTheory:multJetHeisenberg}.
\end{proof}
Finally, observe that the action of \( \widehat{\pJetGroup G} \) on \( \LinMapSpace(\R^{2n}, \LieA{g}) \) is transitive, and that the stabilizer of \( 0 \) is the direct product group \( \SpGroup(2n, \R) \times G \times \UGroup(1) \).
We can thus view \( \LinMapSpace(\R^{2n}, \LieA{g}) \) as the homogeneous space \( \widehat{\pJetGroup G} \slash \bigl( \SpGroup(2n, \R) \times G \times \UGroup(1) \bigr) \).
The Lie group homomorphism that characterizes the \( \widehat{\pJetGroup G} \)-equivariant prequantization bundle \( (\FibreBundleModel{L}, \FibreBundleModel{\theta}) \) according to \cref{sec::reductionOfStructureGroup} is given by
\begin{equation}
	\rho: \SpGroup(2n, \R) \times G \times \UGroup(1) \to \UGroup(1), \qquad (a, g, u) \mapsto u.
\end{equation}
In summary, one needs to pass to the central extension \( \widehat{\pJetGroup G} \) of \( \pJetGroup G \) to obtain a Hamiltonian system \( (\LinMapSpace(\R^{2n}, \LieA{g}), \FibreBundleModel{\omega}) \) with equivariant momentum map and equivariant prequantum bundle \( (\FibreBundleModel{L}, \FibreBundleModel{\theta}) \).

The issue of non-equivariance extends to the bundle picture as well.
For example, the action of the gauge group of \( \pJetBundle P \) on the space \( \ConnSpace(P) \) of principal connections on \( P \) has a non-equivariant momentum map and one needs to pass to an extension to get an equivariant momentum map.
Surprisingly perhaps, these problems disappear under the jet prolongation, and we obtain the following generalization of the result of \textcite[p.~587]{AtiyahBott1983} to higher dimensional base manifolds.
\begin{thm}
	\label{prop:application:gaugeTheory:momentumMapGaugeGroup}
	Let \( (M, \sigma) \) be a compact \( 2n \)-dimensional symplectic manifold, and let \( \pi: P \to M \) be a principal \( G \)-bundle with compact structure group \( G \).
	The \( 2 \)-form
	\begin{equation}
		\Omega (\alpha, \beta) 
		= \int_M \tr(\alpha \wedge \beta) \wedge \sigma^{n-1} ,
		\qquad \alpha, \beta \in \DiffFormSpace^1(M, \AdBundle P)
	\end{equation}
	defines a symplectic structure on the space \( \ConnSpace(P) \) of principal connections on \( P \).
	Moreover, the natural action of the group \( \GauGroup(P) \) of gauge transformations of \( P \) on \( \ConnSpace(P) \) leaves \( \Omega \) invariant, and possesses an equivariant momentum map
	\begin{equation}
		\SectionMapAbb{J}_{\GauGroup}: \ConnSpace(P) \to \DiffFormSpace^{2n}(M, \AdBundle P),
		\qquad
		\Gamma \mapsto \curv \Gamma \wedge \sigma^{n-1},
	\end{equation} 
	where \( \curv \Gamma \in \DiffFormSpace^2(M, \AdBundle P) \) is the curvature of \( \Gamma \).
\end{thm}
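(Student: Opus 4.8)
The plan is to deduce \cref{prop:application:gaugeTheory:momentumMapGaugeGroup} as a special case of the general machinery of \cref{sec:momentum_map_for_the_gauge_group}, in particular \cref{prop::momentumMap:gaugeGroup}, applied to the symplectic fiber bundle realization of $\ConnBundle P$ established above. We have identified $\ConnSpace(P)$ with the section space $\SectionSpaceAbb{F}$ of the associated bundle $F = \pJetBundle P \times_{\pJetGroup G} \LinMapSpace(\R^{2n}, \LieA{g})$, whose typical fiber carries the linear symplectic form $\FibreBundleModel{\omega}$ with $\FibreBundleModel{\omega}(T_1,T_2)\,\omega_0^n = \tr(T_1\wedge T_2)\wedge\omega_0^{n-1}$. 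First I would record that the fiberwise symplectic structure $\omega$ on $F$, combined with the Liouville volume $\mu_\sigma = \sigma^n/n!$ on $M$, yields via \cref{prop::symplecticStructureOnSectionsOfSymplecticFibreBundle} the weak symplectic form $\Omega = \mu_\sigma \hatProduct \omega$ on $\SectionSpaceAbb{F}$, and then translate the explicit formula~\eqref{eq:symplecticFormSectionSpace:symplecticForm:explicit} through the affine modeling of $\ConnBundle P$ over $\LinMapBundle(\TBundle M, \AdBundle P)$: a tangent vector to $\ConnSpace(P)$ at $\Gamma$ is an element $\alpha \in \DiffFormSpace^1(M,\AdBundle P)$, and unwinding $\iota_p$ shows $\omega_m(\alpha_m,\beta_m)\,\mu_\sigma = \tr(\alpha_m\wedge\beta_m)\wedge\sigma^{n-1}$ pointwise. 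Integrating gives exactly the stated $\Omega(\alpha,\beta) = \int_M \tr(\alpha\wedge\beta)\wedge\sigma^{n-1}$. Nondegeneracy is inherited from the fiberwise nondegeneracy of $\FibreBundleModel{\omega}$, which holds because $\LieA{g}$ carries an $\AdAction$-invariant scalar product (using compactness of $G$) and $\sigma^{n-1}$ is nonvanishing.

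Next I would handle the momentum map. The subtlety is that $\FibreBundleModel{J}$ in~\eqref{eq:applications:gaugeTheoryFibreMomentumMap} is \emph{not} $\pJetGroup G$-equivariant, so \cref{prop::momentumMap:gaugeGroup} does not apply verbatim to $\pJetGroup G$; this is precisely why we introduced the central extension $\widehat{\pJetGroup G}$. But the gauge group $\GauGroup(P)$ of $P$ embeds into $\GauGroup(\pJetBundle P)$ via the jet prolongation, and its image consists of automorphisms whose $\SpGroup(2n,\R)$- and $\LinMapSpace(\R^{2n},\LieA{g})$-components are trivial — in other words, it sits inside the subgroup on which the non-equivariance cocycle $\sigma(a,g,S) = \CoAdAction_{(a,g,0)^{-1}}\FibreBundleModel{J}(S)$ vanishes (since $S=0$ there). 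So restricted to the relevant sub-dual-pair, $\FibreBundleModel{J}$ \emph{is} equivariant, and I would invoke \cref{prop::momentumMap:forSubgroups} together with \cref{prop::momentumMap:gaugeGroup}: the momentum map is $\phi\mapsto J_*\phi$ where $J: F \to \CoAdBundle{(\pJetBundle P)}$ is induced by $\FibreBundleModel{J}$, followed by the dual of the inclusion $\GauAlgebra(P)\hookrightarrow\GauAlgebra(\pJetBundle P)$. The concrete computation then amounts to identifying, for a connection $\Gamma = \equivClass{u_m,v_p,T}$, the $\AdBundle P$-component of $\FibreBundleModel{J}(T)$ with the curvature: the term $\frac{1}{2}\tr(\xi,\wedgeLie{T}{T})\wedge\omega_0^{n-1}$ in~\eqref{eq:applications:gaugeTheoryFibreMomentumMap} pairs with $\xi\in\LieA{g}$, and under the identification~\eqref{eq:application:gaugeTheory:horizontalLiftAssociatedBundle} the structure equation $\curv\Gamma = \dif_\Gamma T + \frac{1}{2}\wedgeLie{T}{T}$ (more precisely, the curvature of the connection built from the fiber data and a reference connection from $\FrameBundle M$) reproduces $\tr(\xi, \curv\Gamma)\wedge\sigma^{n-1}$ after integration. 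This yields $\SectionMapAbb{J}_{\GauGroup}(\Gamma) = \curv\Gamma\wedge\sigma^{n-1} \in \DiffFormSpace^{2n}(M,\AdBundle P)$, which is the claimed formula, using the pairing~\eqref{eq::diffAction:dualPairOfGaugeGroup}.

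Finally, equivariance of $\SectionMapAbb{J}_{\GauGroup}$ with respect to the coadjoint action of $\GauGroup(P)$ — which is just the natural action by pull-back/conjugation on $\DiffFormSpace^{2n}(M,\AdBundle P)$ — follows either directly from the transformation law $\curv(g\cdot\Gamma) = \AdAction_g\curv\Gamma$ of the curvature under gauge transformations, or abstractly from \cref{prop::momentumMap:gaugeGroup}, where the gauge-group momentum map is automatically equivariant because the fiber momentum map $\FibreBundleModel{J}$ is equivariant for the $G$-action (the relevant piece). I would also note in passing that $\Omega$ is $\GauGroup(P)$-invariant, which is immediate from the invariance of the $\AdAction$-scalar product on $\LieA{g}$.

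\textbf{The main obstacle} I anticipate is the bookkeeping in the second step: correctly tracking how the $\LieA{g}^*$-component of the non-equivariant fiber momentum map $\FibreBundleModel{J}$ on $\LinMapSpace(\R^{2n},\LieA{g})$ descends, under the associated bundle construction with $\pJetBundle P = \FrameBundle M\times_M \JetBundle^1 P$, to the curvature $2$-form — in particular making sure that the $\SpGroup(2n,\R)$- and $\LinMapSpace$-components of $\FibreBundleModel{J}$ (which carry the jet-theoretic torsion-type and non-equivariance data) genuinely drop out when one restricts the dual pairing to $\GauAlgebra(P)\subseteq\GauAlgebra(\pJetBundle P)$, so that only the clean expression $\curv\Gamma\wedge\sigma^{n-1}$ survives. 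Verifying that the restriction to $\GauGroup(P)$ lands in the locus where the non-equivariance cocycle vanishes — thereby licensing the use of the equivariant machinery of \cref{prop::momentumMap:gaugeGroup} despite the global non-equivariance on $\widehat{\pJetGroup G}$ — is the conceptual heart of the argument, and everything else is routine unwinding of the structure equation.
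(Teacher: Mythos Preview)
Your overall strategy---realize $\ConnSpace(P)$ as the section space of the symplectic fiber bundle $\pJetBundle P \times_{\pJetGroup G} \LinMapSpace(\R^{2n},\LieA{g})$, apply \cref{prop::momentumMap:gaugeGroup} for the gauge group of $\pJetBundle P$, and then restrict along the jet prolongation $\pJetProlongation:\GauGroup(P)\to\GauGroup(\pJetBundle P)$---matches the paper's approach. However, there is a concrete error in your execution that breaks the computation.

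You assert that the image of $\GauGroup(P)$ under jet prolongation ``consists of automorphisms whose $\SpGroup(2n,\R)$- and $\LinMapSpace(\R^{2n},\LieA{g})$-components are trivial.'' This is false. The paper computes the infinitesimal prolongation explicitly: for $\xi\in\GauAlgebra(P)$ viewed as $\xi:P\to\LieA{g}$, one has
\[
\pJetProlongation\xi(u_m,v_p) = \bigl(0,\ \xi(p),\ \tangent_p\xi\circ v_p\circ u_m\bigr),
\]
so the $\LinMapSpace(\R^{2n},\LieA{g})$-component $L = \tangent_p\xi\circ v_p\circ u_m$ is generically nonzero. Two consequences: first, your argument that the non-equivariance cocycle $\sigma(a,g,S)=\CoAdAction_{(a,g,0)^{-1}}\FibreBundleModel{J}(S)$ vanishes because ``$S=0$ there'' collapses, since at the group level $S=\difLog^L_p\chi\circ v_p\circ u_m\neq 0$. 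Second---and this is the fatal point for the formula---when you pair $\FibreBundleModel{J}(T_\Gamma)$ with $\pJetProlongation\xi$, you do \emph{not} only pick up the $\frac{1}{2}\tr(\xi,\wedgeLie{T}{T})$ term from the $\LieA{g}^*$-component. The $\LinMapSpace^*$-component of $\FibreBundleModel{J}$ contributes $-\tr\bigl((\tangent_p\xi\circ v_p)\wedge T_\Gamma\bigr)$ as well. Only after integrating this second term by parts does one obtain $\tr(\xi,\dif\Gamma)$, and then the structure equation $\curv\Gamma = \dif\Gamma + \frac{1}{2}\wedgeLie{\Gamma}{\Gamma}$ assembles both pieces into $\tr(\xi,\curv\Gamma)\wedge\sigma^{n-1}$. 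With your assumption $L=0$, you would be left with only $\frac{1}{2}\wedgeLie{\Gamma}{\Gamma}\wedge\sigma^{n-1}$, not the curvature. Your handwave ``the structure equation \ldots reproduces $\tr(\xi,\curv\Gamma)\wedge\sigma^{n-1}$ after integration'' papers over exactly the missing term.
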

\begin{proof}
The claim follows from a simple direct calculation along the lines of \parencite[p.~587]{AtiyahBott1983}.
We now sketch how the same result can be obtained in our framework utilizing the fiberwise symplectic system.

\Cref{thm::symplecticFibreBundle:momentumMapSymplectic} shows that the momentum map for the action of the gauge group of \( \pJetBundle P \) is the push-forward of the fiber momentum map~\eqref{eq:applications:gaugeTheoryFibreMomentumMap}, that is,
\begin{equation}
	\dualPair{\SectionMapAbb{J}_{\GauGroup(\pJetBundle P)}(\Gamma)}{\eta} = \int_M \dualPair{J \circ T_\Gamma}{\eta} \, \sigma^n,
\end{equation}
where \( \eta \in \GauAlgebra(\pJetBundle P) \), and where \( T_\Gamma: \pJetBundle P \to \LinMapSpace(\R^{2n}, \LieA{g}) \) denotes the equivariant map corresponding to the connection \( \Gamma \); that is, \( \Gamma_p = v_p - p \ldot \bigl(T_\Gamma (u_m, v_p) \circ u_m^{-1} \bigr) \), see~\eqref{eq:application:gaugeTheory:horizontalLiftAssociatedBundle}.
The momentum map for \( \GauGroup(P) \) is obtained by composing with the projection \( \GauAlgebra(\pJetBundle P)^* \to \GauAlgebra(P)^* \) dual to the derivative of the natural prolongation map \( \pJetProlongation: \GauGroup(P) \to \GauGroup(\pJetBundle P) \), \cf \parencite[p.~151]{KolarSlovakEtAl1993}.
If \( \chi \in \GauGroup(P) \) is viewed as an equivariant map \( \chi: P \to G \), then we find for the prolongation \( \pJetProlongation \chi : \pJetBundle P \to \pJetBundle P \) 
\begin{equation}\begin{split}
\pJetProlongation \chi (u_m, v_p) 
&= \bigl(u_m, v_p \ldot \chi(p) + p \ldot (\tangent_p \chi \circ v_p)\bigr) 
			\\
&= (u_m, v_p) \cdot \bigl(\id, \chi(p), \chi(p)^{-1} \ldot (\tangent_p \chi \circ v_p \circ u_m)\bigr),
\end{split}\end{equation}
where we used~\eqref{eq:application:gaugeTheory:actionJetGroupBundle} for the second equality.
Thus, changing viewpoints again, we read off
\begin{equation}
	\pJetProlongation \chi: \pJetBundle P \to \pJetGroup G, \quad (u_m, v_p) 
	\mapsto \bigl(\id, \chi(p), \difLog^L_p \chi \circ v_p \circ u_m\bigr).
\end{equation}
On the Lie algebra level, \( \xi \in \GauAlgebra(P) \) seen as a map \( \xi: P \to \LieA{g} \) is prolongated to
\begin{equation}
\pJetProlongation \xi: \pJetBundle P \to \pJetAlgebra \LieA{g}, 
\quad (u_m, v_p) \mapsto \bigl(0, \xi(p), \tangent_p \xi \circ v_p \circ u_m\bigr).
\end{equation}
The momentum map \( \SectionMapAbb{J}_{\GauGroup(P)} \) for the action of \( \GauGroup(P) \) satisfies, by definition,
\begin{equation}
\dualPair{\SectionMapAbb{J}_{\GauGroup(P)}(\Gamma)}{\xi}
= \dualPair{\SectionMapAbb{J}_{\GauGroup(\pJetBundle P)}(\Gamma)}
{\pJetProlongation \xi}
= \int_M \dualPair{J \circ T_\Gamma}{\pJetProlongation \xi} \, \sigma^n.
\end{equation}
For the integrand we find using~\eqref{eq:applications:gaugeTheoryFibreMomentumMap}
\begin{equation}\begin{split}
&\dualPair{J \circ {T}_\Gamma}{\pJetProlongation \xi} (u_m, v_p) \, \sigma^n_m
		\\
&\quad = \left(\frac{1}{2} \tr\bigl(\xi(p), 
\wedgeLie{{T}_\Gamma(u_m, v_p)}{{T}_\Gamma(u_m, v_p)}\bigr) \right. 
		\\
&\qquad \left. \phantom{\frac{1}{2}} - \tr\bigl((\tangent_p \xi \circ v_p \circ u_m) \wedge {T}_\Gamma(u_m, v_p)\bigr) \right) \wedge 
\omega_0^{n-1}
		\\
&\quad = \left(\frac{1}{2} \tr\bigl(\xi(p), 
\wedgeLie{{T}_\Gamma(v_p)}{{T}_\Gamma(v_p)}\bigr) - 
\tr\bigl((\tangent_p \xi \circ v_p) \wedge {T}_\Gamma(v_p)\bigr)\right) \wedge \sigma^{n-1},
	\end{split}\end{equation}
	where we abbreviated \({T}_\Gamma(v_p) \equiv {T}_\Gamma (u_m, v_p) \circ u_m^{-1} \).
	After partial integration on the second term, we use the structure equation \( \curv \Gamma = \frac{1}{2} \wedgeLie{\Gamma}{\Gamma\,} + \dif \Gamma \) to obtain
	\begin{equation}
		\dualPair{\SectionMapAbb{J}_{\GauGroup(P)}(\Gamma)}{\xi}
			= \int_M \tr(\xi, \curv \Gamma) \wedge \sigma^{n-1}.
	\end{equation}
	Thus \( \Gamma \mapsto \curv \Gamma \wedge \sigma^{n-1} \) is the momentum map for the action of \( \GauGroup(P) \), as claimed.
\end{proof}

The discussion of the momentum map for the action of the full automorphism group of \( P \) is more involved and requires the following additional structure.
Recall the central extension
\begin{equationcd}[label=eq:application:gaugeTheory:extJetHeisenberg]
	\UGroup(1) \to[r] &\widehat{\pJetGroup G} \to[r] &\pJetGroup G.
\end{equationcd}
A lift of \( \pJetBundle P \) to \( \widehat{\pJetGroup G} \) is a principal \( \widehat{\pJetGroup G} \)-bundle \( \widehat{\pJetBundle P} \) together with a vertical bundle morphism \( \chi: \widehat{\pJetBundle P} \to \pJetBundle P \) intertwining the principal actions of \( \widehat{\pJetGroup G} \) and \( \pJetGroup G \).
The bundle morphism \( \chi \) induces an isomorphism of the bundles \( \widehat{\pJetBundle P} \slash \UGroup(1) \) and \( \pJetBundle P \).
We will usually suppress this isomorphism and directly view \( \widehat{\pJetBundle P} \) as a principal \( \UGroup(1) \)-bundle over \( \pJetBundle P \).
Note, however, that such a lift \( \widehat{\pJetBundle P} \) does not need to exist.
In fact, the existence and uniqueness question is governed by the following exact sequence in Čech cohomology
\begin{equationcd}[label=eq:application:gaugeTheory:seqLiftpJetBundle, tikz={row sep=small}]
	\cechCohomology^1(M, \UGroup(1))
		\to[r]
		\arrow[d, equal]
	& \cechCohomology^1(M, \widehat{\pJetGroup G})
		\to[r]
	& \cechCohomology^1(M, \pJetGroup G)
		\to[r]
	& \cechCohomology^2(M, \UGroup(1))
		\arrow[d, equal]
	\\
	\sCohomology^2(M, \Z)
	&
	&
	& \sCohomology^3(M, \Z)
\end{equationcd}
induced by the exact sequence~\eqref{eq:application:gaugeTheory:extJetHeisenberg}, see \parencite[Theorem~4.1.4]{Brylinski2007}.
The vertical isomorphisms are induced by the short exact sequence \( \Z \to \R \to \UGroup(1) \).
As the Čech cohomology \( \cechCohomology^1(M, K) \) parameterizes equivalence classes of \( K \)-bundles, this exact sequence shows that there exists a lift of \( \pJetBundle P \) to \( \widehat{\pJetGroup G} \) given that a certain class in \( \sCohomology^3(M, \Z) \) vanishes.
In fact, this class admits the following description, \cf \parencite[p.~161]{Brylinski2007}.
Choose an open covering \( U_i \) of \( M \), and let \( g_{ij}: U_i \intersect U_j \to G \) be the transition functions of \( P \).
By jet prolongation, we obtain the transition functions \( \pJetProlongation g_{ij}: U_i \intersect U_j \to \pJetGroup G \) of \( \pJetBundle P \).
Choose a lift \( \widehat{\pJetProlongation g_{ij}} \) of \( \pJetProlongation g_{ij} \) to \( \widehat{\pJetGroup G} \).
These are the candidate transition functions for \( \widehat{\pJetBundle P} \).
However, they satisfy the cocycle condition of triple overlaps \( U_i \intersect U_j \intersect U_k \) only in the form
\begin{equation}
	\widehat{\pJetProlongation g_{ij}} \, \widehat{\pJetProlongation g_{jk}} = u_{ijk} \, \widehat{\pJetProlongation g_{ik}}
\end{equation}
for some \( \UGroup(1) \)-valued map \( u_{ijk} \).
The class of \( u_{ijk} \) in \( \cechCohomology^2(M, \UGroup(1)) \isomorph \sCohomology^3(M, \Z) \) is the obstruction for the lift.
We emphasize that this class in the third cohomology can be constructed for each principal bundle over a symplectic manifold in a natural and canonical way.
Although being canonical, we are not aware that this class has been discussed before in the literature.
Recall that elements in \( \sCohomology^3(M, \Z) \) parameterize equivalence classes of bundle gerbes (in the same way \( \sCohomology^2(M, \Z) \) parameterizes principal circle bundles).
The bundle gerbe corresponding to \( \equivClass{u_{ijk}} \) is known as the so-called lifting gerbe associated to the lifting problem of \( \pJetBundle P \) to \( \widehat{\pJetGroup G} \), see \parencite[Section~4]{Murray1996}.
It comes equipped with a natural connection and curving \parencite{Gomi2003}.
In this context, the following open question is of interest.
\begin{problem}
	Find an explicit description of the lifting gerbe (and the class in \( \sCohomology^3(M, \Z) \)) that obstructs the lifting of \( \pJetBundle P \) to \( \widehat{\pJetGroup G} \).
	In particular, clarify the geometric meaning of this gerbe that is naturally associated to each principal bundle over a symplectic manifold.
\end{problem}
In the following, we assume that there exists a lift \( \widehat{\pJetBundle P} \) of \( \pJetBundle P \) to \( \widehat{\pJetGroup G} \).
According to the sequence~\eqref{eq:application:gaugeTheory:seqLiftpJetBundle}, such a lift is unique up to a principal circle bundle on \( M \).
The natural identifications
\begin{equation}\begin{split}
	\widehat{\pJetBundle P} \times_{\widehat{\pJetGroup G}} \bigl(\widehat{\pJetGroup G} \slash (\SpGroup(2n, \R) \times G \times \UGroup(1))\bigr)
	&\isomorph
	\widehat{\pJetBundle P} \times_{\widehat{\pJetGroup G}} \LinMapSpace(\R^{2n}, \LieA{g})
	\\
	&\isomorph
	\pJetBundle P \times_{\pJetGroup G} \LinMapSpace(\R^{2n}, \LieA{g})
	\\
	&\isomorph
	\ConnBundle P
\end{split}\end{equation}
allow us to view a connection \( \Gamma \) on \( P \) as a reduction \( Q_\Gamma \) of \( \widehat{\pJetBundle P} \) to the structure group \( \SpGroup(2n, \R) \times G \times \UGroup(1) \).
We are thus in the setting discussed in \cref{sec::reductionOfStructureGroup}.
Assume that there exists a principal connection on \( \widehat{\pJetBundle P} \) that is reducible to each reduction \( Q_\Gamma \).
Then, by \cref{prop::applications:reductionStructureGroup:momentumMap}, the group of bundle automorphism of \( \widehat{\pJetBundle P} \) has an equivariant group-valued momentum map, whose second component is given by the principal \( \UGroup(1) \)-bundle \( Q_\Gamma \times_{\rho^{-1}} \UGroup(1) \).
We would like to proceed as in the proof of \cref{prop:application:gaugeTheory:momentumMapGaugeGroup}, and pull back everything along the jet prolongation \( \pJetProlongation: \AutGroup(P) \to \AutGroup(\pJetBundle P) \).
However, we are faced with the problem that \( \pJetProlongation \) takes values in \( \AutGroup(\pJetBundle P) \) and not in \( \AutGroup(\widehat{\pJetBundle P}) \).
Clearly, we have the sequence
\begin{equationcd}[label=eq:application:gaugeTheory:seqAutGroupsJet]
	e \to[r] & \sFunctionSpace(M, \UGroup(1)) \to[r] & \AutGroup(\widehat{\pJetBundle P}) \to[r] & \AutGroup(\pJetBundle P).
\end{equationcd}
However, the last map in this sequence is in general not surjective.
If the \( \UGroup(1) \)-bundle \( \widehat{\pJetBundle P} \to \pJetBundle P \) has a \( \widehat{\pJetGroup G} \)-invariant connection, then we can use its horizontal lift to show exactness of the infinitesimal sequence:
\begin{equationcd}[label=eq:application:gaugeTheory:seqAutAlgebraJet]
	0 \to[r] & \sFunctionSpace(M, \R) \to[r] & \AutAlgebra(\widehat{\pJetBundle P}) \to[r] & \AutAlgebra(\pJetBundle P) \to[r] & 0.
\end{equationcd}
In this case, we can pull-back~\eqref{eq:application:gaugeTheory:seqAutAlgebraJet} along the jet prolongation \( \pJetProlongation: \AutAlgebra(P) \to \AutAlgebra(\pJetBundle P) \) to obtain an Abelian extension
\begin{equationcd}[label=eq:application:gaugeTheory:seqAutAlgebra]
	0 \to[r] & \sFunctionSpace(M, \R) \to[r] & \widehat{\AutAlgebra}(P) \to[r] & \AutAlgebra(P) \to[r] & 0.
\end{equationcd}
Under these assumptions, we conclude that the Lie algebra \( \widehat{\AutAlgebra}(P) \) acts on the space \( \ConnSpace(P) \) of connections and that it has an (infinitesimal) equivariant momentum map.
Comparing with the usual non-equivariance picture, we conjecture that the \( \AutGroup(P) \)-action has a non-equivariant momentum map and that the passage to the extension is only necessary to obtain an equivariant momentum map.
In this context, one would also expect that an extension by \( \R \) is sufficient instead of an extension by the much larger space \( \sFunctionSpace(M, \R) \).
Moreover, it is not clear whether the extension~\eqref{eq:application:gaugeTheory:seqAutAlgebra} splits.
As the general methods and techniques developed in \cref{sec::globalAnalysisSymplecticFibreBundles} are tailored to symplectic fiber models with \emph{equivariant} momentum maps, these questions go beyond the scope of this paper.

\appendix

\section*{Appendices}
\addcontentsline{toc}{section}{Appendices}
\renewcommand{\thesubsection}{\Alph{subsection}}
\numberwithin{equation}{subsection}	
\numberwithin{thm}{subsection}		
\numberwithin{prop}{subsection}		
\numberwithin{lemma}{subsection}		
\numberwithin{remark}{subsection}		
\numberwithin{defn}{subsection}		
\automark[subsection]{subsection}

\subsection{Notations and conventions}
\label{sec::conventions}

Because notations and, especially, sign and coefficient conventions are 
not uniform in the literature, we provide below those used in this article.
We also comment on some peculiarities of the infinite-dimensional setting.

By a manifold we understand a possibly infinite-dimensional smooth 
manifold without boundary modeled on a locally convex space as, for 
example discussed in \parencite{Neeb2006}. If $f:M \to N$ is a smooth 
map between manifolds, its derivative, or tangent map, at $m \in M$ is 
denoted by $\tangent_m f: \TBundle_m M \to \TBundle_{f(m)} N$.
The Lie algebra of vector fields on a smooth manifold \( M \) is denoted 
by $\VectorFieldSpace(M)$, the Jacobi-Lie bracket on $\VectorFieldSpace(M)$ 
by $[\cdot , \cdot ]$, the space of $k$-exterior differential forms by 
$\DiffFormSpace^k(M)$, the space of $k$-contravariant tensors by 
$\VectorFieldSpace^k(M)$, the exterior differential by 
$\dif: \DiffFormSpace^k(M) \to \DiffFormSpace^{k+1}(M)$, the Lie 
derivative along a vector field $Z\in \VectorFieldSpace(M)$ by 
$\difLie_Z$ (acting on any tensor field), the interior product 
(contraction on the first index) of a form $\alpha \in\DiffFormSpace^k(M)$ 
with $Z \in \VectorFieldSpace(M)$ by ${\rm i}_Z \alpha  = 
Z \contr\alpha \in \DiffFormSpace^{k-1}(M)$.
In infinite dimensions, the cotangent bundle is no longer a \emph{smooth} 
bundle \parencite[Remark~1.3.9]{Neeb2006} and hence smoothness of 
differential forms has to be specified explicitly.
A differential form is defined as a set-theoretic section of the exterior 
bundle such that all chart representations are smooth as maps 
\( U \times E^k \to \R \), where \( U \) is an open 
subset of \( M \) and \( E \) denotes the model space 
of \( M \). For the wedge product $\alpha \wedge \beta \in 
\DiffFormSpace^{k+l}(M)$ of the differential forms $\alpha \in 
\DiffFormSpace^k(M)$
and $\beta \in\DiffFormSpace^l(M)$ we use the Bourbaki convention
\[
(\alpha \wedge \beta)(X_1, \ldots X_{k+l}) = \sum_\sigma 
(\operatorname{sign} \sigma) \alpha\left(X_{\sigma(1)}, 
\ldots,X_{\sigma(k)}\right) \beta\left(X_{\sigma(k+1)}, 
\ldots,X_{\sigma(k+l)}\right),
\]
where $X_1, \ldots X_{k+l} \in \VectorFieldSpace(M)$ and the sum is
taken over all $(k,l)$-shuffles $\sigma$, \ie, permutations
of $\{1, 2, \ldots, k+l\}$ such that $\sigma(1)< \cdots \sigma(k)$
and $\sigma(k+1)< \cdots \sigma(k+l)$. Evaluation of a tensor
field $\xi$ on \( M \) at $m \in M$ is denoted by $\xi_m = \xi(m)$, 
yielding a tensor of the same type on $\TBundle_m M$. The vector 
space of skew-symmetric $k$-linear maps on $\CotBundle_m M$ and
$\TBundle_m M$ is denoted by $\ExtBundle^k(\CotBundle_m M)$
and $\ExtBundle^k(\TBundle_m M)$, respectively; thus if $\alpha \in 
\DiffFormSpace^k(M)$ and $m \in M$, then $\alpha_m \in 
\ExtBundle^k(\CotBundle_m M)$.

Given topological vector spaces $V$ and $V^*$ (not necessarily the
functional analytic dual), $\left\langle\cdot , \cdot \right\rangle:
V ^* \times V \to \mathbb{R}$ denotes a weakly non-degenerate
pairing. If $W$ is another topological vector space in weak duality
with some topological vector space $W^*$ and $T: V \to W$ is
a linear continuous map, then its dual $T^*: W^*\to V^*$ is 
defined by $\left\langle T^* \alpha, v \right\rangle \defeq
\left\langle \alpha, Tv \right\rangle$, for any $v \in V$
and $\alpha \in W^*$.
Weak non-degeneracy of the pairings guarantees only
uniqueness of the linear dual map $T^*$, not its existence, so in 
concrete situations, its existence needs to be checked.

Given a Lie group \( G \), left and right translations by 
$a \in G$ are denoted by $b \ni G \mapsto \LeftTrans_a b\defeq ab \in G$ 
and $b \ni G \mapsto \RightTrans_a b\defeq ba \in G$, respectively.
The tangent map $\tangent_b \LeftTrans_a: \TBundle_b G
\to \TBundle_{ab} G$ of $\LeftTrans_a$ at an arbitrary $b \in G$ 
is denoted, for simplicity, by the same symbol $\LeftTrans_a$; similar 
conventions are used for right translation.
To shorten notation even further, we will abbreviate \( \tangent_b 
\LeftTrans_a (A_b) \) for \( A_b \in \TBundle_b G \) by 
\(a\ldot A_b\in\TBundle_{ab}G\). Similarly, we write 
\( A_b \ldot a \equiv \tangent_b \RightTrans_a (A_b) \) for the right 
multiplication. The Lie algebra of \( G \) is denoted by
\( \LieA{g} \) and the (left) adjoint representation by 
$\AdAction_a\defeq \tangent_e \left(\LeftTrans_a
\circ \RightTrans_{a^{-1}}\right): \LieA{g} \to 
\LieA{g}$; $\AdAction_a$ is a Lie algebra
isomorphism for any $a \in G$. The (left) coadjoint
representation of \( G \) on the dual $\LieA{g}^\ast$ of \( \LieA{g} \) 
is denoted by $\CoAdAction_{a^{-1}}: 
\LieA{g}^\ast\to \LieA{g}^\ast$ for any 
$a \in G$. The adjoint and coadjoint Lie algebra representations
of \( \LieA{g} \) on \( \LieA{g} \) and $\LieA{g}^\ast$,
respectively, are denoted by $\adAction_A B\defeq [A,B]$ and
$\CoadAction_A: \LieA{g}^\ast\to 
\LieA{g}^\ast$, respectively, for any $A, B \in \LieA{g}$.
We have $\tangent_e (\AdAction_{\cdot} B) (A) = 
[A, B] =\adAction_A B$ for any $A, B \in \LieA{g}$.

The space of \( \LieA{g} \)-valued $k$-forms on a manifold \( M \)
is denoted by $\DiffFormSpace^k(M, \LieA{g})$. The wedge product
associated to the Lie bracket $[ \cdot , \cdot ]$ on \( \LieA{g} \),
denoted by $[\alpha\wedge \beta] \in 
\DiffFormSpace^{k+l}(M, \LieA{g})$ for any $\alpha \in
\DiffFormSpace^k(M, \LieA{g})$ and 
$\beta\in \DiffFormSpace^l(M, \LieA{g})$, 
is defined with the same coefficient conventions as in the scalar 
case, \ie,
\[
[\alpha \wedge \beta](X_1, \ldots X_{k+l}) = \sum_\sigma 
(\operatorname{sign} \sigma) \commutator*{\alpha\left(X_{\sigma(1)}, 
\ldots,X_{\sigma(k)}\right)}{\beta\left(X_{\sigma(k+1)}, 
\ldots,X_{\sigma(k+l)}\right)},
\]
the sum being over all $(k,l)$-shuffles $\sigma$ and 
$X_1, \ldots X_{k+l} \in \VectorFieldSpace(M)$ arbitrary. 

If $\Upsilon:G \times M \to M$ is a left Lie group action and
$A \in \LieA{g}$, the infinitesimal generator, or fundamental,
vector field defined by $A$ is denoted by 
\[
A^\ast_m \defeq \difFracAt{}{t}{t=0}\Upsilon(\exp(tA), m)
\in \TBundle_m M
\]
for any $m\in M$, where $\exp: \LieA{g} \to G$ is the
exponential map of \( G \). We have $[A,B]^* = - [A^*, B^*]$ for all
$A, B \in \LieA{g}$. If the \( G \)-action on \( M \) is on the right, 
then $[A,B]^* = [A^*, B^*]$. If the action is clear from the context, 
we also use the \enquote{dot notation} for the action and its derivatives.
So, for \( m \in M \) and \( g \in G \), we abbreviate 
\( g \cdot m = \Upsilon(g, m) \).
On the infinitesimal level, for \( A \in \LieA{g} \), we will write the 
fundamental vector field as \( A^*_m = A \ldot m \).
Similarly, \( g \ldot X_m = \tangent_m \Upsilon_g (X_m) \).

If $f:M \to G$ is a smooth function, the left
logarithmic derivative $\difLog_m f: \TBundle_m M \to \LieA{g}$
of $f$ at $m \in M$ is defined by $\difLog_m f \defeq
\LeftTrans_{f(m)}^{-1} \circ \tangent_m f$. Therefore $\difLog f \in 
\DiffFormSpace^1(M, \LieA{g})$.

A Poisson structure on a smooth finite-dimensional manifold \( M \) is a 
Lie algebra structure $\{ \cdot , \cdot \}$ on the vector space 
$\sFunctionSpace(M)$ of smooth functions satisfying the Leibniz 
identity $\{fg, h\} = f\{g, h\} + \{f, h\}g$ for any 
$f, g, h \in \sFunctionSpace(M)$. Thus,
${\{h, \cdot \}}: \sFunctionSpace(M) \to \sFunctionSpace(M)$ is
a derivation and hence defines a vector field 
$X_h \in \VectorFieldSpace(M)$ on \( M \), \ie, $\left\langle \dif f, X_h 
\right\rangle \defeq {\{h, f\}}$ for any $f \in \sFunctionSpace(M)$, 
called the Hamiltonian vector field of $h$. Hamilton's equations
$\difFrac{}{t} m(t) = X_h(m(t))$ can be written in Poisson bracket
form as $\difFrac{}{t} f(m(t)) = {\{h, f \}}(m(t))$ for any 
$f \in \sFunctionSpace(M)$. The Poisson tensor $\varpi \in 
\VectorFieldSpace^2(M)$
is defined by $\varpi(\dif f, \dif g)\defeq \{f, g\}$ for
any $f, g \in \sFunctionSpace(M)$ and the Schouten bracket of 
\( \varpi \) with itself vanishes.
Conversely, any $\varpi \in \VectorFieldSpace^2(M)$ with this property 
defines a Poisson bracket by the formula above. Denote
by $\varpi^\sharp : \CotBundle M \to \TBundle M$ the
vector bundle map covering the identity defined by 
$\left\langle \alpha, \varpi^\sharp \beta \right\rangle \defeq 
\varpi(\alpha, \beta)$ for any $\alpha, \beta \in \DiffFormSpace^1(M)$.
If the map $\varpi^\sharp$ is invertible, its inverse, 
$\omega^\flat: \TBundle M \to \CotBundle M$ defines
a closed two form $\omega \in \DiffFormSpace^2(M)$ by $\omega(X_1, X_2) 
\defeq\left\langle \omega^\flat(X_1), X_2 \right\rangle = 
\varpi\left(\bigl(\varpi^\sharp \bigr)^{-1}X_1, 
\bigl(\varpi^\sharp \bigr)^{-1}X_2 \right)$ for
any $X_1, X_2 \in \VectorFieldSpace(M)$. In this case, $(M, \omega)$
is a symplectic manifold. The Hamiltonian vector field $X_h$ of
$h \in \sFunctionSpace(M)$ is given then, alternatively, by
the identity $X_h \contr \omega = -\dif h$. Any symplectic
manifold $(M, \omega)$ is Poisson by setting $\{f, h\}\defeq 
\omega(X_f, X_h)$.

 If $Q$ is finite dimensional, the cotangent bundle is naturally an exact
symplectic manifold $(\CotBundle Q, \omega_\textrm{can})$, where
$\omega_\textrm{can} = \dif  \theta_\textrm{can}$, where 
$\theta_\textrm{can} \in \DiffFormSpace^1(\CotBundle Q)$ is given by 
$\left\langle\theta_\textrm{can}(\alpha_q), V_{\alpha_q} \right\rangle = 
\left\langle\alpha_q, \tangent_q \pi \bigl(V_{\alpha_q}\bigr)
 \right\rangle$, for all $\alpha_q \in \CotBundle Q$, 
$V_{\alpha_q}\in \TBundle_{\alpha_q} (\CotBundle Q)$,
and $\pi:\CotBundle Q \ni \alpha_q \mapsto q \in Q$
is the cotangent bundle projection.
In canonical cotangent bundle coordinates $(q^1, \ldots, q^n, 
p_1, \ldots, p_n)$, we have $\theta_\textrm{can} = p_i \dif q^i$ and
$\omega_\textrm{can} =\dif p_i \wedge \dif q ^i$.
Thus, the Hamiltonian vector field is given by \( X_f = 
\difpFrac{f}{p_i} \difp_{q^i} - \difpFrac{f}{q^i} \difp_{p_i} \) and 
the Poisson bracket takes the form \( \poisson{f}{h} = \difpFrac{f}{p_i} 
\difpFrac{h}{q^i} - \difpFrac{f}{q^i} \difpFrac{h}{p_i} \).

Let \( (M, \omega) \) be a symplectic \( G \)-manifold. A momentum map 
for the action, if it exists, is a smooth map \( J: M \to \LieA{g}^* \) 
satisfying
\begin{equation*}
	A^* \contr \omega + \dif J_A = 0, \quad A \in \LieA{g},
\end{equation*}
where \( J_A \defeq \dualPair{A}{J}: M \to \R \).
 
A principal \( G \)-bundle is a fiber bundle \( P \to M \) together 
with a smooth right action of \( G \) on \( P \), which is 
fiber-preserving, free, and transitive on each fiber. In particular, $M= P/G$.
Given a left \( G \)-manifold \( \FibreBundleModel{F} \), we form the 
associated bundle \( F = P \times_G \FibreBundleModel{F} \defeq 
(P \times \FibreBundleModel{F})/G\) as the quotient of 
\( P \times \FibreBundleModel{F} \) by the natural free right action of 
\( G \), \ie, $(p,\FibreBundleModel{f})\cdot g \defeq (p\cdot g, g ^{-1} 
\cdot \FibreBundleModel{f})$, for any $p\in P$, 
$\FibreBundleModel{f}\in \FibreBundleModel{F}$, and $g\in G$.  Denoting by 
$[p,\FibreBundleModel{f}]$ points in $P \times_G \FibreBundleModel{F}$,
we have the identity 
\begin{equation*}
\equivClass{p \cdot g, \FibreBundleModel{f}} = 
\equivClass{p, g \cdot \FibreBundleModel{f}},
\end{equation*}
often used in the text.
The smooth projection map 
$P \times_G \FibreBundleModel{F}\ni [p,\FibreBundleModel{f}] \mapsto
[p] \in M=P/G$ defines a locally trivial fiber bundle with total space
$P \times_G \FibreBundleModel{F}$, base \( M \), and typical fiber 
$\FibreBundleModel{F}$.

An important particular case of this construction occurs by taking
$\FibreBundleModel{F} =\LieA{g}$, where \( \LieA{g} \) 
is the Lie algebra of \( G \), and the \( G \)-action on \( \LieA{g} \)
is the adjoint action. In 
this case, the associated bundle is called the
adjoint bundle and is denoted by $\AdBundle P$; it is a vector bundle 
all of whose fibers are Lie algebras and the bracket operation depends 
smoothly on the base point, \ie, $\AdBundle P$ is a Lie algebra 
bundle. Taking $\FibreBundleModel{F} = \LieA{g}^*$ and the 
coadjoint action on $\LieA{g}^\ast$ yields as associated bundle
the coadjoint bundle $\CoAdBundle P \to M$. Each fiber
of this vector bundle carries the Lie-Poisson bracket which depends 
smoothly on the base, \ie, $\CoAdBundle P$ is a Lie-Poisson
vector bundle. A third useful example is the conjugation bundle 
$\operatorname{Conj}P \to M$ which is the associated bundle
obtained by taking $\FibreBundleModel{F} = G$ with the 
\( G \)-action being the conjugation on \( G \). The fibers
of this bundle carry a natural group structure, depending smoothly
on the base, so this is a group fiber bundle.

\subsection{Fiber integration}%
\label{sec::fibreIntegration}
In this appendix, we review the method of fiber integration in the 
special case of trivial bundles, because it appears often in formulas 
and computations in the text. All the proofs of the following statements 
can be found in \parencite[Section~VII.5]{GreubHalperinEtAl1972}, where 
also the more general situation of locally trivial fiber bundles is 
discussed.

Let \( \pi: M \times F \to M \) be a finite-dimensional trivial fiber 
bundle. Assume that the fiber \( F \) is oriented and closed (\ie, $F$ 
is compact and without boundary). Integrating a differential form 
\( \alpha \in \DiffFormSpace^k(M \times F) \) over the fiber \( F \) 
yields a \( (k - \dim F) \)-form \( \intFibre_F \alpha \) on the base 
\( M \) by defining
\begin{equation}
\left(\,\,\intFibre_F \alpha\right)_m \defeq \int_F \alpha_m \in 
\ExtBundle^{k-\dim F} (\TBundle_m^* M),
\end{equation}
where, for \( m \in M \), the \emphDef{retrenchment} \( \alpha_m \in 
\DiffFormSpace^{\dim F}(F, \ExtBundle^{k-\dim F} (\TBundle_m^* M)) \), 
a $(\dim F)$-form on $F$ with values in the vector space 
$\ExtBundle^{k-\dim F} (\TBundle_m^* M)$, is defined by
\begin{equation}\begin{split}
&\dualPair*{(\alpha_m)_f(Z_1, \dotsc, Z_{\dim F})}{X_1 \wedge \dotsb 
\wedge X_{k-\dim F}}\\
& \qquad
= \alpha_{(m, f)} \bigl(X_1, \dotsc, X_{k-\dim F}, Z_1, 
\dotsc, Z_{\dim F}\bigr)
\end{split}\end{equation}
for \( m \in M\), \(f \in F \), \( X_i \in \TBundle_m M\), and 
\(Z_j \in \TBundle_f F \). This defines a linear map
\begin{equation}
\intFibre_F: \DiffFormSpace^k(M \times F) \to \DiffFormSpace^{k-\dim F}(M).
\end{equation}

\begin{remark}
\label{rem::fibreIntegration:inInfiniteDimensions}
Note that, on a formal level, the discussion above also applies to the 
case of an infinite-dimensional manifold \( M \), because in the end, 
we are just integrating over the finite-dimensional fiber \( F \). 
However, a direct and carefree translation of the previous approach 
to an infinite-dimensional manifold \( M \) is impeded by the fact 
that the cotangent bundle \( \CotBundle M \) is not a \emph{smooth}
bundle in this case \parencite[Remark~I.3.9]{Neeb2006}. Thus, it is 
then not clear in which sense the retrenchment depends smoothly on 
\( m \in M \). Moreover, to integrate a form \( \beta \in 
\DiffFormSpace^{\dim F}(F, V)\) on $F$ with values in a locally convex 
space \( V \), one would need at least Mackey completeness of \( V \) 
(see \parencite[Remark~I.4.4]{Neeb2006}). To circumvent these problems 
for infinite-dimensional base manifolds, 
we directly define the fiber integral by
\begin{equation}
\label{eq::fibreIntegration:expliciteDefinition}
\left(\;\intFibre_F \alpha \right)_m (X_1, \dotsc, X_{k-\dim F}) 
= \int_F \alpha_{(m, \cdot)}(X_1, \dotsc, X_{k-\dim F}, \cdot).
\end{equation}
Here, the integrand is a real valued, top-degree form on \( F \). We then 
have to check that \( \intFibre \alpha \)
is indeed a \emph{smooth} form on \( M \). But this follows from the local 
expression as in the finite-dimensional case; compare 
\parencite[Proof of Proposition~VII.5.7.11.VII]{GreubHalperinEtAl1972}.
\end{remark}

Fiber integration enjoys the following properties:
\begin{enumerate}
	\item 
		Equivariance with respect to maps on the base:
		\begin{equation}
			\label{eq::fibreIntegration:underTransfOnBase}
	\phi^*\intFibre_F \alpha = \intFibre_F (\phi \times \id_F)^* \alpha
		\end{equation}
		holds for all smooth maps \( \phi: M \to M \), see \parencite[Proposition~VII.5.7.12.VIII]{GreubHalperinEtAl1972}.
	\item 
Invariance under fiber transformations: for all orientation-preserving 
diffeomorphisms \( \psi \) of \( F \),
		\begin{equation}
			\label{eq::fibreIntegration:underTransfOnFibre}
			\intFibre_F(\id_M \times \psi)^* \alpha = \intFibre_F \alpha,
		\end{equation}
		see \parencite[Proposition~VII.5.7.12.VIII]{GreubHalperinEtAl1972}.
	\item 
		Contraction with vector fields:
		\begin{equation}
			\label{eq::fibreIntegration:contractionWithFieldsOnBase}
  X \contr \intFibre_F \alpha = \intFibre_F (X \times 0_F) \contr \alpha
		\end{equation}
holds for all vector fields \( X \in \VectorFieldSpace(M) \). Furthermore, 
for \( Z \in \VectorFieldSpace(F) \), we have
		\begin{equation}
			\label{eq::fibreIntegration:contractionWithFieldsOnFibre}
			\intFibre_F (0_M \times Z) \contr \alpha = 0,
		\end{equation}
		see \parencite[Proposition~VII.5.7.13.X]{GreubHalperinEtAl1972}.
	\item
Fiber integration commutes with the exterior differential on \( M \) 
and $M \times F$: 
		\begin{equation}
			\label{eq::fibreIntegration:commuteDifferentiating}
			\dif \intFibre_F \alpha = \intFibre_F \dif \alpha,
		\end{equation}
see \parencite[Proposition~VII.5.7.13.X]{GreubHalperinEtAl1972}. In this 
formula, the fact that $F$ is boundaryless is essential. If $F$ has a 
non-empty boundary, the formula above has contributions from the 
boundary (see \parencite[Problem~VII.4]{GreubHalperinEtAl1972}):
\begin{equation}
\label{eq::fibreIntegration:commuteDifferentiatingBoundary}
\dif \intFibre_F \alpha = \intFibre_F \dif \alpha + 
(-1)^{\deg \alpha + \dim F} \intFibre_{\boundary F} \alpha.
\end{equation}
	\item
		Up-down formula: for any \( \beta \in \DiffFormSpace^k(M) \) and 
		\( \alpha \in \DiffFormSpace^l (M \times F) \), we have
		\begin{equation}
			\label{eq:fibreIntegration:upDownFormula}
\intFibre_F \pi^* \beta \wedge \alpha = \beta \wedge \intFibre_F \alpha,
		\end{equation}
		see \parencite[Proposition~VII.5.7.13.IX]{GreubHalperinEtAl1972}.
\end{enumerate}

\subsection{Hat product for fiber bundles}
\label{sec::hatProductFibreBundles}
\Textcite{Vizman2011} introduced the hat product of differential forms 
as a method to construct differential forms on the mapping space 
\( \sFunctionSpace(M, F) \) starting from differential forms on \( M \) 
and \( F \). We generalize these ideas to induce differential forms on 
the space of smooth sections of a fiber bundle. 

Let \( M \) be a compact, oriented, finite-dimensional manifold without 
boundary and let \( \pi: F \to M \) be a finite-dimensional fiber bundle 
over \( M \). Then the space \( \SectionSpaceAbb{F} \) of smooth sections 
of \( F \) is a Fr\'echet manifold. The evaluation map \( \ev: M \times 
\SectionSpaceAbb{F} \to F \) assigns to a section \( \phi \in 
\SectionSpaceAbb{F} \) and a point \(m \in M\) the value \(\phi(m) \in F\). 
Furthermore, let \( \pr_M: M \times \SectionSpaceAbb{F} \to M \) denote 
the projection on the first factor. We define the \emphDef{hat product} 
as the bilinear map\footnote{We use a different sign convention than the 
one used in \parencite{Vizman2011}.}
\begin{equation}
	\hatProduct: \DiffFormSpace^k(M) \times \DiffFormSpace^l(F) \to 
	\DiffFormSpace^{k+l - \dim M}(\SectionSpaceAbb{F}), \qquad \alpha 
	\hatProduct \omega = \intFibre_M \pr_M^* \alpha \wedge \ev^* \omega,
\end{equation}
where the fiber integration over the trivial bundle \( M \times 
\SectionSpaceAbb{F} \to \SectionSpaceAbb{F} \) is defined as explained in 
\cref{rem::fibreIntegration:inInfiniteDimensions}. 
Unravelling the definitions 
using~\eqref{eq::fibreIntegration:expliciteDefinition} gives the explicit 
formula
\begin{equation}
\label{eq::fibreIntegration:defWithPartialPullback}
(\alpha \hatProduct \omega)_\phi(Y_1, \dotsc Y_r) = 
(-1)^{k r} \int_M \alpha \wedge 
\phi^* \bigl(Y_r \contr \dotsb Y_1 \contr (\omega \circ \phi)\bigr),
\end{equation}
where \( r = k + l - \dim M \) denotes the degree of the hat product, 
\( \alpha \in \DiffFormSpace^k(M) \), \( \omega \in 
\DiffFormSpace^l(F) \), \( Y_i \in \sSectionSpace(\phi^* \VBundle F) \), 
and $\VBundle F \subset  \TBundle F$ is the vertical subbundle. Moreover, 
we used the abbreviation \( \phi^*(Y_r \contr \dotsb Y_1 \contr (\omega 
\circ \phi)) \in \DiffFormSpace^{l-r}(M) \) for the partial pull-back
\begin{equation}\label{eq::diffForm:defnPartialPullback}
\begin{split}
&\phi^*\bigl(Y_r \contr \dotsb Y_1 \contr (\omega \circ \phi)\bigr)_m 
(X_1, \dotsc X_{l-r}) \\
&\qquad = \omega_{\phi(m)}\bigl(Y_1(m), \dotsc, Y_r(m), 
\tangent_m\phi(X_1), \dotsc, \tangent_m \phi(X_{l-r})\bigr) \\
&\qquad = (\ev^* \omega)_{m,\phi} (Y_1, \dotsc, Y_r, X_1, \dotsc, X_{l-r}).
\end{split}
\end{equation}

\begin{remark}%
\label{rem::hatProduct:WithVolumeForm}
In the special case where \( \alpha \) is a volume form \( \mu \) on 
\( M \), we get the following \( l \)-form on \( \SectionSpaceAbb{F} \) 
	\begin{equation}
	\label{eq::hatProduct:VolumeForm}
(\mu \hatProduct \omega)_\phi (Y_1, \dotsc, Y_l) = (-1)^{l \dim M} 
\ \int_M \phi^*(Y_l \contr \dotsb Y_1 \contr (\omega \circ \phi)) \, \mu.
	\end{equation}
Since \( Y_i(m) \) is a vertical tangent vector at \( \phi(m) \), we see 
that \( \mu \hatProduct \omega \) only depends on the vertical component 
of the form \( \omega \). Hence, in this case, we can also consider the 
hat product of \( \mu \) with a differential form \( \omega \) along 
the fibers of \( F \) by using~\eqref{eq::hatProduct:VolumeForm} as the 
definition.
\end{remark}

Let us now explore the resulting calculus.
Since the exterior differential commutes with fiber integration by 
\eqref{eq::fibreIntegration:commuteDifferentiating}, we have
\begin{equation}
\label{eq::hatProduct:exteriorDifferential}
\begin{split}
\dif (\alpha \hatProduct \omega) 
	&= \intFibre_M \dif (\pr_M^* \alpha \wedge \ev^* \omega) \\
	&= \intFibre_M \pr_M^* \dif \alpha \wedge \ev^* \omega + (-1)^k 
	\intFibre_M \pr_M^* \alpha \wedge \ev^* \dif \omega \\
	&= \dif\alpha\hatProduct \omega + (-1)^k \alpha\hatProduct\dif\omega.
\end{split}
\end{equation}
In particular, if \( \alpha = \mu \) is a volume form on \( M \), then, 
by \cref{rem::hatProduct:WithVolumeForm}, 
\begin{equation}
\label{eq::hatProduct:DifferentialWithVolumeForm}
\dif (\mu\hatProduct\omega) = (-1)^{\dim M} \mu\hatProduct\dif_\pi\omega,
\end{equation}
where \( \dif_\pi \) denotes the vertical differential along the fibers. 

For every bundle automorphism \( \psi \) of \( F \), we denote the 
induced diffeomorphism on the base \( M \) by \( \check{\psi} \). With 
this notation, the canonical smooth left action of \( \AutGroup(F) \) 
on the section space \( \SectionSpaceAbb{F} \) takes the form
\begin{equation}
\Upsilon: \AutGroup(F) \times \SectionSpaceAbb{F} \to \SectionSpaceAbb{F}, 
\qquad \psi \cdot \phi = \psi \circ \phi \circ \check{\psi}^{-1}.
\end{equation}
On the infinitesimal level, we obtain
\begin{equation}
\label{eq::hatProduct:InfActionSectionSpace}
Y^*_\phi = \tangent_{\id} \Upsilon_\phi (Y) = \difFracAt{}{\varepsilon}{0} 
\psi_\varepsilon \circ \phi \circ \check{\psi}^{-1}_\varepsilon = 
Y \circ \phi - \tangent \phi (\check{Y}) \in 
\sSectionSpace(\phi^* \VBundle F),
\end{equation}
where, for \( Y \in \AutAlgebra(F) \), the induced vector field on 
\( M \) is denoted by \( \check{Y} \) and the curve 
\( \varepsilon \mapsto \psi_\varepsilon \in \AutGroup(F) \)
is tangent to \( Y \) at \(\varepsilon=0\), \ie, 
\( \psi_0 = \id \) and \( \difFracAt{}{\varepsilon}{\varepsilon=0}
\psi_\varepsilon= Y\). 
The next proposition shows how the action of \( \AutGroup(F) \) 
on \( \SectionSpaceAbb{F} \) interacts with the hat product.
\begin{prop}
Let \( \alpha \in \DiffFormSpace^k(M) \) and \( \omega \in 
\DiffFormSpace^l(F) \). For every \( \psi \in \AutGroup(F) \), whose 
induced diffeomorphism \( \check{\psi} \) on \( M \) is 
orientation-preserving, and \( Y \in \AutAlgebra(F) \) we have
\begin{align}
\label{eq::hatProduct:TransfUnderAutomorphisms}
\Upsilon_\psi^* (\alpha \hatProduct \omega) &= 
(\check{\psi}^* \alpha) \hatProduct (\psi^* \omega), \\
\label{eq::hatProduct:TransfUnderAutomorphismsInf}
\difLie_{Y^*} (\alpha \hatProduct \omega) &= (\difLie_{\check{Y}} \alpha) 
\hatProduct \omega + \alpha \hatProduct (\difLie_Y \omega).
\end{align}
Moreover,
\begin{equation}
\label{eq::hatProduct:ContractionWithKillingVectorField}
Y^* \contr (\alpha \hatProduct \omega) = (\check{Y} \contr \alpha) 
\hatProduct \omega + (-1)^k \alpha \hatProduct (Y \contr \omega).
	\qedhere
	\end{equation}
\end{prop}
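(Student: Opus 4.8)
The plan is to establish the three identities in sequence, deriving the
infinitesimal statements~\eqref{eq::hatProduct:TransfUnderAutomorphismsInf}
and~\eqref{eq::hatProduct:ContractionWithKillingVectorField} from the finite
statement~\eqref{eq::hatProduct:TransfUnderAutomorphisms}. First I would prove
\eqref{eq::hatProduct:TransfUnderAutomorphisms}. By definition, \( \alpha
\hatProduct \omega = \intFibre_M \pr_M^* \alpha \wedge \ev^* \omega \), so I
need to understand how \( \pr_M^* \alpha \), \( \ev^* \omega \), and the fiber
integration transform under \( \Upsilon_\psi \). The key observation is the
commuting diagram relating \( \id_M \times \Upsilon_\psi \) on \( M \times
\SectionSpaceAbb{F} \), the map \( \check{\psi}^{-1} \times \id \), and the
evaluation map: concretely, \( \ev \circ (\check{\psi}^{-1} \times
\Upsilon_\psi) = \psi \circ \ev \) (using \( (\psi \cdot \phi)(\check{\psi}(m))
= \psi(\phi(m)) \)), and \( \pr_M \circ (\check{\psi}^{-1} \times
\Upsilon_\psi) = \check{\psi}^{-1} \circ \pr_M \). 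Pulling
\( \pr_M^* \alpha \wedge \ev^* \omega \) back along \( \check{\psi}^{-1} \times
\Upsilon_\psi \) thus gives \( \pr_M^* (\check{\psi}^{-1})^* \alpha \wedge
\ev^* \psi^* \omega \). Applying the equivariance of fiber integration under
base maps~\eqref{eq::fibreIntegration:underTransfOnBase} to
\( \phi = \check{\psi}^{-1} \) (which is orientation-preserving by assumption,
so the orientation is respected) then yields
\( \Upsilon_\psi^*(\alpha \hatProduct \omega) = (\check{\psi}^* \alpha)
\hatProduct (\psi^* \omega) \) after rearranging; one has to be a little
careful tracking which of \( \check{\psi} \) versus \( \check{\psi}^{-1} \)
appears, but this is bookkeeping.

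Next I would obtain~\eqref{eq::hatProduct:TransfUnderAutomorphismsInf} by
differentiating~\eqref{eq::hatProduct:TransfUnderAutomorphisms}. Choose a curve
\( \varepsilon \mapsto \psi_\varepsilon \in \AutGroup(F) \) with
\( \psi_0 = \id \) and \( \difFracAt{}{\varepsilon}{0} \psi_\varepsilon = Y \);
then \( \difFracAt{}{\varepsilon}{0} \check{\psi}_\varepsilon = \check{Y} \).
Since \( \difLie_{Y^*} (\alpha \hatProduct \omega) =
\difFracAt{}{\varepsilon}{0} \Upsilon_{\psi_\varepsilon}^* (\alpha \hatProduct
\omega) \) (here \( Y^*_\phi \) is the fundamental vector field computed
in~\eqref{eq::hatProduct:InfActionSectionSpace}, and one uses that the flow of
\( Y^* \) is generated by \( \Upsilon_{\psi_\varepsilon} \) to first order),
differentiating the right-hand side \( (\check{\psi}_\varepsilon^* \alpha)
\hatProduct (\psi_\varepsilon^* \omega) \) and using bilinearity of the hat
product together with \( \difFracAt{}{\varepsilon}{0} \check{\psi}_\varepsilon^*
\alpha = \difLie_{\check{Y}} \alpha \) and \( \difFracAt{}{\varepsilon}{0}
\psi_\varepsilon^* \omega = \difLie_Y \omega \) gives exactly \( (\difLie_{\check{Y}}
\alpha) \hatProduct \omega + \alpha \hatProduct (\difLie_Y \omega) \).

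Finally, for~\eqref{eq::hatProduct:ContractionWithKillingVectorField} I would
combine Cartan's magic formula on \( \SectionSpaceAbb{F} \) with
\eqref{eq::hatProduct:exteriorDifferential} and
\eqref{eq::hatProduct:TransfUnderAutomorphismsInf}. Write \( \difLie_{Y^*}
(\alpha \hatProduct \omega) = \dif (Y^* \contr (\alpha \hatProduct \omega)) +
Y^* \contr \dif(\alpha \hatProduct \omega) \). Using
\eqref{eq::hatProduct:exteriorDifferential}, \( \dif(\alpha \hatProduct \omega)
= \dif \alpha \hatProduct \omega + (-1)^k \alpha \hatProduct \dif \omega \);
apply the (conjectured) formula~\eqref{eq::hatProduct:ContractionWithKillingVectorField}
to each of these two terms — i.e. set up the computation so that I prove the
contraction formula by showing both sides have the same exterior derivative and
agree modulo the already-established Lie derivative identity. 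More cleanly:
assume \eqref{eq::hatProduct:ContractionWithKillingVectorField} as an ansatz,
verify that the right-hand side \( (\check{Y} \contr \alpha) \hatProduct \omega
+ (-1)^k \alpha \hatProduct (Y \contr \omega) \), when fed through Cartan's
formula on \( M \) and \( F \) respectively, reproduces \eqref{eq::hatProduct:TransfUnderAutomorphismsInf};
then uniqueness is not automatic, so instead I would prove
\eqref{eq::hatProduct:ContractionWithKillingVectorField} directly from the
explicit formula~\eqref{eq::fibreIntegration:defWithPartialPullback}. Indeed,
\( Y^*_\phi = Y \circ \phi - \tangent\phi(\check Y) \) splits into a vertical
piece over \( F \) and a piece that is \( \phi \)-related to \( \check Y \) on
\( M \); contracting \( (\alpha \hatProduct \omega)_\phi \) with the first piece
produces the \( \alpha \hatProduct (Y \contr \omega) \) term via
\eqref{eq::fibreIntegration:contractionWithFieldsOnFibre}-type manipulations,
and contracting with the \( \tangent\phi(\check Y) \) piece produces
\( (\check Y \contr \alpha) \hatProduct \omega \) via
\eqref{eq::fibreIntegration:contractionWithFieldsOnBase}, the sign \( (-1)^k \)
being the Koszul sign from moving the contraction past \( \alpha \). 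The main
obstacle I anticipate is precisely this sign- and factor-tracking in the
contraction formula: one must carefully handle the partial pull-back
notation~\eqref{eq::diffForm:defnPartialPullback}, the degree shift in fiber
integration, and the fact that \( Y^* \) is built from two differently-behaved
summands, so that the two contributions land with the correct relative sign.
The proof of~\eqref{eq::hatProduct:TransfUnderAutomorphisms} itself is
essentially formal once the commuting diagram is set up correctly.
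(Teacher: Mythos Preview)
Your plan for~\eqref{eq::hatProduct:TransfUnderAutomorphisms} and~\eqref{eq::hatProduct:TransfUnderAutomorphismsInf} matches the paper's proof. One bookkeeping slip: the correct identity is \( \ev \circ (\check{\psi} \times \Upsilon_\psi) = \psi \circ \ev \), not with \( \check{\psi}^{-1} \); your own parenthetical check confirms this. The paper organizes the computation by first pulling \( \Upsilon_\psi^* \) through the fiber integral via~\eqref{eq::fibreIntegration:underTransfOnBase} as \( (\id_M \times \Upsilon_\psi)^* \), then absorbing the leftover \( (\check{\psi}^{-1} \times \id)^* \) via~\eqref{eq::fibreIntegration:underTransfOnFibre}; you will need both properties, not only~\eqref{eq::fibreIntegration:underTransfOnBase}.

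For~\eqref{eq::hatProduct:ContractionWithKillingVectorField} your proposal diverges from the paper. The Cartan-formula route is indeed a dead end (no uniqueness), and your fallback of computing directly from~\eqref{eq::fibreIntegration:defWithPartialPullback} by splitting \( Y^*_\phi = Y\circ\phi - \tangent\phi(\check Y) \) can be made to work, but note that neither summand lies in \( T_\phi\SectionSpaceAbb{F} = \sSectionSpace(\phi^*\VBundle F) \) individually, so the explicit formula does not literally apply to each piece; you must extend the integrand to non-vertical vectors and then track signs through the partial pull-back, which is exactly the nightmare you anticipate. The paper avoids this by staying on \( M\times\SectionSpaceAbb{F} \): it applies~\eqref{eq::fibreIntegration:contractionWithFieldsOnBase} to move \( Y^* \) inside as \( (0_M\times Y^*)\contr \), uses the infinitesimal version of the evaluation identity to write \( (0_M\times Y^*)\contr\ev^*\omega = \ev^*(Y\contr\omega) - (\check{Y}\times 0_{\SectionSpaceAbb{F}})\contr\ev^*\omega \), and then invokes~\eqref{eq::fibreIntegration:contractionWithFieldsOnFibre} to kill the term \( (\check{Y}\times 0_{\SectionSpaceAbb{F}})\contr(\pr_M^*\alpha\wedge\ev^*\omega) \). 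Your citations of these two properties are swapped in spirit: in the fibration \( M\times\SectionSpaceAbb{F}\to\SectionSpaceAbb{F} \) the vector \( 0_M\times Y^* \) is a \emph{base} direction, while \( \check{Y}\times 0_{\SectionSpaceAbb{F}} \) is a \emph{fiber} direction. The paper's approach is cleaner precisely because the split into a genuine base and a genuine fiber vector field on \( M\times\SectionSpaceAbb{F} \) is well-defined, whereas your split on \( \SectionSpaceAbb{F} \) is not.
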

\begin{proof}
The following identities clearly hold:
\begin{align}
\pr_M \circ (\id_M \times \Upsilon_\psi) &= \pr_M, \\
\label{eq::hatProduct:evAction}
\ev \circ (\id_M \times \Upsilon_\psi) &= \psi \circ \ev \circ 
(\check{\psi}^{-1} \times \id_{\SectionSpaceAbb{F}}), \\
\pr_M \circ (\check{\psi} \times \id_{\SectionSpaceAbb{F}}) 
&= \check{\psi} \circ \pr_M.
\end{align}
Using these relations, we calculate
\begin{equation}\begin{split}
\Upsilon_\psi^* (\alpha \hatProduct \omega)
&\oversetSmall{\eqref{eq::fibreIntegration:underTransfOnBase}}{=} 
\intFibre_M (\id_M \times \Upsilon_\psi)^* (\pr_M^* \alpha \wedge 
\ev^* \omega) \\
&= \intFibre_M \pr_M^* \alpha \wedge \left(\psi \circ \ev \circ 
(\check{\psi}^{-1} \times \id_{\SectionSpaceAbb{F}})\right)^* \omega \\
&= \intFibre_M (\check{\psi}^{-1} \times \id_{\SectionSpaceAbb{F}})^* 
\left((\check{\psi} \times \id_{\SectionSpaceAbb{F}})^* \pr_M^* \alpha 
\wedge \ev^* (\psi^* \omega)\right) \\
&\oversetSmall{\eqref{eq::fibreIntegration:underTransfOnFibre}}{=} 
\intFibre_M \pr_M^* (\check{\psi}^* \alpha) \wedge \ev^* (\psi^* \omega) \\
&= (\check{\psi}^* \alpha) \hatProduct (\psi^* \omega).
\end{split}\end{equation}

The infinitesimal 
version~\eqref{eq::hatProduct:TransfUnderAutomorphismsInf} 
of~\eqref{eq::hatProduct:TransfUnderAutomorphisms} follows 
by differentiating using the product law. 
For the contraction with the Killing vector field, we have
\begin{equation}
\begin{split}
Y^* \contr (\alpha \hatProduct \omega) 
&\oversetSmall{\eqref{eq::fibreIntegration:contractionWithFieldsOnBase}}{=} 
\intFibre_M (0_M \times Y^*) \contr (\pr_M^* \alpha \wedge \ev^* \omega) \\
&= (-1)^k \intFibre_M \pr_M^* \alpha \wedge \bigl( (0_M \times Y^*) \contr 
\ev^* \omega \bigr) \\
&= (-1)^k \intFibre_M \pr_M^* \alpha \wedge \ev^* (Y \contr \omega)
\\
&\qquad + (-1)^k \intFibre_M \pr_M^* \alpha \wedge \left((-\check{Y} \times 
0_{\SectionSpaceAbb{F}}) \contr \ev^* \omega \right) \\
&= (-1)^k \alpha \hatProduct (Y \contr \omega) - \intFibre_M (\check{Y} 
\times 0_{\SectionSpaceAbb{F}}) \contr (\pr_M^* \alpha \wedge \ev^* \omega)
\\ 
& \qquad + \intFibre_M \left((\check{Y} \times 0_{\SectionSpaceAbb{F}}) 
\contr \pr_M^* \alpha\right) \wedge \ev^* \omega \\
&\oversetSmall{\eqref{eq::fibreIntegration:contractionWithFieldsOnFibre}}
{=} (-1)^k \alpha \hatProduct (Y \contr \omega) + 0 + \intFibre_M \pr_M^* 
(\check{Y} \contr \alpha) \wedge \ev^* \omega \\
&= (\check{Y} \contr \alpha) \hatProduct \omega + (-1)^k \alpha \hatProduct 
(Y \contr \omega).
\end{split}
\end{equation}
Here, the general identity \( Y \contr f^* \beta = f^* ({\rm T} Y \contr 
\beta \circ f) \) for the partial pull-back, 
see~\eqref{eq::diffForm:defnPartialPullback}, was utilized in the form
\begin{align}
(0_M \times Y^*) \contr \pr_M^* \alpha &= \pr_M^* (\tangent \pr_M (0_M 
\times Y^*) \contr \alpha \circ \pr) = 0, \\
\begin{split}
(0_M \times Y^*) \contr \ev^* \omega &= 
\ev^* (\tangent \ev (0_M \times Y^*) 
\contr \omega \circ \ev) \\
&= \ev^* (Y \contr \omega) - \ev^*(\tangent \ev (\check{Y} \times 
0_{\SectionSpaceAbb{F}}) \contr \omega \circ \ev) \\
&= \ev^* (Y \contr \omega) - (\check{Y} \times 0_{\SectionSpaceAbb{F}}) 
\contr \ev^* \omega,
\end{split}\\
(\check{Y} \times 0_{\SectionSpaceAbb{F}}) \contr \pr_M^* \alpha 
&= \pr_M^* (\tangent \pr_M (\check{Y} \times 0_{\SectionSpaceAbb{F}}) 
\contr \alpha) = \pr_M^* (\check{Y} \contr \alpha),
\end{align}
where, for the second identity, we used the infinitesimal version 
of~\eqref{eq::hatProduct:evAction}.
\end{proof}

\subsection{Cheeger-Simons differential characters}%
\label{sec::differentialCharacter}

Cheeger--Simons differential characters model connections on higher 
circle \( n \)-bundles. As such, they generalize circle bundles to 
\enquote{bundles} whose curvature is not a \(  2 \)-form but a 
\(  n \)-form. 
Differential characters were introduced by \textcite{CheegerSimons1985}. 
The idea is to adopt and generalize the representation of principal 
circle bundles with connection via their holonomy map. First
we summarize the main constructions and operations for differential 
characters and then we introduce the hat product of differential 
characters. All details and proofs can be found in 
\parencite{BaerBecker2013} if no other references are given.
The only original work in this section are 
\cref{prop::differentialCharacter:pullback} and 
\cref{sec::hatProduct:differentialCharacters} where we calculate the 
derivative of the pull-back map and where we generalize the hat 
product to differential characters, respectively.

\subsubsection{Motivation: circle bundles}
\label{sec::differentialCharacters:motivationCircleBundles}
Let us first recall some basic facts about ordinary principal 
\( \UGroup(1) \)-bundles in a language which will motivate their 
generalization to higher bundles. 

We will denote the set of equivalence classes of principal 
\( \UGroup(1) \)-bundles with connections over \( M \) by 
\( \csCohomology^2(M, \UGroup(1)) \). The reason for this notation 
will become clear from the considerations below.
We have four canonical maps:
\begin{description}[wide=0em]
	\item[Curvature]
The curvature \( \curv \Gamma \) of the connection \( \Gamma \) is a 
\( 2 \)-form on \( M \). By the Bianchi identity, \( \curv \Gamma \) 
is closed. Furthermore, the periods of the curvature are integral 
(after dividing by \( 2 \pi \)). Thus, in summary, we get a curvature 
map to the space of closed \( 2 \)-forms with integral periods
\begin{equation}
\curv: \csCohomology^2(M, \UGroup(1)) \to \clZDiffFormSpace^2(M, \R).
\end{equation}
\item[Flat bundle]
The holonomy of a flat connection yields a homomorphism \( \Hol: \\ 
\pi_1(M) \to \UGroup(1) \) from the fundamental group to the circle group. 
Since the target is an Abelian group, the commutator subgroup 
\( \commutator{\pi_1(M)}{\pi_1(M)} \) lies in the kernel of \( \Hol \). 
On the other hand, the Abelianization 
\[
\pi_1(M) \slash \commutator{\pi_1(M)}{\pi_1(M)}
\quad \text{is isomorphic to} \quad \sHomology_1(M, \Z) 
\] 
by the Hurewicz theorem. The universal coefficient theorem now gives 
the isomorphism \( \Hom(\sHomology_1(M, \Z), \UGroup(1)) = 
\sCohomology^1(M, \UGroup(1)) \) since \( \UGroup(1) \) is divisible. 
Thus, flat line bundles are represented by \(\sCohomology^1(M,\UGroup(1))\) 
and we get a natural injection
\begin{equation}
\jmath: \sCohomology^1(M, \UGroup(1)) \isomorph \Hom(\pi_1(M) \slash 
\commutator{\pi_1(M)}{\pi_1(M)}, \UGroup(1)) \to \csCohomology^2(M, 
\UGroup(1)).
\end{equation}

\item[Characteristic class]
Assigning the Chern class to a bundle furnishes a map
\begin{equation}
	c: \csCohomology^2(M, \UGroup(1)) \to \sCohomology^2(M, \Z).
\end{equation}
\item[Trivial bundle]
Connections on the trivial bundle \( M \times \UGroup(1) \) are 
canonically identified with real valued \( 1 \)-forms.
So, we can view elements of \( \DiffFormSpace^1(M, \R) \) as 
trivial bundles with connection, and thus get a map 
\( \iota: \DiffFormSpace^1(M, \R) \to \csCohomology^2(M, \UGroup(1)) \).
The kernel of \( \iota \) consists of all \( \alpha \in 
\DiffFormSpace^1(M, \R) \) which are gauge-equivalent to \( 0 \), \ie, 
there exists a gauge transformation \( \phi: M \to \UGroup(1) \) such 
that \( \alpha = \difLog \phi \). For connected \( M \), this is 
exactly the case if \( \alpha \) is closed and integral.
Thus we get an injection
\begin{equation}
\iota: \DiffFormSpace^1(M, \R) \slash \clZDiffFormSpace^1(M, \R) \to 
\csCohomology^2(M, \UGroup(1)).
\end{equation}
\end{description}

\subsubsection{Differential characters}
\label{sec::differentialCharacters:definition}
Let \( M \) be smooth manifold. 
In the following, we assume that all singular simplices are smooth, \ie, 
we work with smooth singular (co-)homology. Let \( \sCycles_{k-1}(M,\Z) \) 
denote the group of smooth integral-valued singular cycles of degree 
\(k-1\).

\begin{defn}[{\parencite[Section~5.1]{BaerBecker2013}}]
A group homomorphisms \( h: \sCycles_{k-1}(M, \Z) \to \UGroup(1) \) is 
called a \emphDef{differential character} if there exists a differential 
form \( \curv h \in \DiffFormSpace^k(M, \R) \) satisfying
	\begin{equation}
		h(\boundary c) = \exp\left( 2 \pi i \int_c \curv h \right)
	\end{equation}
for every smooth singular \( k \)-chain \( c \). The \emphDef{\( k \)-th 
Cheeger--Simons differential cohomology group} 
\( \csCohomology^k(M, \UGroup(1)) \) is the subgroup of 
\( \Hom(\sCycles_{k-1}(M, \Z), \UGroup(1)) \) consisting of differential 
characters.
\end{defn}

Cheeger--Simons differential characters provide a model for differential 
cohomology theory \parencite[Definition~5.9]{BaerBecker2013} in the sense 
that we get a functor \( \csCohomology^\ast(\cdot, \UGroup(1)) \) from the 
category of smooth manifolds to the category of \( \Z \)-graded Abelian 
groups, together with four natural transformations
\begin{itemize}
	\item curvature \( \curv: \csCohomology^\ast(\cdot, \UGroup(1)) \to 
	\clZDiffFormSpace^\ast(\cdot, \R) \)
	\item inclusion of flat classes \( \jmath: \sCohomology^{\ast-1}(\cdot, 
	\UGroup(1)) \to \csCohomology^\ast(\cdot, \UGroup(1)) \)
	\item characteristic class \( c: \csCohomology^\ast(\cdot, \UGroup(1)) 
	\to \sHomology^\ast(\cdot, \Z) \)
	\item topological trivialization \( \iota: \DiffFormSpace^{\ast-1}
	(\cdot, \R) \slash \clZDiffFormSpace^{\ast-1}(\cdot, \R) \to 
	\csCohomology^\ast(\cdot, \UGroup(1)) \)
\end{itemize}
These transformations generalize the maps discussed in 
\cref{sec::differentialCharacters:motivationCircleBundles}.
Moreover, for every smooth manifold \( M \) and \( k \in \Z \) the 
following diagram commutes and all rows and columns are exact:
\begingroup
\small
\begin{equationcd}[label=eq::differentialCharacter:definingDiagram, tikz={column sep=small}]
	&
	0 \to[d] &
	0 \to[d] &
	0 \to[d] &
	\\
	0 \to[r] &
	\JacTorus^{k-1}(M) \to[d]\to[r] &
	\frac{\DiffFormSpace^{k-1}(M, \R)}{\clZDiffFormSpace^{k-1}(M, \R)} 
	\to[d, thick, "\iota"] \to[r] &
	\dif \DiffFormSpace^{k-1}(M, \R) \to[d]\to[r] & 
	0
	\\
	0 \to[r] &
	\sCohomology^{k-1}(M, \UGroup(1)) \to[r, thick, "\jmath"]\to[d] &
	\csCohomology^k(M, \UGroup(1)) \to[d, thick, "c"] 
	\to[r, thick, "\curv"] &
	\clZDiffFormSpace^k(M, \R) \to[d]\to[r] &
	0
	\\
	0 \to[r]	 &
	\ExtAb(\sHomology_{k-1}(M, \Z), \Z) \to[d]\to[r] &
	\sCohomology^k(M, \Z) \to[d]\to[r] &
	\Hom(\sHomology_k(M, \Z), \Z) \to[d]\to[r] &
	0
	\\
	&
	0 &
	0 &
	0 &
\end{equationcd}
\endgroup

Here \( \JacTorus^l(M) \) denotes the Jacobian torus 
\( \sCohomology^l(M, \R) \slash \sCohomology^l(M, \Z) \), and the Abelian 
group \( \ExtAb(\sHomology_{k-1}(M, \Z), \Z) \) of equivalence classes 
of \( \Z \)-extensions of \( \sHomology_{k-1}(M, \Z) \) is canonically 
identified with the torsion subgroup of \( \sCohomology^k(M, \Z) \).
The horizontal sequence in the middle captures information mainly 
about the connection. For ordinary principal \( \UGroup(1) \)-bundle it 
states that every closed integer \( 2 \)-form can be realized as the 
curvature of a connection and that flat bundles are parametrized by their 
holonomy on generators of \( \pi_1(M) \). The topological information is 
encoded in the vertical sequences. The surjective map 
\( c: \csCohomology^k(M, \UGroup(1)) \to \sCohomology^k(M, \Z) \) shows 
that differential cohomology refines integer-valued cohomology.
\medskip

Let us finally comment on the group structure for the case of ordinary 
principal \( \UGroup(1) \)-bundles.
As far as we know, the group structure was first observed by 
\textcite{Kobayashi1956}.
The addition of two principal \( \UGroup(1) \)-bundles \( P \) and 
\( \tilde{P} \) over \( M \) is defined as follows. First, consider 
the fiber product \( P \times_M \tilde{P} \) and then identify the 
points which differ by an \( \UGroup(1) \)-action, \ie, we set
\begin{equation}
	P + \tilde{P} \defeq P \times_M \tilde{P} \quad \slash \quad 
	(p, \tilde{p}) \sim (p \cdot z, \tilde{p} \cdot z^{-1})
\end{equation}
The so-defined object \( P + \tilde{P} \) is indeed a smooth principal 
\( \UGroup(1) \)-bundle, where the \( \UGroup(1) \)-action is the 
translation in the first factor. The trivial bundle constitutes the 
identity element, \ie, \( P + (M \times \UGroup(1)) \) is isomorphic to 
\( P \). For a given principal bundle \( P \), we let \( -P \) denote 
the \( \UGroup(1) \) bundle that has the same underlying bundle 
structure as \( P \) but carries the modified \( \UGroup(1) \)-action
\begin{equation}
 	p \ast z \defeq p \cdot z^{-1},
\end{equation}
where on the right side \( \UGroup(1) \)-acts as in \( P \). Then 
\( P + (-P) \) is isomorphic to the trivial bundle. We leave it to the 
reader to verify that the addition of bundles is commutative and 
associative.
Connections \( A \) and \( \tilde{A} \) on the bundles combine to a 
connection \(\pr_1^* A + \pr_2^* \tilde{A}\) on \(P \times_M \tilde{P}\), 
which descends to a connection \( A + \tilde{A} \) on \( P + \tilde{P} \). 
The curvature of \( A + \tilde{A} \) is the sum of the corresponding 
curvatures. 
\medskip

Coming back to the general case, \( \csCohomology^k(M, \UGroup(1)) \) is 
not just a group but even a Lie group.
\begin{prop}[Lie group structure 
{\parencite[Section~A]{BeckerSchenkelEtAl2014}}]%
\label{prop::differentialCharacter:lieGroup}
Let \( M \) be a compact manifold. For every \( k \geq 1 \), the group 
\( \csCohomology^k(M, \UGroup(1)) \) is an infinite-dimensional 
Fr\'echet Lie group diffeomorphic to 
\begin{equation}
\sCohomology^k(M,\Z) \times \left( \DiffFormSpace^{k-1}(M,\R)\slash 
\clZDiffFormSpace^{k-1}(M, \R) \right) .
	\end{equation}
Its Lie algebra \( \csAlgebra^k(M, \UGroup(1)) \) is isomorphic to 
the Abelian Fr\'echet Lie algebra 
\begin{equation}
 	 \DiffFormSpace^{k-1}(M, \R) \slash \dif \DiffFormSpace^{k-2}(M, \R). 
\end{equation}
With respect to this differentiable structure, all natural maps 
\( \curv, \jmath, c, \iota \) are smooth and hence the defining 
diagram~\eqref{eq::differentialCharacter:definingDiagram} is a 
commutative diagram of Lie groups.
\end{prop}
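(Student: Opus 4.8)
The plan is to read off the Fréchet Lie group structure of $\csCohomology^k(M, \UGroup(1))$ from the exact sequences in diagram~\eqref{eq::differentialCharacter:definingDiagram}, feeding Hodge theory into the continuous part and divisibility into the splitting. Because $M$ is compact, $\sHomology_\bullet(M, \Z)$ and $\sCohomology^\bullet(M, \Z)$ are finitely generated; thus $\sCohomology^k(M, \Z)$ is a discrete (zero-dimensional) Lie group, and the universal coefficient theorem identifies $\sCohomology^{k-1}(M, \UGroup(1))$ with $\Hom(\sHomology_{k-1}(M, \Z), \UGroup(1))$, a compact Lie group isomorphic to a torus of rank $b_{k-1} = \dim \sCohomology^{k-1}(M, \R)$ times a finite group. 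The strategy is to first make $A \defeq \DiffFormSpace^{k-1}(M, \R) \slash \clZDiffFormSpace^{k-1}(M, \R)$ into a Fréchet Lie group, then split the middle column of~\eqref{eq::differentialCharacter:definingDiagram} and transport the product structure.

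For the first step, on the compact manifold $M$ the smooth Hodge decomposition $\DiffFormSpace^{j}(M) = \mathcal{H}^{j}(M) \oplus \dif \DiffFormSpace^{j-1}(M) \oplus \dif^* \DiffFormSpace^{j+1}(M)$ holds with all three summands closed and $\mathcal{H}^j(M)$ finite-dimensional; in particular $\dif \DiffFormSpace^{k-1}(M) \isomorph \dif^* \DiffFormSpace^{k}(M)$ is a closed complemented subspace of $\DiffFormSpace^{k}(M)$, and $\dif$ identifies $\DiffFormSpace^{k-1}(M) \slash \clDiffFormSpace^{k-1}(M)$ with the Fréchet space $\dif \DiffFormSpace^{k-1}(M)$. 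Combined with the first exact row of~\eqref{eq::differentialCharacter:definingDiagram}, which identifies $\clDiffFormSpace^{k-1}(M) \slash \clZDiffFormSpace^{k-1}(M)$ with the finite-dimensional Jacobian torus $\JacTorus^{k-1}(M)$, the Hodge decomposition furnishes a linear (hence smooth) splitting of that row and an isomorphism of topological groups $A \isomorph \JacTorus^{k-1}(M) \times \dif \DiffFormSpace^{k-1}(M)$. The right-hand side is a connected Fréchet Lie group — a finite-dimensional torus times a Fréchet space — and its Lie algebra $\sCohomology^{k-1}(M, \R) \oplus \dif \DiffFormSpace^{k-1}(M)$ is isomorphic, by a second use of the Hodge decomposition, to $\DiffFormSpace^{k-1}(M, \R) \slash \dif \DiffFormSpace^{k-2}(M, \R)$.

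Next I would split the middle column of~\eqref{eq::differentialCharacter:definingDiagram},
\[
0 \to A \xrightarrow{\;\iota\;} \csCohomology^k(M, \UGroup(1)) \xrightarrow{\;c\;} \sCohomology^k(M, \Z) \to 0 ,
\]
as abelian groups. Since $A$ is a quotient of the $\R$-vector space $\DiffFormSpace^{k-1}(M, \R)$ it is divisible, so $\ExtAb\bigl(\sCohomology^k(M, \Z), A\bigr) = 0$ and there is a group-theoretic section $s \colon \sCohomology^k(M, \Z) \to \csCohomology^k(M, \UGroup(1))$. Then $\Phi_s \colon \sCohomology^k(M, \Z) \times A \to \csCohomology^k(M, \UGroup(1))$, $(u, a) \mapsto s(u) + \iota(a)$, is an isomorphism onto $\csCohomology^k(M, \UGroup(1))$ carrying the direct product group structure (all groups here being abelian). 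I would define the smooth structure on $\csCohomology^k(M, \UGroup(1))$ by transporting the product manifold structure of $\sCohomology^k(M, \Z) \times A$ along $\Phi_s$. This is independent of the choice of $s$: two sections differ by a homomorphism $\delta \colon \sCohomology^k(M, \Z) \to A$, and $\Phi_{s'}^{-1} \circ \Phi_s = \bigl((u, a) \mapsto (u, a + \delta(u))\bigr)$ is a diffeomorphism because $\sCohomology^k(M, \Z)$ is discrete. As a direct product of Fréchet Lie groups, $\csCohomology^k(M, \UGroup(1))$ is therefore a Fréchet Lie group, diffeomorphic to $\sCohomology^k(M, \Z) \times A$; as $A$ is connected, its identity component is $\iota(A)$ and $\pi_0 \isomorph \sCohomology^k(M, \Z)$, and its Lie algebra equals that of $A$, the abelian Fréchet Lie algebra $\DiffFormSpace^{k-1}(M, \R) \slash \dif \DiffFormSpace^{k-2}(M, \R)$.

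It remains to check that $\curv$, $\jmath$, $c$, $\iota$ are smooth, which I would do in the product model. The maps $c$ (projection onto the discrete factor) and $\iota$ (the inclusion $a \mapsto (0, a)$) are smooth outright. As $\curv$ is a homomorphism with $\curv \circ \iota$ the continuous linear map $a \mapsto \dif a$, the composite $\curv \circ \Phi_s = \bigl((u, a) \mapsto \curv(s(u)) + \dif a\bigr)$ is smooth, the $u$-dependence being over a discrete set. For $\jmath$, write $\sCohomology^{k-1}(M, \UGroup(1)) \isomorph \JacTorus^{k-1}(M) \times F$ with $F$ finite, using the left column of~\eqref{eq::differentialCharacter:definingDiagram} (which splits as $\JacTorus^{k-1}(M)$ is divisible); on $F$ the map $\jmath$ is trivially smooth, while since $\JacTorus^{k-1}(M)$ is connected $\jmath$ carries it into the identity component $\iota(A)$, in fact into the torus factor $\JacTorus^{k-1}(M)$ of $A$ (a compact connected subgroup of $A$ cannot meet the Fréchet-space factor nontrivially), and a continuous homomorphism of finite-dimensional tori is smooth; the two pieces assemble to a smooth homomorphism. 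The step I expect to require the most care is the analytic input of the second paragraph together with the ensuing bookkeeping: one must confirm the smooth Hodge decomposition on the Fréchet spaces $\DiffFormSpace^\bullet(M)$ (so that $\dif \DiffFormSpace^{k-1}(M)$ is closed and complemented and all quotients remain Fréchet), and then keep track of the several splittings carefully enough to see that the smooth structure is canonical and the four natural transformations — and, more generally, pullback along smooth maps — are smooth; the remaining homological ingredients (divisibility, discreteness of $\sCohomology^k(M, \Z)$) are elementary.
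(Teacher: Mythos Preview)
The paper does not give its own proof of this proposition: it is stated with a citation to \parencite[Section~A]{BeckerSchenkelEtAl2014} and no proof environment follows. Your outline is a correct and complete sketch of the standard argument, and it is essentially the approach taken in the cited reference: use the smooth Hodge decomposition on the compact manifold to make $A=\DiffFormSpace^{k-1}(M)\slash\clZDiffFormSpace^{k-1}(M)$ a Fr\'echet Lie group (torus $\times$ Fr\'echet space), then split the middle column of~\eqref{eq::differentialCharacter:definingDiagram} via divisibility of $A$, and finally verify smoothness of the four natural maps in the resulting product model.

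One small simplification: for the smoothness of $\jmath$ on the torus part you need not invoke the compactness argument. Commutativity of the upper-left square in~\eqref{eq::differentialCharacter:definingDiagram} already tells you that $\jmath$ restricted to $\JacTorus^{k-1}(M)$ is exactly $\iota$ composed with the inclusion $\JacTorus^{k-1}(M)\hookrightarrow A$, which in your product model $A\isomorph\JacTorus^{k-1}(M)\times\dif\DiffFormSpace^{k-1}(M)$ is just the inclusion of the first factor. Your self-identified caveat is accurate: the only genuinely nontrivial analytic ingredient is the smooth Hodge decomposition on $\DiffFormSpace^\bullet(M)$ ensuring that the relevant subspaces are closed and complemented so that all quotients remain Fr\'echet; everything else is homological bookkeeping.
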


Let us now describe the different operations that constitute the calculus 
of differential cohomology.
\paragraph{Pull-back}
A smooth map \( f: M \to N \) induces a pull-back operation 
\begin{equation}
	f^*: \csCohomology^k(N, \UGroup(1)) \to \csCohomology^k(M,\UGroup(1))
\end{equation}
on the level of differential characters. Here \( f^* h \) is defined 
by \( f^* h = h \circ f_* \), \ie, as the dual of the induced map 
\( f_*: \sCycles_{k-1}(M, \Z) \to \sCycles_{k-1}(N, \Z) \) on cycles. 
Pull-back commutes with the curvature and the characteristic map.

\begin{prop}[Smoothness of the pull-back operation]%
\label{prop::differentialCharacter:pullback}
Let \( h \in \csCohomology^k(N, \UGroup(1)) \) be a differential 
character on the finite-dimensional manifold \( N \). For every 
closed manifold \( M \), the pull-back map
\begin{equation}
\pb_h: \sFunctionSpace(M, N) \to \csCohomology^k(M, \UGroup(1)),
		\qquad \phi \mapsto \phi^* h
\end{equation}
is smooth with logarithmic derivative
\begin{equation}
\label{eq::differentialCharacter:pullbackDerivative}
\difLog_\phi \pb_h (X) = \equivClass[\big]{\phi^* (X \contr \curv h \circ 
\phi)} \in \DiffFormSpace^{k-1}(M, \R) \slash \dif 
\DiffFormSpace^{k-2}(M, \R)
\end{equation}
for all \( X \in \TBundle_\phi \sFunctionSpace(M, N) \).
\end{prop}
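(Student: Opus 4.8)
The plan is to reduce the smoothness statement to a question about the explicit parametrization of $\csCohomology^k(M, \UGroup(1))$ provided by \cref{prop::differentialCharacter:lieGroup}, namely $\csCohomology^k(M, \UGroup(1)) \isomorph \sCohomology^k(M, \Z) \times \bigl(\DiffFormSpace^{k-1}(M, \R) \slash \clZDiffFormSpace^{k-1}(M, \R)\bigr)$. Since the characteristic class $c(\phi^* h) = \phi^* c(h)$ depends only on the homotopy class of $\phi$, the composition $c \circ \pb_h$ is locally constant and therefore smooth (and its derivative vanishes). It remains to treat the component of $\pb_h$ landing in the identity component, i.e.\ to analyze the topologically trivial part. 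After modifying $h$ by a fixed differential character pulled back along a reference map in each component (this is allowed since $\pb$ is a group homomorphism in $h$ and the curvature only sees the continuous part), we may assume $\phi$ varies in a fixed component and work locally. First I would invoke the splitting: locally around a given $\phi_0$, write $\phi^* h = \iota(\alpha_\phi) + (\text{fixed topological part})$ for a uniquely determined $\alpha_\phi \in \DiffFormSpace^{k-1}(M,\R)\slash \clZDiffFormSpace^{k-1}(M,\R)$, and it suffices to show $\phi \mapsto \equivClass{\alpha_\phi}$ is smooth with the claimed derivative.

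The key computational input is the behavior of differential characters under one-parameter families of maps. Let $\phi_t$ be a smooth curve in $\sFunctionSpace(M, N)$ with $\difFracAt{}{t}{0}\phi_t = X$. I would construct the homotopy $\Phi: M \times [0,1] \to N$, $\Phi(m,t) = \phi_t(m)$, and use the homotopy formula for differential characters: for a cycle $z \in \sCycles_{k-1}(M,\Z)$, the difference $\phi_1^* h(z) - \phi_0^* h(z)$ is $\exp\bigl(2\pi\I \int_{\Phi_*(z \times [0,1])} \curv h\bigr)$, which by Fubini over the interval is $\exp\bigl(2\pi\I \int_0^1 \int_z \phi_t^*(\difp_t \Phi \contr \curv h \circ \phi_t)\bigr)$. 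This identifies the primitive form whose class gives $\equivClass{\alpha_{\phi_1}} - \equivClass{\alpha_{\phi_0}}$ as $\int_0^1 \phi_t^*\bigl(\dot\phi_t \contr (\curv h \circ \phi_t)\bigr)\, \dif t$ modulo closed integral forms; differentiating at $t=0$ yields exactly the formula $\difLog_\phi \pb_h(X) = \equivClass{\phi^*(X \contr \curv h \circ \phi)}$ in $\DiffFormSpace^{k-1}(M,\R)\slash\dif\DiffFormSpace^{k-2}(M,\R)$ claimed in~\eqref{eq::differentialCharacter:pullbackDerivative} (note the quotient by $\dif\DiffFormSpace^{k-2}$ rather than $\clZDiffFormSpace^{k-1}$ is correct at the Lie algebra level, consistent with \cref{prop::differentialCharacter:lieGroup}). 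Smoothness then follows by the standard argument: the map $(\phi, X) \mapsto \phi^*(X \contr \curv h \circ \phi)$ is smooth $\sFunctionSpace(M,N) \times \TBundle\sFunctionSpace(M,N) \to \DiffFormSpace^{k-1}(M)$ because it is built from the smooth evaluation map, composition, and fiberwise-linear algebraic operations with the fixed form $\curv h$ --- here one uses the exponential law for mapping spaces between a compact manifold $M$ and the finite-dimensional $N$, as in \parencite{KrieglMichor1997,Neeb2006} --- and then a locally convex version of the fundamental theorem of calculus (valid since the relevant quotient Fréchet space is Mackey complete, cf.\ \cref{rem::fibreIntegration:inInfiniteDimensions}) upgrades "has a smooth derivative along every curve" to genuine smoothness.

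The main obstacle I anticipate is the passage between the two natural presentations of a differential character --- as a homomorphism on cycles versus via the form-part $\alpha_\phi$ in the splitting --- done in a way that is uniformly smooth in $\phi$. The subtlety is that the splitting $\csCohomology^k \isomorph \sCohomology^k(M,\Z) \times (\DiffFormSpace^{k-1}\slash\clZDiffFormSpace^{k-1})$ from \cite{BeckerSchenkelEtAl2014} is non-canonical (it depends on auxiliary choices such as a basis of cycles), so one must check that the homotopy-formula computation above is compatible with whatever splitting is used, or better, bypass the splitting entirely by observing that the curve $t \mapsto \phi_t^* h$ in $\csCohomology^k(M,\UGroup(1))$ has a well-defined velocity in the Lie algebra $\DiffFormSpace^{k-1}(M,\R)\slash\dif\DiffFormSpace^{k-2}(M,\R)$ and identifying that velocity directly. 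I would therefore phrase the core of the argument intrinsically: exhibit $t \mapsto \phi_t^* h \cdot (\phi_0^* h)^{-1}$ as a curve through the identity in the identity component, show it is differentiable with the stated velocity using the homotopy formula for the curvature, and then invoke regularity of $\csCohomology^k(M,\UGroup(1))$ together with the smoothness of the candidate logarithmic derivative to conclude that $\pb_h$ is smooth with that logarithmic derivative. Everything else is routine bookkeeping with fiber integration over the interval and the up-down formula~\eqref{eq:fibreIntegration:upDownFormula}.
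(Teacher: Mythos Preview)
Your approach is correct and follows essentially the same line as the paper: reduce via a homotopy formula for differential characters to a statement about the curvature form, then differentiate. The paper's proof is more streamlined in one respect worth noting. Rather than arguing separately that the $\sCohomology^k(M,\Z)$-component is locally constant and then worrying about the non-canonical splitting of $\csCohomology^k(M,\UGroup(1))$, the paper directly invokes the homotopy formula \parencite[Example~7.19]{BaerBecker2013} in the form
\[
\phi_\varepsilon^* h - \phi^* h = (-1)^{k-1}\,\iota\Bigl(\,\intFibre_{[0,1]} \tilde{\phi}_\varepsilon^* \curv h\Bigr),
\]
which already lands in the image of $\iota$; this makes your anticipated obstacle about compatibility with the splitting disappear entirely. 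The remaining computation of $\difFracAt{}{\varepsilon}{0}\intFibre_{[0,1]} \tilde{\phi}_\varepsilon^* \curv h$ is then a short exercise with \cref{prop::pullbackDiffForm} and the fiber-integration identities \eqref{eq::fibreIntegration:commuteDifferentiatingBoundary}, \eqref{eq:fibreIntegration:upDownFormula}, yielding exactly the form you obtained. Your cycle-by-cycle derivation of the homotopy formula is a valid alternative and has the merit of being self-contained, but it is not needed once one cites the result from \parencite{BaerBecker2013}.
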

\begin{proof}
Let \( \phi_\varepsilon: M \to N \) be a smooth family of maps whose derivative at $ \varepsilon =0$ is the 
tangent vector \( X \in \TBundle_\phi \sFunctionSpace(M, N) \).
The homotopy formula \parencite[Example~7.19]{BaerBecker2013}
\begin{equation}
\phi_\varepsilon^* h - \phi^* h = (-1)^{k-1}\ \iota 
\left(\ \ \ \intFibre_{[0,1]} \tilde{\phi}_\varepsilon^* \curv h \right)
\end{equation} 
reduces the problem to the case of differential forms.
Here, \( \tilde{\phi}_\varepsilon: M \times [0,1] \to N \) denotes the 
smooth homotopy between \( \phi \) and \( \phi_\varepsilon \) (for a 
fixed \( \varepsilon \)).
The following calculation using \cref{prop::pullbackDiffForm} below, 
proves the identity \eqref{eq::differentialCharacter:pullbackDerivative}:
\begin{equation}\begin{split}
\difFracAt{}{\varepsilon}{0}\ \ \intFibre_{[0,1]} 
\tilde{\phi}_\varepsilon^* \curv h &=\ 
\intFibre_{[0,1]}\difFracAt{}{\varepsilon}{0}\tilde{\phi}_\varepsilon^* 
			\curv h\\
&\oversetSmall{\eqref{eq::pullbackDiffFormDerivative}}{=}
\ \intFibre_{[0,1]} \dif\ \tilde{\phi}_0^* \left( 
\difFracAt{}{\varepsilon}{0} \tilde{\phi}_\varepsilon \contr \curv h \circ 
\tilde{\phi}_0 \right) \\ &=\ 
\intFibre_{[0,1]} \dif \left( \pr_M^* \bigl(\phi^*(X \contr \curv h \circ 
			\phi)\bigr) \wedge \pr_{[0,1]} \right) \\
&\oversetSmall{\eqref{eq::fibreIntegration:commuteDifferentiatingBoundary}}
{=}\ \dif \intFibre_{[0,1]} \pr_M^* \bigl(\phi^*(X \contr 
\curv h \circ \phi)\bigr) \wedge \pr_{[0,1]} \\
& \quad + (-1)^{k-1} \intFibre_{\boundary[0,1]} \pr_M^* \bigl(\phi^*(X 
\contr \curv h \circ \phi)\bigr) \wedge \pr_{[0,1]} \\
&\oversetSmall{\eqref{eq:fibreIntegration:upDownFormula}}{=}\ 0 +
(-1)^{k-1} \phi^*(X \contr \curv h \circ \phi) \wedge 
\intFibre_{\boundary[0,1]} \pr_{[0,1]} \\
&=\ (-1)^{k-1} \phi^*(X \contr \curv h \circ \phi),
\end{split}\end{equation}
where we used the notation of the partial 
pull-back~\eqref{eq::diffForm:defnPartialPullback}.
\end{proof}
\begin{proof}[Alternative proof]
In the case \( k = 2 \) (where \( \csCohomology^2(N, \UGroup(1)) \) 
parametrizes principal circle bundles with connection), we can give 
a more direct and geometric proof. Let \(P \to N\) be principal 
\( \UGroup(1) \)-bundle with connection \( \Gamma \). Let 
\( \eta_\varepsilon \) be a family of smooth closed curves in \( N \) 
representing the tangent vector \( Y \in \TBundle_\eta 
\sFunctionSpace(S^1, N) \). The derivative of the holonomy map 
\( \Hol: \sFunctionSpace(S^1, N) \to \UGroup(1) \) in the direction 
of \( Y \) can be calculated, heuristically, by 
\begin{equation}
\difFracAt{}{\varepsilon}{0} \Hol(\eta_\varepsilon) = \difFracAt{}
{\varepsilon}{0} \exp \left(\ \int_{\eta_\varepsilon} \Gamma \right) = 
\Hol(\eta) + \int_{S^1} (\curv \Gamma)_{\eta(\tau)} (Y(\tau), \dot{\eta}
(\tau)) \dif \tau.
\end{equation}
This calculation can be made rigorous and even generalizes to non-Abelian 
bundles; see \parencite{Diez2017c} for details. 
Now let \( \gamma \) be a closed curve in \( M \) and consider a family 
of smooth maps \( \phi_\varepsilon: M \to N \) representing \( X \in 
\TBundle_\phi \sFunctionSpace(M, N) \). The above calculation implies
\begin{equation}\begin{split}
\difFracAt{}{\varepsilon}{0} \pb_h (\phi_\varepsilon)(\gamma) 
&= \difFracAt{}{\varepsilon}{0} h (\phi_\varepsilon \circ \gamma) \\
&= h(\phi \circ \gamma) 
+ \int_{S^1} (\curv h)_{\phi \circ \gamma(\tau)} (X_{\gamma(\tau)}, 
\tangent \phi (\dot{\gamma}(\tau))) \dif \tau.
\end{split}\end{equation}
Thus, the derivative of the pull-back map is
\begin{equation}
\difLog_\phi \pb_h (X) (\gamma) = 
\int_\gamma \phi^* (X \contr \curv h \circ \phi).
		\qedhere
\end{equation}   
\end{proof}

\begin{lemma}\label{prop::pullbackDiffForm}
Let \( \alpha \) be a differential \( k \)-form on the finite-dimensional
manifold \( N \). For every closed finite-dimensional manifold \( M \), 
the pull-back map
\begin{equation}
\pb_\alpha: \sFunctionSpace(M, N) \to \DiffFormSpace^k(M), 
\qquad \phi \mapsto \phi^* \alpha
\end{equation}
is smooth and has derivative
\begin{equation}
\label{eq::pullbackDiffFormDerivative}
\tangent_\phi \pb_\alpha (X) = \difLie^\phi_X \alpha = 
\dif \bigl(\phi^* (X \contr \alpha \circ \phi)\bigr) + 
\phi^* \bigl(X \contr (\dif \alpha \circ \phi)\bigr)
\end{equation}
for \( X \in \TBundle_{\phi} \sFunctionSpace(M, N) \).
Here \( \difLie^\phi_X \alpha \defeq \difFracAt{}{\varepsilon}{0} 
\phi_\varepsilon^* \alpha \) denotes the Lie derivative along the 
map \( \phi \), so that the second equality 
in~\eqref{eq::pullbackDiffFormDerivative} generalizes Cartan's formula. 
Moreover, we used the notation of the partial pull-
back~\eqref{eq::diffForm:defnPartialPullback}.
\end{lemma}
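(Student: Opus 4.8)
The statement to prove is \cref{prop::pullbackDiffForm}: the pull-back map $\pb_\alpha : \sFunctionSpace(M,N) \to \DiffFormSpace^k(M)$, $\phi \mapsto \phi^*\alpha$, is smooth with derivative given by the Lie derivative along $\phi$, which moreover satisfies the Cartan-type formula $\difLie^\phi_X \alpha = \dif\bigl(\phi^*(X \contr \alpha\circ\phi)\bigr) + \phi^*\bigl(X\contr(\dif\alpha\circ\phi)\bigr)$. The plan is to reduce everything to a local computation in charts, using the exponential law for mapping spaces, and then to identify the resulting expression with the two stated forms. First I would recall that, since $M$ is closed and $N$ finite-dimensional, $\sFunctionSpace(M,N)$ is a Fréchet manifold with $\TBundle_\phi\sFunctionSpace(M,N) = \sSectionSpace(\phi^*\TBundle N)$, and that smoothness of a map into $\DiffFormSpace^k(M)$ is to be understood via chart representations $U \times E^k \to \R$ as in \cref{sec::conventions}. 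The evaluation map $\ev\colon M \times \sFunctionSpace(M,N) \to N$ is smooth (this is the exponential law for the convenient/locally convex calculus, e.g.\ \parencite{KrieglMichor1997}), hence so is $\ev^*\alpha \in \DiffFormSpace^k(M \times \sFunctionSpace(M,N))$; restricting and using that $\pb_\alpha(\phi)$ is the partial pull-back of $\ev^*\alpha$ along the slice $M \times \{\phi\}$ gives smoothness of $\pb_\alpha$ directly.

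\textbf{Computing the derivative.} For the derivative, pick a smooth family $\phi_\varepsilon\colon M \to N$ with $\phi_0 = \phi$ and $\difFracAt{}{\varepsilon}{0}\phi_\varepsilon = X \in \sSectionSpace(\phi^*\TBundle N)$, and assemble it into a smooth homotopy $\tilde{\phi}\colon M \times (-\delta,\delta) \to N$. Then $\pb_\alpha(\phi_\varepsilon) = \iota_\varepsilon^* \tilde{\phi}^*\alpha$ where $\iota_\varepsilon\colon M \hookrightarrow M\times(-\delta,\delta)$ is the slice inclusion at $\varepsilon$, and $\difFracAt{}{\varepsilon}{0}$ becomes the Lie derivative on $M\times(-\delta,\delta)$ along $\difp_\varepsilon$ followed by restriction to $M\times\{0\}$; concretely, $\difLie^\phi_X\alpha = \iota_0^*\bigl(\difLie_{\difp_\varepsilon}\tilde{\phi}^*\alpha\bigr)$. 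Now apply Cartan's magic formula $\difLie_{\difp_\varepsilon} = \dif\, \difp_\varepsilon\contr + \difp_\varepsilon\contr\dif$ to $\tilde{\phi}^*\alpha$ on $M\times(-\delta,\delta)$, use $\dif\tilde{\phi}^*\alpha = \tilde{\phi}^*\dif\alpha$, and then commute the slice restriction $\iota_0^*$ past $\dif$ (since $\iota_0$ is a smooth map, $\iota_0^*\dif = \dif\iota_0^*$). Unravelling $\iota_0^*\bigl(\difp_\varepsilon \contr \tilde{\phi}^*\beta\bigr)$ in terms of the partial pull-back notation \eqref{eq::diffForm:defnPartialPullback} — it is exactly $\phi^*(X \contr \beta\circ\phi)$ because $\tangent\tilde{\phi}(\difp_\varepsilon)|_{\varepsilon=0} = X$ — yields $\difLie^\phi_X\alpha = \dif\bigl(\phi^*(X\contr\alpha\circ\phi)\bigr) + \phi^*\bigl(X\contr(\dif\alpha\circ\phi)\bigr)$, which is the claimed formula. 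That this expression is independent of the chosen family (depending only on the tangent vector $X$) is automatic once one checks it defines the Gâteaux derivative, and continuity/linearity in $X$ plus smoothness of $\ev^*\alpha$ upgrades Gâteaux to smooth in the locally convex sense.

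\textbf{Expected obstacle.} The routine part is Cartan's formula; the part requiring care is the \emph{infinite-dimensional bookkeeping}: justifying that differentiation under the fiber/partial-pull-back operations is legitimate, that chart representations are genuinely smooth as maps $U \times E^k \to \R$, and that the convenient-calculus exponential law applies in the form needed (here $\delta$-parametrized families through $\phi$). I expect the main subtlety to be confirming that $\ev\colon M\times\sFunctionSpace(M,N)\to N$ and its derivatives behave well enough that the chart-wise smoothness criterion of \cref{sec::conventions} is met; once this is in place, the computation of the derivative is a short application of Cartan's formula together with the slice-restriction identities above, and the two displayed expressions for $\tangent_\phi\pb_\alpha(X)$ coincide by inspection. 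This lemma is precisely the ingredient invoked (via \eqref{eq::pullbackDiffFormDerivative}) in the proof of \cref{prop::differentialCharacter:pullback}, so the two proofs are consistent by design.
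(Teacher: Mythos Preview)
Your proof is correct but follows a genuinely different route from the paper's. The paper establishes the generalized Cartan formula by checking it on generators of the exterior algebra: it verifies the identity first for functions \( \alpha = f \) (where only the second term survives and is a one-line computation) and then for exact \( 1 \)-forms \( \alpha = \dif f \) (where only the first term survives, by commuting \( \dif \) past the \( \varepsilon \)-derivative). Since functions and exact \( 1 \)-forms generate \( \DiffFormSpace^\bullet(N) \) multiplicatively and both sides of the formula are derivations in a suitable sense, this suffices.

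Your approach is more conceptual: by assembling the family into a homotopy \( \tilde{\phi}: M \times (-\delta,\delta) \to N \) and identifying \( \difLie^\phi_X \alpha \) with \( \iota_0^* \difLie_{\partial_\varepsilon} \tilde{\phi}^* \alpha \), you reduce the generalized Cartan formula to the \emph{classical} Cartan formula on the finite-dimensional manifold \( M \times (-\delta,\delta) \), followed by a slice restriction. This makes transparent \emph{why} the formula holds---it is literally the ordinary magic formula viewed through the product decomposition---and avoids the algebraic verification on generators. The paper's argument is slightly shorter and entirely elementary; yours gives more geometric insight and is the argument one would want when generalizing to other settings (it is also closer in spirit to the homotopy-formula technique the paper itself uses in the proof of \cref{prop::differentialCharacter:pullback}). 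The paper does not explicitly discuss smoothness of \( \pb_\alpha \), whereas your remarks via the evaluation map are a useful addition.
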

\begin{proof}
Let \( \phi_\varepsilon: M \to N \) be a family of maps representing the 
tangent vector \( X \in \TBundle_\phi \sFunctionSpace(M, N) \). We have 
\( \tangent_\phi \pb_\alpha (X) = \difFracAt{}{\varepsilon}{0} 
\phi_\varepsilon^* \alpha = \difLie^\phi_X \alpha \) by definition. For the 
generalized Cartan formula, we need to check the identity only for 
functions and for exact \( 1 \)-forms, because these generate the 
exterior algebra. 
On functions \( \alpha = f \in \sFunctionSpace(N) \), we get
\begin{equation}
(\difLie^\phi_X f)_m =  
\difFracAt{}{\varepsilon}{0} (\phi_\varepsilon^* f)(m) = 
\tangent_{\phi(m)}f (X_m) = 
\bigl(\phi^* (X \contr \dif f \circ \phi)\bigr)_m.
\end{equation}
Similarly, for exact one forms \( \alpha = \dif f \), we find
\begin{equation}
\difLie^\phi_X (\dif f) = \difFracAt{}{\varepsilon}{0} \phi_\varepsilon^* 
\dif f = \dif \difFracAt{}{\varepsilon}{0} \phi_\varepsilon^* f = 
\dif \bigl(\phi^* (X \contr \dif f \circ \phi)\bigr).
\end{equation}
This completes the proof.
\end{proof}

\paragraph{Product}
\label{sec::differentialCharacter:starProduct}

There exists an associative and \( \Z \)-bilinear product of differential 
characters denoted by
\begin{equation}
\label{Appendix_D_def_fat_star}
\star: \csCohomology^k(M, \UGroup(1)) \times \csCohomology^l(M, \UGroup(1)) 
\to \csCohomology^{k+l}(M, \UGroup(1)).
\end{equation}
It satisfies the following properties 
\parencite[Definition~6.1]{BaerBecker2013}:
\begin{enumerate}
\item (Ring structure) The product \( \star \) is associative and bilinear.
\item (Graded commutativity) \( h \star g  = (-1)^{k l} g \star h \).
\item (Naturality under pull-back) For a smooth map \( f: N \to M \), we 
have \( f^* (h \star g) = (f^* h) \star (f^* g) \).
\item (Compatibility with curvature) \( \curv (h \star g) = 
\curv h \wedge \curv g \).
\item (Compatibility with characteristic class) \( c(h \star g) = 
c(h) \cup c(g) \).
\item (Compatibility with trivializations) \( \iota(\alpha) \star h = 
\iota(\alpha \wedge \curv h) \).
\end{enumerate}
As shown in \parencite{BaerBecker2013}, these properties completely 
characterize the product~\eqref{Appendix_D_def_fat_star}; we refer to 
this book for the lengthy proof of this statement.
In this paper, we use only these properties without appealing to the 
concrete construction of \( \star \). 

\paragraph{Fiber integration}
Let \( \pi: F \to M \) be a smooth fiber bundle whose typical fiber 
\( \FibreBundleModel{F} \) is a closed, finite-dimensional oriented 
manifold. Then fiber integration \parencite[Definition~7.1]{BaerBecker2013} 
yields a group homomorphism
\begin{equation}
	\intFibre: \csCohomology^k(F, \UGroup(1)) 
\to \csCohomology^{k - \dim \FibreBundleModel{F}}(M, \UGroup(1))
\end{equation}
such that the curvature map intertwines fiber integration of differential 
characters with ordinary fiber integration of differential forms
\( \intFibre: \DiffFormSpace^k(F) \to 
\DiffFormSpace^{k - \dim \FibreBundleModel{F}}(M) \), \ie, 
the following diagram commutes:
\begin{equationcd}
	\csCohomology^k(F, \UGroup(1))
		\to[r, "\intFibre"]
		\to[d, "\curv"]
	& \csCohomology^{k - \dim \FibreBundleModel{F}}(M, \UGroup(1))
		\to[d, "\curv"]
	\\
	\DiffFormSpace^k(F)
		\to[r, "\intFibre"]
	& \DiffFormSpace^{k - \dim \FibreBundleModel{F}}(M).
\end{equationcd}

\subsubsection{Hat product of differential characters}%
\label{sec::hatProduct:differentialCharacters}

\Textcite{Vizman2011} introduced the hat product of differential forms 
as a means to construct differential forms on the mapping space 
\( \sFunctionSpace(M, F) \) from differential forms on the 
finite-dimensional manifolds \( M \) and \( F \). 
We   now generalize these ideas to the level of differential characters.
A similar construction has been used also in 
\parencite{DiezJanssensNeebVizmann2019}.
The hat product of differential characters yields a method that combines 
higher bundles on \( M \) and \( F \) to bundles on the mapping space. 
In the process, we also generalize the procedure of higher dimensional 
transgression \parencite[Chapter~9.2]{BaerBecker2013}.

As in \cref{sec::hatProductFibreBundles}, we consider the more general 
setup of sections of a fiber bundle.
Let \( M \) be a compact, oriented, finite-dimensional manifold without 
boundary, and let \( \pi: F \to M \) be a finite-dimensional fiber 
bundle over \( M \). Recall from \parencite{Vizman2011} that the hat 
product of differential forms used the evaluation and projection map to 
construct differential forms on \( M \times \SectionSpaceAbb{F} \) and then 
integrate out the \( M \)-factor. We use the same strategy to define the 
hat product of differential characters. The evaluation map 
\( \ev: M \times \SectionSpaceAbb{F} \to F \) yields a pull-back map of 
differential characters \( \ev^*: \csCohomology^l(F, \UGroup(1)) \to 
\csCohomology^l(M \times \SectionSpaceAbb{F}, \UGroup(1)) \). Similarly, 
the projection on the first factor induces a homomorphism \( \pr_M^*: 
\csCohomology^k(M, \UGroup(1)) \to \csCohomology^k(M \times 
\SectionSpaceAbb{F}) \). Taking the product of these differential forms 
yields an element of \(\csCohomology^{k+l}(M\times\SectionSpaceAbb{F})\). 
Finally, we integrate over \( M \) by viewing 
\( M \times \SectionSpaceAbb{F} \) as a trivial bundle over 
\( \SectionSpaceAbb{F} \). Summarizing, we have constructed a map
\begin{equation}\begin{split}
\hatProduct: \csCohomology^k(M, \UGroup(1)) \,\times\, 
&\csCohomology^l(F, \UGroup(1)) 
\to \csCohomology^{k+l - \dim M}(\SectionSpaceAbb{F}, \UGroup(1)), 
\\
h \hatProduct g &\defeq \intFibre_M \pr_M^* h \star \ev^* g.
\end{split}\end{equation}
Using the compatibility with the curvature map, we calculate
\begin{equation}
\label{eq::differentialCharacter:hatProductCurvature}
\curv (h \hatProduct g) = \intFibre_M \pr_M^* \curv h \wedge \ev^* \curv g 
= \curv h \hatProduct \curv g,
\end{equation}
that is, the curvature of the hat product of differential characters 
equals the hat product of the corresponding curvatures. 
Let us now discuss a few special cases.
\begin{description}[wide=0em]	
\item[Prequantization]
Fix a differential character \( \mu \) on \( M \) whose degree 
coincides with the dimension of \( M \).
Then we define a \( \mu \)-dependent map by
\begin{equation}
\csCohomology^l(F, \UGroup(1)) \to 
\csCohomology^l(\SectionSpaceAbb{F}, \UGroup(1)), 
\qquad h \mapsto \mu \hatProduct h.
\end{equation}
In particular, for \( l=2 \), principal \( \UGroup(1) \)-bundles on 
\( F \) yield principal \( \UGroup(1) \)-bundles on the section 
space \( \SectionSpaceAbb{F} \).
By~\eqref{eq::differentialCharacter:hatProductCurvature}, the 
differential character \( \mu \hatProduct h \) is a prequantization 
of \( (\SectionSpaceAbb{F}, \curv \mu \hatProduct \curv g) \).
\item[Transgression]
The \emphDef{transgression map} \parencite[Definition~9.1]{BaerBecker2013} 
uses the hat product with the identity element 
\( e \in \csCohomology^0(M, \UGroup(1)) \):
\begin{equation}
\csCohomology^l(F, \UGroup(1)) \to \csCohomology^{l-\dim M}
(\SectionSpaceAbb{F}, \UGroup(1)), \qquad h \mapsto e \hatProduct h = 
\intFibre_M \ev^* h.
\end{equation}
For the special case of a trivial fiber bundle 
\( F = \FibreBundleModel{F} \times M \) over \( M = S^1 \), the space 
of smooth sections is identified with the space 
\( \sLoopSpace(\FibreBundleModel{F}) \) of smooth loops in 
\( \FibreBundleModel{F} \), and transgression yields a map 
\( \csCohomology^l(\FibreBundleModel{F}, \UGroup(1)) \to 
\csCohomology^{l-1}(\sLoopSpace(\FibreBundleModel{F}), \UGroup(1)) \).
In degree \( l=2 \), it assigns to a principal \( \UGroup(1) \)-bundle 
over \( \FibreBundleModel{F} \) its holonomy map 
\( \sLoopSpace(\FibreBundleModel{F}) \to \UGroup(1) \).
\end{description}

\begin{refcontext}[sorting=nyt]{}
	\printbibliography
\end{refcontext}



\end{document}